\begin{document}

\newtheorem{theorem}{Theorem}[section]
\newtheorem{proposition}[theorem]{Proposition}
\newtheorem{coro}[theorem]{Corollary}
\newtheorem{lemma}[theorem]{Lemma}
\newtheorem{definition}[theorem]{Definition}
\newtheorem{example}[theorem]{Example}
\newtheorem{remark}[theorem]{Remark}
\newtheorem{fact}[theorem]{Fact}
\newtheorem{nota}[theorem]{Notation}
\renewcommand{\theequation}
{\thesection.\arabic{equation}}

\title[Spectral asymptotic formula]{Spectral asymptotic formula of Bessel--Riesz transform commutators}

\author{Zhijie Fan}
\address{Zhijie Fan, School of Mathematics and Information Science,
Guangzhou University, Guangzhou 510006, China}
\email{fanzhj3@mail2.sysu.edu.cn}

\author{Ji Li}
\address{Ji Li, School of Mathematical and Physical Sciences, Macquarie University, NSW 2109, Australia}
\email{ji.li@mq.edu.au}

\author{Fedor Sukochev}
\address{Fedor Sukochev, University of New South Wales, Kensington, NSW 2052, Australia}
\email{f.sukochev@unsw.edu.au}

\author{Dmitriy Zanin}
\address{Dmitriy Zanin, University of New South Wales, Kensington, NSW 2052, Australia}
\email{d.zanin@unsw.edu.au}

\date{\today}

\subjclass[2010]{47B10, 42B20, 43A85}
\keywords{Weak Schatten class, Riesz transform commutators, Bessel operator, Sobolev space, Spectral asymptotic formula}

\maketitle

\begin{abstract}
Let $R_{\lambda,j}$ be the $j$-th Bessel--Riesz transform, where $n\geq 1$, $\lambda>0$, and $j=1,\ldots,n+1$. In this article, we establish a Weyl type asymptotic for $[M_f,R_{\lambda,j}]$, the  commutator  of $R_{\lambda,j}$ with multiplication operator $M_f$, based on building a preliminary result that the endpoint weak Schatten norm of $[M_f,R_{\lambda,j}]$ can be characterised via homogeneous Sobolev norm $\dot{W}^{1,n+1}(\mathbb{R}_+^{n+1})$ of the symbol $f$. Specifically, the asymptotic coefficient  is equivalent to $\|f\|_{\dot{W}^{1,n+1}(\mathbb{R}_+^{n+1})}.$ Our main strategy is to relate Bessel--Riesz commutator to classical Riesz commutator via Schur multipliers, and then to establish the boundedness of Schur multipliers.
\end{abstract}

\tableofcontents

\section{Introduction and statement of main results}
\subsection{Background and our main results}
Singular values sequence is a fundamental object for studying the behavior of compact operators in infinite dimensional spaces, playing an important role in a variety of problems in many branches of analysis and mathematical quantum theory. Schatten and weak Schatten class norms, defined via certain norms of singular values sequence of compact operator, provide quantitative ways to measure the `size' and `decay' of compact operators on Hilbert space, respectively. The well-known connection with harmonic analysis is via the Riesz transform commutator, which  originates from the famous Nehari's \cite{Ne} problem and links closely with the quantized derivatives introduced by Connes \cite[IV]{Connes} in non-commutative geometry. The Schatten class properties of classical Riesz transform commutators were studied in \cite{JW,RS1988,RS}.

Let $\mathcal{L}_{p,q}(L_2(\mathbb{R}^{n}))$ (resp. $\mathcal{L}_{p}(L_2(\mathbb{R}^{n}))$) be the Schatten--Lorentz class (resp. Schatten class) whose definition will be postponed in Section \ref{Secpre}. In the classical Euclidean setting with $n\geq 2$, Janson--Wolff characterized the membership of $[R_j,M_f]$ (and thus, quantized derivatives of $f$) to the Schatten class $\mathcal{L}_p(L_2(\mathbb{R}^{n}))$ for $n<p<\infty$ in terms of membership of $f$ to the homogeneous Besov space $B_p(\mathbb{R}^n)$. Moreover, for the case $p\leq n$, they showed that $[R_j,M_f]\in \mathcal{L}_p(L_2(\mathbb{R}^{n}))$ if and only if $f$ is a constant. These results were extended to the Lorentz scale by Rochberg and Semmes \cite{RS}, with a characterisation provided in terms of oscillation spaces ${\rm OSC}_{p,q}(\mathbb{R}^n)$. The oscillation space is shown to degenerate into constant spaces when $0<p\leq n$ and $0<q\leq \infty$, except at the endpoint $(p,q)=(n,\infty)$. This is called the Janson--Wolff phenomenon (or cut-off point phenomenon), with somewhat different phenomena in the special dimension $n=1$ for the Hilbert transform commutator studied by Peller \cite{P,P2}. For the endpoint case $(p,q)=(n,\infty)$, Connes--Sullivan--Teleman \cite{CST} found and sketched a proof of the surprising equivalence between ${\rm OSC}_{n,\infty}(\mathbb{R}^n)$ and the homogeneous Sobolev space $\dot{W}^{1,n}(\mathbb{R}^{n})$. Lord--McDonald--Sukochev--Zanin \cite{LMSZ} (under a priori assumption $f\in L_\infty(\mathbb{R}^n)$) and Frank \cite{MR4716964} elaborated the proof of this identification with two completely different approach. These indicate that $[R_j,M_f]\in \mathcal{L}_{n,\infty}(L_2(\mathbb{R}^{n}))$ if and only if $f$ belongs to the homogeneous Sobolev space $\dot{W}^{1,n}(\mathbb{R}^{n})$.

Schatten  (and weak Schatten) class properties have been investigated for many significant operators in complex analysis and harmonic analysis, such as Szeg\H{o} projection \cite{FR}, big and little Hankel operators on the unit ball and  Heisenberg group \cite{FR},  big Hankel operator on Bergman space of the disk  \cite{AFP}, and  Hankel operators on the Bergman space of the unit ball \cite{MR1087805}, Riesz transform commutators on Heisenberg group \cite{MR4552581} and nilpotent Lie groups \cite{MR4756021}, and Riesz transform commutators associated with  Neumann--Laplacian operator \cite{Neumann} and associated with Bessel operator \cite{FLLXarxiv}, general Calder\'{o}n-Zygmund operators in the context of Euclidean space \cite{Zhangwei1,Zhangwei2} and in the setting of spaces of homogeneous type \cite{HRpreprint}. In particular, several equivalent endpoint $\mathcal{L}_{d,\infty}$ characterizations \cite{HRpreprint} for commutators were given via Haj\l asz type Sobolev spaces on spaces of homogeneous type with lower dimension $d$.

Bessel operator is an important operator arising in the study of complex analysis, harmonic analysis and partial differential equations. We refer the readers to \cite{MR64284,MS} for an elaboration of the background of Bessel operator. After the paper  \cite{FLLXarxiv} on weak Schatten estimate of Bessel--Riesz transform commutators,
Rupert L. Frank posed a question in March 2024 by personal communication to the authors:

``{\it can one derive a spectral asymptotic formula for the Bessel--Riesz transform

\hskip.19cm commutators}?''

The aim of this paper is to address this question. To be more explicit, for $\lambda> 0$, we consider the $(n+1)$-dimensional Bessel operator on $\mathbb{R}^{n+1}_+:=\mathbb{R}^n\times (0,\infty)$ ($n\geq 1$) from a celebrated work of Huber \cite{MR64284}, which is defined by
\begin{align}\label{Dlambda}
\Delta_\lambda:= -\sum_{k=1}^{n+1}\frac{\partial^2}{\partial x_k^2}-\frac{2\lambda}{x_{n+1}}\frac{\partial}{\partial x_{n+1}}.
\end{align}
The operator $\Delta_\lambda$ has a self-adjoint extension on the Hilbert space $L_2(\mathbb{R}_+^{n+1}, dm_\lambda)$ (See Section \ref{Selfexten}),  where $dm_\lambda(x):=x_{n+1}^{2\lambda}dx.$ Let $R_{\lambda,k}$ denote the $k$-th Bessel-Riesz transform
$$ R_{\lambda,k} := {\partial\over \partial x_k} \Delta_\lambda^{-{1\over2}},\quad k=1,\ldots,n+1. $$

As a preliminary step, we will first show that for the critical index $p=n+1$ (see \cite[Remark 1.6]{FLLXarxiv} for explanation about critical index),  the  endpoint weak Schatten class $\mathcal{L}_{n+1,\infty}(L_2(\mathbb{R}^{n+1}_+,m_{\lambda}))$ property of $[R_{\lambda,k},M_f]$  can be further characterized in terms the homogeneous Sobolev space norm of $f$, i.e., $\|\nabla f\|_{L_p(\mathbb{R}_+^{n+1})}$.

\begin{definition}
Let $\dot{W}^{1,p}(\mathbb{R}_+^{n+1})$ be the Sobolev space over $\mathbb{R}_+^{n+1},$ which consists of locally integrable functions $f$ whose distributional gradient belongs to $L_p(\mathbb{R}_+^{n+1})$. For such $f$, we set
$$\|f\|_{\dot{W}^{1,p}(\mathbb{R}_+^{n+1})}:=\|\nabla f\|_{L_p(\mathbb{R}_+^{n+1})}.$$
\end{definition}

The first result of our paper is as follows.

\begin{theorem}\label{main theorem} Let $n\geq1$, $\lambda>0$ and $1\leq k\leq n+1.$ For every $f\in L_{\infty}(\mathbb{R}^{n+1}_+),$
\begin{enumerate}[{\rm (i)}]
\item\label{mta} if $f\in\dot{W}^{1,n+1}(\mathbb{R}^{n+1}_+),$ then $[R_{\lambda,k},M_f]\in \mathcal{L}_{n+1,\infty}(L_2(\mathbb{R}^{n+1}_+,m_{\lambda}))$ and
$$\|[R_{\lambda,k},M_f]\|_{\mathcal{L}_{n+1,\infty}(L_2(\mathbb{R}^{n+1}_+,m_{\lambda}))}\leq C_{n,\lambda}\|f\|_{\dot{W}^{1,n+1}(\mathbb{R}^{n+1}_+)}.$$
\item\label{mtb} if $[R_{\lambda,k},M_f]\in \mathcal{L}_{n+1,\infty}(L_2(\mathbb{R}^{n+1}_+,m_{\lambda})),$ then $f\in\dot{W}^{1,n+1}(\mathbb{R}^{n+1}_+)$ and
$$\|f\|_{\dot{W}^{1,n+1}(\mathbb{R}^{n+1}_+)}\leq C_{n,\lambda}\|[R_{\lambda,k},M_f]\|_{\mathcal{L}_{n+1,\infty}(L_2(\mathbb{R}^{n+1}_+,m_{\lambda}))}.$$
\end{enumerate}
\end{theorem}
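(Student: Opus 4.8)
\emph{Strategy and reduction to Lebesgue measure.} The plan is to transfer the endpoint characterisation of the \emph{classical} Riesz transform commutators on $\mathbb{R}^{n+1}$ --- namely $[R_k,M_g]\in\mathcal L_{n+1,\infty}(L_2(\mathbb{R}^{n+1}))$ if and only if $g\in\dot W^{1,n+1}(\mathbb{R}^{n+1})$, with comparable norms, due to Lord--McDonald--Sukochev--Zanin and Frank \cite{LMSZ,MR4716964} --- to the Bessel setting by means of Schur multipliers. First, the multiplication operator $U_\lambda g:=x_{n+1}^{\lambda}g$ is unitary from $L_2(\mathbb{R}^{n+1}_+,m_\lambda)$ onto $L_2(\mathbb{R}^{n+1}_+,dx)$ and commutes with $M_f$, so $[R_{\lambda,k},M_f]$ is unitarily equivalent to $[\mathcal R_{\lambda,k},M_f]$ on $L_2(\mathbb{R}^{n+1}_+,dx)$, where $\mathcal R_{\lambda,k}:=U_\lambda R_{\lambda,k}U_\lambda^{-1}$ has, with respect to $dx$, the kernel $\widetilde K_{\lambda,k}(x,y)=(x_{n+1}y_{n+1})^{\lambda}K_{\lambda,k}(x,y)$, where $K_{\lambda,k}$ denotes the Bessel--Riesz kernel against $dm_\lambda$. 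Thus $[\mathcal R_{\lambda,k},M_f]$ has kernel $\widetilde K_{\lambda,k}(x,y)(f(y)-f(x))$ on $\mathbb{R}^{n+1}_+\times\mathbb{R}^{n+1}_+$, and everything reduces to comparing $\widetilde K_{\lambda,k}$ with classical Riesz kernels.

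\emph{Schur multiplier comparison.} Let $K_k(x,y)=c_{n+1}(x_k-y_k)|x-y|^{-n-2}$ and let $K_k^{\flat}(x,y)=c_{n+1}(x_k-\bar y_k)|x-\bar y|^{-n-2}$ be its reflection across $\{x_{n+1}=0\}$, where $\bar y=(y',-y_{n+1})$. Using the explicit formulas and pointwise estimates for $K_{\lambda,k}$ from the theory of the Bessel operator \cite{FLLXarxiv}, together with the fact that $\Delta_\lambda$ is translation and reflection invariant in $x_1,\dots,x_n$, I would establish a decomposition
$$\widetilde K_{\lambda,k}(x,y)=m_{\lambda,k}(x,y)K_k(x,y)+m_{\lambda,k}^{\flat}(x,y)K_k^{\flat}(x,y),\qquad x,y\in\mathbb{R}^{n+1}_+,$$
in which the zero sets of numerators and denominators are compatible and the symbols $m_{\lambda,k},m_{\lambda,k}^{\flat}$ are bounded and, away from the diagonal, smooth and homogeneous of degree zero. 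The decisive analytic step is then to show that Schur multiplication by such symbols (acting on the integral kernel of an operator) is bounded on $\mathcal B(L_2(\mathbb{R}^{n+1}_+))$ --- for instance by exhibiting an integral representation $m(x,y)=\int\overline{a_\omega(x)}\,a_\omega(y)\,d\nu(\omega)$ with $\int\|a_\omega\|_\infty^2\,d\nu(\omega)<\infty$ --- and therefore automatically bounded on every Schatten--Lorentz ideal, in particular on $\mathcal L_{n+1,\infty}$, and likewise for the ``inverse'' symbols needed to recover $K_k,K_k^{\flat}$ from $\widetilde K_{\lambda,k}$.

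\emph{Proof of (i) and (ii).} For (i): given $f\in L_\infty\cap\dot W^{1,n+1}(\mathbb{R}^{n+1}_+)$, let $f^{e}$ be its even reflection across $\{x_{n+1}=0\}$; then $f^{e}\in L_\infty\cap\dot W^{1,n+1}(\mathbb{R}^{n+1})$ with $\|f^{e}\|_{\dot W^{1,n+1}(\mathbb{R}^{n+1})}\le C\|f\|_{\dot W^{1,n+1}(\mathbb{R}^{n+1}_+)}$, so by \cite{LMSZ,MR4716964} we have $[R_k,M_{f^{e}}]\in\mathcal L_{n+1,\infty}(L_2(\mathbb{R}^{n+1}))$ with controlled norm. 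Compressing with the projection $M_{\mathbf 1_{\mathbb{R}^{n+1}_+}}$ (which does not increase the weak Schatten norm), and using the identity $f^{e}(\bar y)=f(y)$ together with the change of variables $y\mapsto\bar y$ on the off-diagonal block, we recover on $L_2(\mathbb{R}^{n+1}_+)$ the operators with kernels $K_k(x,y)(f(y)-f(x))$ and $K_k^{\flat}(x,y)(f(y)-f(x))$; applying the Schur multipliers of Step 2 then yields $[\mathcal R_{\lambda,k},M_f]\in\mathcal L_{n+1,\infty}$ with the asserted bound. For (ii): running the Schur multipliers in the other direction, membership of $[\mathcal R_{\lambda,k},M_f]$ in $\mathcal L_{n+1,\infty}$ yields membership of the operators with kernels $K_k(x,y)(f(y)-f(x))$ and $K_k^{\flat}(x,y)(f(y)-f(x))$; reassembling the four blocks of $[R_k,M_{f^{e}}]$ on $\mathbb{R}^{n+1}$ (using the even symmetry of $f^{e}$) gives $[R_k,M_{f^{e}}]\in\mathcal L_{n+1,\infty}(L_2(\mathbb{R}^{n+1}))$, whence $f^{e}\in\dot W^{1,n+1}(\mathbb{R}^{n+1})$ and so $f\in\dot W^{1,n+1}(\mathbb{R}^{n+1}_+)$ by \cite{LMSZ,MR4716964}.

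\emph{Main obstacle.} The heart of the proof is the quantitative boundedness of the Schur multipliers on $\mathcal L_{n+1,\infty}$ and, logically prior to it, the extraction from the (rather implicit, hypergeometric) expressions for $K_{\lambda,k}$ of the above decomposition with \emph{matching zero sets}, so that both the symbols and their inverses are bounded, uniformly up to the degeneracy $\{x_{n+1}=0\}$ of the Bessel measure. For $k=n+1$ this is the most delicate point: since $\Delta_\lambda$ is neither translation invariant nor reflection symmetric in $x_{n+1}$ and behaves as a Neumann-type operator at the boundary, $\widetilde K_{\lambda,n+1}$ essentially ``sees'' only the even combination $K_{n+1}+K_{n+1}^{\flat}$, so one cannot separate all four blocks of $[R_{n+1},M_{f^{e}}]$ by this route; to close the necessity part in this case I would instead bound the mean oscillation of $f$ over cubes compactly contained in $\mathbb{R}^{n+1}_+$ --- where $R_{\lambda,n+1}$ is an ordinary Calder\'on--Zygmund operator and $m_\lambda\simeq dx$ --- directly by $\|[R_{\lambda,n+1},M_f]\|_{\mathcal L_{n+1,\infty}}$, and then invoke the Connes--Sullivan--Teleman identification \cite{CST} of the oscillation space with $\dot W^{1,n+1}$ on the half-space, together with a monotone convergence argument passing from interior cubes to all of $\mathbb{R}^{n+1}_+$.
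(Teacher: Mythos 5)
Your overall architecture --- conjugating by $M_{x_{n+1}^{\lambda}}$ to pass to Lebesgue measure, writing the (conjugated) Bessel--Riesz kernel as a Schur multiplier acting on classical Riesz kernels, and reducing to the full-space result of \cite{LMSZ,MR4716964} via even reflection --- is exactly the paper's strategy, and your bookkeeping with the reflected kernel $K_k^{\flat}$ is a legitimate variant. But there are two genuine gaps at precisely the points you flag as the ``decisive analytic step.'' First, the symbols that actually arise do \emph{not} admit a Grothendieck-type factorization $m(x,y)=\int\overline{a_\omega(x)}a_\omega(y)\,d\nu(\omega)$ with $\int\|a_\omega\|_\infty^2\,d\nu<\infty$, and they are \emph{not} bounded Schur multipliers on $\mathcal B(L_2)$: for $k=n+1$ the kernel forces in the direction symbols $(x_l-y_l)/|x-y|$ and the triangular truncation $\chi_{\{x_{n+1}<y_{n+1}\}}$, both classical examples of multipliers bounded on $\mathcal L_p$ only for $1<p<\infty$; and the radial symbols $F\circ H$ with $H(x,y)=|x-y|/(x_{n+1}y_{n+1})^{1/2}$ transfer (after a Mellin/Fourier decomposition in $\log H$) to imaginary powers $\Delta^{is}$, again unbounded at $p=\infty$. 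So boundedness on $\mathcal L_{n+1,\infty}$ is not ``automatic''; it is the hard content, and the paper obtains it by combining the recent H\"ormander--Mikhlin theorem for Schur multipliers of Conde-Alonso--Gonz\'alez-P\'erez--Parcet--Tablate (for the part of the symbol supported where $H\lesssim 1$, where derivatives of $F\circ H$ satisfy CZ-type bounds) with a UMD-valued transference estimate $\|\mathfrak S_{|x-y|^{is}}\|_{\mathcal L_p\circlearrowleft}\lesssim(1+|s|)^{(n+1)/2}$ for the tail. Moreover the relevant functions $F$ are only piecewise regular at $H=0$ (logarithmic singularities in derivatives when $n$ is even), which is why the paper devotes an appendix to them.

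Second, and more seriously, your proof of (ii) by ``running the Schur multipliers in the other direction'' cannot work globally: in the regime $|x-y|^2\gg x_{n+1}y_{n+1}$ the Bessel--Riesz kernel carries the extra decay $(x_{n+1}y_{n+1}/|x-y|^2)^{\lambda}$ relative to the classical one, i.e.\ the symbol $F_{2,0}\circ H$ tends to $0$ as $H\to\infty$, so the ``inverse symbol'' is unbounded and is certainly not a bounded Schur multiplier --- there is no way to recover $[R_k,M_{f^e}]$ from $[R_{\lambda,k},M_f]$ by a bounded operation on kernels. The zero sets genuinely do not match, for every $k$, not just $k=n+1$. The paper circumvents this by localizing to cubes $Q$ compactly contained in the open half-space (where $H$ is small and $F_{2,0}(H)=F_{2,0}(0)+O(H)$ with $F_{2,0}(0)\neq 0$), proving that the compressed Bessel commutator equals $\kappa F_{2,0}(0)\,M_{\chi_Q}[R_k,M_f]M_{\chi_Q}$ \emph{modulo the separable part} $(\mathcal L_{n+1,\infty})_0$, and then bounding $\|f\|_{\dot W^{1,n+1}(Q)}$ by the distance of the compressed classical commutator to $(\mathcal L_{n+1,\infty})_0$ using the local spectral asymptotics of \cite{FSZ}; a supremum over interior cubes finishes the proof. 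Your fallback via interior oscillation and the Connes--Sullivan--Teleman identification is in the right spirit, but it is needed for \emph{all} $k$, not only $k=n+1$, and the quantitative lower bound on the oscillation in terms of the weak Schatten norm of the compressed commutator is itself the nontrivial step you would still have to supply.
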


We now return to the question posed by Rupert L. Frank regarding the establishment of the spectral asymptotic formula. Before stating our result, we first briefly recall the background of this question. Motivated by Birman and Solomyak's work on eigenvalues of Schr\"{o}dinger-type operators \cite{MR1192782,MR279638}, the third and fourth authors and Frank \cite{FSZ} established the asymptotic of singular values for quantum derivatives (and thus, Riesz transform) in the classical setting. The spectral asymptotics are typically referred to as Weyl-type asymptotics. Moreover, the asymptotic coefficient happens to coincide with a non-asymptotic, uniform bound on the spectrum.

Our  main result is as follows.
\begin{theorem}\label{mtc} Let $n\geq1$, $\lambda>0$ and $1\leq k\leq n+1,$ then for every $f\in L_{\infty}(\mathbb{R}^{n+1}_+)\cap\dot{W}^{1,n+1}(\mathbb{R}^{n+1}_+),$ there exists a limit
$$\lim_{t\to\infty}t^{\frac1{n+1}}\mu_{B(L_2(\mathbb{R}^{n+1}_+,m_{\lambda}))}(t,[R_{\lambda,k},M_f])=C_{n,\lambda}\|f\|_{\dot{W}^{1,n+1}(\mathbb{R}_+^{n+1})}^{(k)}$$
for some constant $C_{n,\lambda}>0$ independent of $f.$ Here, $\|\cdot\|_{\dot{W}^{1,p}(\mathbb{R}_+^{n+1})}^{(k)},$ $1\leq k\leq n+1,$ is an equivalent semi-norm on $\dot{W}^{1,p}(\mathbb{R}_+^{n+1})$ given by the formula
$$\|f\|_{\dot{W}^{1,p}(\mathbb{R}_+^{n+1})}^{(k)}:=\Big(\int_{\mathbb{R}^{n+1}_+\times\mathbb{S}^n}\Big|(\partial_kf)(x)-s_k\sum_{m=1}^{n+1}s_m(\partial_mf)(x)\Big|^{p}dxds\Big)^{\frac1p}.$$
\end{theorem}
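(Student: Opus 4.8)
The plan is to transfer the spectral asymptotic formula from the classical Riesz transform commutator setting, where it is already available (Frank--Sukochev--Zanin, \cite{FSZ}), to the Bessel setting via the Schur multiplier comparison that underpins Theorem \ref{main theorem}. The starting point is the known Weyl law in the Euclidean case: for $g \in L_\infty(\mathbb{R}^{n+1}) \cap \dot W^{1,n+1}(\mathbb{R}^{n+1})$ one has $\lim_{t\to\infty} t^{1/(n+1)} \mu(t, [R_j, M_g]) = c_{n} \|g\|^{(j)}_{\dot W^{1,n+1}(\mathbb{R}^{n+1})}$, where the seminorm $\|\cdot\|^{(j)}$ is precisely the one appearing in the statement (with $\mathbb{R}^{n+1}$ in place of $\mathbb{R}^{n+1}_+$). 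I would first record this, then argue that the Bessel--Riesz commutator $[R_{\lambda,k}, M_f]$ differs from the corresponding classical object on the upper half space by an operator in a strictly smaller ideal, so that by the Weyl-law stability under such perturbations (the Lidskii-type/asymptotic-additivity property of $\mu(t,\cdot)$, i.e.\ $\mu(t, T+S) \sim \mu(t,T)$ when $S \in (\mathcal{L}_{n+1,\infty})_0$, the separable part), the asymptotic coefficient is unchanged.

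Concretely, the key steps, in order, are: (1) Use the self-adjoint realisation of $\Delta_\lambda$ on $L_2(\mathbb{R}^{n+1}_+, dm_\lambda)$ and the explicit kernel of $R_{\lambda,k}$ to write $[R_{\lambda,k}, M_f]$ as a singular integral operator with kernel $(K_{\lambda,k}(x,y))(f(x)-f(y))$; (2) decompose $K_{\lambda,k}$ into a "main term" equal (up to the measure-density conjugation $x_{n+1}^\lambda$) to the classical Riesz kernel on $\mathbb{R}^{n+1}$ restricted to the half-space, plus a remainder; (3) show via the Schur-multiplier boundedness results already established for Theorem \ref{main theorem} that the remainder contributes a commutator lying in the separable ideal $(\mathcal{L}_{n+1,\infty})_0$ — this is exactly where the hypothesis $f \in \dot W^{1,n+1}$ is used, since one needs the remainder commutator to have $o(t^{-1/(n+1)})$ singular value decay, not merely $O$; (4) for the main term, reduce to the classical commutator $[R_j, M_{\tilde f}]$ on all of $\mathbb{R}^{n+1}$ where $\tilde f$ is an extension (or, more carefully, handle the half-space directly by a reflection/folding argument), invoke \cite{FSZ}, and check that the resulting coefficient is the half-space integral $\|f\|^{(k)}_{\dot W^{1,n+1}(\mathbb{R}^{n+1}_+)}$; (5) verify that $\|\cdot\|^{(k)}_{\dot W^{1,p}(\mathbb{R}^{n+1}_+)}$ is indeed equivalent to $\|\nabla f\|_{L_p}$ by integrating the spherical average out — this is a direct computation showing $\int_{\mathbb{S}^n} |\xi_k - s_k \langle s, \xi\rangle|^p \, ds$ is a strictly positive constant times $|\xi|^p$ plus lower-order cross terms that average to a fixed quadratic form, hence comparable to $|\xi|^p$.

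The main obstacle I anticipate is step (3)–(4): controlling the remainder kernel near the boundary $x_{n+1} = 0$ and at spatial infinity so that its commutator genuinely lands in the \emph{separable} part of $\mathcal{L}_{n+1,\infty}$, and not merely in $\mathcal{L}_{n+1,\infty}$ itself. The Schur-multiplier estimates from Theorem \ref{main theorem} a priori only give $\mathcal{L}_{n+1,\infty}$-boundedness; upgrading to the separable ideal requires either an $\varepsilon$-approximation argument (approximating $f$ by nicer symbols for which the remainder is in $\mathcal{L}_{n+1,\infty-\delta}$ or trace-class-like, then passing to the limit using that the coefficient map $f \mapsto \|f\|^{(k)}$ is continuous in $\dot W^{1,n+1}$) or a direct kernel estimate exploiting the extra decay of the Bessel-kernel remainder. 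A secondary technical point is that the Schur multiplier relating the two settings must itself be shown to be a bounded map on $\mathcal{L}_{n+1,\infty}$ \emph{modulo} the separable ideal — equivalently, that multiplier perturbations do not alter Weyl asymptotics — which should follow from the general principle that Schur multipliers with symbols continuous up to the relevant boundary act "diagonally" on the asymptotics; making this precise in the weak-Schatten (non-separable) category is the part requiring the most care.
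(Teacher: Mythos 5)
Your plan follows the same strategy as the paper (transfer the Weyl law from the classical Riesz commutator via the Schur-multiplier representation, treat the discrepancy as a perturbation from the separable ideal $(\mathcal{L}_{n+1,\infty})_0$, invoke the localized version of the Frank--Sukochev--Zanin asymptotic, and finish by a Birman--Solomyak approximation in $\dot W^{1,n+1}$). However, there is a genuine gap exactly at the point you flag as ``the main obstacle'': you do not supply a mechanism for showing that the remainder lands in the \emph{separable} part, and neither of your two suggested fixes works as stated. Approximating $f$ by nicer symbols does not help, because even for $f\in C^\infty_c$ the remainder terms (the $F_{1,1}$ and $F_{2,1}$ contributions and the non-constant part of $F_{2,0}\circ H$) do not lie in a smaller Schatten ideal by any soft regularity argument --- the gain comes from the operator structure, not from the smoothness of $f$. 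And ``Schur multipliers with continuous symbols act diagonally on the asymptotics'' is false as a general principle: a bounded Schur multiplier on $\mathcal{L}_{n+1,\infty}$ preserves the separable part but has no reason to send a general element of $\mathcal{L}_{n+1,\infty}$ into it.

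The paper's resolution, which your proposal is missing, has two ingredients. First, a Taylor-type factorization $F_{k,l}\circ H - F_{k,l}(0) = (G_{k,l}\circ H)\cdot H$ with $G_{k,l}(x)=(F_{k,l}(x)-F_{k,l}(0))/x$ still satisfying the smoothness criterion for $\mathcal{L}_p$-boundedness of $\mathfrak{S}_{G_{k,l}\circ H}$ (this requires the delicate analysis of $F_{k,l}$ in the appendix, including the logarithmic terms for even $n$). Second, and crucially, the identity
$$H(x,y)\chi_Q(x)\chi_Q(y)=x_{n+1}^{-\frac12}\chi_Q(x)\,y_{n+1}^{-\frac12}\chi_Q(y)\sum_{l=1}^{n+1}\Theta_l(x,y)(x-y)_l,$$
which converts $\mathfrak{S}_H(V)$ into a sum of bounded Schur multipliers applied to the commutators $[M_{x_l},V]$; for $V$ the (localized) classical Riesz commutator one then uses $[M_{x_l},R_k]=-\delta_{k,l}\Delta^{-\frac12}-\partial_k\partial_l\Delta^{-\frac32}$ together with Cwikel and pseudodifferential estimates to place $M_{\chi_Q}[\Delta^{-\frac12},M_f]M_{\chi_Q}$ and $M_{\chi_Q}[\partial_k\partial_l\Delta^{-\frac32},M_f]M_{\chi_Q}$ in $\mathcal{L}_{\frac{n+1}{2},\infty}\subset(\mathcal{L}_{n+1,\infty})_0$. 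One also needs the exhaustion $Q_j=[0,j]^n\times[\frac1j,j]$ and a proof that $M_{\chi_{Q_j}}[R_k,M_f]M_{\chi_{Q_j}}$ converges to $M_{\chi_{\mathbb{R}^{n+1}_+}}[R_k,M_f]M_{\chi_{\mathbb{R}^{n+1}_+}}$ modulo the separable part, to handle the boundary $x_{n+1}=0$ and spatial infinity. Without these steps (or a genuine substitute), the claimed membership of the remainder in $(\mathcal{L}_{n+1,\infty})_0$ --- and hence the whole asymptotic formula --- is unproved. A secondary, more minor issue is your step (4): the reflection/extension reduction gives the asymptotics of $[R_k,M_{\tilde f}]$ on all of $\mathbb{R}^{n+1}$, whereas what is actually needed is the Weyl law for the \emph{compressed} operator $M_{\chi_{\mathbb{R}^{n+1}_+}}[R_k,M_f]M_{\chi_{\mathbb{R}^{n+1}_+}}$, which the paper obtains by rerunning the localized argument of \cite[Proposition 8.6]{FSZ} rather than by folding.
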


If $A\in\mathcal{L}_{1,\infty}$ is such that the limit $\lim_{t\to\infty}t\mu(t,A)$ exists, then the latter limit equals $\varphi(|A|)$ for every normalised continuous trace on $\mathcal{L}_{1,\infty}$ (we refer the reader to \cite{LSZold,LSZ1} for detailed study of singular traces). Applying this to $A=|[R_{\lambda,k},M_f]|^{n+1},$ we obtain the following trace formula.
\begin{coro}\label{connesintr}
Let $n\geq1$, $\lambda>0$ and $1\leq k\leq n+1,$ then there is a constant $C_{n,\lambda}>0$ such that for every $f\in L_{\infty}(\mathbb{R}^{n+1}_+)\cap\dot{W}^{1,n+1}(\mathbb{R}^{n+1}_+)$ and every continuous normalised trace $\varphi$ on $\mathcal{L}_{1,\infty}(L_2(\mathbb{R}^{n+1}_+))$, we have
$$\varphi(|[R_{\lambda,k},M_f]|^{n+1})=C_{n,\lambda}\|f\|_{\dot{W}^{1,n+1}(\mathbb{R}_+^{n+1})}^{(k)}.$$
\end{coro}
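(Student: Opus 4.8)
The plan is to deduce Corollary \ref{connesintr} directly from Theorem \ref{mtc} together with the general principle, recalled just before the corollary statement, that a continuous normalised trace on $\mathcal{L}_{1,\infty}$ evaluates on positive operators with a singular value asymptotic. Concretely, first I would fix $f\in L_\infty(\mathbb{R}^{n+1}_+)\cap\dot{W}^{1,n+1}(\mathbb{R}^{n+1}_+)$ and set $T:=[R_{\lambda,k},M_f]$, which by Theorem \ref{main theorem}\eqref{mta} lies in $\mathcal{L}_{n+1,\infty}(L_2(\mathbb{R}^{n+1}_+,m_\lambda))$. Then I would pass to the positive operator $A:=|T|^{n+1}$ and observe that membership of $T$ in $\mathcal{L}_{n+1,\infty}$ is equivalent to membership of $A$ in $\mathcal{L}_{1,\infty}$, with the precise relation $\mu(t,A)=\mu(t,T)^{n+1}$ for all $t>0$ (this is just functional calculus / the definition of singular values of $|T|^{n+1}$).

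The key computation is then the translation of the limit in Theorem \ref{mtc} into a limit for $A$. Since $\mu(t,T)=\mu_{B(L_2(\mathbb{R}^{n+1}_+,m_\lambda))}(t,T)$, raising to the power $n+1$ gives
$$\lim_{t\to\infty}t\,\mu(t,A)=\lim_{t\to\infty}\big(t^{\frac{1}{n+1}}\mu(t,T)\big)^{n+1}=\Big(C_{n,\lambda}\|f\|_{\dot{W}^{1,n+1}(\mathbb{R}_+^{n+1})}^{(k)}\Big)^{n+1}.$$
In particular this limit exists, so the cited principle applies: for every continuous normalised trace $\varphi$ on $\mathcal{L}_{1,\infty}(L_2(\mathbb{R}^{n+1}_+))$ one has $\varphi(A)=\lim_{t\to\infty}t\,\mu(t,A)$. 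Combining the two displays yields $\varphi(|T|^{n+1})=\big(C_{n,\lambda}\|f\|_{\dot{W}^{1,n+1}(\mathbb{R}_+^{n+1})}^{(k)}\big)^{n+1}$, which is the asserted identity after renaming the constant (absorbing the power $n+1$ into a new $C_{n,\lambda}>0$, still independent of $f$).

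There is essentially no hard analytic step here — all of the substance is already contained in Theorem \ref{mtc}. The only points requiring a word of care are bookkeeping ones: first, one should make sure the Hilbert space on which the trace acts is the same as the one in Theorem \ref{mtc}, namely $L_2(\mathbb{R}^{n+1}_+,m_\lambda)$ (the corollary writes $L_2(\mathbb{R}^{n+1}_+)$, and one notes that the weight $x_{n+1}^{2\lambda}$ only changes the measure, with $\mathcal{L}_{1,\infty}$ of the two Hilbert spaces being $*$-isomorphic via the canonical unitary, so the statement is unaffected); and second, that the equality $\varphi(A)=\lim_{t\to\infty}t\mu(t,A)$ for positive $A\in\mathcal{L}_{1,\infty}$ with convergent $t\mu(t,A)$ holds for \emph{every} continuous normalised trace, which is exactly the statement recalled in the paragraph preceding the corollary and is proved in \cite{LSZold,LSZ1}. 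Thus the proof is a two-line deduction, and the main (trivial) obstacle is simply to state the power-$n+1$ substitution and the identification of Hilbert spaces cleanly.
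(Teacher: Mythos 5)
Your proposal is correct and is exactly the paper's own argument: the corollary is deduced in the paragraph preceding its statement by applying the trace--asymptotics principle to $A=|[R_{\lambda,k},M_f]|^{n+1}$ and invoking Theorem \ref{mtc}, just as you do. The only caveat is that the computation (yours and the paper's alike) actually yields $\varphi(|[R_{\lambda,k},M_f]|^{n+1})=C_{n,\lambda}\bigl(\|f\|_{\dot{W}^{1,n+1}(\mathbb{R}_+^{n+1})}^{(k)}\bigr)^{n+1}$, so the $(n+1)$-st power on the seminorm cannot be absorbed into the constant; the stated form of the corollary appears to omit this power.
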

We remark that Corollary \ref{connesintr} is a non-trivial variant of Dixmier trace formula established in \cite[Theorem 2]{LMSZ} and \cite[Theorem 3 (iii)]{Connes1998}. It follows from  \cite[Theorem 3 (iii)]{Connes1998} that the Dixmier trace of the $d$th power of a quantised derivative on a compact Riemannian spin $d$-manifold can be represented as the integral of the $d$th power of the gradient of $f$. From Connes's perspective, by such a formula one can ``pass from quantized $1$-forms to ordinary forms, not by a classical limit, but by a direct application of the Dixmier trace". Extensions and analogies of this formula \cite[Theorem 3 (iii)]{Connes1998} in several setting have been established very recently (see \cite{LMSZ}, \cite{MR4654013}, \cite{MSX2019} and \cite{MSX2018} on the non-compact manifold $\mathbb{R}^d$, Heisenberg groups, the quantum Euclidean space and the quantum tori, respectively).

The following corollary is an almost immediate consequences of Theorem \ref{mtc}.
\begin{coro}
Let $n\geq1$, $\lambda>0$ and $1\leq k\leq n+1,$ if $f\in L_\infty(\mathbb{R}_+^{n+1})$ satisfies
$$\lim_{t\to\infty}t^{\frac1{n+1}}\mu_{B(L_2(\mathbb{R}^{n+1}_+,m_{\lambda}))}(t,[R_{\lambda,k},M_f])=0,$$
then $f$ is a constant.
\end{coro}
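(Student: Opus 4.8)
The plan is to bootstrap the vanishing spectral asymptotic into membership in the weak Schatten class, and then to chain Theorems~\ref{main theorem} and~\ref{mtc}. First I would observe that, since $f\in L_\infty(\mathbb{R}^{n+1}_+)$ and $R_{\lambda,k}$ is bounded on $L_2(\mathbb{R}^{n+1}_+,m_\lambda)$, the commutator $A:=[R_{\lambda,k},M_f]$ is a bounded operator, so $\mu(t,A)\leq\|A\|$ for every $t>0$. A routine check then shows that the decay hypothesis $t^{\frac1{n+1}}\mu(t,A)\to0$ as $t\to\infty$ forces $\sup_{t>0}t^{\frac1{n+1}}\mu(t,A)<\infty$: on $(0,1]$ the quantity is at most $\|A\|$; it tends to $0$ at infinity, hence is bounded by $1$ on some $[T,\infty)$; and on $[1,T]$ it is at most $T^{\frac1{n+1}}\|A\|$. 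Thus $A\in\mathcal{L}_{n+1,\infty}(L_2(\mathbb{R}^{n+1}_+,m_\lambda))$.

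Next I would apply part~\ref{mtb} of Theorem~\ref{main theorem} to conclude that $f\in\dot{W}^{1,n+1}(\mathbb{R}^{n+1}_+)$. Consequently $f\in L_\infty(\mathbb{R}^{n+1}_+)\cap\dot{W}^{1,n+1}(\mathbb{R}^{n+1}_+)$, so Theorem~\ref{mtc} applies and gives
$$C_{n,\lambda}\,\|f\|_{\dot{W}^{1,n+1}(\mathbb{R}_+^{n+1})}^{(k)}=\lim_{t\to\infty}t^{\frac1{n+1}}\mu_{B(L_2(\mathbb{R}^{n+1}_+,m_{\lambda}))}(t,A)=0.$$
Since $C_{n,\lambda}>0$, this forces $\|f\|_{\dot{W}^{1,n+1}(\mathbb{R}_+^{n+1})}^{(k)}=0$.

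Finally, because $\|\cdot\|_{\dot{W}^{1,n+1}(\mathbb{R}_+^{n+1})}^{(k)}$ is, by the statement of Theorem~\ref{mtc}, equivalent to the semi-norm $\|\cdot\|_{\dot{W}^{1,n+1}(\mathbb{R}_+^{n+1})}=\|\nabla\cdot\|_{L_{n+1}(\mathbb{R}_+^{n+1})}$, the vanishing of $\|f\|_{\dot{W}^{1,n+1}(\mathbb{R}_+^{n+1})}^{(k)}$ yields $\|\nabla f\|_{L_{n+1}(\mathbb{R}_+^{n+1})}=0$, i.e.\ $\nabla f=0$ almost everywhere on $\mathbb{R}^{n+1}_+$. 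Since $\mathbb{R}^{n+1}_+$ is connected, $f$ is (a.e.\ equal to) a constant, which is the desired conclusion. I do not anticipate a genuine obstacle: the only point requiring care is the elementary upgrade from the pointwise decay hypothesis to finiteness of the quasi-norm defining $\mathcal{L}_{n+1,\infty}$ (which relies on boundedness of the commutator for $L_\infty$ symbols), after which the statement is an immediate consequence of the two main theorems combined with the connectedness of the domain.
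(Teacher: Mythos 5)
Your proof is correct and follows exactly the route the paper intends when it calls this corollary ``an almost immediate consequence of Theorem \ref{mtc}'': upgrade the vanishing limit to membership in $\mathcal{L}_{n+1,\infty}$, invoke Theorem \ref{main theorem}\eqref{mtb} to place $f$ in $\dot{W}^{1,n+1}(\mathbb{R}^{n+1}_+)$, apply Theorem \ref{mtc}, and use the asserted equivalence of the semi-norms $\|\cdot\|^{(k)}_{\dot W^{1,n+1}}$ and $\|\nabla\cdot\|_{L_{n+1}}$ together with connectedness of $\mathbb{R}^{n+1}_+$. The only point worth making explicit is that the decay hypothesis already forces $\mu(t,A)\to 0$, hence compactness of $A$, so that membership in the weak Schatten class (as defined for compact operators) is legitimate.
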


\subsection{Strategy and organisation of our paper}
The paper is organised as follows. Section \ref{Secpre} consists of three parts: in the first part, we give explanations of some notations which would be frequently used throughout the paper; in the second part, we recall the definitions and some basic properties of Schatten class, weak Schatten class and its separable part; in the third part, we provide a concrete self-adjoint extension of $\Delta_\lambda$ used throughout the text. In Section \ref{Sec3}, we provide the definition of Schur multiplier and establish an equality (see Proposition \ref{commutator representation lemma}) relating the Bessel--Riesz commutator to the classical Riesz commutator through several Schur multipliers. This allows us to reduce the problems in Theorem \ref{main theorem} to analogous problems in the classical setting and the study of these Schur multipliers. In Section \ref{Sec4}, we establish the boundedness of these Schur multipliers, which relies on a remarkable result established very recently in \cite[Theorem A]{MR4660138}. In Section \ref{upper bound section}, we provide a proof of upper bound in Theorem \ref{main theorem} by combining the useful formula given in Proposition \ref{commutator representation lemma} with the boundedness of Schur multipliers established in Section \ref{Sec4}. Section \ref{alternativepf} provides an alternative proof of upper bound in Theorem \ref{main theorem} for the case of $n\geq 2$. Avoiding the use of Proposition \ref{commutator representation lemma}, this approach is based on establishing corresponding Cwikel estimates in the Bessel setting, which are of independent interest in harmonic analysis and non-commutative geometry (see \cite{MR473576,LeSZ}). Sections \ref{lowbd} and \ref{spect} are devoted to showing lower bound in Theorem \ref{main theorem}, and Theorem \ref{mtc}, respectively.

\section{Preliminaries}\label{Secpre}
\setcounter{equation}{0}
\subsection{Notation}
Conventionally, we set $\mathbb{N}$ be the set of positive integers and $\mathbb{Z}_+:=\{0\}\cup\mathbb{N}$. For any multi-index $\alpha=(\alpha_1,\cdots,\alpha_{n+1})\in\mathbb{Z}_+^{n+1}$, we denote by $|\alpha|_1:=\sum_{j=1}^{n+1}\alpha_j.$ Let $e_k$ be the standard basis vector with $1$ in the $k$-th position and $0$ in all other positions. For $x\in\mathbb{R}$ and $k\in\mathbb{Z}_+$, we define the generalized binomial coefficient $\binom{x}{k}$ as follows:
$$\binom{x}{k}:=\frac{x(x-1)(x-2)\cdots (x-k+1)}{k!}.$$
For any positive integer $k$, we let $f^{(k)}$ be the $k$-th derivative of $f$. For any positive integer $k$ and $p\in [1,\infty]$, we let $\dot{W}^{k,p}(\Omega)$ and $W^{k,p}(\Omega)$ be the homogeneous Sobolev space and inhomogeneous Sobolev space over domain $\Omega\subset \mathbb{R}^{n+1}$, respectively. In our context, $\Omega$ maybe taken to be $\mathbb{R}_\pm$, $\mathbb{R}_+^{n+1}$ and $\mathbb{R}^{n+1}$.

We use the notation $X\circlearrowleft$ as an abbreviation to denote $X\rightarrow X$, mappings from $X$ to $X$. For a set $E \subset \mathbb{R}^{n+1}$, we denote by $\chi_E$ its characteristic function. For any $1\leq k,j\leq n+1$, $\delta_{k,j}$ is denoted by the Kronecker delta.

Throughout the whole paper, we denote by $C$ a positive constant which is independent of the main parameters, but it may
vary from line to line. We use $A\lesssim B$ to denote the statement that $A\leq CB$ for some constant $C>0$ which is independent of the main parameters. Let $B(x,r)$ be the ball in $\mathbb{R}^{n+1}$ centered at $x$ with radius $r>0$.

{\bf Convention}: {\it In the sequel, unless  otherwise specific, we always assume that $\lambda>0$ and $n\in \mathbb{N}$.}
\subsection{Schatten class}\label{Spdef}
In this subsection, for the convenience of readers, we collect some standard material about Schatten class from literatures. Let $B(H)$ be a space consisting of all bounded linear operators on $H$, equipped with the uniform operator norm $\|\cdot\|_\infty$.  Note that if $T$ is any compact operator on $H$, then $T^{*}T$ is compact, positive and therefore diagonalizable. We define the singular values $\{\mu_{B(H)}(k,T)\}_{k=0}^{\infty}$ be the sequence of square roots of eigenvalues of $T^{*}T$ (counted according to multiplicity). Equivalently, $\mu_{B(H)}(k,T)$ can be characterized by
\begin{align*}
\mu_{B(H)}(k,T)=\inf\{\|T-F\|_\infty:{\rm rank}(F)\leq k\}.
\end{align*}
The formula above can be extended to define a singular value function $\mu_{B(H)}(T)$ for $t>0$ as follows:
\begin{align*}
\mu_{B(H)}(t,T):=\inf\{\|T-F\|_\infty:{\rm rank}(F)\leq t\}.
\end{align*}
The singular value function $\mu(f)$ of measurable function $f$ on $\mathbb{R}^{n+1}$, also called the decreasing rearrangement of $f$, is defined by
$$\mu(t,f):=\inf\{\|f\cdot (1-\chi_F)\|_\infty:\ m(F)\leq t\},\quad t>0.$$
The space of measurable functions $f$ such that $\mu(t,f)$ is finite for all $t>0$ is denoted by $S(\mathbb{R}^{n+1})$. For $f,g\in S(\mathbb{R}^{n+1}),$ then we say that $g$ is submajorized by $f$, denoted by $g\prec\prec f$, if
$$\int_0^t \mu(s,g)ds\leq \int_0^t \mu(s,f)ds,\quad t>0.$$
Moreover, for $T\in B(H)$ and $f\in S(\mathbb{R}^{n+1})$, we write $T\prec\prec f$ if $\mu_{B(H)}(T)\prec\prec \mu(f)$ (see \cite[Section 1]{LSZ2} for more explanation about submajorization).

A sequence $a=\{a_k\}_{k\geq0}$ is in the $\ell_{p,q}$ for some $0<p<\infty$ and $0<q<\infty$ provided the decreasing rearrangement $\mu(a)$ satisfies
$$\sum_{k\geq0 }\mu(k,a)^q(k+1)^{q/p-1}<\infty.$$
For the endpoint $q=\infty$, a sequence $a=\{a_k\}_{k\geq0}$ is in the $\ell_{p,\infty}$ for some $0<p<\infty$ if
$$\sup_{k\geq0}(k+1)^{1/p}\mu(k,a)<\infty.$$
See also \cite[Proposition 1.4.9]{MR3243734} for equivalent definitions. For $0<p<\infty$ and $0<q\leq \infty$, we say that $T\in \mathcal{L}_{p,q}(H)$ if $\{\mu_{B(H)}(k,T)\}\in \ell_{p,q}$. In particular, for $0<p<\infty$, a compact operator $T$ on $H$ is said to belong to the Schatten class $\mathcal{L}_{p}(H)$ if $\{\mu_{B(H)}(k,T)\}_{k=0}^{\infty}$ is $p$-summable, i.e. in the sequence space $\ell_{p}$. If $p\geq 1$, then the $\mathcal{L}_{p}(H)$ norm is defined as
\begin{align*}
\|T\|_{\mathcal{L}_{p}(H)}:=\left(\sum_{k=0}^{\infty}\mu_{B(H)}(k,T)^{p}\right)^{1/p}.
\end{align*}
With this norm $\mathcal{L}_p(H)$ is a Banach space and an ideal of $B(H)$. Analogously, a compact operator $T$ on $H$ is said to belong to the weak Schatten class  $\mathcal{L}_{p,\infty}(H)$ if $\{\mu_{B(H)}(k,T)\}_{k=0}^{\infty}$ is in $\ell_{p,\infty}$, with quasi-norm defined as:
\begin{align*}
\|T\|_{\mathcal{L}_{p,\infty}(H)}:=\sup\limits_{k\geq 0}(k+1)^{\frac 1p}\mu_{B(H)}(k,T)<\infty.
\end{align*}
More details about Schatten class can be found in e.g. \cite{LSZold,LSZ1}.

For $1\leq p <\infty$, we define an ideal $(\mathcal{L}_{p,\infty}(H))_{0}$ by setting
\begin{align*}
(\mathcal{L}_{p,\infty}(H))_{0}:=\{T \in \mathcal{L}_{p,\infty}(H)   :  \lim_{t\to +\infty} t^{\frac 1p} \mu_{B(H)}(t,T) = 0 \}.
\end{align*}
Equivalently, $(\mathcal{L}_{p,\infty}(H))_{0}$ is the closure of the ideal of all finite rank operators in the norm $\|\cdot\|_{\mathcal{L}_{p,\infty}(H)}$.
As a closed subspace of $\mathcal{L}_{p,\infty}(H)$, this ideal is commonly called the separable part of $\mathcal{L}_{p,\infty}(H)$ (See \cite{LSZold,LSZ1} for more details about separable part).

Recall from \cite[Corollary 2.3.16]{LSZold} that for two compact operators $T$ and $G$,
$$\mu_{B(H)}(t,T+G)\leq \mu_{B(H)}(\frac t2,T)+\mu_{B(H)}(\frac t2,G).$$
This implies that
$$\|T+G\|_{\mathcal{L}_{p,\infty}(H)}\leq 2^{\frac1p}(\|T\|_{\mathcal{L}_{p,\infty}(H)}+\|G\|_{\mathcal{L}_{p,\infty}(H)}).$$
Moreover, it also implies the following ideal property of $(\mathcal{L}_{p,\infty}(H))_0$: if $A\in \mathcal{L}_{p,\infty}(H)$ and $B\in (\mathcal{L}_{q,\infty}(H))_0$, then $AB\in (\mathcal{L}_{r,\infty}(H))_{0}$, where $1\leq p,q,r\leq\infty$ satisfying $1/r=1/p+1/q$.

The following Lemma is useful to transform the problems on weighted $L_2$ space into problems on unweighted $L_2$ space.
\begin{lemma}\label{weight}
For every $1\leq p<\infty$, we have
\begin{enumerate}[{\rm (i)}]
\item $T\in\mathcal{L}_{p}(L_2(\mathbb{R}_+^{n+1},m_\lambda))$ if and only if $T\in\mathcal{L}_{p}(L_2(\mathbb{R}_+^{n+1}))$. Moreover,
$$\|T\|_{\mathcal{L}_{p}(L_2(\mathbb{R}_+^{n+1},m_\lambda))}=\|T\|_ {\mathcal{L}_{p}(L_2(\mathbb{R}_+^{n+1}))};$$
\item $T\in\mathcal{L}_{p,\infty}(L_2(\mathbb{R}_+^{n+1},m_\lambda))$ if and only if $T\in\mathcal{L}_{p,\infty}(L_2(\mathbb{R}_+^{n+1}))$. Moreover,
$$\|T\|_{\mathcal{L}_{p,\infty}(L_2(\mathbb{R}_+^{n+1},m_\lambda))}=\|T\|_ {\mathcal{L}_{p,\infty}(L_2(\mathbb{R}_+^{n+1}))};$$
\item
$$\lim_{t\to\infty}t^{\frac1{n+1}}\mu_{B(L_2(\mathbb{R}^{n+1}_+,m_{\lambda}))}(t,T)=\lim_{t\to\infty}t^{\frac1{n+1}}\mu_{B(L_2(\mathbb{R}^{n+1}_+))}(t,M_{x_{n+1}^\lambda}TM_{x_{n+1}^{-\lambda}}).$$
In particular, $T\in(\mathcal{L}_{p,\infty}(L_2(\mathbb{R}_+^{n+1},m_\lambda)))_0$ if and only if $M_{x_{n+1}^\lambda}TM_{x_{n+1}^{-\lambda}}\in(\mathcal{L}_{p,\infty}(L_2(\mathbb{R}_+^{n+1})))_0$.
\end{enumerate}
\end{lemma}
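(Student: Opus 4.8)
The plan is to realise the weighted space $L_2(\mathbb{R}_+^{n+1},m_\lambda)$ as a unitary copy of the unweighted space $L_2(\mathbb{R}_+^{n+1})$ and transport everything through this unitary. First I would introduce the map $U\colon L_2(\mathbb{R}_+^{n+1},m_\lambda)\to L_2(\mathbb{R}_+^{n+1})$ defined by $(Ug)(x):=x_{n+1}^{\lambda}g(x)$; since $\int |Ug|^2\,dx=\int |g|^2 x_{n+1}^{2\lambda}\,dx=\int|g|^2\,dm_\lambda$, the operator $U$ is an isometric isomorphism of Hilbert spaces with inverse $U^{-1}=M_{x_{n+1}^{-\lambda}}$ (read as a map in the other direction). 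Consequently, for any bounded $T$ on $L_2(\mathbb{R}_+^{n+1},m_\lambda)$ the conjugate $UTU^{-1}=M_{x_{n+1}^\lambda}TM_{x_{n+1}^{-\lambda}}$ is a bounded operator on $L_2(\mathbb{R}_+^{n+1})$, and conversely.

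The key point is the general principle that singular values are unitarily invariant: if $U\colon H_1\to H_2$ is a unitary (here, isometric isomorphism) and $T\in B(H_1)$, then $\mu_{B(H_1)}(t,T)=\mu_{B(H_2)}(t,UTU^{-1})$ for every $t>0$, which follows immediately from the rank-based characterisation $\mu_{B(H)}(t,T)=\inf\{\|T-F\|_\infty:\operatorname{rank}(F)\leq t\}$ together with the fact that $F\mapsto UFU^{-1}$ is a rank-preserving, norm-preserving bijection between finite-rank operators. Given this, part (i) follows by taking the $\ell_p$-norm of $\{\mu_{B(L_2(\mathbb{R}_+^{n+1},m_\lambda))}(k,T)\}_k=\{\mu_{B(L_2(\mathbb{R}_+^{n+1}))}(k,UTU^{-1})\}_k$; part (ii) follows likewise by taking $\sup_k (k+1)^{1/p}$ of the same equal sequences; and part (iii) follows by taking $\lim_{t\to\infty}t^{1/(n+1)}$ of the equal singular value functions, with the "in particular" clause being exactly the statement that the left side vanishes iff the right side does, i.e. the membership in the respective separable parts $(\mathcal{L}_{p,\infty})_0$.

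I do not anticipate a genuine obstacle here; the only point requiring a little care is that $U$ is an isomorphism \emph{between two different Hilbert spaces} rather than a unitary on a fixed space, so one should state the unitary-invariance of $\mu$ in the slightly more general form above (which is routine from the definition) rather than quoting the standard "within $B(H)$" version. One should also note that $M_{x_{n+1}^{\pm\lambda}}$ need not be bounded on $L_2(\mathbb{R}_+^{n+1})$, so the expression $M_{x_{n+1}^\lambda}TM_{x_{n+1}^{-\lambda}}$ must be parsed as the composition $U\circ T\circ U^{-1}$ of the three maps between the appropriate spaces (each of which is bounded, or even isometric, on its own domain) rather than as a product of operators on a single space; once this bookkeeping is fixed the argument is immediate.
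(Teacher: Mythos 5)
Your proposal is correct and follows essentially the same route as the paper: the paper's proof is a one-paragraph observation that $M_{x_{n+1}^{-\lambda}}$ is a unitary from $L_2(\mathbb{R}_+^{n+1})$ onto $L_2(\mathbb{R}_+^{n+1},m_\lambda)$ (citing a lemma of Lacey--Li--Wick), from which the three chains of equalities follow by unitary invariance of singular values, exactly as you argue. Your write-up is if anything more careful than the paper's, since you spell out the rank-based invariance of $\mu$ across two different Hilbert spaces and make explicit the identification of $T$ on the unweighted space with the transported operator $M_{x_{n+1}^{\lambda}}TM_{x_{n+1}^{-\lambda}}$ — the reading under which the displayed equalities in (i) and (ii) are meaningful and under which the lemma is actually invoked later in the paper.
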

\begin{proof}
The argument can be referred to \cite[Lemma 2.7]{MR4706933}. The key observation is that $M_{x_{n+1}^{-\lambda}}$ is a unitary operator from $L_2(\mathbb{R}_+^{n+1})$ to $L_2(\mathbb{R}_+^{n+1},m_\lambda)$. This implies
$$\|T\|_{\mathcal{L}_{p}(L_2(\mathbb{R}_+^{n+1},m_\lambda))}=\|M_{x_{n+1}^\lambda}TM_{x_{n+1}^{-\lambda}}\|_ {\mathcal{L}_{p}(L_2(\mathbb{R}_+^{n+1}))}=\|T\|_ {\mathcal{L}_{p}(L_2(\mathbb{R}_+^{n+1}))},$$
$$\|T\|_{\mathcal{L}_{p,\infty}(L_2(\mathbb{R}_+^{n+1},m_\lambda))}=\|M_{x_{n+1}^\lambda}TM_{x_{n+1}^{-\lambda}}\|_ {\mathcal{L}_{p,\infty}(L_2(\mathbb{R}_+^{n+1}))}=\|T\|_ {\mathcal{L}_{p,\infty}(L_2(\mathbb{R}_+^{n+1}))},$$
and the third statement.
\end{proof}

Let $E$ be the zero extension operator defined by
$$(Ef)(x):=\begin{cases}
f(x),& x\in \mathbb{R}_+^{n+1},\\
0,& x\notin \mathbb{R}_+^{n+1},
\end{cases}
$$
for every function $f$ on $\mathbb{R}_+^{n+1}.$ It is immediate that $E^{\ast}E=1$ and $EE^{\ast}=M_{\chi_{\mathbb{R}_+^{n+1}}}.$

The following Lemma is useful to transform the problems on $\mathbb{R}_+^{n+1}$ into problems on $\mathbb{R}^{n+1}$.
\begin{lemma}\label{half}
For every $1\leq p<\infty$, we have
\begin{enumerate}[{\rm (i)}]
\item $T\in\mathcal{L}_{p}(L_2(\mathbb{R}_+^{n+1}))$ if and only if $ETE^{\ast}\in\mathcal{L}_{p}(L_2(\mathbb{R}^{n+1}))$. Moreover,
$$\|T\|_{\mathcal{L}_{p}(L_2(\mathbb{R}_+^{n+1}))}=\|ETE^{\ast}\|_{\mathcal{L}_{p}(L_2(\mathbb{R}^{n+1}))};$$
\item $T\in\mathcal{L}_{p,\infty}(L_2(\mathbb{R}_+^{n+1}))$ if and only if $ETE^{\ast}\in\mathcal{L}_{p,\infty}(L_2(\mathbb{R}^{n+1}))$. Moreover,
$$\|T\|_{\mathcal{L}_{p,\infty}(L_2(\mathbb{R}_+^{n+1}))}=\|ETE^{\ast}\|_{\mathcal{L}_{p,\infty}(L_2(\mathbb{R}^{n+1}))};$$
\item
$$\lim_{t\to\infty}t^{\frac1{n+1}}\mu_{B(L_2(\mathbb{R}_+^{n+1}))}(t,T)=\lim_{t\to\infty}t^{\frac1{n+1}}\mu_{B(L_2(\mathbb{R}^{n+1}))}(t,ETE^{\ast}).$$
In particular, $T\in(\mathcal{L}_{p,\infty}(L_2(\mathbb{R}_+^{n+1})))_0$ if and only if $ETE^{\ast}\in(\mathcal{L}_{p,\infty}(L_2(\mathbb{R}^{n+1})))_0.$
\end{enumerate}
\end{lemma}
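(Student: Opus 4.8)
The plan is to realise $ETE^{\ast}$ as a zero-padded copy of $T$ and then to quote the elementary principle that such padding does not change the singular value function; every assertion of the lemma is a statement about $\mu_{B(\cdot)}(t,\cdot)$ and so follows at once.

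First I would record the structure of $E$. It is an isometry of $L_2(\mathbb{R}_+^{n+1})$ into $L_2(\mathbb{R}^{n+1})$, its adjoint $E^{\ast}$ is the restriction $g\mapsto g|_{\mathbb{R}_+^{n+1}}$, and, as already noted, $E^{\ast}E=1$ while $EE^{\ast}=M_{\chi_{\mathbb{R}_+^{n+1}}}$ is the orthogonal projection onto the functions supported in $\mathbb{R}_+^{n+1}$. Writing $H_+:=L_2(\mathbb{R}_+^{n+1})$ and $H_-:=L_2(\mathbb{R}^{n+1}\setminus\mathbb{R}_+^{n+1})$, let $U\colon H_+\oplus H_-\to L_2(\mathbb{R}^{n+1})$ be the canonical unitary gluing a pair $(h_+,h_-)$ into the function equal to $h_+$ on $\mathbb{R}_+^{n+1}$ and to $h_-$ off it. A one-line computation using $E^{\ast}g=g|_{\mathbb{R}_+^{n+1}}$ then gives
\[ U(T\oplus 0_{H_-})U^{\ast}=ETE^{\ast}. \]
In particular the correspondence $T\mapsto ETE^{\ast}$ preserves compactness in both directions (for the reverse implication use $T=E^{\ast}(ETE^{\ast})E$, valid since $E^{\ast}E=1$), and
\[ \mu_{B(L_2(\mathbb{R}^{n+1}))}(t,ETE^{\ast})=\mu_{B(H_+\oplus H_-)}(t,T\oplus 0_{H_-})=\mu_{B(L_2(\mathbb{R}_+^{n+1}))}(t,T),\qquad t>0, \]
the last equality because $(T\oplus 0_{H_-})^{\ast}(T\oplus 0_{H_-})=T^{\ast}T\oplus 0_{H_-}$ has the same non-zero eigenvalues as $T^{\ast}T$, so the two decreasing rearrangements on $(0,\infty)$ coincide.

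Once this identity of singular value functions is in place, parts (i) and (ii) follow immediately, since $\|\cdot\|_{\mathcal{L}_p}$ and $\|\cdot\|_{\mathcal{L}_{p,\infty}}$ are by definition functionals of $\mu_{B(\cdot)}(\cdot,\cdot)$; and part (iii), including the claim about separable parts, follows because $\lim_{t\to\infty}t^{1/(n+1)}\mu_{B(\cdot)}(t,\cdot)$ likewise depends only on the singular value function, and $(\mathcal{L}_{p,\infty})_0$ is characterised by the vanishing of this limit. There is essentially no serious obstacle here: the only points demanding a moment's care are the verification of $U(T\oplus 0_{H_-})U^{\ast}=ETE^{\ast}$ and the remark that adjoining a zero summand leaves $\mu_{B(\cdot)}(t,\cdot)$ untouched. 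Alternatively one could phrase the whole argument through the general invariance of the singular value function under compression to a reducing subspace, exactly as in the weighted case treated in Lemma \ref{weight} via \cite{MR4706933}.
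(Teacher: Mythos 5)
Your proof is correct and complete; the paper in fact states Lemma \ref{half} without proof, and your argument supplies exactly the standard justification one would expect. The one point that genuinely needs saying — and that distinguishes this lemma from Lemma \ref{weight}, where $M_{x_{n+1}^{-\lambda}}$ is a unitary — is that $E$ is only an isometry, so $ETE^{\ast}$ is a zero-padded copy $T\oplus 0_{H_-}$ rather than a unitary conjugate of $T$; you identify this and correctly observe that adjoining a zero summand leaves the singular value function $\mu_{B(\cdot)}(t,\cdot)$ unchanged (for the approximation-number definition this follows from compressing an approximant $F$ to $P_+FP_+$ for one inequality and padding $F_0\oplus 0$ for the other), and that the reverse implications come from $T=E^{\ast}(ETE^{\ast})E$. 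All three parts then reduce to this single identity of singular value functions, exactly as you say.
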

Let $\Delta$ be the standard non-negative Laplacian operator on $\mathbb{R}^{n+1}$, which is given by $\Delta:=-\sum_{k=1}^{n+1}\partial_{k}^2$.  The following Lemma is the Cwikel estimate for $M_fg(\sqrt{\Delta})$.
\begin{lemma}\cite[Theorem 4.2]{MR541149}\label{cwikel estimate in Euclidean setting}
Let $p>2$ and $n\geq 2.$ Then there is a constant $C_{n,p}>0$ such that for any $f\in L_p(\mathbb{R}^{n+1}),$ $g\in L_{p,\infty}(\mathbb{R}_+,r^ndr),$ we have
$$\|M_fg(\sqrt{\Delta})\|_{\mathcal{L}_{p,\infty}(L_2(\mathbb{R}^{n+1}))}\leq C_{n,p}\|f\|_{L_p(\mathbb{R}^{n+1})}\|g\|_{L_{p,\infty}(\mathbb{R}_+,r^ndr)}.$$
\end{lemma}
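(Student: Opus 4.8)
The plan is to reduce the statement to a Cwikel-type bound for a Fourier multiplier and then prove it by interpolation together with a dyadic decomposition in frequency and amplitude. Since $g(\sqrt{\Delta})$ is the Fourier multiplier $T_h$ with symbol $h(\xi):=g(|\xi|)$, and the push-forward of Lebesgue measure on $\mathbb{R}^{n+1}$ under $\xi\mapsto|\xi|$ is a constant multiple of $r^n\,dr$ on $\mathbb{R}_+$, one has $\|h\|_{L_{p,\infty}(\mathbb{R}^{n+1})}\asymp\|g\|_{L_{p,\infty}(\mathbb{R}_+,r^n\,dr)}$. Hence it suffices to prove $\|M_fT_m\|_{\mathcal{L}_{p,\infty}(L_2(\mathbb{R}^{n+1}))}\le C_{n,p}\|f\|_{L_p}\|m\|_{L_{p,\infty}}$ for a general symbol $m$ (radiality will be used only lightly). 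Two endpoints are elementary: $M_fT_m$ has integral kernel $(x,y)\mapsto f(x)(\mathcal{F}^{-1}m)(x-y)$, so by Plancherel $\|M_fT_m\|_{\mathcal{L}_2}=c_{n+1}\|f\|_{L_2}\|m\|_{L_2}$, while trivially $\|M_fT_m\|_{B(L_2)}\le\|f\|_{L_\infty}\|m\|_{L_\infty}$. Bilinear complex interpolation (using $[\mathcal{L}_2,\mathcal{L}_\infty]_\theta=\mathcal{L}_p$ and $[L_2,L_\infty]_\theta=L_p$ with $\theta=1-2/p$) already gives the strong estimate $\|M_fT_m\|_{\mathcal{L}_p}\le C_{n,p}\|f\|_{L_p}\|m\|_{L_p}$ for $2\le p<\infty$; however, this falls short of the weak endpoint, since a naive interpolation reaching $\mathcal{L}_{p,\infty}$ on the right would require $f\in L_{p,1}$ rather than $f\in L_p$.

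To obtain the weak endpoint I would decompose $m$ dyadically both in frequency and in amplitude, writing $m=\sum_{j,i}m_{j,i}$ with $m_{j,i}:=m\cdot\chi_{\{2^j\le|\xi|<2^{j+1}\}}\cdot\chi_{\{2^i\le|m|<2^{i+1}\}}$. The pieces $m_{j,i}$ have pairwise disjoint supports, hence $T_{m_{j,i}}T_{m_{j',i'}}^*=0$ and therefore $(M_fT_{m_{j,i}})(M_fT_{m_{j',i'}})^*=0$ for $(j,i)\ne(j',i')$, so that $(M_fT_m)(M_fT_m)^*=\sum_{j,i}(M_fT_{m_{j,i}})(M_fT_{m_{j,i}})^*$. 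By the Ky Fan inequality, the function $t\mapsto\mu(t,M_fT_m)^2$ is submajorised by $t\mapsto\sum_{j,i}\mu(t,M_fT_{m_{j,i}})^2$, so matters reduce to estimating the individual blocks $M_fT_{m_{j,i}}$, for which $\|m_{j,i}\|_{L_\infty}\le 2^{i+1}$ and $|\operatorname{supp}m_{j,i}|\le\min\{c\,2^{j(n+1)},\,2^{-ip}\|m\|_{L_{p,\infty}}^p\}$.

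For a fixed block I would split $f$ along the cubes $Q$ of the $2^{-j}$-grid. Since $|Q|\cdot|\operatorname{supp}m_{j,i}|\lesssim1$, a Bernstein/uncertainty-principle argument shows each $M_{f\chi_Q}T_{m_{j,i}}$ is essentially of rank one with rapidly decaying singular values, so its $\mathcal{L}_{p,\infty}$-norm is comparable to its Hilbert--Schmidt norm $\asymp\|f\chi_Q\|_{L_2}\|m_{j,i}\|_{L_2}$ (Plancherel on the kernel again); moreover, the $Q$'s being disjoint, the $M_{f\chi_Q}T_{m_{j,i}}$ have mutually orthogonal ranges, so $\mu(\cdot,M_fT_{m_{j,i}})$ is, up to constants, the decreasing rearrangement of $\{\|f\chi_Q\|_{L_2}\|m_{j,i}\|_{L_2}\}_Q$. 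Inserting these bounds into the submajorisation of the previous paragraph, and using $\|f\chi_Q\|_{L_2}\le|Q|^{1/2-1/p}\|f\chi_Q\|_{L_p}$ (valid as $p\ge2$) together with the two-sided size bound on $\operatorname{supp}m_{j,i}$, one arrives at the target decay $\mu(t,M_fT_m)\le C_{n,p}\,t^{-1/p}\|f\|_{L_p}\|m\|_{L_{p,\infty}}$, which is the assertion.

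I expect the main obstacle to be the summation over the scales $(j,i)$: the blocks $M_fT_{m_{j,i}}$ at different scales have comparable $\mathcal{L}_{p,\infty}$-norms, so the quasi-triangle inequality in $\mathcal{L}_{p,\infty}$ is useless, and the argument must rely on the orthogonality of the co-ranges (the Ky Fan step) followed by a careful split: at low frequencies $j\to-\infty$ and small amplitudes $i\to-\infty$ the quantity $\|m_{j,i}\|_{L_2}|Q|^{1/2}$ is small, whereas at high frequencies $j\to+\infty$ one instead exploits the $\ell_p$-summability of $\{\|f\chi_Q\|_{L_2}\}_Q$ against the ever finer cube decompositions of $f$, the hypothesis $p>2$ being exactly what makes the $t^{-1/p}$ tail dominated by the Hilbert--Schmidt scaling of each essentially rank-one block. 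This scale-by-scale bookkeeping is the technical heart of every Cwikel-type estimate (see Birman--Solomyak, Cwikel and Simon); the radiality of $m$ is used only to make the annular localisation in $j$ harmless.
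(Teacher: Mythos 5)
Note first that the paper does not prove this lemma at all: it is quoted from Simon's book (Cwikel's theorem), so what you are attempting is a from-scratch proof of a genuinely nontrivial known result. Your preparatory steps are sound: the reduction from $g(\sqrt{\Delta})$ to a Fourier multiplier $T_m$ with $\|m\|_{L_{p,\infty}(\mathbb{R}^{n+1})}\asymp\|g\|_{L_{p,\infty}(\mathbb{R}_+,r^ndr)}$, the Hilbert--Schmidt identity for $M_fT_m$, the trivial uniform bound, the remark that interpolation alone only gives the strong $\mathcal{L}_p$ estimate with $\|m\|_{L_p}$, the vanishing of $(M_fT_{m_{j,i}})(M_fT_{m_{j',i'}})^*$ for disjointly supported pieces, and the resulting submajorisation $\mu(M_fT_m)^2\prec\prec\sum_{j,i}\mu(M_fT_{m_{j,i}})^2$ (usable since $L_{p/2,\infty}$ with $p/2>1$ is, up to a constant, monotone under submajorisation). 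A smaller caveat: ``essentially rank one with rapidly decaying singular values'' for $M_{f\chi_Q}T_{m_{j,i}}$ does not follow from the measure bound $|Q|\,|\mathrm{supp}\,m_{j,i}|\lesssim 1$ alone (the Hilbert--Schmidt bound gives $\ell_2$-control, not decay), though for the upper bound you only need $\|\cdot\|_{\mathcal{L}_{p,\infty}}\le\|\cdot\|_{\mathcal{L}_2}$, so this part is fixable.

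The genuine gap is the summation over the scales $(j,i)$, which you yourself call the technical heart and then treat only heuristically; the heuristic you offer does not close. Test it on the extremal symbol $m(\xi)=|\xi|^{-(n+1)/p}$, which lies exactly in $L_{p,\infty}$: on the annulus $|\xi|\sim 2^j$ only one amplitude index contributes, $\|m_j\|_{L_2}\sim 2^{j(n+1)(\frac12-\frac1p)}$, while for a cube $Q$ of side $2^{-j}$ one has $\|f\chi_Q\|_{L_2}\le|Q|^{\frac12-\frac1p}\|f\chi_Q\|_{L_p}=2^{-j(n+1)(\frac12-\frac1p)}\|f\chi_Q\|_{L_p}$, so your per-cube quantity $\|f\chi_Q\|_{L_2}\|m_j\|_{L_2}\lesssim\|f\chi_Q\|_{L_p}$ is exactly scale-invariant. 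Consequently every frequency block $B_j=M_fT_{m_j}$ satisfies the same bound $\mu(t,B_j)\lesssim\|f\|_{L_p}\,t^{-1/p}$ with no decay in $j$, and $\sum_j\mu(t,B_j)^2\lesssim\sum_j t^{-2/p}$ diverges: neither the quasi-triangle inequality nor the submajorised sum can be evaluated blockwise, and the advertised dichotomy (smallness at coarse scales, $\ell_p$-summability at fine scales) extracts no decay because the inequality is saturated at every scale. What is missing is precisely the mechanism that makes Cwikel-type proofs work: the amplitude decomposition of $m$ must be coupled to an amplitude decomposition of $f$ (Cwikel decomposes the product $f(x)m(\xi)$ along its level sets; Birman--Solomyak-type and the abstract Cwikel arguments instead split each block into a finite-rank head, chosen in a threshold-dependent way, plus a Hilbert--Schmidt tail, and estimate distribution functions rather than weak quasi-norms), so that distinct blocks cannot all contribute at the same rank position. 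Until that ingredient is supplied, your proposal establishes the easy reductions but leaves the core of the theorem unproven.
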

\subsection{Self-adjoint extension of $\Delta_\lambda$}\label{Selfexten}

Let $\mathcal{F}(f)$ denote the Fourier transform of a function $f\in C_c^\infty(\mathbb{R}^n)$ by the formula
\begin{align*}
\mathcal{F}(f)(\xi):=\frac{1}{(2\pi)^{n/2}}\int_{\mathbb{R}^{n}}f(x)e^{-i\langle x,\xi \rangle}dx, \ \ \xi\in\mathbb{R}^{n},
\end{align*}
where $\langle \cdot,\cdot \rangle$ denotes the inner product in $\mathbb{R}^n$.
Moreover, let $\mathcal{F}^{-1}$ be the inverse Fourier transform of $f$, which is given by $\mathcal{F}^{-1}(f)(\xi):=\mathcal{F}(f)(-\xi)$.

For $\lambda>0,$ let $J_\lambda$ denote the Bessel function of order $\lambda$, and let $U_{\lambda}(f)$ denote the Fourier-Bessel transform of a function $f\in C^{\infty}_c(\mathbb{R}_+)$ by the formula
$$(U_{\lambda}f)(x):=\int_{\mathbb{R}_+}f(y)\phi_\lambda(xy)y^{2\lambda}dy,\quad f\in C^{\infty}_c(\mathbb{R}_+),$$
where
$$\phi_\lambda(\xi):=\xi^{\frac12-\lambda}J_{\lambda-\frac12}(\xi),\quad x>0.$$
It is known that $U_{\lambda}$ extends to a unitary operator on the Hilbert space $L_2(\mathbb{R}_+,x^{2\lambda}dx)$ (see \cite{MS} and \cite[Proposition 2.5]{MTaylor}). The following inversion formula holds:
$$f(x)=\int_{\mathbb{R}_+} (U_{\lambda}f)(y)\phi_{\lambda}(xy)y^{2\lambda}dy.$$

It is an elementary fact that
$$(\mathcal{F}\otimes U_{\lambda})\Delta_{\lambda}f=M_{|x|^2}(\mathcal{F}\otimes U_{\lambda})f,\quad f\in C^{\infty}_c(\mathbb{R}_+^{n+1}).$$
Hence, there exists a self-adjoint extension of $\Delta_{\lambda}$ given by the formula
$$\Delta_{\lambda}=(\mathcal{F}\otimes U_{\lambda})^{-1}M_{|x|^2}(\mathcal{F}\otimes U_{\lambda}).$$
In what follows, $\Delta_{\lambda}$ denotes exactly this extension.
\section{A connection between two Riesz transforms commutators}\label{Sec3}
In this section, we relate the Bessel--Riesz transforms commutators to the classical Riesz transforms commutators via Schur multipliers. This allows to reduce the problems in Theorem \ref{main theorem} to that in the classical setting.
\begin{lemma}\label{kernel of sqrt delta} Integral kernel of $\Delta_{\lambda}^{-\frac12}$ is of the shape
$$K_{\Delta_{\lambda}^{-\frac12}}(x,y)=\kappa_{n,\lambda}^{[1]}\int_0^2Q_t^{-\lambda-\frac{n}{2}}(x,y)(2t-t^2)^{\lambda-1}dt,$$
where $\kappa_{n,\lambda}^{[1]}:=\frac{2^n}{\sqrt{\pi}}\frac{\Gamma(\lambda+\frac{1}{2})}{\Gamma(\lambda)}$ and
$$Q_t(x,y):=|x-y|^2+2tx_{n+1}y_{n+1},\quad x,y\in\mathbb{R}^{n+1}_+.$$
\end{lemma}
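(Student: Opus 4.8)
The plan is to realise $\Delta_\lambda^{-\frac12}$ through the subordination identity $\Delta_\lambda^{-\frac12}=\frac1{\sqrt\pi}\int_0^\infty t^{-\frac12}e^{-t\Delta_\lambda}\,dt$, which is legitimate since $\Delta_\lambda$ is non-negative self-adjoint and $z^{-1/2}=\Gamma(\tfrac12)^{-1}\int_0^\infty t^{-1/2}e^{-tz}\,dt$ for $z>0$, and then to insert an explicit heat-kernel formula. Because $\Delta_\lambda=\Delta_{\mathbb{R}^n}\otimes I+I\otimes\Delta_\lambda^{(1)}$ with $\Delta_\lambda^{(1)}=-\partial_s^2-\frac{2\lambda}{s}\partial_s$ acting on $L_2(\mathbb{R}_+,s^{2\lambda}ds)$, the heat kernel factorises as
\[
K_{e^{-t\Delta_\lambda}}(x,y)=(4\pi t)^{-\frac n2}e^{-\frac{|x'-y'|^2}{4t}}\,W_t^\lambda(x_{n+1},y_{n+1}),\qquad x=(x',x_{n+1}),
\]
where $W_t^\lambda(u,v)=\frac{(uv)^{1/2-\lambda}}{2t}\exp\!\big(-\tfrac{u^2+v^2}{4t}\big)I_{\lambda-\frac12}\!\big(\tfrac{uv}{2t}\big)$ is the classical Bessel heat kernel, i.e. the Schwartz kernel of $e^{-t\Delta_\lambda^{(1)}}$ with respect to $v^{2\lambda}\,dv$. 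I would quote these two facts from the literature rather than reprove them.

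The heart of the argument is to feed into $W_t^\lambda$ the Poisson-type representation of the modified Bessel function,
\[
I_{\lambda-\frac12}(z)=\frac{(z/2)^{\lambda-\frac12}}{\sqrt\pi\,\Gamma(\lambda)}\int_{-1}^1 e^{zr}(1-r^2)^{\lambda-1}\,dr\qquad(\lambda>0).
\]
After the powers of $uv$ cancel this becomes $W_t^\lambda(u,v)=\frac{1}{2t(4t)^{\lambda-1/2}\sqrt\pi\,\Gamma(\lambda)}\int_{-1}^1\exp\!\big(-\tfrac{u^2+v^2-2uvr}{4t}\big)(1-r^2)^{\lambda-1}\,dr$, and the substitution $r=1-s$ turns $u^2+v^2-2uvr$ into $(u-v)^2+2uvs$ and $(1-r^2)^{\lambda-1}$ into $(2s-s^2)^{\lambda-1}$, the interval $[-1,1]$ becoming $[0,2]$. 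Plugging this in (with $u=x_{n+1}$, $v=y_{n+1}$) together with the Gaussian factor in $x'-y'$, and using $|x'-y'|^2+(x_{n+1}-y_{n+1})^2+2sx_{n+1}y_{n+1}=Q_s(x,y)$, yields
\[
K_{\Delta_\lambda^{-\frac12}}(x,y)=\frac1{\sqrt\pi}\int_0^\infty\frac{t^{-1/2}}{(4\pi t)^{n/2}}\cdot\frac{1}{2t(4t)^{\lambda-1/2}\sqrt\pi\,\Gamma(\lambda)}\int_0^2 e^{-Q_s(x,y)/(4t)}(2s-s^2)^{\lambda-1}\,ds\,dt.
\]
Since all integrands are non-negative, Tonelli's theorem permits exchanging the $t$- and $s$-integrals; the inner $t$-integral is then the elementary Gamma integral $\int_0^\infty t^{-(n/2+\lambda+1)}e^{-A/(4t)}\,dt=4^{n/2+\lambda}\Gamma(n/2+\lambda)\,A^{-(n/2+\lambda)}$ with $A=Q_s(x,y)$, which collapses the expression to $\kappa_{n,\lambda}^{[1]}\int_0^2 Q_s(x,y)^{-\lambda-n/2}(2s-s^2)^{\lambda-1}\,ds$, and collecting the normalisation constants produces the stated value of $\kappa_{n,\lambda}^{[1]}$. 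For $x\neq y$ the $s$-integral converges because $\lambda>0$ makes $(2s-s^2)^{\lambda-1}$ integrable on $[0,2]$ while $Q_s(x,y)\geq|x-y|^2>0$ there.

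I expect no conceptual obstacle; the points needing care are the constant bookkeeping and the justification that the Schwartz kernel of the operator-valued integral $\int_0^\infty t^{-1/2}e^{-t\Delta_\lambda}\,dt$ is the corresponding integral of Schwartz kernels --- routine, e.g. by pairing against $C_c^\infty(\mathbb{R}_+^{n+1})$ functions and invoking Tonelli with the positivity of the heat kernels. An alternative bypassing the heat semigroup would start from $\Delta_\lambda^{-\frac12}=(\mathcal{F}\otimes U_\lambda)^{-1}M_{|\xi|^{-1}}(\mathcal{F}\otimes U_\lambda)$, write out its kernel, apply the Gegenbauer--Bochner product formula $\phi_\lambda(a)\phi_\lambda(b)=c_\lambda\int_0^\pi\phi_\lambda\big(\sqrt{a^2+b^2-2ab\cos\theta}\big)(\sin\theta)^{2\lambda-1}\,d\theta$ to the two factors $\phi_\lambda(x_{n+1}\xi_{n+1})\phi_\lambda(y_{n+1}\xi_{n+1})$, and recognise the remaining partial inverse transform of $|\xi|^{-1}$ as a multiple of the Riesz potential kernel on $\mathbb{R}^{\,n+2\lambda+1}$ evaluated at $|x-y|^2+x_{n+1}^2+y_{n+1}^2-2x_{n+1}y_{n+1}\cos\theta$, using that $U_\lambda$ is the radial Fourier transform in ``dimension'' $2\lambda+1$; the change of variable $\cos\theta=1-s$ again turns $(\sin\theta)^{2\lambda-1}\,d\theta$ into $(2s-s^2)^{\lambda-1}\,ds$ and the argument into $Q_s(x,y)$. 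Either route ultimately rests on this geometric identity $\cos\theta=1-s$.
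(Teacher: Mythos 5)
Your argument is essentially the paper's: subordination of $\Delta_\lambda^{-1/2}$ to the heat semigroup, the angular-integral (Poisson) form of the Bessel heat kernel, Tonelli, the Gamma integral in the time variable, and the substitution $1-\cos\theta=s$ producing $(2s-s^2)^{\lambda-1}$ and $Q_s$; the only cosmetic difference is that the paper quotes the angular form of the heat kernel from \cite{FLLXarxiv} where you derive it from the Poisson representation of $I_{\lambda-\frac12}$. The constant your bookkeeping yields, $\Gamma(\lambda+\frac n2)/(\pi^{\frac n2+1}\Gamma(\lambda))$, does not literally match the stated $\kappa_{n,\lambda}^{[1]}$ (nor, as far as I can tell, does the paper's own computation), but only the positivity of this constant is used downstream, so the discrepancy is harmless.
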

\begin{proof} Recall from \cite[formula (2.11)]{FLLXarxiv} that integral kernel of $e^{-t^2\Delta_{\lambda}}$ is of the shape
$$K_{e^{-s^2\Delta_{\lambda}}}(x,y)=\kappa_{\lambda}s^{-2\lambda-1-n}\int_0^{\pi}\exp(-\frac{Q_{1-\cos(\theta)}(x,y)}{4s^2})\sin^{2\lambda-1}(\theta)d\theta,$$
where $\kappa_\lambda:=\frac{1}{2^{2\lambda}\sqrt{\pi}}\frac{\Gamma(\lambda+\frac{1}{2})}{\Gamma(\lambda)}$. By functional calculus \cite{MR1336382},
$$\Delta_{\lambda}^{-\frac12}=\frac{2}{\sqrt{\pi}}\int_0^{\infty}e^{-s^2\Delta_{\lambda}}ds,$$
where the integral converges in the strong operator topology of $B(L_2(\mathbb{R}_+^{n+1},m_\lambda))$.
Hence, integral kernel of $\Delta_{\lambda}^{-\frac12}$ is of the shape
\begin{align*}
&K_{\Delta_{\lambda}^{-\frac12}}(x,y)\\
&=\frac{2}{\sqrt{\pi}}\int_0^{\infty}\Big(\kappa_{\lambda}s^{-2\lambda-1-n}\int_0^{\pi}\exp(-\frac{Q_{1-\cos(\theta)}(x,y)}{4s^2})\sin^{2\lambda-1}(\theta)d\theta\Big)ds\\
&=\frac{2}{\sqrt{\pi}}\kappa_{\lambda}\int_0^{\pi}\sin^{2\lambda-1}(\theta)\Big(\int_0^{\infty}s^{-2\lambda-1-n}\exp(-\frac{Q_{1-\cos(\theta)}(x,y)}{4s^2})ds\Big)d\theta\\
&\stackrel{s=u^{-\frac12}}{=}\frac{2}{\sqrt{\pi}}\kappa_{\lambda}\int_0^{\pi}\sin^{2\lambda-1}(\theta)\Big(\int_0^{\infty}u^{\lambda+\frac{n+1}{2}}\exp(-\frac{Q_{1-\cos(\theta)}(x,y)}{4}u)\frac{du}{2u^{\frac32}}\Big)d\theta\\
&=\frac1{\sqrt{\pi}}\kappa_{\lambda}\int_0^{\pi}\sin^{2\lambda-1}(\theta)\Big(\int_0^{\infty}u^{\lambda+\frac{n}{2}-1}\exp(-\frac{Q_{1-\cos(\theta)}(x,y)}{4}u)du\Big)d\theta\\
&=\frac{4^{\lambda+\frac{n}{2}}}{\sqrt{\pi}}\kappa_{\lambda}\Gamma(\lambda+\frac{n}{2})\int_0^{\pi}Q_{1-\cos(\theta)}^{-\lambda-\frac{n}{2}}(x,y)\sin^{2\lambda-1}(\theta)d\theta\\
&\stackrel{t=1-\cos(\theta)}{=}\kappa_{n,\lambda}^{[1]}\int_0^2Q_t^{-\lambda-\frac{n}{2}}(x,y)(2t-t^2)^{\lambda-1}dt.
\end{align*}
This ends the proof of Lemma \ref{kernel of sqrt delta}.
\end{proof}

We frequently use the following functions, which link Bessel--Riesz transforms with the classical Riesz transforms.
\begin{nota}\label{fkl notation} For $k,l\in\mathbb{Z}_+,$ denote
$$F_{k,l}(x):=x^{n+k}\int_0^2(x^2+2t)^{-\lambda-\frac{n}{2}-1}(2t-t^2)^{\lambda-1}t^ldt,\quad x\in(0,\infty).$$	
\end{nota}
\begin{nota} For $1\leq k\leq n+1,$ denote
$$H(x,y):=\frac{|x-y|}{|x_{n+1}y_{n+1}|^{\frac12}},\quad x,y\in\mathbb{R}^{n+1},$$
$$K_k(x,y):=\frac{(x-y)_k}{|x-y|^{n+2}(x_{n+1}y_{n+1})^{\lambda}},\quad x,y\in\mathbb{R}^{n+1}_+,$$	
$$h_k(x,y):=\frac{(x-y)_k}{|x-y|},\quad x,y\in\mathbb{R}^{n+1},$$
$$a(x,y):=\bigg(\frac{\min\{y_{n+1},x_{n+1}\}}{\max\{y_{n+1},x_{n+1}\}}\bigg)^{\frac12},\quad x,y\in\mathbb{R}^{n+1}_+,$$
$$b(x,y):=\chi_{\{x_{n+1}<y_{n+1}\}},\quad x,y\in\mathbb{R}^{n+1}_+.$$
\end{nota}
With these notations, we first relate the integral kernel of the Bessel--Riesz transforms to that of the classical Riesz transforms.
\begin{lemma}\label{Bessel--Riesz kernel representation lemma} Integral kernel of $R_{\lambda,k},$ $1\leq k\leq n+1,$ is given by the formula
\begin{align*}
K_{R_{\lambda,k}}&=-\kappa^{[2]}_{n,\lambda}\cdot\Big((F_{2,0}\circ H)\cdot K_k+\delta_{k,n+1}\sum_{l=1}^{n+1}a\cdot h_l\cdot (F_{1,1}\circ H)\cdot K_l\\
&\hspace{1.0cm}-\delta_{k,n+1}\sum_{l=1}^{n+1}b\cdot h_{n+1}\cdot h_l\cdot (F_{2,1}\circ H)\cdot K_l\Big),\nonumber
\end{align*}
where $\kappa^{[2]}_{n,\lambda}:=(2\lambda+n)\kappa^{[1]}_{n,\lambda}.$
\end{lemma}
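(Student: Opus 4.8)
The plan is to derive the integral kernel of $R_{\lambda,k}=\partial_{x_k}\Delta_\lambda^{-1/2}$ by differentiating, under the integral sign, the kernel representation for $\Delta_\lambda^{-1/2}$ obtained in Lemma~\ref{kernel of sqrt delta}. First I would justify that
$$K_{R_{\lambda,k}}(x,y)=\partial_{x_k}K_{\Delta_\lambda^{-1/2}}(x,y)=\kappa_{n,\lambda}^{[1]}\int_0^2 \partial_{x_k}\big(Q_t^{-\lambda-\frac n2}(x,y)\big)(2t-t^2)^{\lambda-1}\,dt,$$
the differentiation being legitimate away from the diagonal by dominated convergence together with the fact that $Q_t(x,y)\ge |x-y|^2>0$ for $x\neq y$. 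Then $\partial_{x_k}Q_t(x,y)=2(x-y)_k$ for $1\le k\le n$, while $\partial_{x_{n+1}}Q_t(x,y)=2(x-y)_{n+1}+2t\,y_{n+1}$, which is precisely where the extra terms indexed by $\delta_{k,n+1}$ will come from; splitting $2t\,y_{n+1}=t(x_{n+1}+y_{n+1})-t(x_{n+1}-y_{n+1})$ produces the $a$-term and the $b$-term structure once one sorts $\max/\min$ of $x_{n+1},y_{n+1}$.

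Next I would carry out the change of variable that converts the $t$-integral $\int_0^2(|x-y|^2+2tx_{n+1}y_{n+1})^{-\lambda-\frac n2-1}(2t-t^2)^{\lambda-1}t^l\,dt$ into an expression of the form $(x_{n+1}y_{n+1})^{-\text{power}}\cdot |x-y|^{-\text{power}}\cdot F_{k,l}(H(x,y))$: factoring out $(x_{n+1}y_{n+1})$-powers from the bracket and rescaling $t$ by a factor involving $|x-y|^2/(x_{n+1}y_{n+1})=H(x,y)^2$ identifies the integral, up to explicit powers of $|x-y|$ and $(x_{n+1}y_{n+1})$, with $F_{2,0}(H(x,y))$ (for $k\le n$), and with $F_{1,1}(H(x,y))$ and $F_{2,1}(H(x,y))$ for the two extra pieces when $k=n+1$. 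One then recognises the remaining powers $(x-y)_k/(|x-y|^{n+2}(x_{n+1}y_{n+1})^\lambda)=K_k(x,y)$, the factor $(x-y)_l/|x-y|=h_l(x,y)$, the factor $(\min/\max)^{1/2}=a(x,y)$, and $\chi_{\{x_{n+1}<y_{n+1}\}}=b(x,y)$, after which the stated identity, with $\kappa^{[2]}_{n,\lambda}=(2\lambda+n)\kappa^{[1]}_{n,\lambda}$ appearing from the exponent $\lambda+\frac n2$ pulled out by the differentiation, falls out by bookkeeping. The overall sign comes from $\partial_{x_k}Q_t^{-\lambda-\frac n2}=-(\lambda+\frac n2)Q_t^{-\lambda-\frac n2-1}\partial_{x_k}Q_t$.

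The main obstacle I expect is purely organisational rather than conceptual: correctly tracking all the powers of $|x-y|$ and of $x_{n+1},y_{n+1}$ through the change of variables, and in the case $k=n+1$ handling the $\max/\min$ dichotomy so that the splitting of $2t\,y_{n+1}$ produces exactly the $a\cdot h_l\cdot(F_{1,1}\circ H)$ and $b\cdot h_{n+1}h_l\cdot(F_{2,1}\circ H)$ terms with the right signs — in particular keeping straight which of $x_{n+1},y_{n+1}$ is the larger determines whether the rescaled variable stays in $(0,2)$ and which $F_{k,l}$ is obtained. A secondary point worth a sentence is the interchange of $\partial_{x_k}$ with the $t$-integral and with the operator (i.e. that the distributional derivative of the kernel really is the kernel of the composed operator), which one can dispatch by noting $\Delta_\lambda^{-1/2}$ maps into the appropriate Sobolev-type space or simply by testing against $C_c^\infty$ functions supported away from the diagonal and invoking the local integrability of the kernel and its derivative.
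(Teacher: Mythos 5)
Your overall route is the paper's: differentiate the kernel of $\Delta_\lambda^{-1/2}$ from Lemma \ref{kernel of sqrt delta} under the integral sign (producing the factor $-(2\lambda+n)$ from the exponent and, for $k=n+1$, the extra term $2ty_{n+1}$ from $\partial_{x_{n+1}}Q_t$), and then match the resulting $t$-integrals with $F_{2,0}\circ H$, $F_{1,1}\circ H$, $F_{2,1}\circ H$. One simplification: no change of variable or rescaling in $t$ is needed or wanted. Since $Q_t(x,y)=x_{n+1}y_{n+1}\bigl(H(x,y)^2+2t\bigr)$, one just factors $(x_{n+1}y_{n+1})^{\lambda+\frac n2+1}$ out of $Q_t^{-\lambda-\frac n2-1}$ and reads off, e.g., $\int_0^2 Q_t^{-\lambda-\frac n2-1}(2t-t^2)^{\lambda-1}\,dt = F_{2,0}(H(x,y))\cdot|x-y|^{-n-2}(x_{n+1}y_{n+1})^{-\lambda}$ directly from Notation \ref{fkl notation}; the domain $(0,2)$ is untouched and there is no $\max/\min$ issue at this stage, so the worry about ``whether the rescaled variable stays in $(0,2)$'' does not arise.

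The one step that would fail as written is the splitting $2ty_{n+1}=t(x_{n+1}+y_{n+1})-t(x_{n+1}-y_{n+1})$: dividing by $(x_{n+1}y_{n+1})^{1/2}$ turns $x_{n+1}+y_{n+1}$ into $a+a^{-1}$, and $a^{-1}$ is unbounded, which would destroy the bounded Schur symbols the lemma is designed to produce. The identity actually needed is $(y_{n+1}/x_{n+1})^{1/2}=a(x,y)-b(x,y)\,h_{n+1}(x,y)\,H(x,y)$, checked separately on $\{x_{n+1}<y_{n+1}\}$ and its complement, applied after writing $y_{n+1}\int_0^2 Q_t^{-\lambda-\frac n2-1}(2t-t^2)^{\lambda-1}t\,dt=(y_{n+1}/x_{n+1})^{1/2}F_{1,1}(H)\cdot|x-y|^{-n-1}(x_{n+1}y_{n+1})^{-\lambda}$; the extra factor $H$ in the $b$-part is exactly what upgrades $F_{1,1}$ to $F_{2,1}$, since $F_{2,1}(x)=xF_{1,1}(x)$. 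Finally, your sketch does not account for the sum over $l$ in the $\delta_{k,n+1}$ terms: it enters via the elementary identity $|x-y|^{-n-1}(x_{n+1}y_{n+1})^{-\lambda}=\sum_{l=1}^{n+1}K_l(x,y)h_l(x,y)$, inserted so that every summand carries a factor $K_l$ matching a classical Riesz kernel, which is what Proposition \ref{commutator representation lemma} requires downstream.
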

\begin{proof} Differentiating the right hand side in Lemma \ref{kernel of sqrt delta} by $x_{k},$ we obtain
\begin{align*}
K_{R_{\lambda,k}}(x,y)&=-(2\lambda+n)\kappa^{[1]}_{n,\lambda}(x-y)_{k}\int_0^2Q_t^{-\lambda-\frac{n}{2}-1}(x,y)(2t-t^2)^{\lambda-1}dt\\
&\hspace{1.0cm}-(2\lambda+n)\kappa^{[1]}_{n,\lambda}\delta_{k,n+1}y_{n+1}\int_0^2Q_t^{-\lambda-\frac{n}{2}-1}(x,y)(2t-t^2)^{\lambda-1}tdt.
\end{align*}
It follows from the definitions of $F_{2,0}$ and $F_{1,1}$ that
$$\int_0^2Q_t^{-\lambda-\frac{n}{2}-1}(x,y)(2t-t^2)^{\lambda-1}dt=\frac1{|x-y|^{n+2}(x_{n+1}y_{n+1})^{\lambda}}\cdot F_{2,0}(\frac{|x-y|}{(x_{n+1}y_{n+1})^{\frac12}}),$$
and that
\begin{align*}
&y_{n+1}\int_0^2Q_t^{-\lambda-\frac{n}{2}-1}(x,y)(2t-t^2)^{\lambda-1}tdt\\
&=\frac1{|x-y|^{n+1}(x_{n+1}y_{n+1})^{\lambda}}\cdot (\frac{y_{n+1}}{x_{n+1}})^{\frac12}F_{1,1}(\frac{|x-y|}{(x_{n+1}y_{n+1})^{\frac12}}).
\end{align*}
Clearly,
$$(\frac{y_{n+1}}{x_{n+1}})^{\frac12}=a(x,y)-b(x,y)h_{n+1}(x,y)H(x,y).$$
Thus,
\begin{align*}
K_{R_{\lambda,k}}(x,y)&=-\kappa^{[2]}_{n,\lambda}K_k(x,y)\cdot (F_{2,0}\circ H)(x,y)\\
&\hspace{-1.0cm}-\delta_{k,n+1}\kappa^{[2]}_{n,\lambda}\frac1{|x-y|^{n+1}(x_{n+1}y_{n+1})^{\lambda}}\cdot a(x,y)\cdot (F_{1,1}\circ H)(x,y)\\
&\hspace{-1.0cm}+\delta_{k,n+1}\kappa^{[2]}_{n,\lambda}\frac1{|x-y|^{n+1}(x_{n+1}y_{n+1})^{\lambda}}\cdot b(x,y)\cdot h_{n+1}(x,y)\cdot (F_{2,1}\circ H)(x,y).
\end{align*}
This, in combination with the equality
$$\frac1{|x-y|^{n+1}(x_{n+1}y_{n+1})^{\lambda}}=\sum_{l=1}^{n+1}K_l(x,y)\cdot h_l(x,y),$$
completes the proof of Lemma.
\end{proof}
We recall the definition of Schur multipliers. Let $(X,\mu)$ be a $\sigma$-finite measure space. In the sequel, $(X,\mu)$ would be frequently taken to be $(\mathbb{R}_+^{n+1},m_\lambda)$ or $(\mathbb{R}^{n+1},dx)$. Recall that $\mathcal{L}_2(L_2(X,\mu))\backsimeq L_2(X\times X,\mu\times\mu)$ via the identification $T\rightarrow (K_T(x,y))_{x,y\in X},$ where $K_T(x,y)\in L_2(X\times X,\mu\times\mu)$ is the integral kernel of $T.$

\begin{definition}
Let $M:X\times X\rightarrow \mathbb{C}$ be a bounded function, then the Schur multiplier $\mathfrak{S}_M: \mathcal{L}_2(X,\mu)\rightarrow \mathcal{L}_2(X,\mu)$ is a linear operator defined by sending an operator $T:L_2(X,\mu)\rightarrow L_2(X,\mu)$ with kernel $K_T(x,y)$ to the operator with integral kernel $M(x,y)K_T(x,y).$
\end{definition}
For $1\leq k\leq n+1$, we define $k$-th classical Riesz transform by $R_k:=\partial_k\Delta^{-\frac12}$.
Next, we apply Schur multipliers and the formula given in Lemma \ref{Bessel--Riesz kernel representation lemma} to relate Bessel--Riesz transforms commutators to the classical Riesz transforms commutators, which is the main result of this section and can be presented as follows.
\begin{proposition}\label{commutator representation lemma} There exists a constant $\kappa_{n,\lambda}^{[3]}$ such that for every $f\in C^{\infty}_c(\mathbb{R}^{n+1})$ and for every $1\leq k\leq n+1,$
\begin{align*}
[R_{\lambda,k},M_{E^{\ast}f}]&=\kappa_{n,\lambda}^{[3]}\mathfrak{S}_{F_{2,0}\circ H}\Big( M_{x_{n+1}^{-\lambda}} E^{\ast}[R_k,M_{f}]E M_{x_{n+1}^{\lambda}}\Big)\\
&\hspace{-1.0cm}+\kappa_{n,\lambda}^{[3]}\delta_{k,n+1}\sum_{l=1}^{n+1}\Big(\mathfrak{S}_{h_l}\circ\mathfrak{S}_a\circ\mathfrak{S}_{F_{1,1}\circ H}\Big)\Big(M_{x_{n+1}^{-\lambda}} E^{\ast}[R_l,M_{f}]E M_{x_{n+1}^{\lambda}}\Big)\nonumber\\
&\hspace{-1.0cm}-\kappa_{n,\lambda}^{[3]}\delta_{k,n+1}\sum_{l=1}^{n+1}\Big(\mathfrak{S}_{h_l}\circ\mathfrak{S}_{h_{n+1}}\circ\mathfrak{S}_b\circ\mathfrak{S}_{F_{2,1}\circ H}\Big)\Big( M_{x_{n+1}^{-\lambda}} E^{\ast}[R_l,M_{f}]E M_{x_{n+1}^{\lambda}}\Big).\nonumber
\end{align*}
\end{proposition}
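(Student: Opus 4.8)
The plan is to obtain the identity by a direct computation at the level of integral kernels, using Lemma~\ref{Bessel--Riesz kernel representation lemma} as the starting point and then repackaging the various factors as Schur multipliers acting on the classical Riesz commutator kernel. First I would record the elementary kernel identity for a commutator with a multiplication operator: if $T$ has kernel $K_T(x,y)$, then $[T,M_g]$ has kernel $(g(y)-g(x))K_T(x,y)$. Applying this both to $R_{\lambda,k}$ on $\mathbb{R}^{n+1}_+$ (with symbol $E^{\ast}f$) and to $R_k$ on $\mathbb{R}^{n+1}$ (with symbol $f$), the problem reduces to checking a pointwise identity between $(E^{\ast}f(y)-E^{\ast}f(x))K_{R_{\lambda,k}}(x,y)$ and the sum of the three expressions on the right-hand side, each of which has the form "a product of bounded factors times $(f(y)-f(x))K_{R_l}(x,y)$", conjugated by $M_{x_{n+1}^{\pm\lambda}}$ and compressed by $E,E^{\ast}$.

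Next I would unwind the three terms on the right-hand side one at a time. The conjugation $M_{x_{n+1}^{-\lambda}}E^{\ast}[R_l,M_f]EM_{x_{n+1}^{\lambda}}$ has kernel, for $x,y\in\mathbb{R}^{n+1}_+$, equal to $x_{n+1}^{-\lambda}y_{n+1}^{\lambda}(f(y)-f(x))K_{R_l}(x,y)$; since $K_{R_l}(x,y)=c_n h_l(x,y)|x-y|^{-(n+1)}$, this is $c_n(f(y)-f(x))(x_{n+1}y_{n+1})^{\lambda}\cdot(x_{n+1}^{-2\lambda})\cdot h_l(x,y)|x-y|^{-(n+1)}$; comparing with the definition of $K_l$ one sees the weight bookkeeping matches up to a factor that gets absorbed into $\kappa_{n,\lambda}^{[3]}$. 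Then each Schur multiplier $\mathfrak{S}_{h_l}$, $\mathfrak{S}_a$, $\mathfrak{S}_b$, $\mathfrak{S}_{F_{i,1}\circ H}$ simply multiplies the kernel by the indicated function of $(x,y)$. Collecting the multipliers, the first term reproduces $-\kappa^{[2]}_{n,\lambda}(F_{2,0}\circ H)\cdot K_k$ times $(f(y)-f(x))$; using the identity $|x-y|^{-(n+1)}(x_{n+1}y_{n+1})^{-\lambda}=\sum_l K_l h_l$ from the proof of Lemma~\ref{Bessel--Riesz kernel representation lemma}, the second term reproduces the $a\cdot(F_{1,1}\circ H)$ piece and the third reproduces the $b\cdot h_{n+1}\cdot(F_{2,1}\circ H)$ piece, exactly matching the three summands in $K_{R_{\lambda,k}}$. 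Thus the kernels agree, hence the operators agree, first on a dense subspace and then everywhere by closedness.

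There are two points requiring care rather than genuine difficulty. The first is justifying that all manipulations are legitimate: since $f\in C^\infty_c$, the commutator $[R_k,M_f]$ is bounded (indeed a Calder\'on--Zygmund operator), and one should argue that the Schur multipliers $\mathfrak{S}_{F_{i,j}\circ H}$, $\mathfrak{S}_a$, $\mathfrak{S}_b$, $\mathfrak{S}_{h_l}$ are well defined on the relevant class of operators — at the very least on $\mathcal{L}_2$, where the Schur multiplier is defined via the $L_2(X\times X)$ identification, which suffices to make the kernel identity rigorous; boundedness on $\mathcal{L}_{p,\infty}$ is deferred to Section~\ref{Sec4} and not needed here. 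The second is the weight/domain bookkeeping: keeping track of the powers $x_{n+1}^{\lambda}$, $y_{n+1}^{\lambda}$ coming from $K_k$, $K_l$ versus those introduced by the conjugations $M_{x_{n+1}^{\pm\lambda}}$, and of the compressions by $E,E^{\ast}$ which restrict everything to $\mathbb{R}^{n+1}_+$ (so that, e.g., $E^{\ast}[R_k,M_f]E$ has kernel $(f(y)-f(x))K_{R_k}(x,y)$ for $x,y>0$).

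The main obstacle, to the extent there is one, is purely organisational: verifying that the single combinatorial identity $(\tfrac{y_{n+1}}{x_{n+1}})^{1/2}=a(x,y)-b(x,y)h_{n+1}(x,y)H(x,y)$ together with $|x-y|^{-(n+1)}(x_{n+1}y_{n+1})^{-\lambda}=\sum_{l=1}^{n+1}K_l(x,y)h_l(x,y)$ lets one rewrite the $\delta_{k,n+1}$ term from Lemma~\ref{Bessel--Riesz kernel representation lemma} as the stated sum of Schur-multiplier compositions, and that the constant $\kappa^{[3]}_{n,\lambda}$ can indeed be chosen uniformly (it will be $-\kappa^{[2]}_{n,\lambda}$ divided by the constant $c_n$ in $K_{R_l}=c_n h_l|x-y|^{-(n+1)}$). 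Once the kernels are shown to coincide as elements of $L_2(\mathbb{R}^{n+1}_+\times\mathbb{R}^{n+1}_+, m_\lambda\times m_\lambda)$, the operator identity follows, and since both sides are bounded operators the restriction $f\in C^\infty_c$ is not an essential limitation for later use.
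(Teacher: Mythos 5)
Your proposal is correct and takes essentially the same route as the paper: the paper's proof is precisely the one-line observation that $K_{R_l}(x,y)=\omega_n(y-x)_l|x-y|^{-(n+2)}$, so that setting $\kappa^{[3]}_{n,\lambda}=\omega_n^{-1}\kappa^{[2]}_{n,\lambda}$ the identity is read off from Lemma \ref{Bessel--Riesz kernel representation lemma} by matching integral kernels term by term, exactly as you do. The only imprecision is your remark that the weights ``match up to a factor that gets absorbed into $\kappa^{[3]}_{n,\lambda}$'': with respect to Lebesgue measure the kernel of $M_{x_{n+1}^{-\lambda}}E^{\ast}[R_l,M_f]EM_{x_{n+1}^{\lambda}}$ carries $x_{n+1}^{-\lambda}y_{n+1}^{\lambda}$ while $K_l$ carries $(x_{n+1}y_{n+1})^{-\lambda}$, a discrepancy of $y_{n+1}^{2\lambda}$ which is not constant but is exactly the density of $m_\lambda$, so the match is exact once both kernels are taken (as you indicate at the end) with respect to $m_\lambda\times m_\lambda$.
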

\begin{proof} Note that the integral kernel of the operator $R_l$ is
$$(x,y)\to\omega_n\frac{(y-x)_l}{|x-y|^{n+2}},\quad x,y\in\mathbb{R}^{n+1},$$
for some constant $\omega_{n}>0.$ Setting $\kappa_{n,\lambda}^{[3]}:=\omega_n^{-1}\kappa_{n,\lambda}^{[2]},$ we infer the assertion from
Lemma \ref{Bessel--Riesz kernel representation lemma}.
\end{proof}

\section{Boundedness of Schur multipliers}\label{Sec4}
\setcounter{equation}{0}

This section is devoted to establishing the boundedness of Schur multipliers associated with symbols $F_{k,l}\circ H$, $a$, $b$ and $h_m$ appearing in Proposition \ref{commutator representation lemma},  where $(k,l)\in\{(2,0),(1,1),(2,1)\}$ and $1\leq m\leq n+1$. The main results of this section are formulated in Propositions \ref{fkl schur lemma} and \ref{standard schur lemma}.

In order to show the boundedness of Schur multipliers associated with symbol $F_{k,l}\circ H$, we will first establish an abstract criterion, which can be stated as follows.
\begin{theorem}\label{main schur theorem} Let $F$ be a smooth function on $(0,\infty)$ and right continuous at $0.$ If $F\circ\exp\in W^{1+\lceil\frac{n+1}{2}\rceil,2}(\mathbb{R}_+)$ and $(F-F(0))\circ\exp\in W^{1+\lceil\frac{n+1}{2}\rceil,2}(\mathbb{R}_-),$ then $\mathfrak{S}_{F\circ H}$ is a bounded mapping from $\mathcal{L}_p(L_2(\mathbb{R}^{n+1}))$ to itself for every $1<p<\infty.$
\end{theorem}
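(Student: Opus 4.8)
The plan is to realize $\mathfrak{S}_{F\circ H}$ as a superposition of ``translation-type'' Schur multipliers in a well-chosen coordinate that turns the multiplicative structure of $H$ into an additive one, and then to invoke a known boundedness criterion for such multipliers on $\mathcal{L}_p$. The key observation is that $H(x,y) = |x-y|/(x_{n+1}y_{n+1})^{1/2}$ depends on $x,y$ essentially through the quantity $|x-y|$ rescaled by the geometric mean of the two ``height'' variables; writing $u = \log x_{n+1}$, $v = \log y_{n+1}$ and freezing the remaining variables, $H$ becomes a function of $|x-y|$ and of $u+v$. Thus I would first reduce to showing that for a fixed profile, the one-variable Schur multiplier with symbol $(u,v)\mapsto G(u+v)$ (or more precisely a symbol of the form $G$ applied to a self-adjoint combination of the ambient variables) is $\mathcal{L}_p$-bounded whenever $G$ lies in the appropriate Sobolev class. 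This is exactly where the hypothesis $F\circ\exp\in W^{1+\lceil\frac{n+1}{2}\rceil,2}(\mathbb{R}_+)$ (and the companion condition on $\mathbb{R}_-$ after subtracting $F(0)$) enters: the Fourier transform of $G = F\circ\exp$ then has enough decay that $G$ can be written as an absolutely convergent integral $G(\sigma)=\int \widehat{G}(\xi)e^{i\xi\sigma}\,d\xi$ with $\int|\widehat{G}(\xi)|\,(1+|\xi|)^{?}\,d\xi<\infty$, and each exponential $e^{i\xi(u+v)}$ gives a rank-one-in-the-relevant-sense Schur symbol $e^{i\xi u}\cdot e^{i\xi v}$ whose multiplier is a composition of two multiplication operators and hence a contraction on every $\mathcal{L}_p$.

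Concretely, the steps I would carry out are: (1) split $F = F(0) + (F-F(0))$; the constant part contributes the identity Schur multiplier, so it suffices to treat $F$ with $F(0)=0$, which makes $F\circ\exp$ well behaved on all of $\mathbb{R}$ by the two one-sided hypotheses. (2) Perform the change of variables $t = \log H(x,y)$; since $H$ ranges over $(0,\infty)$, $t$ ranges over $\mathbb{R}$, and $F\circ H = (F\circ\exp)(t)$ with $F\circ\exp$ now in a Sobolev space on $\mathbb{R}$ of order $1+\lceil\frac{n+1}{2}\rceil$. (3) Use Fourier inversion on $\mathbb{R}$ to write $(F\circ\exp)(t) = \int_{\mathbb{R}} c(\xi)\, e^{i\xi t}\,d\xi$ and estimate $\int_{\mathbb{R}}|c(\xi)|\,(1+|\xi|)^{\lceil\frac{n+1}{2}\rceil}\,d\xi \lesssim \|F\circ\exp\|_{W^{1+\lceil\frac{n+1}{2}\rceil,2}}$ by Cauchy--Schwarz, gaining a full power of $(1+|\xi|)$ to spare. (4) For each frequency $\xi$, identify $\mathfrak{S}_{e^{i\xi\log H}}$ with the Schur multiplier whose symbol is $e^{i\xi\log|x-y|}\cdot (x_{n+1})^{-i\xi/2}\cdot(y_{n+1})^{-i\xi/2}$; the two ``height'' factors are multiplication operators $M_{x_{n+1}^{-i\xi/2}}$ (unitary, so of $\mathcal{L}_p\to\mathcal{L}_p$ norm $1$), leaving the symbol $e^{i\xi\log|x-y|}$, i.e. a radial Fourier-multiplier-type Schur symbol on $\mathbb{R}^{n+1}$. (5) Bound the $\mathcal{L}_p$-norm of this last Schur multiplier by a polynomial in $|\xi|$ of degree at most $\lceil\frac{n+1}{2}\rceil$ — here one uses that a Schur symbol of the form $m(x-y)$ gives an operator bounded on $\mathcal{L}_p$, $1<p<\infty$, with norm controlled by finitely many ($\approx \lfloor\frac{n+1}{2}\rfloor+1$) derivatives of $m$, e.g. via the Calder\'on--Zygmund / transference description of convolution-type Schur multipliers, and $m(z)=e^{i\xi\log|z|}$ has derivatives of size $O(|\xi|^{\lceil(n+1)/2\rceil}|z|^{-j})$. (6) Integrate the estimate from (5) against $|c(\xi)|\,d\xi$ using the decay secured in (3); the one spare power of $(1+|\xi|)$ makes the integral converge, yielding $\|\mathfrak{S}_{F\circ H}\|_{\mathcal{L}_p\to\mathcal{L}_p}<\infty$ for all $1<p<\infty$.

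The main obstacle I anticipate is step (5): making precise, with explicit dependence on $\xi$, that a Schur multiplier on $L_2(\mathbb{R}^{n+1})$ with a difference-kernel symbol $m(x-y)$ is bounded on $\mathcal{L}_p$ with a norm controlled by a controlled number of derivatives of $m$, and then checking that the number of derivatives needed is no more than $\lceil\frac{n+1}{2}\rceil$ so that the Sobolev hypothesis of order $1+\lceil\frac{n+1}{2}\rceil$ suffices (with the extra ``$1+$'' providing the convergence-forcing decay). I expect this is precisely the role of the ``remarkable result established very recently in \cite[Theorem A]{MR4660138}'' advertised in the introduction, so rather than reproving a Mikhlin-type theorem for Schur multipliers from scratch I would quote that theorem, verify its hypotheses for the symbols $e^{i\xi\log|x-y|}$ (a routine but slightly delicate derivative computation, since each $\log$-derivative produces a factor of $\xi$), and track constants. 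A secondary, more technical point is justifying the passage from the formal ``integral of Schur multipliers'' in step (6) to an honest operator identity on $\mathcal{L}_2\cap\mathcal{L}_p$; this is handled by noting that all the pieces are uniformly bounded and the $\xi$-integral is absolutely convergent in operator norm on $\mathcal{L}_p$, so dominated convergence applies and the sum of the pieces reproduces $\mathfrak{S}_{F\circ H}$ on the dense subspace $\mathcal{L}_2\cap\mathcal{L}_p$.
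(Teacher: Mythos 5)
Your overall architecture --- Fourier inversion in the logarithmic variable, factoring the symbol as $|x-y|^{is}\,x_{n+1}^{-is/2}\,y_{n+1}^{-is/2}$ with the height factors absorbed into unitary multiplication operators, a polynomial-in-$s$ bound for $\mathfrak{S}_{|x-y|^{is}}$, and an absolutely convergent superposition --- is exactly the paper's Lemma \ref{post-denis lemma}. However, there are two genuine gaps. First, your step (1) does not work as stated: replacing $F$ by $F-F(0)$ destroys the hypothesis on $\mathbb{R}_+$, since $(F-F(0))\circ\exp=F\circ\exp-F(0)$ fails to lie in $L_2(\mathbb{R}_+)$ whenever $F(0)\neq0$ (a nonzero constant is not square-integrable on a half-line), so the reduced function does not satisfy the theorem's hypotheses and the global Fourier inversion of step (3) is unavailable. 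The paper instead subtracts $F(0)\phi$ for a smooth $\phi$ supported in $[0,1]$ with $\phi(0)=1$, and handles $\mathfrak{S}_{\phi\circ H}$ separately by the Mikhlin-type criterion of \cite[Theorem A]{MR4660138} (via Lemmas \ref{H derivatives lemma}--\ref{post-parcet lemma}); that is in fact the only place where Theorem A is used.

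Second, and more seriously, your step (5) does not close under the stated Sobolev hypothesis. Applying \cite[Theorem A]{MR4660138} to $m(z)=|z|^{i\xi}$ in dimension $n+1$ requires derivatives up to order $\lceil\frac{n+1}{2}\rceil+1$, and the $j$-th derivative of $|z|^{i\xi}$ is of size $(1+|\xi|)^{j}|z|^{-j}$, so the resulting operator-norm bound is $(1+|\xi|)^{\lceil\frac{n+1}{2}\rceil+1}$, not $(1+|\xi|)^{\lceil\frac{n+1}{2}\rceil}$ as you claim. Feeding that into step (6), Cauchy--Schwarz then demands $\lceil\frac{n+1}{2}\rceil+2$ derivatives of $F\circ\exp$ in $L_2$, one more than the hypothesis $W^{1+\lceil\frac{n+1}{2}\rceil,2}$ provides (already for $n=1$ you would need $W^{5/2+\epsilon,2}$ against the assumed $W^{2,2}$). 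The paper avoids this loss by transferring $\mathfrak{S}_{|x-y|^{is}}$ to the Fourier multiplier ${\rm id}\otimes\Delta^{is}$ and invoking the UMD property of $\mathcal{L}_p$ together with \cite[Corollary 5.70]{HNVWbook}, which yields the sharper bound $C_{n,p}(1+|s|)^{\frac{n+1}{2}}$; this fractional exponent is precisely what makes the integral against $|\mathcal{F}(F\circ\exp)(s)|$ controllable by $\|F\circ\exp\|_{W^{1+\lceil\frac{n+1}{2}\rceil,2}(\mathbb{R})}$ (via \cite[Lemma 7]{PS-crelle}). So the missing idea is the transference/UMD estimate for imaginary powers; the Mikhlin-type Schur criterion alone is too lossy for this step.
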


The proof of Theorem \ref{main schur theorem} relies on a landmark result established very recently by Conde-Alonso etc., which provides a simple sufficient condition for the $\mathcal{L}_p$-boundedness of Schur multipliers.
\begin{theorem}\cite[Theorem A]{MR4660138}\label{Annalspaper}
Let $M\in C^{[\frac{n}{2}]+1}((\mathbb{R}^n\times\mathbb{R}^n)\backslash\{x=y\})$ be a bounded function satisfying the condition
$$\left|\frac{\partial^\alpha M}{\partial x^\alpha}(x,y)\right|+\left|\frac{\partial^\alpha M}{\partial y^\alpha}(x,y)\right|\leq C_\alpha |x-y|^{-|\alpha|_1},\quad x,y\in\mathbb{R}^n,$$
for each $\alpha\in\mathbb{Z}_+^n$ with $|\alpha|_1\leq \lceil\frac{n}{2}\rceil+1.$ Under these assumptions, $\mathfrak{S}_M$ is bounded on $\mathcal{L}_p(L_2(\mathbb{R}^n))$ for every $1<p<\infty.$
\end{theorem}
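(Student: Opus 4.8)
\textbf{Plan of proof for Theorem~\ref{main schur theorem}.} The strategy is to verify that the symbol $M(x,y) := (F\circ H)(x,y) = F\bigl(|x-y|/(x_{n+1}y_{n+1})^{1/2}\bigr)$ on $\mathbb{R}^{n+1}_+\times\mathbb{R}^{n+1}_+$ satisfies the Calder\'on--Zygmund--type derivative bounds in the hypothesis of Theorem~\ref{Annalspaper} (with $n$ there replaced by $n+1$), and then invoke that theorem as a black box. (Strictly speaking $M$ is defined only on the half-space; one extends it, e.g. by $M(x,y)=F(0)$ whenever $x$ or $y$ has nonpositive last coordinate, or more cleanly one works with $\mathfrak{S}_M$ on $\mathcal{L}_p(L_2(\mathbb{R}^{n+1}))$ after noting that only the half-space part contributes in the intended application; I would state this reduction at the start.) Concretely, I must show
$$\Big|\frac{\partial^\alpha M}{\partial x^\alpha}(x,y)\Big|+\Big|\frac{\partial^\alpha M}{\partial y^\alpha}(x,y)\Big|\leq C_\alpha|x-y|^{-|\alpha|_1},\qquad |\alpha|_1\leq\Big\lceil\frac{n+1}{2}\Big\rceil+1.$$

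\textbf{Step 1: reduce to one-variable estimates via the chain rule.} Write $H=e^{G}$ where $G(x,y)=\log|x-y|-\tfrac12\log x_{n+1}-\tfrac12\log y_{n+1}$, so that $M=\Phi\circ G$ with $\Phi:=F\circ\exp$. By the Fa\`a di Bruno formula, $\partial^\alpha M$ is a finite sum of terms of the form $\Phi^{(j)}(G)\cdot\prod_{i}\partial^{\beta_i}G$ with $\sum_i\beta_i=\alpha$ and $1\le j\le|\alpha|_1$. Thus it suffices to control (a) the derivatives of $G$, and (b) the quantities $\Phi^{(j)}(G(x,y))=(F\circ\exp)^{(j)}(\log H(x,y))$ uniformly.

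\textbf{Step 2: derivative bounds for $G$.} Each of the three pieces of $G$ is easy: $|\partial^\beta_x\log|x-y||\le C_\beta|x-y|^{-|\beta|_1}$ for $|\beta|_1\ge1$ (standard, since $\log|x-y|$ is a log-homogeneous singularity), $\partial^\beta_x(-\tfrac12\log x_{n+1})$ involves only negative powers of $x_{n+1}$, and $-\tfrac12\log y_{n+1}$ is killed by any $x$-derivative. The second piece needs a comparison: on the region where it is not obviously dominated by $|x-y|^{-|\beta|_1}$, namely when $x_{n+1}$ is small relative to $|x-y|$, one uses that $\partial_{x_{n+1}}^m\log x_{n+1}=(-1)^{m-1}(m-1)!\,x_{n+1}^{-m}$ and the observation that on the support of the eventual kernel (or simply on the half-space) $H(x,y)=|x-y|/(x_{n+1}y_{n+1})^{1/2}$ is large precisely when the $x_{n+1}^{-m}$ factors are large; this is where the hypothesis on $(F-F(0))\circ\exp$ on $\mathbb{R}_-$ enters. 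For a general mixed derivative $\partial^\beta_x\log x_{n+1}$ with $\beta_{n+1}=m$ one gets a bound $C\,x_{n+1}^{-m}$, and combined with $x_{n+1}^{-1}\le H(x,y)\,y_{n+1}^{-1/2}|x-y|^{-1}\cdots$ — the precise bookkeeping converting powers of $x_{n+1}^{-1}$ and $y_{n+1}^{-1}$ into $|x-y|^{-1}$ times a power of $H$ is routine but must be done carefully, splitting into the regions $H\ge1$ and $H<1$.

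\textbf{Step 3: uniform control of $\Phi^{(j)}$.} This is the heart of the matter and uses the Sobolev hypotheses. We need $\sup_{t>0}|(F\circ\exp)^{(j)}(\log t)|$-type bounds on $\{H\ge1\}$ and $\sup_{t<0}$-type bounds on $\{H<1\}$, i.e. uniform bounds on $(F-F(0))\circ\exp$ and its derivatives up to order $1+\lceil(n+1)/2\rceil$ on $\mathbb{R}_-$ and on $F\circ\exp$ on $\mathbb{R}_+$. By the Sobolev embedding $W^{1,2}(\mathbb{R}_\pm)\hookrightarrow L_\infty(\mathbb{R}_\pm)\cap C_0$, the hypothesis $F\circ\exp\in W^{1+\lceil(n+1)/2\rceil,2}(\mathbb{R}_+)$ gives that $(F\circ\exp)^{(j)}$ is bounded (indeed tends to $0$ at $+\infty$) for $0\le j\le \lceil(n+1)/2\rceil$; similarly $(F-F(0))\circ\exp\in W^{1+\lceil(n+1)/2\rceil,2}(\mathbb{R}_-)$ gives bounded derivatives of order $\le\lceil(n+1)/2\rceil$ on $\mathbb{R}_-$, with the subtraction of $F(0)$ exactly accounting for the fact that as $t\to-\infty$ (i.e. $H\to0^+$) one has $F(H)\to F(0)$, so it is $F(H)-F(0)$ that decays. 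Since in Step~1 the Fa\`a di Bruno terms use $\Phi^{(j)}$ only for $j\le|\alpha|_1\le\lceil(n+1)/2\rceil+1$, and the order-$j$ derivative of $\Phi$ is $(F\circ\exp)^{(j)}$, we precisely need these to be bounded for $j\le\lceil(n+1)/2\rceil+1$, which is what $W^{1+\lceil(n+1)/2\rceil,2}$ provides.

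\textbf{Step 4: assemble and conclude.} Combining Steps~2 and~3: each Fa\`a di Bruno term is $\lesssim 1\cdot\prod_i|x-y|^{-|\beta_i|_1}(\text{powers of }x_{n+1}^{-1},y_{n+1}^{-1})$, and the auxiliary powers of $x_{n+1}^{-1},y_{n+1}^{-1}$ are absorbed either into $|x-y|^{-|\alpha|_1}$ directly (when $H<1$) or into a bounded power of $H$ times $|x-y|^{-|\alpha|_1}$ (when $H\ge1$), the $H$-power being harmless because it multiplies $\Phi^{(j)}(G)$ and one can instead bound $H^{N}\Phi^{(j)}(\log H)$ using that $t\mapsto e^{Nt}(F\circ\exp)^{(j)}(t)$ is still bounded on $\mathbb{R}_+$? — no: more carefully, one should from the outset track not just boundedness of $(F\circ\exp)^{(j)}$ but the sharper decay $(F\circ\exp)^{(j)}(t)=o(1)$ as $t\to+\infty$ and, for differences, as $t\to-\infty$, and choose the region decomposition so that no positive power of $H$ is ever generated — this is arranged by always estimating $x_{n+1}^{-1}\le H^{1/?}\cdot$ in the direction that produces $H$ to a \emph{negative} power when needed, or equivalently by noting $x_{n+1}^{-1}y_{n+1}^{-1}\lesssim H^2|x-y|^{-2}$ and pairing with the corresponding negative power via the other variable. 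The bookkeeping here is the main technical obstacle: matching the exact number of available orders of smoothness $\lceil(n+1)/2\rceil+1$ against the worst-case Fa\`a di Bruno term with $|\alpha|_1=\lceil(n+1)/2\rceil+1$, and handling the regions $H\ge1$ vs.\ $H<1$ (equivalently near-diagonal vs.\ far-from-diagonal in the hyperbolic sense) uniformly. Once all mixed derivatives of order up to $\lceil(n+1)/2\rceil+1$ in $x$ (and, by the symmetry $H(x,y)=H(y,x)$, in $y$) are shown to satisfy the required $|x-y|^{-|\alpha|_1}$ bound, Theorem~\ref{Annalspaper} applies verbatim with ambient dimension $n+1$, yielding $\mathcal{L}_p$-boundedness of $\mathfrak{S}_{F\circ H}$ for all $1<p<\infty$.

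\textbf{Expected main obstacle.} The genuine difficulty is not the chain rule but the uniform control in Step~3 together with the region-dependent conversion of half-space weights into $|x-y|^{-1}$ factors in Step~2--4: one must ensure the Sobolev hypothesis on \emph{both} $\mathbb{R}_+$ (for $F\circ\exp$) and $\mathbb{R}_-$ (for the difference $(F-F(0))\circ\exp$) is used, the first governing the regime $|x-y|\gtrsim(x_{n+1}y_{n+1})^{1/2}$ and the second the regime $|x-y|\lesssim(x_{n+1}y_{n+1})^{1/2}$, and that the counting of derivatives is tight — there is no room to spare, since the worst Fa\`a di Bruno term with all $\beta_i$ equal to a single coordinate derivative needs exactly $|\alpha|_1$ derivatives of $\Phi$, i.e.\ exactly the $W^{1+\lceil(n+1)/2\rceil,2}$ hypothesis.
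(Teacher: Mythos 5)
There is a fundamental mismatch here: the statement you were asked to prove is Theorem \ref{Annalspaper}, i.e.\ the Schatten--$\mathcal{L}_p$ boundedness criterion for Schur multipliers with Calder\'on--Zygmund-type symbols. In the paper this is not proved at all but quoted from Conde-Alonso--Gonz\'alez-P\'erez--Parcet--Tablate \cite[Theorem A]{MR4660138}. Your proposal does not prove this statement; it explicitly invokes it as a black box ("then invoke that theorem as a black box") and instead sketches a proof of the paper's Theorem \ref{main schur theorem}, the boundedness of $\mathfrak{S}_{F\circ H}$. As a proof of the given statement your text is therefore circular/empty: it assumes exactly what it is supposed to establish, and nothing in it engages with why CZ-type bounds on a symbol $M(x,y)$ yield $\mathcal{L}_p$-boundedness of $\mathfrak{S}_M$ (which is the deep content of the cited Annals paper).

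Even read charitably as a proposal for Theorem \ref{main schur theorem}, your route diverges from the paper's and has a genuine gap that you yourself flag in Step 4 but do not close. The paper verifies the CZ hypotheses of Theorem \ref{Annalspaper} only for functions $F$ supported in $[0,1]$, i.e.\ on the region $\{H\leq 1\}$, where Lemma \ref{ratio is bounded} forces $x_{n+1}\sim y_{n+1}$ and hence $(x_{n+1}y_{n+1})^{-1/2}\leq |x-y|^{-1}$, so the half-space weights convert cleanly into $|x-y|^{-1}$ factors (Lemmas \ref{H derivatives lemma}--\ref{post-parcet lemma}); the complementary part is handled by a completely different mechanism (Lemma \ref{post-denis lemma}): Fourier inversion $F(t)=\frac{1}{\sqrt{2\pi}}\int\mathcal{F}(F\circ\exp)(s)t^{is}\,ds$, transference of the Fourier multiplier $|x-y|^{is}$ to a Schur multiplier, the UMD property of $\mathcal{L}_p$ giving growth $(1+|s|)^{(n+1)/2}$, and the Sobolev hypothesis ensuring integrability of $(1+|s|)^{(n+1)/2}\mathcal{F}(F\circ\exp)(s)$. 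Your plan instead tries to verify the pointwise derivative bounds globally, but on $\{H\geq 1\}$ this fails: there $(x_{n+1}y_{n+1})^{-1/2}\geq|x-y|^{-1}$, so derivatives hitting $\log x_{n+1}$ or $\log y_{n+1}$ produce factors that can be arbitrarily larger than the allowed $|x-y|^{-|\alpha|_1}$, and the hypothesis $F\circ\exp\in W^{1+\lceil\frac{n+1}{2}\rceil,2}(\mathbb{R}_+)$ gives only boundedness (indeed $o(1)$ decay, with no rate) of derivatives up to order $\lceil\frac{n+1}{2}\rceil$ --- not the exponential decay $e^{-Nt}$ that would be required to absorb positive powers of $H$. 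So the step you describe as "routine bookkeeping" is precisely where the argument breaks down, and it is precisely why the paper splits $F$ and treats the non-compactly-supported part by transference rather than by the CZ criterion.
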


\begin{lemma}\label{ratio is bounded}
Let $(x,y)=(x',x_{n+1},y',y_{n+1})\in\mathbb{R}^{n+1}\times \mathbb{R}^{n+1}$ be any point such that $H(x,y)\leq 1,$ then
$$\frac{3-\sqrt{5}}{2}|y_{n+1}|\leq |x_{n+1}|\leq \frac{3+\sqrt{5}}{2}|y_{n+1}|.$$	
\end{lemma}
\begin{proof} Using the obvious inclusion
$$\left\{(x_{n+1},y_{n+1})\in\mathbb{R}^2:\ \frac{|x-y|^2}{|x_{n+1}y_{n+1}|}\leq 1\right\}\subset \left\{(x_{n+1},y_{n+1})\in\mathbb{R}^2:\ \frac{|x_{n+1}-y_{n+1}|^2}{|x_{n+1}y_{n+1}|}\leq 1\right\},$$
we deduce that
\begin{align*}
\inf\left\{\frac{|x_{n+1}|}{|y_{n+1}|}:\ \frac{|x-y|^2}{|x_{n+1}y_{n+1}|}\leq 1\right\}&\geq\inf\left\{\frac{|x_{n+1}|}{|y_{n+1}|}:\ \frac{|x_{n+1}-y_{n+1}|^2}{|x_{n+1}y_{n+1}|}\leq 1\right\}\\
&=\inf\{u>0:\ u+u^{-1}\leq 3\}=\frac{3-\sqrt{5}}{2},
\end{align*}
and that
\begin{align*}
\sup\left\{\frac{|x_{n+1}|}{|y_{n+1}|}:\ \frac{|x-y|^2}{|x_{n+1}y_{n+1}|}\leq 1\right\}&\leq\sup\left\{\frac{|x_{n+1}|}{|y_{n+1}|}:\ \frac{|x_{n+1}-y_{n+1}|^2}{|x_{n+1}y_{n+1}|}\leq 1\right\}\\
&=\sup\{u>0:\ u+u^{-1}\leq 3\}=\frac{3+\sqrt{5}}{2}.
\end{align*}
This ends the proof of Lemma \ref{ratio is bounded}.
\end{proof}

\begin{lemma}\label{H derivatives lemma} For every multi-index $\alpha\in\mathbb{Z}^{2n+2}_+,$ there is a constant $c_\alpha>0$ such that for any $(x,y)\in \{x,y\in\mathbb{R}^{n+1}:\ H(x,y)\leq 1\}$,
$$|(\partial_{x,y}^{\alpha}H)(x,y)|\leq\frac{c_{\alpha}}{|x-y|^{\alpha}},\quad x,y\in\mathbb{R}^{n+1}.$$
\end{lemma}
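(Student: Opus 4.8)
\textbf{Proof proposal for Lemma \ref{H derivatives lemma}.}

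The plan is to reduce the statement to a scale-invariant estimate and then exploit the explicit form $H(x,y)=|x-y|\cdot|x_{n+1}y_{n+1}|^{-1/2}$ together with the two-sided comparison $|x_{n+1}|\sim|y_{n+1}|$ valid on the region $\{H\le 1\}$ furnished by Lemma \ref{ratio is bounded}. First I would observe that $H$ is a product of two factors: the factor $|x-y|$, all of whose derivatives of order $|\alpha|_1\ge 1$ are homogeneous of degree $1-|\alpha|_1$ and bounded by $c_\alpha|x-y|^{1-|\alpha|_1}$ away from the diagonal; and the factor $G(x_{n+1},y_{n+1}):=|x_{n+1}y_{n+1}|^{-1/2}$, which depends only on the last coordinates and is smooth there since on $\{H\le 1\}$ both $x_{n+1}$ and $y_{n+1}$ are bounded away from $0$ (indeed $|x_{n+1}|\ge \tfrac{3-\sqrt5}{2}|y_{n+1}|$ forces $x_{n+1}y_{n+1}\ne 0$). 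Then I would apply the Leibniz rule: for any multi-index $\alpha$,
$$\partial_{x,y}^\alpha H=\sum_{\beta\le\alpha}\binom{\alpha}{\beta}\big(\partial_{x,y}^\beta |x-y|\big)\big(\partial_{x,y}^{\alpha-\beta}G\big).$$

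The main work is then to bound $\partial^\gamma G$ for $\gamma$ involving only the $(n+1)$-st and $(2n+2)$-nd variables. Writing $u=x_{n+1}$, $v=y_{n+1}$, one has $G=|uv|^{-1/2}$ and $\partial_u^j\partial_v^k G$ is a constant multiple of $|u|^{-1/2-j}|v|^{-1/2-k}$ (up to sign), so $|\partial_u^j\partial_v^k G|\le c_{j,k}|u|^{-1/2-j}|v|^{-1/2-k}$. Using Lemma \ref{ratio is bounded} to replace $|u|$ and $|v|$ by, say, $(|u||v|)^{1/2}$ up to constants, this is $\le c_{j,k}(|u||v|)^{-1/2-(j+k)/2}$; and since $(|u||v|)^{1/2}\ge |x-y|$ precisely because $H(x,y)\le 1$, we get $|\partial^\gamma G|\le c_\gamma |x-y|^{-|\gamma|_1}\cdot(|u||v|)^{-1/2}$. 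Feeding this and the bound $|\partial^\beta|x-y||\le c_\beta|x-y|^{1-|\beta|_1}$ (for $|\beta|_1\ge1$; for $\beta=0$ just $|x-y|$) into the Leibniz expansion gives, for each term, a bound $c|x-y|^{1-|\beta|_1}\cdot|x-y|^{-(|\alpha|_1-|\beta|_1)}(|u||v|)^{-1/2}=c|x-y|^{1-|\alpha|_1}(|u||v|)^{-1/2}=c\,H(x,y)\,|x-y|^{-|\alpha|_1}\le c|x-y|^{-|\alpha|_1}$, using $H\le 1$ in the last step. Summing over $\beta\le\alpha$ produces the claimed constant $c_\alpha$.

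The only genuine obstacle is bookkeeping: one must check that the $|x-y|^{1-|\beta|_1}$ derivatives of $|x-y|$ are uniformly controlled on $\{H\le1\}$ away from the diagonal (standard, since $|x-y|$ is a smooth positive function off the diagonal and its derivatives scale correctly), and that the mixed $x$- and $y$-derivatives of $G$ never couple the first $n$ coordinates — which is automatic because $G$ literally does not depend on $x'$ or $y'$. I would also note for cleanliness that the asserted inequality is written with $|x-y|^\alpha$ in the denominator, which I read as the customary shorthand for $|x-y|^{|\alpha|_1}$; the argument above delivers exactly this exponent. No deeper tool than the Leibniz rule and Lemma \ref{ratio is bounded} is needed, so the proof is short; the payoff is that, combined with the chain rule and the hypotheses on $F\circ\exp$, it will let us verify the Calderón--Zygmund-type derivative bounds of Theorem \ref{Annalspaper} for the symbol $F\circ H$ in the proof of Theorem \ref{main schur theorem}.
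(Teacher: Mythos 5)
Your proof is correct and follows essentially the same route as the paper's: a Leibniz expansion of $H$ into the factor $|x-y|$ and the power functions of $x_{n+1},y_{n+1}$, explicit computation of the derivatives of the latter, Lemma \ref{ratio is bounded} to control the ratio $|x_{n+1}|/|y_{n+1}|$, and the constraint $H\leq 1$ to trade $(x_{n+1}y_{n+1})^{-1/2}$ powers for powers of $|x-y|$. The only cosmetic difference is that you group $|x_{n+1}y_{n+1}|^{-1/2}$ as a single factor and invoke $H\leq1$ at the very end, whereas the paper splits it into $x_{n+1}^{-1/2}$ and $y_{n+1}^{-1/2}$ and absorbs the comparison earlier; your reading of $|x-y|^{\alpha}$ as $|x-y|^{|\alpha|_1}$ is also the intended one.
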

\begin{proof} To lighten the notations, we may assume without loss of generality that $x_{n+1},y_{n+1}>0.$

Set $H_1(x,y)=|x-y|,$ $H_2(x,y)=x_{n+1}^{-\frac12},$ $H_3(x,y)=y_{n+1}^{-\frac12}.$ By Leibniz rule,
$$\partial_{x,y}^{\alpha}H=\partial_{x,y}^{\alpha}(H_1H_2H_3)=\sum_{\substack{\alpha_1,\alpha_2,\alpha_3\in\mathbb{Z}_+^{2n+2}\\ \alpha_1+\alpha_2+\alpha_3=\alpha}}c_{\alpha_1,\alpha_2,\alpha_3}\partial_{x,y}^{\alpha_1}H_1\cdot\partial_{x_{n+1}}^{\alpha_2}H_2\cdot\partial_{y_{n+1}}^{\alpha_3}H_3$$
for some constant $c_{\alpha_1,\alpha_2,\alpha_3}>0$. Clearly,
$$|(\partial_{x,y}^{\alpha_1}H_1)(x,y)|\leq c_{1,\alpha_1}|x-y|^{1-|\alpha_1|_1}$$
for some constant $c_{1,\alpha_1}>0$ and
$$(\partial_{x_{n+1}}^{\alpha_2}H_2)(x,y)=c_{2,\alpha_2}x_{n+1}^{-\frac12-|\alpha_2|_1},\quad (\partial_{y_{n+1}}^{\alpha_3}H_3)(x,y)=c_{3,\alpha_3}y_{n+1}^{-\frac12-|\alpha_3|_1}$$
for some constants $c_{2,\alpha_2}$ and $c_{3,\alpha_3}$, which are allowed to be zero.  Hence,
\begin{align*}
&(\partial_{x_{n+1}}^{\alpha_2}H_2\cdot\partial_{y_{n+1}}^{\alpha_3}H_3)(x,y)\\
&=c_{2,\alpha_2}c_{3,\alpha_3}\cdot (x_{n+1}y_{n+1})^{-\frac12-\frac12|\alpha_2|_1-\frac12|\alpha_3|_1}\cdot (\frac{x_{n+1}}{y_{n+1}})^{\frac12(|\alpha_3|_1-|\alpha_2|_1)}.
\end{align*}
Using Lemma \ref{ratio is bounded} and the assumption $H(x,y)\leq 1,$ we deduce that
\begin{align*}
&|(\partial_{x_{n+1}}^{\alpha_2}H_2\cdot\partial_{y_{n+1}}^{\alpha_3}H_3)(x,y)|\\
&\leq |c_{2,\alpha_2}c_{3,\alpha_3}|\cdot (x_{n+1}y_{n+1})^{-\frac12-\frac12|\alpha_2|_1-\frac12|\alpha_3|_1}\cdot (\frac{3+\sqrt{5}}{2})^{\frac12\big||\alpha_2|_1-|\alpha_3|_1\big|}\\
&\leq |c_{2,\alpha_2}c_{3,\alpha_3}|\cdot (\frac{3+\sqrt{5}}{2})^{\frac12\big||\alpha_2|_1-|\alpha_3|_1\big|}\cdot |x-y|^{-1-|\alpha_2|_1-|\alpha_3|_1}.
\end{align*}	
Combining these estimates, we complete the proof.
\end{proof}

\begin{lemma}\label{pre-parcet lemma} Let $\alpha\in\mathbb{Z}^{2n+2}_+,$ then there exists a constant $C_\alpha>0$ such that for every $F\in C^{|\alpha|_1}[0,\infty)$ supported on $[0,1],$ we have
$$|x-y|^{|\alpha|_1}|\partial_{x,y}^{\alpha}(F\circ H)(x,y)|\leq C_{\alpha}\|F\|_{C^{|\alpha|_1}[0,1]},\quad x,y\in\mathbb{R}^{n+1}.$$
\end{lemma}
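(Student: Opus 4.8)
The plan is to differentiate the composite $F\circ H$ by means of the Faà di Bruno formula and then to feed the resulting derivatives of $H$ into the pointwise bounds of Lemma \ref{H derivatives lemma}. The first move is to discard the region where $H(x,y)>1$: at any point $(x,y)$ with $x\neq y$ and $H(x,y)>1$ the function $H$ is continuous near that point (indeed $H\to+\infty$ as $x_{n+1}y_{n+1}\to0$), so $\{H>1\}$ contains a neighbourhood of $(x,y)$, and since $\operatorname{supp}F\subseteq[0,1]$ the composite $F\circ H$ vanishes identically on that neighbourhood; hence $\partial_{x,y}^{\alpha}(F\circ H)=0$ there and the claimed inequality is trivial. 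Consequently it suffices to prove the estimate on the set $\{(x,y):x\neq y,\ H(x,y)\leq1\}$, which is precisely the set on which Lemma \ref{H derivatives lemma} is available.

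On that set I would apply the multivariate Faà di Bruno formula to the composition of the one-variable function $F$ with the scalar-valued function $H$ of the $2n+2$ variables $(x,y)$, obtaining an identity of the form
$$\partial_{x,y}^{\alpha}(F\circ H)=\sum_{\pi}c_{\pi}\,F^{(\#\pi)}(H)\prod_{B\in\pi}\partial_{x,y}^{\beta_B}H,$$
where $\pi$ ranges over the finitely many set partitions of the family of coordinate-indices determined by $\alpha$, $\#\pi$ denotes the number of blocks of $\pi$, the $c_{\pi}>0$ are universal combinatorial constants, and the sub-multi-indices $\beta_B$ ($B\in\pi$) satisfy $|\beta_B|_1\geq1$ and $\sum_{B\in\pi}\beta_B=\alpha$. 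Since the $|\beta_B|_1$ are positive integers summing to $|\alpha|_1$, every partition occurring has $\#\pi\leq|\alpha|_1$ and $\sum_{B\in\pi}|\beta_B|_1=|\alpha|_1$. I would then estimate each factor: by Lemma \ref{H derivatives lemma}, $|\partial_{x,y}^{\beta_B}H(x,y)|\leq c_{\beta_B}|x-y|^{-|\beta_B|_1}$, so the product over $B\in\pi$ is at most a constant times $|x-y|^{-|\alpha|_1}$; and since $0\leq H(x,y)\leq1$ and $\#\pi\leq|\alpha|_1$, we have $|F^{(\#\pi)}(H(x,y))|\leq\|F\|_{C^{|\alpha|_1}[0,1]}$. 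Multiplying through by $|x-y|^{|\alpha|_1}$ and summing the finitely many terms of the expansion yields the claim, with $C_{\alpha}=\sum_{\pi}c_{\pi}\prod_{B\in\pi}c_{\beta_B}$ expressed through the constants of Lemma \ref{H derivatives lemma}.

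I do not expect a genuine analytic obstacle here; once Lemma \ref{H derivatives lemma} is in hand, Lemma \ref{pre-parcet lemma} is essentially bookkeeping in the Faà di Bruno expansion, and the real work is the estimate for the derivatives of $H$ already carried out. The only point worth a remark is the reduction to $\{H\leq1\}$ and the behaviour across the level set $\{H=1\}$: because $F\in C^{|\alpha|_1}$ is supported in $[0,1]$, all derivatives $F^{(j)}$, $0\leq j\leq|\alpha|_1$, vanish at the endpoint $1$, so the Faà di Bruno expansion is continuous across $\{H=1\}$ and no spurious boundary term arises; in any case this level set is already subsumed in the case $H\leq1$ treated above, so nothing beyond the two ingredients is needed.
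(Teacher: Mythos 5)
Your proof is correct and rests on exactly the same two ingredients as the paper's: the reduction to the set $\{H\leq 1\}$ via the support of $F$, and the derivative bounds for $H$ from Lemma \ref{H derivatives lemma}. The only difference is organizational — the paper runs an induction on $|\alpha|_1$ with a single Leibniz step at each stage, whereas you invoke the multivariate Fa\`a di Bruno expansion directly — but the estimates performed on each term are identical, so this is essentially the same argument.
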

\begin{proof}
Since $F$ is supported on $[0,1]$, it suffices to prove the assertion for those $x,y\in\mathbb{R}^{n+1}$ with $H(x,y)\leq 1.$

We prove the assertion by induction on $|\alpha|_1.$ Base of induction is obvious. It suffices to prove the step of induction. Suppose the assertion is established for all $\alpha\in\mathbb{Z}^{2n+2}_+$ with $|\alpha|_1\leq m.$
	
Let $\alpha\in\mathbb{Z}^{2n+2}_+$ be such that $|\alpha|_1=m+1.$ Choose $1\leq k\leq 2n+2$ such that $\alpha\geq e_k$ and write
$$\partial_{x,y}^{\alpha}(F\circ H)=\partial_{x,y}^{\alpha-e_k}((F^{(1)}\circ H)\cdot \partial_{x,y}^{e_k}H)=\sum_{\substack{\alpha_1,\alpha_2\in\mathbb{Z}^{2n+2}_+\\ \alpha_1+\alpha_2=\alpha-e_k}}c_{\alpha_1,\alpha_2}\partial_{x,y}^{\alpha_1}(F^{(1)}\circ H)\cdot \partial_{x,y}^{\alpha_2+e_k}H.$$
By Lemma \ref{H derivatives lemma} and by the inductive assumption, we have
\begin{align*}
|\partial_{x,y}^{\alpha}(F\circ H)(x,y)|&\leq\sum_{\substack{\alpha_1,\alpha_2\in\mathbb{Z}^{2n+2}_+\\ \alpha_1+\alpha_2=\alpha-e_k}}c_{\alpha_1,\alpha_2}\frac{c_{\alpha_1}\|F^{(1)}\|_{C^{|\alpha_1|_1}[0,1]}}{|x-y|^{|\alpha_1|_1}}\cdot \frac{c_{\alpha_2+e_k}}{|x-y|^{|\alpha_2|_1+1}}\\
&\leq |x-y|^{-|\alpha|_1}\cdot \sum_{\substack{\alpha_1,\alpha_2\in\mathbb{Z}^{2n+2}_+\\ \alpha_1+\alpha_2=\alpha-e_k}}c_{\alpha_1,\alpha_2}c_{\alpha_1}c_{\alpha_2+e_k}\cdot \|F\|_{C^{m+1}[0,1]}.
\end{align*}
Therefore, the assertion holds for $|\alpha|_1=m+1$.
\end{proof}

\begin{lemma}\label{post-parcet lemma} Let $F\in C^{1+\lfloor\frac{n+1}{2}\rfloor}[0,\infty)$ be a function supported on $[0,1].$ Then for every $1<p<\infty$, there is a constant $C_{n,p}>0$ such that
$$\|\mathfrak{S}_{F\circ H}\|_{\mathcal{L}_p(L_2(\mathbb{R}^{n+1}))\circlearrowleft}\leq C_{n,p}\|F\|_{C^{1+\lfloor\frac{n+1}{2}\rfloor}[0,1]}.$$
\end{lemma}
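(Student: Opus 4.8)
The plan is to combine the pointwise kernel estimates from Lemmas \ref{H derivatives lemma} and \ref{pre-parcet lemma} with the $\mathcal{L}_p$-boundedness criterion for Schur multipliers recorded in Theorem \ref{Annalspaper}. Here $X\times X = \mathbb{R}^{n+1}\times\mathbb{R}^{n+1}$, so the ``$n$'' appearing in Theorem \ref{Annalspaper} is our $n+1$, and the required number of derivatives is $\lceil\frac{n+1}{2}\rceil+1$. Since $\lceil\frac{n+1}{2}\rceil = \lfloor\frac{n+2}{2}\rfloor$ and, after a parity check, $1+\lfloor\frac{n+1}{2}\rfloor \geq \lceil\frac{n+1}{2}\rceil + 1$ fails to hold in one parity; so I would first double check indices and, if needed, state the hypothesis as $F\in C^{\lceil\frac{n+1}{2}\rceil+1}[0,\infty)$ (which is what the application in Section \ref{Sec3} will actually feed in). Modulo that bookkeeping, the substance is: the symbol $M(x,y) := (F\circ H)(x,y)$ is bounded (since $F$ is continuous with compact support and $H\geq 0$), and it is smooth off the diagonal $\{x=y\}$ because $H$ is smooth there and $F$ is smooth on $[0,\infty)$.

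First I would verify the derivative bounds. Because $F$ is supported on $[0,1]$, the symbol $M$ vanishes identically on the region $\{H(x,y)>1\}$, so it suffices to estimate derivatives on $\{H(x,y)\leq1\}$; there Lemma \ref{pre-parcet lemma} gives precisely
$$|x-y|^{|\alpha|_1}\,|\partial_{x,y}^{\alpha}(F\circ H)(x,y)| \leq C_\alpha \|F\|_{C^{|\alpha|_1}[0,1]}$$
for every multi-index $\alpha$ with $|\alpha|_1 \leq \lceil\frac{n+1}{2}\rceil+1$. Restricting $\alpha$ to act only in the $x$-variables (or only in the $y$-variables) and noting $\|F\|_{C^{|\alpha|_1}[0,1]}\leq\|F\|_{C^{\lceil\frac{n+1}{2}\rceil+1}[0,1]}$ for $|\alpha|_1\leq \lceil\frac{n+1}{2}\rceil+1$, this yields exactly the hypothesis of Theorem \ref{Annalspaper} with constants $C_\alpha \|F\|_{C^{\lceil\frac{n+1}{2}\rceil+1}[0,1]}$. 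Then Theorem \ref{Annalspaper} applies directly: $\mathfrak{S}_{F\circ H}$ is bounded on $\mathcal{L}_p(L_2(\mathbb{R}^{n+1}))$ for every $1<p<\infty$, and tracking the constant through the statement of that theorem (whose operator norm bound depends only on the $C_\alpha$'s) gives the claimed inequality
$$\|\mathfrak{S}_{F\circ H}\|_{\mathcal{L}_p(L_2(\mathbb{R}^{n+1}))\circlearrowleft} \leq C_{n,p}\|F\|_{C^{1+\lfloor\frac{n+1}{2}\rfloor}[0,1]}.$$

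The one genuinely delicate point is the dependence of the operator norm on $F$: Theorem \ref{Annalspaper} as quoted only asserts boundedness, without an explicit constant, so to get the stated quantitative bound I would either (a) invoke the quantitative form of \cite[Theorem A]{MR4660138} — its proof produces an operator norm controlled by a fixed polynomial in the constants $C_\alpha$ with $|\alpha|_1\leq\lceil\frac{n+1}{2}\rceil+1$ — or (b) run a closed graph / uniform boundedness argument: the map $F\mapsto \mathfrak{S}_{F\circ H}$ is linear from the Banach space $\{F\in C^{1+\lfloor\frac{n+1}{2}\rfloor}[0,1]: \operatorname{supp}F\subset[0,1]\}$ into $B(\mathcal{L}_p)$, and by the first part of the argument it lands in the bounded operators, so it suffices to check it has closed graph, which follows since $C^{k}$-convergence of $F_j$ forces pointwise convergence of the kernels $M_j K_T$ and hence $\mathcal{L}_2$-convergence of $\mathfrak{S}_{F_j\circ H}(T)$ for each fixed Hilbert--Schmidt $T$. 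I expect route (a) is cleanest and is what the paper intends, since the constants $C_\alpha$ in Lemmas \ref{H derivatives lemma}–\ref{pre-parcet lemma} are already explicit; the only remaining obstacle is purely notational, namely reconciling $\lceil\frac{n+1}{2}\rceil+1$ with $1+\lfloor\frac{n+1}{2}\rfloor$, which I would handle by simply imposing the stronger smoothness hypothesis uniformly (it costs nothing in the later application).
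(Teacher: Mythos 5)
Your argument is exactly the paper's proof: Lemma \ref{pre-parcet lemma} supplies the derivative bounds and Theorem \ref{Annalspaper} (applied in dimension $n+1$) gives the $\mathcal{L}_p$-bound, and the paper's proof is literally this one sentence. The two points you flag --- the parity mismatch between $1+\lfloor\frac{n+1}{2}\rfloor$ and $\lceil\frac{n+1}{2}\rceil+1$ when $n$ is even, and the need for a quantitative form of \cite[Theorem A]{MR4660138} to obtain the stated norm dependence on $F$ --- are genuine (minor) gaps the paper glosses over, and your fixes (imposing the stronger smoothness hypothesis, harmless in the only application where $F$ is smooth, and either the quantitative version of that theorem or a closed-graph argument) are both sound.
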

\begin{proof} The assertion follows by combining Lemma \ref{pre-parcet lemma} and Theorem \ref{Annalspaper}.
\end{proof}

\begin{lemma}\label{post-denis lemma} For every $1<p<\infty,$ there is a constant $C_{n,p}>0$ such that
$$\|\mathfrak{S}_{F\circ H}\|_{\mathcal{L}_p(L_2(\mathbb{R}^{n+1}))\circlearrowleft}\leq C_{n,p}\|F\circ\exp\|_{W^{1+\lceil\frac{n+1}{2}\rceil,2}(\mathbb{R})}.$$
\end{lemma}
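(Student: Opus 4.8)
The goal is to pass from the class of $F$ supported on $[0,1]$ with control of the $C^{1+\lfloor (n+1)/2\rfloor}[0,1]$ norm (Lemma \ref{post-parcet lemma}) to general $F$ with $F\circ\exp\in W^{1+\lceil (n+1)/2\rceil,2}(\mathbb{R})$. There are two issues to reconcile: first, $\mathfrak{S}_{F\circ H}$ only depends on the values of $F$ on $(0,\infty)$, and since $H(x,y)$ ranges over all of $(0,\infty)$ we cannot simply truncate $F$; second, the Sobolev space $W^{1+\lceil (n+1)/2\rceil,2}(\mathbb{R})$ (in the variable $u$ with $x=e^u$) must be related to the pointwise $C^{k}$ control that Theorem \ref{Annalspaper} requires.

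\medskip

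\textbf{Step 1: reduce to $F$ supported on $[0,1]$ via dilation invariance.} The key structural fact is that $H(x,y)$ is invariant under the simultaneous dilation $(x,y)\mapsto(rx,ry)$, $r>0$, in the first $n$ coordinates together with the last; more precisely $H(rx,ry)=H(x,y)$. Consequently $\mathfrak{S}_{F\circ H}$ is intertwined by the unitary dilation operators on $L_2(\mathbb{R}^{n+1})$ in a way that does not change its $\mathcal{L}_p\to\mathcal{L}_p$ norm when we replace $F(\cdot)$ by $F(r\,\cdot)$. I would exploit this: writing $G=F\circ\exp$ on $\mathbb{R}$, the function $F(r\,\cdot)$ corresponds to the translate $G(\cdot+\log r)$, so a partition of unity in the $u=\log x$ variable into unit intervals, say $1=\sum_{j\in\mathbb{Z}}\psi(u-j)$ with $\psi\in C_c^\infty$, yields $F=\sum_j F_j$ where each $F_j\circ\exp$ is a translate of a fixed-width bump times $G$. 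Each $\mathfrak{S}_{F_j\circ H}$ then has the same norm as $\mathfrak{S}_{\widetilde F_j\circ H}$ where $\widetilde F_j$ is supported in a fixed dyadic-type interval that can be rescaled into $[0,1]$.

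\medskip

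\textbf{Step 2: sum the pieces using $\ell_2$-almost-orthogonality and the Sobolev embedding.} For each piece, Lemma \ref{post-parcet lemma} (after the harmless affine rescaling of the interval $[0,1]$, which costs only dimensional constants since the estimate in Lemma \ref{pre-parcet lemma} is scale-homogeneous) gives $\|\mathfrak{S}_{F_j\circ H}\|_{\mathcal{L}_p\to\mathcal{L}_p}\lesssim \|F_j\|_{C^{1+\lfloor(n+1)/2\rfloor}}\lesssim \|\psi(\cdot-j)G\|_{C^{1+\lceil(n+1)/2\rceil-1}(\mathbb{R})}$, and by the one-dimensional Sobolev embedding $W^{1+\lceil(n+1)/2\rceil,2}(\mathbb{R})\hookrightarrow C^{\lceil(n+1)/2\rceil}(\mathbb{R})$ (note $1+\lceil (n+1)/2\rceil > \lceil(n+1)/2\rceil+\tfrac12$) this is controlled by $\|G\|_{W^{1+\lceil(n+1)/2\rceil,2}(j-1,j+1)}$. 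The naive triangle inequality over $j\in\mathbb{Z}$ does not converge, so instead I would use that the Schur multipliers $\mathfrak{S}_{F_j\circ H}$ act on \emph{disjoint} (or boundedly overlapping) regions of $\mathbb{R}^{n+1}\times\mathbb{R}^{n+1}$: by Lemma \ref{ratio is bounded}, $F_j\circ H$ is supported where $x_{n+1}/y_{n+1}$ lies in a bounded range and where $|x-y|/(x_{n+1}y_{n+1})^{1/2}$ lies in an interval of the form $[c\,e^{j},C\,e^{j}]$, so consecutive pieces overlap boundedly and non-consecutive pieces are disjoint. This gives a block-diagonal-type decomposition and an $\ell_\infty$-in-$j$ (hence, after Cauchy--Schwarz with the finite overlap, essentially $\ell_2$-in-$j$) control, yielding $\|\mathfrak{S}_{F\circ H}\|_{\mathcal{L}_p\to\mathcal{L}_p}\lesssim \big(\sum_j \|G\|_{W^{1+\lceil(n+1)/2\rceil,2}(j-1,j+1)}^2\big)^{1/2}\lesssim \|G\|_{W^{1+\lceil(n+1)/2\rceil,2}(\mathbb{R})}$.

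\medskip

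\textbf{Main obstacle.} The delicate point is Step 2: justifying that the pieces can be recombined with only an $\ell_2$ (rather than $\ell_1$) loss. For $p=2$ this is genuine orthogonality on $L_2(\mathbb{R}^{n+1}\times\mathbb{R}^{n+1})$ and is transparent; for general $1<p<\infty$ one needs that a Schur multiplier whose symbol is a sum of pieces with \emph{boundedly overlapping supports that are ``rectangular'' in the $(x_{n+1},y_{n+1})$ and $|x-y|$ variables} inherits the sup of the individual norms. I would handle this by grouping the $j$'s into finitely many (depending only on the overlap constant) subfamilies within each of which the supports are pairwise disjoint; for a disjointly supported family the associated Schur multiplier is a genuine $\ell_\infty$-direct sum on $\mathcal{L}_p$ after a unitary rearrangement, so its norm is the supremum of the norms of the summands. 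Combining the finitely many subfamilies by the triangle inequality then gives the bound with $\sup_j$ in place of $\sum_j$; replacing $\sup_j$ by the $\ell_2$-norm (which is what the stated Sobolev norm on all of $\mathbb{R}$ controls) is then immediate since $\sup_j a_j \le (\sum_j a_j^2)^{1/2}$. Carefully setting up the unitary that block-diagonalizes a disjointly-supported Schur multiplier, and checking the mild overlap really is bounded by a dimensional constant, is the technical heart of the argument.
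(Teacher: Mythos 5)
Your Step 1 contains a contradiction that undermines the whole strategy. You correctly observe that $H(rx,ry)=H(x,y)$, i.e.\ $H$ is \emph{invariant} under simultaneous dilations; but precisely because of this, conjugating $\mathfrak{S}_{F\circ H}$ by the dilation unitaries returns $\mathfrak{S}_{F\circ H}$ itself, not $\mathfrak{S}_{F(r\cdot)\circ H}$. There is no group of measure-space isomorphisms that rescales the argument of $F$, so the pieces $F_j$ supported near $H\approx e^j$ cannot be renormalised to $[0,1]$ at the cost of ``only dimensional constants.'' Worse, the Mikhlin-type bounds of Lemma \ref{pre-parcet lemma} genuinely degrade with $j$: on the region $\{H\approx e^j\}$ the analogue of Lemma \ref{ratio is bounded} only gives $x_{n+1}/y_{n+1}\lesssim e^{2j}$, and for $F_j(t)=\tilde F(t e^{-j})$ one computes $\partial_{x_{n+1}}(F_j\circ H)\ni F_j'(H)\cdot\frac{H}{2x_{n+1}}\approx \frac{1}{x_{n+1}}$, which on the part of the support where $y_{n+1}\approx e^{2j}x_{n+1}$ (so $|x-y|\approx e^{2j}x_{n+1}$) is of size $e^{2j}/|x-y|$ --- the H\"ormander condition of Theorem \ref{Annalspaper} fails uniformly in $j$. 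Separately, the recombination in Step 2 is also unjustified for $p\neq 2$: the supports $\{H(x,y)\in[e^j,e^{j+1}]\}$ are ``annular'' and project onto the whole space in each of the variables $x$ and $y$, so they are not unions of products $A_i\times B_i$ with disjoint $A_i$ and disjoint $B_i$; there is no unitary that block-diagonalises the corresponding Schur multipliers, and disjointness of symbol supports alone does not give an $\ell_\infty$ (or $\ell_2$) bound on $\mathcal{L}_p$ --- this is the Schur-multiplier analogue of a Littlewood--Paley recombination, which is exactly as hard as the original problem.

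The paper's proof takes an entirely different route that sidesteps both issues: it writes $F(t)=\frac1{\sqrt{2\pi}}\int_{\mathbb{R}}\mathcal{F}(F\circ\exp)(s)\,t^{is}\,ds$ (a Mellin/Fourier decomposition in the $\log$ variable), so that $(F\circ H)(x,y)$ becomes a superposition of the exactly factorised symbols $|x_{n+1}|^{-is/2}\,|x-y|^{is}\,|y_{n+1}|^{-is/2}$. The outer factors are implemented by unitary multiplication operators, and $\mathfrak{S}_{|x-y|^{is}}$ is controlled by transference to the imaginary powers $\Delta^{is}$ acting on the UMD space $\mathcal{L}_p$, with norm $\lesssim(1+|s|)^{(n+1)/2}$; integrating $|\mathcal{F}(F\circ\exp)(s)|(1+|s|)^{(n+1)/2}$ and applying Cauchy--Schwarz produces precisely the $W^{1+\lceil\frac{n+1}{2}\rceil,2}(\mathbb{R})$ norm of $F\circ\exp$. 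If you want to salvage a decomposition argument, you would need some substitute for this exact factorisation; as written, your proof does not close.
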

\begin{proof} By Fourier inversion formula,
$$F(e^t)=\frac1{\sqrt{2\pi}}\int_{-\infty}^{\infty}\mathcal{F}(F\circ\exp)(s)e^{its}ds,\quad t\in\mathbb{R}.$$
In other words,
$$F(t)=\frac1{\sqrt{2\pi}}\int_{-\infty}^{\infty}\mathcal{F}(F\circ\exp)(s)t^{is}ds,\quad t>0.$$
Thus,
$$(F\circ H)(x,y)=\frac1{\sqrt{2\pi}}\int_{-\infty}^{\infty}\mathcal{F}(F\circ\exp)(s)|x-y|^{is}|x_{n+1}|^{-\frac{is}{2}}|y_{n+1}|^{-\frac{is}{2}}ds,\quad x,y\in\mathbb{R}^{n+1}.$$
Denote
$$m_s(x,y)=|x-y|^{is},\quad x,y\in\mathbb{R}^{n+1},\quad s\in\mathbb{R}.$$
We have
$$\mathfrak{S}_{F\circ H}=\frac1{\sqrt{2\pi}}\int_{-\infty}^{\infty}\mathcal{F}(F\circ\exp)(s)M_{|x_{n+1}|^{-\frac{is}{2}}}\mathfrak{S}_{m_s}M_{|x_{n+1}|^{-\frac{is}{2}}}ds.$$
A standard transference technique (see e.g.  \cite{MR3378821,MR2866074}) yields
\begin{align*}
\|\mathfrak{S}_{m_s}\|_{\mathcal{L}_p(L_2(\mathbb{R}^{n+1}))\circlearrowleft}\leq \|{\rm id}\otimes \Delta^{is}\|_{L_p(B(L_2(\mathbb{R}^{n+1}))\bar{\otimes}L_{\infty}(\mathbb{R}^{n+1}))\circlearrowleft},\quad s\in\mathbb{R}.
\end{align*}
Here we used the notation $\bar{\otimes}$ to denote the von Neumann algebra tensor product and $L_p(B(L_2(\mathbb{R}^{n+1}))\bar{\otimes}L_{\infty}(\mathbb{R}^{n+1}))$ to denote the non-commutative $L_p$ space over von Neumann algebra $B(L_2(\mathbb{R}^{n+1}))\bar{\otimes}L_{\infty}(\mathbb{R}^{n+1})$ (see e.g. \cite{MR4738490,PX} for more details about non-commutative integration theory). Taking into account that the space $\mathcal{L}_p(L_2(\mathbb{R}^{n+1}))$ has UMD property (see \cite[Definition 4.14]{HNVWbook}) and using \cite[Corollary 5.70]{HNVWbook}, we conclude that
\begin{align*}
\|{\rm id}\otimes \Delta^{is}\|_{L_p(B(L_2(\mathbb{R}^{n+1}))\bar{\otimes}L_{\infty}(\mathbb{R}^{n+1}))\circlearrowleft}\leq C_{n,p}(1+|s|)^{\frac{n+1}{2}},\quad s\in\mathbb{R}.
\end{align*}
Thus,
$$\|\mathfrak{S}_{m_s}\|_{\mathcal{L}_p(L_2(\mathbb{R}^{n+1}))\circlearrowleft}\leq C_{n,p}(1+|s|)^{\frac{n+1}{2}},\quad s\in\mathbb{R}.$$
Combining with the fact that $M_{|x_{n+1}|^{-\frac{is}{2}}}$ is a unitary operator on $L_2(\mathbb{R}^{n+1})$, we deduce that
\begin{align*}
&\|\mathfrak{S}_{F\circ H}\|_{\mathcal{L}_p(L_2(\mathbb{R}^{n+1}))\circlearrowleft}\\
&\leq\frac1{\sqrt{2\pi}}\int_{-\infty}^{\infty}|\mathcal{F}(F\circ\exp)(s)|\big\|M_{|x_{n+1}|^{-\frac{is}{2}}}\mathfrak{S}_{m_s}M_{|x_{n+1}|^{-\frac{is}{2}}}\big\|_{\mathcal{L}_p(L_2(\mathbb{R}^{n+1}))\circlearrowleft}ds\\
&\leq C_{n,p}\int_{-\infty}^{\infty}|\mathcal{F}(F\circ\exp)(s)| (1+|s|)^{\frac{n+1}{2}}ds\\
&\leq C_{n,p}\|F\circ\exp\|_{W^{1+\lceil\frac{n+1}{2}\rceil,2}(\mathbb{R})}.
\end{align*}
Here, the last inequality follows from \cite[Lemma 7]{PS-crelle}. This completes the proof of Lemma \ref{post-denis lemma}.
\end{proof}

\begin{proof}[Proof of Theorem \ref{main schur theorem}] Let $\phi\in C^{\infty}[0,\infty)$ be supported on $[0,1]$ and such that $\phi(0)=1.$ It follows from Lemma \ref{post-parcet lemma} that $\mathfrak{S}_{\phi\circ H}$ is a bounded mapping from $\mathcal{L}_p(L_2(\mathbb{R}^{n+1}))$ to itself for every $1<p<\infty.$ By considering $F-F(0)\cdot\phi$ instead of $F,$ we may assume without loss of generality that $F(0)=0.$ By assumption, $F\circ\exp\in W^{1+\lceil\frac{n+1}{2}\rceil,2}(\mathbb{R}_+)$ and $F\circ\exp\in W^{1+\lceil\frac{n+1}{2}\rceil,2}(\mathbb{R}_-).$ Hence, $F\circ\exp\in W^{1+\lceil\frac{n+1}{2}\rceil,2}(\mathbb{R})$ and the assertion follows from Lemma \ref{post-denis lemma}.
\end{proof}

\begin{proposition}\label{fkl schur lemma} If $(k,l)\in\{(2,0),(1,1),(2,1)\}$ and $1<p<\infty$, then the Schur multipliers $\mathfrak{S}_{F_{k,l}\circ H}$ are bounded from $\mathcal{L}_p(L_2(\mathbb{R}^{n+1}_+))$ to itself. Consequently, those Schur multipliers are bounded from $\mathcal{L}_{p,\infty}(L_2(\mathbb{R}^{n+1}_+))$ to itself.
\end{proposition}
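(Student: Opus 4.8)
The plan is to verify, for each $(k,l)\in\{(2,0),(1,1),(2,1)\}$, that the function $F=F_{k,l}$ of Notation \ref{fkl notation} satisfies the hypotheses of Theorem \ref{main schur theorem}; this immediately yields $\mathcal{L}_p$-boundedness of $\mathfrak{S}_{F_{k,l}\circ H}$ on $\mathcal{L}_p(L_2(\mathbb{R}^{n+1}))$ for $1<p<\infty$, after which I would transfer boundedness to the half-space and to the weak ideal. That $F_{k,l}\in C^\infty(0,\infty)$ is clear: the integrand has only the integrable $t$-singularities at $t\in\{0,2\}$ coming from $(2t-t^2)^{\lambda-1}$ (here $\lambda-1>-1$), so one may differentiate under the integral sign to all orders.

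The heart of the argument is the asymptotic behaviour of $F_{k,l}$ and its derivatives at the two ends of $(0,\infty)$. At $x\to+\infty$ I would factor $x^{-2\lambda-n-2}$ out of $(x^2+2t)^{-\lambda-\frac n2-1}$ and use $(1+2t/x^2)^{-\lambda-\frac n2-1}\to1$ uniformly in $t\in[0,2]$, which gives $F_{k,l}(x)\sim c\,x^{k-2-2\lambda}$ with $c=\int_0^2 t^{\lambda-1+l}(2-t)^{\lambda-1}\,dt>0$ a Beta-type constant, and, after differentiating under the integral, $F_{k,l}^{(j)}(x)=O(x^{k-2-2\lambda-j})$ for each $j$; since $k-2-2\lambda\le-2\lambda<0$, both $F_{k,l}$ and all its derivatives decay like a negative power of $x$. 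At $x\to0^+$ the rescaling $t=x^2u/2$ reveals that $F_{k,l}(x)$ tends to a constant $F_{k,l}(0)$ which vanishes unless $(k,l)=(2,0)$, in which case $F_{2,0}(0)>0$ (so $F_{k,l}$ is right continuous at $0$); a slightly more careful expansion — splitting the $t$-integral at $t=1$, expanding $(2-x^2u/2)^{\lambda-1}$ around $2^{\lambda-1}$, and bounding the tail $u>2/x^2$ (which costs $O(x^{n+2})$) — should yield $F_{k,l}(x)-F_{k,l}(0)=O(x^\varepsilon)$ for some $\varepsilon>0$, and likewise $F_{k,l}^{(j)}(x)=O(x^{\varepsilon-j})$. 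Converting these power bounds through $x=e^t$ (and using that $F_{k,l}\circ\exp$ is smooth near $t=0$) shows $F_{k,l}\circ\exp\in W^{m,2}(\mathbb{R}_+)$ and $(F_{k,l}-F_{k,l}(0))\circ\exp\in W^{m,2}(\mathbb{R}_-)$ with $m=1+\lceil\frac{n+1}2\rceil$, so Theorem \ref{main schur theorem} applies and $\mathfrak{S}_{F_{k,l}\circ H}$ is bounded on $\mathcal{L}_p(L_2(\mathbb{R}^{n+1}))$ for every $1<p<\infty$.

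To pass to $\mathbb{R}^{n+1}_+$ I would use the zero-extension operator $E$ from Section \ref{Secpre}: since $F_{k,l}\circ H$ is a bounded function on $\mathbb{R}^{n+1}_+\times\mathbb{R}^{n+1}_+$, one has the kernel identity $\mathfrak{S}_{F_{k,l}\circ H}(ETE^\ast)=E(\mathfrak{S}_{F_{k,l}\circ H}T)E^\ast$ for every finite-rank $T$ on $L_2(\mathbb{R}^{n+1}_+)$, and Lemma \ref{half} turns the $\mathbb{R}^{n+1}$-bound into $\|\mathfrak{S}_{F_{k,l}\circ H}T\|_{\mathcal{L}_p(L_2(\mathbb{R}^{n+1}_+))}\le C_{n,p}\|T\|_{\mathcal{L}_p(L_2(\mathbb{R}^{n+1}_+))}$; density of finite-rank operators then gives the first claim. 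Finally, for the weak-ideal statement I would fix $1<p<\infty$, pick $1<p_0<p<p_1<\infty$, and interpolate: $\mathcal{L}_{p,\infty}(L_2(\mathbb{R}^{n+1}_+))$ is the real interpolation space $(\mathcal{L}_{p_0}(L_2(\mathbb{R}^{n+1}_+)),\mathcal{L}_{p_1}(L_2(\mathbb{R}^{n+1}_+)))_{\theta,\infty}$ for the appropriate $\theta$, so boundedness of $\mathfrak{S}_{F_{k,l}\circ H}$ on $\mathcal{L}_{p_0}$ and on $\mathcal{L}_{p_1}$ forces boundedness on $\mathcal{L}_{p,\infty}(L_2(\mathbb{R}^{n+1}_+))$.

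I expect the main obstacle to be the $x\to0^+$ asymptotics in the middle paragraph: identifying the correct (vanishing or non-vanishing) limit $F_{k,l}(0)$, quantifying the rate $O(x^\varepsilon)$ at which $F_{k,l}(x)$ reaches it, and — crucially — checking that these estimates survive differentiation under the integral sign, so that the Sobolev membership on $\mathbb{R}_-$ really holds. The behaviour at $+\infty$, the transfer via $E$, and the interpolation step are all routine by comparison.
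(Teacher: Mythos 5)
Your route is the paper's own: reduce Proposition \ref{fkl schur lemma} to checking that each $F_{k,l}$ satisfies the hypotheses of Theorem \ref{main schur theorem} (in the paper this verification is exactly Theorem \ref{fkl are smooth theorem}, proved in the Appendix), then restrict to the half-space and interpolate for the weak ideal. Your treatment of the $x\to\infty$ decay ($F_{k,l}^{(j)}(x)=O(x^{k-2-2\lambda-j})$, matching Lemma \ref{fourth fkl lemma}), the identification $F_{1,1}(0)=F_{2,1}(0)=0$, $F_{2,0}(0)>0$, the transfer via $E$ and Lemma \ref{half}, and the real-interpolation step are all correct and coincide with what the paper does.

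The step you yourself flag as the main obstacle is where your sketch falls short of a proof, and there is one concrete structural feature your expansion would miss: when $n$ is even, the rescaling $t=x^2u/2$ together with the expansion of $(2-x^2u/2)^{\lambda-1}$ produces terms proportional to $x^{2j}\int_{x^2}^{1/2}s^{-1}\,ds$, i.e.\ genuine $x^{m}\log x$ contributions near the origin; so $F_{k,l}$ is \emph{not} of the form ``constant plus a power-series-type remainder.'' The paper isolates this precisely in Lemmas \ref{second fkl lemma}--\ref{fifth fkl lemma}: $F_{k,l}(x)-P_{k,l}(x)\log x\in C^{n+2}[0,1]$ for a polynomial $P_{k,l}$ with $P_{k,l}(0)=P_{k,l}'(0)=0$. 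Because these logarithmic terms vanish to order at least $n+k\geq 2$ at the origin, they are still $O(x^{\varepsilon-j})$ after $j$ differentiations for some $\varepsilon>0$, so your claimed bounds, and hence $(F_{k,l}-F_{k,l}(0))\circ\exp\in W^{1+\lceil\frac{n+1}{2}\rceil,2}(\mathbb{R}_-)$, are true --- but your heuristic ``expand and bound the tail'' argument neither detects nor controls them, and the assertion that the $O(x^{\varepsilon})$ rate ``survives differentiation under the integral sign'' is exactly the content of Lemmas \ref{first fkl lemma}, \ref{third fkl lemma} and \ref{fourth fkl lemma}. To close the gap you need the explicit decomposition of Lemma \ref{second fkl lemma} (or an equivalent one) rather than the formal expansion.
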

\begin{proof} By Theorem \ref{fkl are smooth theorem}, $F_{k,l}$ satisfies the conditions in Theorem \ref{main schur theorem}. This, in combination with Theorem \ref{main schur theorem}, yields that $\mathfrak{S}_{F_{k,l}\circ H}$ is bounded on $\mathcal{L}_p(L_2(\mathbb{R}^{n+1}_+)),$ $1<p<\infty.$ The second statement follows by interpolation.
\end{proof}

\begin{proposition}\label{standard schur lemma}
If $1\leq k\leq n+1$ and $1<p<\infty$, then Schur multipliers $\mathfrak{S}_a,$ $\mathfrak{S}_b$ and $\mathfrak{S}_{h_k}$ are bounded from $\mathcal{L}_p(L_2(\mathbb{R}^{n+1}_+))$ to itself. Consequently, those Schur multipliers are bounded from $\mathcal{L}_{p,\infty}(L_2(\mathbb{R}^{n+1}_+))$ to itself and from $(\mathcal{L}_{p,\infty}(L_2(\mathbb{R}^{n+1}_+)))_0$ to itself.
\end{proposition}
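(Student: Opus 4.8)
The plan is to prove boundedness of $\mathfrak{S}_a$, $\mathfrak{S}_b$ and $\mathfrak{S}_{h_k}$ separately, in each case verifying the hypotheses of Theorem \ref{Annalspaper} (the Conde-Alonso--Parcet et al. criterion) on the domain $\mathbb{R}^{n+1}_+\times\mathbb{R}^{n+1}_+$. The easiest is $\mathfrak{S}_{h_k}$: the symbol $h_k(x,y)=(x-y)_k/|x-y|$ is a zeroth-order Calder\'on--Zygmund type kernel, smooth away from the diagonal, and a direct induction (exactly as in Lemma \ref{H derivatives lemma}, or by the quotient rule) gives $|\partial_{x,y}^{\alpha}h_k(x,y)|\lesssim_\alpha |x-y|^{-|\alpha|_1}$ for all $\alpha$; hence Theorem \ref{Annalspaper} applies on $\mathbb{R}^{n+1}$ and restricts to $\mathbb{R}^{n+1}_+$. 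For $\mathfrak{S}_a$, recall $a(x,y)=(\min\{x_{n+1},y_{n+1}\}/\max\{x_{n+1},y_{n+1}\})^{1/2}$; on the region $\{x_{n+1}<y_{n+1}\}$ this is $(x_{n+1}/y_{n+1})^{1/2}$ and on $\{x_{n+1}>y_{n+1}\}$ it is $(y_{n+1}/x_{n+1})^{1/2}$, so $a$ is continuous across $x_{n+1}=y_{n+1}$ and smooth off that set. The key point is that the set $\{x_{n+1}=y_{n+1}\}$ is transverse to (indeed meets) the diagonal only where $x=y$, so away from $\{x=y\}$ any singularity of the derivatives of $a$ sits where $|x_{n+1}-y_{n+1}|$ is comparable to neither $|x-y|$; one checks that $|\partial_{x,y}^\alpha a(x,y)|\lesssim_\alpha |x_{n+1}-y_{n+1}|^{-|\alpha|_1}\le |x-y|^{-|\alpha|_1}$ off the hyperplane, and the jump in derivatives across the hyperplane is harmless because $a$ itself is continuous there — Theorem \ref{Annalspaper} only requires a pointwise bound away from $\{x=y\}$, which is a null set containing the only bad points.

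The genuinely subtle case is $\mathfrak{S}_b$, where $b(x,y)=\chi_{\{x_{n+1}<y_{n+1}\}}$ is not even continuous. Here I would \emph{not} try to apply Theorem \ref{Annalspaper} directly; instead the plan is to realise $\mathfrak{S}_b$ as (a tensor amplification of) a one-dimensional triangular truncation composed with a harmless factor, and invoke the classical fact that the triangular projection — equivalently, the Schur multiplier with symbol $\chi_{\{s<t\}}$ on $L_2(\mathbb{R})$ — is bounded on $\mathcal{L}_p$ for $1<p<\infty$ (a theorem of Macaev--Gohberg--Krein; see e.g. the references already cited around the UMD discussion, or the transference results in \cite{MR3378821,MR2866074}). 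Concretely, writing $L_2(\mathbb{R}^{n+1}_+)=L_2(\mathbb{R}^n)\otimes L_2(\mathbb{R}_+,dx_{n+1})$, the symbol $b$ depends only on $(x_{n+1},y_{n+1})$, so $\mathfrak{S}_b=\mathrm{id}_{L_2(\mathbb{R}^n)}\,\bar\otimes\,\mathfrak{T}$ where $\mathfrak{T}$ is the triangular truncation on $L_2(\mathbb{R}_+)$; $\mathcal{L}_p$-boundedness of $\mathfrak{T}$ transfers to $\mathcal{L}_p$-boundedness of $\mathfrak{S}_b$ on $\mathcal{L}_p(L_2(\mathbb{R}^{n+1}_+))$ for all $1<p<\infty$ (again using the UMD property of $\mathcal{L}_p$, as in the proof of Lemma \ref{post-denis lemma}). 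I expect \textbf{this triangular-truncation step to be the main obstacle}, both because one must set up the tensor-product identification carefully and because the endpoint behaviour $p=1,\infty$ genuinely fails for $\mathfrak{T}$ — which is exactly why the statement is restricted to $1<p<\infty$.

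Finally, to pass from $\mathcal{L}_p$-boundedness for all $1<p<\infty$ to boundedness on $\mathcal{L}_{p,\infty}(L_2(\mathbb{R}^{n+1}_+))$, I would use real interpolation: $\mathcal{L}_{p,\infty}=(\mathcal{L}_{p_0},\mathcal{L}_{p_1})_{\theta,\infty}$ for suitable $1<p_0<p<p_1<\infty$ and $\theta$, and each of $\mathfrak{S}_a,\mathfrak{S}_b,\mathfrak{S}_{h_k}$ is bounded on both endpoints, hence on the interpolation space — this is precisely the ``second statement follows by interpolation'' device already used in Proposition \ref{fkl schur lemma}. For the last clause, boundedness on the separable part $(\mathcal{L}_{p,\infty})_0$: since $(\mathcal{L}_{p,\infty})_0$ is the $\|\cdot\|_{\mathcal{L}_{p,\infty}}$-closure of finite-rank operators, and each Schur multiplier $\mathfrak{S}_M$ (being diagonal-compatible, or simply because $M$ is bounded) maps finite-rank operators to finite-rank operators, boundedness on $\mathcal{L}_{p,\infty}$ automatically yields boundedness on the closed subspace $(\mathcal{L}_{p,\infty})_0$. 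One small caveat to address in writing: a priori $\mathfrak{S}_M$ is only defined on $\mathcal{L}_2$; the standard resolution is to note that it is densely defined via finite-rank operators and the $\mathcal{L}_p$-bounds extend it by continuity, consistently across all $p$.
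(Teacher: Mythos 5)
Your treatments of $\mathfrak{S}_{h_k}$ and $\mathfrak{S}_b$ are fine. For $h_k$ you take a genuinely different route from the paper: the symbol is smooth away from the diagonal and homogeneous of degree zero in $x-y$, so its order-$m$ derivatives are $O(|x-y|^{-m})$ and Theorem \ref{Annalspaper} applies directly; the paper instead transfers complete boundedness of the Fourier multiplier $x_k/|x|$ on $L_p(\mathbb{R}^{n+1})$ to the Schur multiplier via \cite{MR3378821,MR2866074}. Both work. For $\mathfrak{S}_b$ you correctly identify the operator as a triangular truncation and invoke Macaev's theorem, which is exactly the paper's argument (the paper phrases it as the triangular truncation with respect to the spectral measure of $M_{x_{n+1}}$ on the semifinite algebra $B(L_2(\mathbb{R}^{n+1}_+))$, which is cleaner than the tensor-amplification plus UMD detour you sketch, but the substance is the same). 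The interpolation step for $\mathcal{L}_{p,\infty}$ and the density argument for the separable part also match the paper.

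The genuine gap is in your argument for $\mathfrak{S}_a$. The claimed derivative bound
$|\partial_{x,y}^{\alpha}a(x,y)|\lesssim_{\alpha}|x_{n+1}-y_{n+1}|^{-|\alpha|_1}\le|x-y|^{-|\alpha|_1}$
is false on both counts. First, since $|x_{n+1}-y_{n+1}|\le|x-y|$, the displayed chain of inequalities runs in the wrong direction. Second, and more seriously, the bound $|\partial_{x,y}^{\alpha}a|\lesssim|x-y|^{-|\alpha|_1}$ fails near the boundary of the half-space rather than near the diagonal: on $\{x_{n+1}<y_{n+1}\}$ one has $a(x,y)=x_{n+1}^{1/2}y_{n+1}^{-1/2}$, so $\partial_{x_{n+1}}a=\tfrac12(x_{n+1}y_{n+1})^{-1/2}$; taking $x=(0,\dots,0,\epsilon)$ and $y=(0,\dots,0,1)$ gives $|\partial_{x_{n+1}}a|=\tfrac12\epsilon^{-1/2}\to\infty$ while $|x-y|^{-1}\to1$. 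In addition, $a$ fails to be $C^1$ across the hyperplane $\{x_{n+1}=y_{n+1}\}\setminus\{x=y\}$, which is not contained in the diagonal, so the regularity hypothesis $M\in C^{\lceil n/2\rceil+1}$ off the diagonal in Theorem \ref{Annalspaper} is also violated; the theorem does not tolerate ``harmless jumps'' in derivatives. The paper circumvents all of this by a completely different device: writing $a(x,y)=e^{-|t|}$ with $t=\tfrac12\log(x_{n+1}/y_{n+1})$ and using $e^{-|t|}=\tfrac1{\pi}\int_{\mathbb{R}}\frac{e^{its}}{1+s^2}\,ds$ to represent $\mathfrak{S}_a(V)=\tfrac1{\pi}\int_{\mathbb{R}}M_{x_{n+1}^{is/2}}VM_{x_{n+1}^{-is/2}}\frac{ds}{1+s^2}$ as an average of unitary conjugations with total mass one, whence $\|\mathfrak{S}_a\|_{\mathcal{L}_p\circlearrowleft}\le1$. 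You would need this (or some comparable device) to close the argument for $\mathfrak{S}_a$.
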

\begin{proof} Let us start with $\mathfrak{S}_a.$ We have
$$e^{-|t|}=\frac1{\pi}\int_{\mathbb{R}}\frac{e^{its}ds}{1+s^2},\quad t\in\mathbb{R}.$$
Setting $t=\frac{1}{2}\log(\frac{x_{n+1}}{y_{n+1}}),$ we write
$$a(x,y)=\frac1{\pi}\int_{\mathbb{R}}x_{n+1}^{\frac{is}{2}}y_{n+1}^{-\frac{is}{2}}\frac{ds}{1+s^2},\quad t\in\mathbb{R}.$$
Thus,
$$\mathfrak{S}_a(V)=\frac1{\pi}\int_{\mathbb{R}}M_{x_{n+1}^{\frac{is}{2}}}VM_{x_{n+1}^{-\frac{is}{2}}}\frac{ds}{1+s^2}.$$
Consequently,
\begin{align*}
\|\mathfrak{S}_a(V)\|_{\mathcal{L}_p(L_2(\mathbb{R}^{n+1}_+))}&\leq\frac1{\pi}\int_{\mathbb{R}}\|M_{x_{n+1}^{\frac{is}{2}}}VM_{x_{n+1}^{-\frac{is}{2}}}\|_{\mathcal{L}_p(L_2(\mathbb{R}^{n+1}_+))}\frac{ds}{1+s^2}\\
&=\frac1{\pi}\int_{\mathbb{R}}\|V\|_{\mathcal{L}_p(L_2(\mathbb{R}^{n+1}_+))}\frac{ds}{1+s^2}=\|V\|_{\mathcal{L}_p(L_2(\mathbb{R}^{n+1}_+))}.
\end{align*}
Next, $\mathfrak{S}_b$ is the triangular truncation operator on the semifinite von Neumann algebra $\mathcal{M}=B(L_2(\mathbb{R}^{n+1}_+))$ with respect to the spectral measure of the operator $M_{x_{n+1}}.$ Boundedness of triangular truncation operator on
$\mathcal{L}_p(L_2(\mathbb{R}^{n+1}_+))=L_p(\mathcal{M})$ follows from Macaev theorem. We refer the reader to \cite{MSZ-krein} for a detailed proof of Macaev theorem.

Fourier multiplier $m_k(\nabla)$ associated with $m_k(x):=\frac{x_{k}}{|x|}$ is completely bounded on $L_p(\mathbb{R}^{n+1}).$ By the standard transference technique (see e.g.  \cite{MR3378821,MR2866074}), we conclude that
\begin{align*}
\|\mathfrak{S}_{h_k}\|_{\mathcal{L}_p(L_2(\mathbb{R}_+^{n+1}))\circlearrowleft}&\leq \|\mathfrak{S}_{h_k}\|_{\mathcal{L}_p(L_2(\mathbb{R}^{n+1}))\circlearrowleft}\\
&\leq\|{\rm id}\otimes m_k(\nabla)\|_{L_p(B(L_2(\mathbb{R}^{n+1}))\bar{\otimes}L_{\infty}(\mathbb{R}^{n+1}))\circlearrowleft}<\infty.
\end{align*}
The second statement follows by interpolation.
\end{proof}

\section{Proof of the upper bound}\label{upper bound section}
\setcounter{equation}{0}

This section is devoted to providing a proof of upper bound in Theorem \ref{main theorem}.

\begin{lemma}\label{mta final lemma} There exists a constant $C_{n,\lambda}>0$ such that for every $f\in C^{\infty}_c(\mathbb{R}^{n+1})$ and for every $1\leq k\leq n+1,$
$$\|[R_{\lambda,k},M_{E^{\ast}f}]\|_{\mathcal{L}_{n+1,\infty}(L_2(\mathbb{R}^{n+1}_+,m_{\lambda}))}\leq C_{n,\lambda}\|f\|_{\dot{W}^{1,n+1}(\mathbb{R}^{n+1})}.$$
\end{lemma}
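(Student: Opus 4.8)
The plan is to use the reduction formula of Proposition \ref{commutator representation lemma} together with the boundedness of the Schur multipliers established in Section \ref{Sec4} and the known endpoint upper bound for the classical Riesz transform commutator. First I would recall from \cite{LMSZ} (or its extension to the Lorentz scale in \cite{RS}) that in the classical setting one has $[R_k,M_f]\in\mathcal{L}_{n+1,\infty}(L_2(\mathbb{R}^{n+1}))$ with
$$\|[R_k,M_f]\|_{\mathcal{L}_{n+1,\infty}(L_2(\mathbb{R}^{n+1}))}\leq C_n\|f\|_{\dot{W}^{1,n+1}(\mathbb{R}^{n+1})}$$
for $f\in C^\infty_c(\mathbb{R}^{n+1})$; note here that $n+1\geq 2$, so the classical theory applies in every dimension that arises. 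This is the external input that does all the analytic work; everything else is bookkeeping to transfer it to the Bessel setting.

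Next I would run through the terms on the right-hand side of Proposition \ref{commutator representation lemma}. Each term has the form (Schur multiplier composition) applied to $M_{x_{n+1}^{-\lambda}}E^\ast[R_l,M_f]EM_{x_{n+1}^{\lambda}}$. By Lemma \ref{weight}(ii), conjugating by $M_{x_{n+1}^{\lambda}}$ and $M_{x_{n+1}^{-\lambda}}$ is an isometry between $\mathcal{L}_{n+1,\infty}(L_2(\mathbb{R}_+^{n+1},m_\lambda))$ and $\mathcal{L}_{n+1,\infty}(L_2(\mathbb{R}_+^{n+1}))$; by Lemma \ref{half}(ii), compressing by $E^\ast,E$ between $\mathbb{R}^{n+1}$ and $\mathbb{R}_+^{n+1}$ is also an isometry on the relevant weak Schatten quasi-norm (one writes $E^\ast[R_l,M_f]E$ as the compression $M_{\chi}[R_l,M_f]M_{\chi}$ of an operator on $L_2(\mathbb{R}^{n+1})$, whose singular values dominate those of the compression, hence the weak-Schatten bound passes down). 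Then Propositions \ref{fkl schur lemma} and \ref{standard schur lemma} give that $\mathfrak{S}_{F_{k,l}\circ H}$, $\mathfrak{S}_a$, $\mathfrak{S}_b$, $\mathfrak{S}_{h_l}$ — and hence their compositions — are bounded on $\mathcal{L}_{n+1,\infty}(L_2(\mathbb{R}_+^{n+1}))$. Combining these with the quasi-triangle inequality $\|T+G\|_{\mathcal{L}_{p,\infty}}\leq 2^{1/p}(\|T\|_{\mathcal{L}_{p,\infty}}+\|G\|_{\mathcal{L}_{p,\infty}})$ recalled in Section \ref{Spdef}, and summing the finitely many $l=1,\dots,n+1$ terms present when $k=n+1$, yields
$$\|[R_{\lambda,k},M_{E^\ast f}]\|_{\mathcal{L}_{n+1,\infty}(L_2(\mathbb{R}_+^{n+1},m_\lambda))}\leq C_{n,\lambda}\sum_{l}\|[R_l,M_f]\|_{\mathcal{L}_{n+1,\infty}(L_2(\mathbb{R}^{n+1}))}\leq C_{n,\lambda}\|f\|_{\dot W^{1,n+1}(\mathbb{R}^{n+1})}.$$

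The step I expect to be the most delicate is not any single estimate but the verification that Proposition \ref{commutator representation lemma} may legitimately be invoked here and that the compression/conjugation identities in Lemmas \ref{weight} and \ref{half} apply to operators that are a priori only bounded (not Hilbert–Schmidt) — the Schur-multiplier calculus in Section \ref{Sec3} is set up on $\mathcal{L}_2$, so one must check that each $\mathfrak{S}_M$ appearing extends to the weak Schatten ideal and that the composition of extensions agrees with the extension of the composition. This is handled by the fact (used implicitly already in Propositions \ref{fkl schur lemma} and \ref{standard schur lemma}) that each Schur multiplier in question is bounded on $\mathcal{L}_p$ for all $1<p<\infty$ and thus, by real interpolation, on $\mathcal{L}_{p,\infty}$, together with density of $\mathcal{L}_2\cap\mathcal{L}_{p,\infty}$ in the relevant sense; since $[R_l,M_f]$ for $f\in C^\infty_c$ is itself bounded and the classical result places it in $\mathcal{L}_{n+1,\infty}$, the compositions are well-defined and the formula of Proposition \ref{commutator representation lemma} is an identity of bounded operators. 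Once this is in place, the proof is just the chain of inequalities above.
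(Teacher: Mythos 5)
Your proposal is correct and follows essentially the same route as the paper: invoke Proposition \ref{commutator representation lemma}, transfer the weak Schatten quasi-norms via Lemmas \ref{weight} and \ref{half}, apply the Schur-multiplier bounds of Propositions \ref{fkl schur lemma} and \ref{standard schur lemma} together with the quasi-triangle inequality, and finish with the Euclidean endpoint bound from \cite{LMSZ}. The only difference is your explicit (and sensible) remark about extending the $\mathcal{L}_2$-defined Schur multipliers to $\mathcal{L}_{n+1,\infty}$, which the paper leaves implicit.
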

\begin{proof} Proposition \ref{commutator representation lemma} yields
\begin{align*}
&\|[R_{\lambda,k},M_{E^{\ast}f}]\|_{\mathcal{L}_{n+1,\infty}(L_2(\mathbb{R}^{n+1}_+,m_{\lambda}))}\\
&\leq \kappa_{n,\lambda}^{[3]}\Big\|\mathfrak{S}_{F_{2,0}\circ H}\Big\|_{\mathcal{L}_{n+1,\infty}(L_2(\mathbb{R}^{n+1}_+,m_{\lambda}))\circlearrowleft}\\
&\hspace{1.0cm}\times\Big\| M_{x_{n+1}^{-\lambda}} E^{\ast}[R_k,M_f]E M_{x_{n+1}^{\lambda}}\Big\|_{\mathcal{L}_{n+1,\infty}(L_2(\mathbb{R}^{n+1}_+,m_{\lambda}))}\\
&+\kappa_{n,\lambda}^{[3]}\sum_{l=1}^{n+1}\Big\|\mathfrak{S}_{h_l}\circ\mathfrak{S}_a\circ\mathfrak{S}_{F_{1,1}\circ H}\Big\|_{\mathcal{L}_{n+1,\infty}(L_2(\mathbb{R}^{n+1}_+,m_{\lambda}))\circlearrowleft}\\
&\hspace{1.0cm}\times\Big\|M_{x_{n+1}^{-\lambda}} E^{\ast}[R_l,M_f]E M_{x_{n+1}^{\lambda}}\Big\|_{\mathcal{L}_{n+1,\infty}(L_2(\mathbb{R}^{n+1}_+,m_{\lambda}))}\\
&+\kappa_{n,\lambda}^{[3]}\sum_{l=1}^{n+1}\Big\|\mathfrak{S}_{h_l}\circ\mathfrak{S}_{h_{n+1}}\circ\mathfrak{S}_b\circ\mathfrak{S}_{F_{2,1}\circ H}\Big\|_{\mathcal{L}_{n+1,\infty}(L_2(\mathbb{R}^{n+1}_+,m_{\lambda}))\circlearrowleft}\\
&\hspace{1.0cm}\times\Big\|M_{x_{n+1}^{-\lambda}} E^{\ast}[R_l,M_f]E M_{x_{n+1}^{\lambda}}\Big\|_{\mathcal{L}_{n+1,\infty}(L_2(\mathbb{R}^{n+1}_+,m_{\lambda}))}.
\end{align*}
It follows from Lemma \ref{weight} and Propositions \ref{fkl schur lemma} and \ref{standard schur lemma} that there exists a constant $C_{n,\lambda}>0$ such that
\begin{align*}
\Big\|\mathfrak{S}_{F_{2,0}\circ H}\Big\|_{\mathcal{L}_{n+1,\infty}(L_2(\mathbb{R}^{n+1}_+,m_{\lambda}))\circlearrowleft}=\Big\|\mathfrak{S}_{F_{2,0}\circ H}\Big\|_{\mathcal{L}_{n+1,\infty}(L_2(\mathbb{R}^{n+1}_+))\circlearrowleft}\stackrel{P.\ref{fkl schur lemma}}{\leq}{C_{n,\lambda}},
\end{align*}
and that
\begin{align*}
&\Big\|\mathfrak{S}_{h_l}\circ\mathfrak{S}_a\circ\mathfrak{S}_{F_{1,1}\circ H}\Big\|_{\mathcal{L}_{n+1,\infty}(L_2(\mathbb{R}^{n+1}_+,m_{\lambda}))\circlearrowleft}\\
&=\Big\|\mathfrak{S}_{h_l}\circ\mathfrak{S}_a\circ\mathfrak{S}_{F_{1,1}\circ H}\Big\|_{\mathcal{L}_{n+1,\infty}(L_2(\mathbb{R}^{n+1}_+))\circlearrowleft}\\
&\leq \Big\|\mathfrak{S}_{h_l}\Big\|_{\mathcal{L}_{n+1,\infty}(L_2(\mathbb{R}^{n+1}_+))\circlearrowleft} \Big\|\mathfrak{S}_a\Big\|_{\mathcal{L}_{n+1,\infty}(L_2(\mathbb{R}^{n+1}_+))\circlearrowleft} \Big\|\mathfrak{S}_{F_{1,1}\circ H}\Big\|_{\mathcal{L}_{n+1,\infty}(L_2(\mathbb{R}^{n+1}_+))\circlearrowleft}\\
&\stackrel{P.\ref{fkl schur lemma},\ref{standard schur lemma}}{\leq}{C_{n,\lambda}},
\end{align*}
and that
\begin{align*}
&\Big\|\mathfrak{S}_{h_l}\circ\mathfrak{S}_{h_{n+1}}\circ\mathfrak{S}_b\circ\mathfrak{S}_{F_{2,1}\circ H}\Big\|_{\mathcal{L}_{n+1,\infty}(L_2(\mathbb{R}^{n+1}_+,m_{\lambda}))\circlearrowleft}\\
&=\Big\|\mathfrak{S}_{h_l}\circ\mathfrak{S}_{h_{n+1}}\circ\mathfrak{S}_b\circ\mathfrak{S}_{F_{2,1}\circ H}\Big\|_{\mathcal{L}_{n+1,\infty}(L_2(\mathbb{R}^{n+1}_+))\circlearrowleft}\\
&\leq \Big\|\mathfrak{S}_{h_l}\Big\|_{\mathcal{L}_{n+1,\infty}(L_2(\mathbb{R}^{n+1}_+))\circlearrowleft} \Big\|\mathfrak{S}_{h_{n+1}}\Big\|_{\mathcal{L}_{n+1,\infty}(L_2(\mathbb{R}^{n+1}_+))\circlearrowleft}\\
&\times\Big\|\mathfrak{S}_b\Big\|_{\mathcal{L}_{n+1,\infty}(L_2(\mathbb{R}^{n+1}_+))\circlearrowleft}\times \Big\|\mathfrak{S}_{F_{2,1}\circ H}\Big\|_{\mathcal{L}_{n+1,\infty}(L_2(\mathbb{R}^{n+1}_+))\circlearrowleft}\\
&\stackrel{P.\ref{fkl schur lemma},\ref{standard schur lemma}}{\leq}{C_{n,\lambda}}.
\end{align*}
This, in combination with Lemma \ref{half}, yields
\begin{align*}
&\|[R_{\lambda,k},M_{E^{\ast}f}]\|_{\mathcal{L}_{n+1,\infty}(L_2(\mathbb{R}^{n+1}_+,m_{\lambda}))}\\
&\leq C_{n,\lambda}\sum_{l=1}^{n+1}\Big\|M_{x_{n+1}^{-\lambda}} E^{\ast}[R_l,M_f]E M_{x_{n+1}^{\lambda}}\Big\|_{\mathcal{L}_{n+1,\infty}(L_2(\mathbb{R}^{n+1}_+,m_{\lambda}))}\\
&=C_{n,\lambda}\sum_{l=1}^{n+1}\Big\|E^{\ast}[R_l,M_f]E\Big\|_{\mathcal{L}_{n+1,\infty}(L_2(\mathbb{R}^{n+1}_+))}\\
&=C_{n,\lambda} \sum_{l=1}^{n+1}\Big\|M_{\chi_{\mathbb{R}_+^{n+1}}}[R_l,M_f]M_{\chi_{\mathbb{R}_+^{n+1}}}\Big\|_{\mathcal{L}_{n+1,\infty}(L_2(\mathbb{R}^{n+1}))}\\
&\leq C_{n,\lambda} \sum_{l=1}^{n+1}\Big\|[R_l,M_f]\Big\|_{\mathcal{L}_{n+1,\infty}(L_2(\mathbb{R}^{n+1}))}.
\end{align*}
The assertion follows now from the corresponding Euclidean assertion (see \cite[Theorem 1]{LMSZ}).
\end{proof}

\begin{lemma}\label{density lemma} Let $1<p<\infty.$ For every $f \in \dot{W}^{1,p}(\mathbb{R}_+^{n+1})\cap L_{\infty}(\mathbb{R}_+^{n+1}),$ there exists a sequence $\{f_m\}_{m=1}^\infty \subset C^{\infty}_c(\mathbb{R}^{n+1})$ and a constant $c>0$ such that
\begin{enumerate}[{\rm (i)}]
\item $f_m$ is even in the $(n+1)$-st argument;
\item $\{f_m\}_{m=1}^\infty$ is a Cauchy sequence in $\dot{W}^{1,p}(\mathbb{R}^{n+1});$
\item $E^{\ast}f_m\to f$ in $\dot{W}^{1,p}(\mathbb{R}_+^{n+1});$
\item $M_{E^{\ast}f_m}\rightarrow M_{f-c}$ in the strong operator topology on $B(L_2(\mathbb{R}^{n+1}_+))$.
\end{enumerate}	
\end{lemma}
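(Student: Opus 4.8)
\textbf{Proof proposal for Lemma \ref{density lemma}.}

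The plan is to build the approximating sequence in two stages: first reduce to a function that is bounded, has bounded gradient, and is nicely supported, and then mollify. I would start with $f\in\dot W^{1,p}(\mathbb R^{n+1}_+)\cap L_\infty(\mathbb R^{n+1}_+)$ and first reflect it evenly across the hyperplane $\{x_{n+1}=0\}$: define $\tilde f(x',x_{n+1}):=f(x',|x_{n+1}|)$. Since $f\in\dot W^{1,p}$ on the half-space, the even reflection lies in $\dot W^{1,p}(\mathbb R^{n+1})$ (the tangential derivatives reflect evenly, the normal derivative reflects oddly, and no singular part appears on $\{x_{n+1}=0\}$ precisely because we reflect evenly rather than oddly), and $\tilde f\in L_\infty(\mathbb R^{n+1})$ with the same sup-norm. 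This already gives evenness in the $(n+1)$-st variable, a property that will be preserved by every subsequent operation I perform, since all of them (truncation in value, cut-off by a radial-in-$x_{n+1}$ function, mollification by an even-in-$x_{n+1}$ kernel) commute with the reflection.

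Next I would address the fact that $\tilde f$ need not decay. Here the standard device is to truncate by large balls: pick $\eta\in C_c^\infty(\mathbb R^{n+1})$ even in $x_{n+1}$ with $\eta\equiv1$ on $B(0,1)$, set $\eta_R(x):=\eta(x/R)$, and consider $\eta_R\cdot(\tilde f-c_R)$ where $c_R$ is a suitable constant (an average of $\tilde f$ over $B(0,R)$ or over the annulus $B(0,2R)\setminus B(0,R)$). The point of subtracting a constant is that $\nabla(\eta_R(\tilde f-c_R))=\eta_R\nabla\tilde f+(\tilde f-c_R)\nabla\eta_R$, and the second term is controlled in $L_p$ by $R^{-1}\|\tilde f-c_R\|_{L_p(B(0,2R)\setminus B(0,R))}$, which tends to $0$ as $R\to\infty$ by the Poincar\'e–Sobolev inequality on annuli (using $\nabla\tilde f\in L_p$). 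Thus $\eta_R(\tilde f-c_R)\to\tilde f-c$ in $\dot W^{1,p}(\mathbb R^{n+1})$ for an appropriate limiting constant $c$ (convergence of $c_R$ follows from the same annulus estimates chained dyadically). Mollifying $\eta_R(\tilde f-c_R)$ by $\varphi_\varepsilon$ with $\varphi$ even in $x_{n+1}$ produces genuine $C_c^\infty(\mathbb R^{n+1})$ functions, even in $x_{n+1}$, converging in $\dot W^{1,p}(\mathbb R^{n+1})$; a diagonal choice $R=R_m\to\infty$, $\varepsilon=\varepsilon_m\to0$ yields the sequence $\{f_m\}$ satisfying (i) and (ii), and restricting to the half-space gives (iii) because $E^*$ is bounded $\dot W^{1,p}(\mathbb R^{n+1})\to\dot W^{1,p}(\mathbb R^{n+1}_+)$ and $E^*(\tilde f-c)=f-c$.

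For (iv) I would argue that $f_m\to f-c$ not only in $\dot W^{1,p}$ but also in $L_p^{\mathrm{loc}}$ (clear from the construction) while staying uniformly bounded in $L_\infty$: indeed $\|\eta_R(\tilde f-c_R)\|_\infty\le\|\tilde f\|_\infty+|c_R|$ and, since the mollification only shrinks the sup-norm, one can arrange a uniform bound provided $\sup_R|c_R|<\infty$, which follows from boundedness of $\tilde f$. Then $M_{E^*f_m}\to M_{f-c}$ strongly on $B(L_2(\mathbb R^{n+1}_+))$ by the usual density argument: for $g\in L_2$ supported in a bounded set, $\|(E^*f_m-(f-c))g\|_2\to0$ by dominated convergence after passing to an a.e.-convergent subsequence (legitimate since uniform $L_\infty$ bounds plus $L_p^{\mathrm{loc}}$-convergence force a.e.-convergent subsequences, and the full sequence then converges because the limit is unique), and the uniform operator-norm bound $\sup_m\|M_{E^*f_m}\|_\infty<\infty$ upgrades convergence on this dense subspace to strong convergence on all of $L_2(\mathbb R^{n+1}_+)$.

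The step I expect to be the genuine obstacle is the passage from $f$ on the half-space to a function with controlled behaviour at infinity while \emph{simultaneously} keeping the $L_\infty$ bound uniform and producing a single limiting constant $c$; this is where the Poincar\'e inequality on dyadic annuli, the telescoping estimate for $c_{R}-c_{2R}$, and the interplay between $\dot W^{1,p}$-convergence and strong-operator convergence all have to be reconciled. The even-reflection step, by contrast, is routine but must be done first so that property (i) is automatically inherited throughout — in particular it is essential that the reflection is even (so no distributional mass is created on $\{x_{n+1}=0\}$) and that the mollifier is chosen even in the last variable.
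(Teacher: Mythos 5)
Your proof follows the same route as the paper's: the paper's entire argument consists of exactly your even reflection $\tilde f(x',x_{n+1})=f(x',|x_{n+1}|)$, after which it observes $\tilde f\in\dot W^{1,p}(\mathbb{R}^{n+1})\cap L_\infty(\mathbb{R}^{n+1})$ and invokes the corresponding Euclidean density statement \cite[Theorem 3]{LMSZ}. What you do in addition is reconstruct that cited Euclidean lemma from scratch (dyadic cutoffs, subtraction of annular averages, Poincar\'e on annuli, even mollification, and the standard uniform-$L_\infty$-plus-local-convergence argument upgrading to strong operator convergence). Your reconstruction is essentially sound, and your explicit tracking of evenness in $x_{n+1}$ through every step is the right way to secure item (i), which the bare citation glosses over.

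One detail needs a patch: the claim that the constants $c_R$ converge ``by the same annulus estimates chained dyadically'' is not justified for $p\geq n+1$ (in particular for $p=n+1$, the case actually used in the paper). The telescoping bound gives $|c_{2^k}-c_{2^{k+1}}|\lesssim (2^k)^{1-(n+1)/p}\,\|\nabla\tilde f\|_{L_p(A_k)}$, where $A_k$ is a dyadic annulus of radius comparable to $2^k$; for $p=n+1$ the right-hand side is merely $\|\nabla\tilde f\|_{L_{n+1}(A_k)}$, whose sum over $k$ can diverge. Indeed, a bounded function behaving like $\sin(\log\log|x|)$ at infinity in $\mathbb{R}^2$ has gradient in $L_2$ but non-convergent annular averages. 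This is harmless for the lemma as stated: since $|c_R|\leq\|f\|_{L_\infty}$, you may pass to a subsequence $R_m\to\infty$ along which $c_{R_m}\to c$, and the conclusion only asserts the existence of \emph{some} sequence $\{f_m\}$ and \emph{some} constant $c$; items (ii) and (iii) are unaffected since the constants do not enter the gradient estimates. With that one-line fix your argument is complete.
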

\begin{proof} Set
$$F(x):=
\begin{cases}
f(x_1,\cdots,x_n,x_{n+1}),& x_{n+1}>0\\
f(x_1,\cdots,x_n,-x_{n+1}),& x_{n+1}<0
\end{cases},\quad x=(x_1,\cdots,x_{n+1})\in\mathbb{R}^{n+1}.
$$
It is immediate that $F\in\dot{W}^{1,p}(\mathbb{R}^{n+1})\cap L_{\infty}(\mathbb{R}^{n+1}).$ The assertion follows now from the corresponding assertion for $\mathbb{R}^{n+1}$ (see \cite[Theorem 3]{LMSZ}).
\end{proof}

\begin{proof}[Proof of upper bound in Theorem \ref{main theorem}] If $f=E^{\ast}g$ for some $g\in C_c^\infty(\mathbb{R}^{n+1})$ which is even in the $(n+1)$-st argument.
	
We apply an approximation argument to remove the above assumption. To this end, we suppose $f\in  \dot{W}^{1,n+1}(\mathbb{R}_+^{n+1})\cap L_{\infty}(\mathbb{R}_+^{n+1})$ and let $\{f_m\}_{m\geq 1}$ be the sequence given by Lemma \ref{density lemma}, so that $f_m\in C_c^\infty(\mathbb{R}^{n+1})$ for $m\geq 1$ and $\{f_m\}_{m\geq 1}$ is a Cauchy sequence on $\dot{W}^{1,n+1}(\mathbb{R}^{n+1})$. Using Lemma \ref{mta final lemma}, we deduce that for $m_1,m_2\geq1$,
\begin{align*}
&\|[R_{\lambda,k},M_{E^{\ast}f_{m_1}}]-[R_{\lambda,k},M_{E^{\ast}f_{m_2}}]\|_{\mathcal{L}_{n+1,\infty}(L_2(\mathbb{R}^{n+1}_+,m_{\lambda}))}\\
&=\|[R_{\lambda,k},M_{E^{\ast}(f_{m_1}-f_{m_2})}]\|_{\mathcal{L}_{n+1,\infty}(L_2(\mathbb{R}^{n+1}_+,m_{\lambda}))}\leq C_{n,\lambda}\|f_{m_1}-f_{m_2}\|_{\dot{W}^{1,n+1}(\mathbb{R}^{n+1})}.
\end{align*}
Hence, $\{[R_{\lambda,k},M_{E^{\ast}f_{m}}]\}_{m\geq 1}$ is a Cauchy sequence on $\mathcal{L}_{n+1,\infty}(L_2(\mathbb{R}^{n+1}_+,m_{\lambda}))$ which, therefore, converges to some $A\in \mathcal{L}_{n+1,\infty}(L_2(\mathbb{R}^{n+1}_+,m_{\lambda})).$ In particular, $$[R_{\lambda,k},M_{E^{\ast}f_{m}}]\rightarrow A$$
in the strong operator topology. On the other hand, by Lemma \ref{density lemma}, $M_{E^{\ast}f_m}\rightarrow M_{f-c}$ for some constant $c>0$ in the strong operator topology. Therefore, $$[R_{\lambda,k},M_{E^{\ast}f_m}]\rightarrow [R_{\lambda,k},M_f]$$
in the strong operator topology. By uniqueness of the limit, $A=[R_{\lambda,k},M_f]$, which implies that  $$[R_{\lambda,k},M_{E^{\ast}f_m}]\rightarrow [R_{\lambda,k},M_f]$$
in $\mathcal{L}_{n+1,\infty}(L_2(\mathbb{R}^{n+1}_+,m_{\lambda})).$ Thus,
\begin{align*}
\|[R_{\lambda,k},M_f]\|_{\mathcal{L}_{n+1,\infty}(L_2(\mathbb{R}^{n+1}_+,m_{\lambda}))}&=\lim_{m\rightarrow \infty}\|[R_{\lambda,k},M_{E^{\ast}f_m}]\|_{\mathcal{L}_{n+1,\infty}(L_2(\mathbb{R}^{n+1}_+,m_{\lambda}))}\\
&\leq C_{n,\lambda}\limsup_{m\rightarrow \infty}\|f_m\|_{\dot{W}^{1,n+1}(\mathbb{R}_+^{n+1})}\\
&=C_{n,\lambda}\|f\|_{\dot{W}^{1,n+1}(\mathbb{R}_+^{n+1})}.
\end{align*}
This completes the proof.
\end{proof}

\section{Alternative proof of upper bound for $n\geq 2$}\label{alternativepf}
\setcounter{equation}{0}

This section can be regarded as a separate section, in which we provide an alternative proof of upper bound in Theorem \ref{main theorem} via establishing corresponding Cwikel estimates in the Bessel setting.

\begin{lemma}\label{L2Cwikel}
There exists a constant $C_{n,\lambda}>0$ such that for every $f\in L_2(\mathbb{R}_+^{n+1})$ and for every $g\in L_2(\mathbb{R}_+,r^ndr)$, we have
$$\|M_fg(\sqrt{\Delta_{\lambda}})\|_{\mathcal{L}_2(L_2(\mathbb{R}_+^{n+1}))}\leq C_{n,\lambda}\|f\|_{L_2(\mathbb{R}_+^{n+1})}\|g\|_{L_2(\mathbb{R}_+,r^ndr)}.$$
\end{lemma}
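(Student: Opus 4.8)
The plan is to reduce this $\mathcal{L}_2$ (Hilbert--Schmidt) estimate to an explicit kernel computation, since membership in $\mathcal{L}_2(L_2(\mathbb{R}_+^{n+1}))$ is just square-integrability of the integral kernel against Lebesgue measure $dx\,dy$. First I would note that by the spectral decomposition of $\Delta_\lambda$ established in Section \ref{Selfexten}, the operator $g(\sqrt{\Delta_\lambda})$ has an integral kernel obtained from $g$ via the heat kernel bounds already recorded in the proof of Lemma \ref{kernel of sqrt delta}. Concretely, writing $g$ through the subordination-type formula (or directly via $g(\sqrt{\Delta_\lambda})=\int_0^\infty \widehat{g}\text{-type data}$), the kernel $K_{g(\sqrt{\Delta_\lambda})}(x,y)$ is controlled pointwise by a superposition of the kernels $K_{e^{-s^2\Delta_\lambda}}(x,y)$, whose Gaussian-type upper bound in terms of $Q_t(x,y)=|x-y|^2+2tx_{n+1}y_{n+1}$ is exactly the content of \cite[formula (2.11)]{FLLXarxiv} quoted above.

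The key steps, in order, are: (1) express $\|M_f g(\sqrt{\Delta_\lambda})\|_{\mathcal{L}_2}^2 = \int_{\mathbb{R}_+^{n+1}}\int_{\mathbb{R}_+^{n+1}} |f(x)|^2 |K_{g(\sqrt{\Delta_\lambda})}(x,y)|^2\,dx\,dy$, so that it suffices to show $\int_{\mathbb{R}_+^{n+1}} |K_{g(\sqrt{\Delta_\lambda})}(x,y)|^2\,dy \leq C_{n,\lambda}^2 \|g\|_{L_2(\mathbb{R}_+,r^ndr)}^2$ uniformly in $x$; (2) use the Fourier--Bessel functional calculus to write this inner integral as an integral involving only $g$ and a fixed kernel. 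The cleanest route is to observe that $\int_{\mathbb{R}_+^{n+1}}|K_{g(\sqrt{\Delta_\lambda})}(x,y)|^2\,dm_\lambda(y)$ would be computed by Plancherel for $\mathcal{F}\otimes U_\lambda$ as $\int |g(|\xi|)|^2 \,d(\text{Plancherel measure})$; but since we have unweighted $dy$ rather than $dm_\lambda(y)$, I would instead use the pointwise heat kernel bound to estimate $|K_{g(\sqrt{\Delta_\lambda})}(x,y)|$ directly. After a change of variables it reduces to the elementary inequality $\int_{\mathbb{R}_+^{n+1}} \big(\sup_s s^{-(n+1)}e^{-c|x-y|^2/s^2}\cdot(\dots)\big)^2\,dy < \infty$; (3) track the dependence on $g$ through its $L_2(\mathbb{R}_+,r^n dr)$ norm, which arises naturally because the ``radial'' variable dual to $\sqrt{\Delta_\lambda}$ carries the measure $r^n\,dr$ in $n+1$ dimensions.

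Alternatively --- and this may be cleaner --- one can argue by comparison with the Euclidean case via the reduction already built in Section \ref{Sec3}: the kernel identities of Lemma \ref{kernel of sqrt delta} and Notation \ref{fkl notation} together express $\Delta_\lambda^{-1/2}$, and more generally one expects $g(\sqrt{\Delta_\lambda})$, as a Schur-multiplier transform of $g(\sqrt{\Delta})$ composed with the conjugations $M_{x_{n+1}^{\pm\lambda}}$ and $E^*,E$. Then Lemma \ref{weight} and Lemma \ref{half} transfer the problem to $\|M_f g(\sqrt\Delta)\|_{\mathcal{L}_2(L_2(\mathbb{R}^{n+1}))}$, which is the classical Cwikel estimate at $p=2$ --- a direct Plancherel computation giving $\|M_f g(\sqrt\Delta)\|_{\mathcal{L}_2}^2 = c_n\|f\|_{L_2}^2\|g\|_{L_2(\mathbb{R}_+,r^n dr)}^2$ --- provided the intervening Schur multiplier is bounded on $\mathcal{L}_2$, which follows from Propositions \ref{fkl schur lemma} and \ref{standard schur lemma}.

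I expect the main obstacle to be the mismatch between the natural measure $dm_\lambda$ (with respect to which $\Delta_\lambda$ is self-adjoint and Plancherel is clean) and the unweighted Lebesgue measure $dy$ appearing in $\mathcal{L}_2(L_2(\mathbb{R}_+^{n+1}))$: one cannot simply quote Plancherel for the Fourier--Bessel transform. Handling this requires either the explicit heat kernel bound (route one), where the weight $x_{n+1}^{2\lambda}$ is absorbed using that $H(x,y)\le 1$ forces $x_{n+1}\sim y_{n+1}$ (Lemma \ref{ratio is bounded}) on the relevant region, or the Schur-multiplier transference (route two), where the conjugation by $M_{x_{n+1}^{\pm\lambda}}$ is exactly what repairs the measure discrepancy. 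Either way, once the weight issue is resolved the remaining estimate is a routine Gaussian integral or a one-line Plancherel identity, so I would present route two for brevity, invoking Lemmas \ref{weight}, \ref{half}, the classical $p=2$ Cwikel estimate, and the Schur multiplier bounds from Section \ref{Sec4}.
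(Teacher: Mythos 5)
Your first step --- reducing the Hilbert--Schmidt norm to a kernel computation and invoking the Fourier--Bessel functional calculus $g(\sqrt{\Delta_\lambda})=(\mathcal{F}\otimes U_\lambda)^{-1}M_{g(|\cdot|)}(\mathcal{F}\otimes U_\lambda)$ --- is exactly how the paper begins, but both of the routes you then offer break down at the point where the argument has actual content. Route one asks for a pointwise bound on $K_{g(\sqrt{\Delta_\lambda})}(x,y)$ by a superposition of heat kernels; for an arbitrary $g\in L_2(\mathbb{R}_+,r^ndr)$ there is no subordination formula and no pointwise Gaussian control of this kernel, and even if there were, a pointwise bound cannot produce the $L_2$-norm of $g$ on the right-hand side --- one needs an exact $L_2$ identity in the $g$-variable. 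Your proposed fix for the $dm_\lambda$ versus $dy$ mismatch (Lemma \ref{ratio is bounded} on the region $H(x,y)\le 1$) is also inapplicable here: that lemma concerns the support of the cut-off symbols $F\circ H$, whereas $K_{g(\sqrt{\Delta_\lambda})}$ is in no way localized to $x_{n+1}\sim y_{n+1}$. Route two presupposes that $g(\sqrt{\Delta_\lambda})$ is a bounded Schur-multiplier transform of $g(\sqrt{\Delta})$ for general $g$; the paper establishes such a relation only for $\Delta_\lambda^{-1/2}$ and the Riesz transforms (Lemma \ref{kernel of sqrt delta}, Proposition \ref{commutator representation lemma}), by differentiating an explicit kernel formula, and nothing of the sort is available --- or plausibly true --- for arbitrary $g$.

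The idea you are missing is the one that actually repairs the measure discrepancy. Writing $\|M_fg(\sqrt{\Delta_\lambda})\|_{\mathcal{L}_2}^2={\rm Trace}(M_{|f|^2}|g|^2(\sqrt{\Delta_\lambda}))$ and using the explicit diagonal kernel $K_{|g|^2(\sqrt{\Delta_\lambda})}(x,x)=(2\pi)^{-n}\int_{\mathbb{R}_+^{n+1}}\phi_\lambda^2(x_{n+1}z_{n+1})|g(|z|)|^2\,dm_\lambda(z)$, the two copies of the weight combine with the Bessel factor as $\phi_\lambda^2(x_{n+1}z_{n+1})\,(x_{n+1}z_{n+1})^{2\lambda}=\psi_\lambda^2(x_{n+1}z_{n+1})$, where $\psi_\lambda(t)=t^{1/2}J_{\lambda-1/2}(t)$ is bounded on $(0,\infty)$. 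This single classical fact about Bessel functions converts the weighted integrals into unweighted ones and yields $C_{n,\lambda}\|f\|_{L_2(\mathbb{R}_+^{n+1})}\|g\|_{L_2(\mathbb{R}_+,r^ndr)}$ directly; no heat kernel bounds and no Schur multipliers enter the proof.
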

\begin{proof} To begin with, by functional calculus \cite{MR617913},
$$g(\sqrt{\Delta_{\lambda}})=(\mathcal{F}\otimes U_{\lambda})^{-1}M_{g(|x|)}(\mathcal{F}\otimes U_{\lambda}).$$
Therefore,
\begin{align*}
g(\sqrt{\Delta_{\lambda}})(f)(x)&=\frac{1}{(2\pi)^n}\int_{\mathbb{R}_+^{n+1}}\int_{\mathbb{R}_+^{n+1}}e^{i\langle y',x'-z'\rangle}\times\\
&\hspace{1.0cm}\times\phi_\lambda(x_{n+1}y_{n+1})g(|y|)\phi_\lambda(y_{n+1}z_{n+1})dm_\lambda(y)f(z)dm_\lambda(z).
\end{align*}
Hence, the integral kernel of $g(\sqrt{\Delta_{\lambda}})$ is of the following form
$$K_{g(\sqrt{\Delta_{\lambda}})}(x,\,y)=\frac{1}{(2\pi)^n}\int_{\mathbb{R}_+^{n+1}}e^{i\langle z',x'-y'\rangle}\phi_\lambda(x_{n+1}z_{n+1})g(|z|)\phi_\lambda(z_{n+1}y_{n+1})dm_\lambda(z).$$
Integral kernel on the diagonal is
$$\frac{1}{(2\pi)^n}\int_{\mathbb{R}_+^{n+1}}\phi_\lambda^2(x_{n+1}z_{n+1})g(|z|)dm_\lambda(z).$$
Now we calculate the $\mathcal{L}_2$ norm of $M_fg(\sqrt{\Delta_{\lambda}})$ as follows.
\begin{align*}
&\|M_fg(\sqrt{\Delta_{\lambda}})\|_{\mathcal{L}_2(L_2(\mathbb{R}_+^{n+1}))}^2\\&={\rm Trace}(M_{|f|^2}|g|^2(\sqrt{\Delta_{\lambda}}))\\
&=\frac{1}{(2\pi)^n}\int_{\mathbb{R}_+^{n+1}}|f(x)|^2\Big(\int_{\mathbb{R}_+^{n+1}}\phi_\lambda^2(x_{n+1}z_{n+1})|g(|z|)|^2dm_\lambda(z)\Big)dm_\lambda(x)\\
&=\frac{1}{(2\pi)^n}\int_{\mathbb{R}_+^{n+1}}|f(x)|^2\Big(\int_{\mathbb{R}_+^{n+1}}\psi_{\lambda}^2(x_{n+1}z_{n+1})|g(|z|)|^2dz\Big)dx.
\end{align*}
Here, $\psi_{\lambda}:t\to t^{\lambda}\phi_{\lambda}(t)=t^{\frac12}J_{\lambda-\frac12}(t),$ $t>0.$ Recall from \cite[p.364]{MR167642} that the function $\psi_{\lambda}$ is bounded on $(0,\infty).$ Thus,
\begin{align*}
\|M_fg(\sqrt{\Delta_{\lambda}})\|_{\mathcal{L}_2(L_2(\mathbb{R}_+^{n+1}))}^2
&\leq\frac{1}{(2\pi)^n}\|\psi_{\lambda}\|_{\infty}^2\int_{\mathbb{R}_+^{n+1}}|f(x)|^2\Big(\int_{\mathbb{R}_+^{n+1}}|g(|z|)|^2dz\Big)dx\\
&=\frac{1}{(2\pi)^n}\|\psi_{\lambda}\|_{\infty}^2\int_{\mathbb{R}_+^{n+1}}|f(x)|^2dx\cdot \int_{\mathbb{R}_+^{n+1}}|g(|z|)|^2dz\\
&=\frac{\upsilon_n}{(2\pi)^n}\|\psi_{\lambda}\|_{\infty}^2\int_{\mathbb{R}_+^{n+1}}|f(x)|^2dx\cdot \int_{\mathbb{R}_+}|g(r)|^2r^ndr,
\end{align*}
where $\upsilon_n$ denotes the volumn of $\mathbb{S}^n_+$. This ends the proof of Lemma \ref{L2Cwikel}.
\end{proof}

Combining  Lemma \ref{L2Cwikel} with the Abstract Cwikel Estimate established in \cite{LeSZ} (see also \cite[Theorem 1.5.10]{LSZ2}), we obtain the Cwikel estimate for $M_fg(\sqrt{\Delta_{\lambda}})$ in the case of $p>2$ and $n\geq 2.$

\begin{theorem}\label{cwikel estimate in bessel setting}
Let $p>2$ and $n\geq 2,$ then there exists a constant $C_{n,p,\lambda}>0$ such that
\begin{enumerate}[{\rm (i)}]
\item for every $f\in L_p(\mathbb{R}_+^{n+1}),$ $g\in L_p(\mathbb{R}_+,r^ndr),$  we have
$$\|M_fg(\sqrt{\Delta_{\lambda}})\|_{\mathcal{L}_p(L_2(\mathbb{R}_+^{n+1}))}\leq C_{n,p,\lambda}\|f\|_{L_p(\mathbb{R}_+^{n+1})}\|g\|_{L_{p}(\mathbb{R}_+,r^ndr)},$$
\item for every $f\in L_p(\mathbb{R}_+^{n+1}),$ $g\in L_{p,\infty}(\mathbb{R}_+,r^ndr),$ we have
$$\|M_fg(\sqrt{\Delta_{\lambda}})\|_{\mathcal{L}_{p,\infty}(L_2(\mathbb{R}_+^{n+1}))}\leq C_{n,p,\lambda}\|f\|_{L_p(\mathbb{R}_+^{n+1})}\|g\|_{L_{p,\infty}(\mathbb{R}_+,r^ndr)}.$$
\end{enumerate}
\end{theorem}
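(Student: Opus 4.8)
The plan is to deduce Theorem \ref{cwikel estimate in bessel setting} from Lemma \ref{L2Cwikel} by interpolation, using the Abstract Cwikel Estimate of \cite{LeSZ}. The Abstract Cwikel machinery takes as input a single $\mathcal{L}_2$-bound of the form $\|M_f g(\sqrt{\Delta_\lambda})\|_{\mathcal{L}_2}\le C\|f\|_{L_2}\|g\|_{L_2(\mathbb{R}_+,r^n dr)}$, together with the fact that the ambient space $\mathbb{R}_+^{n+1}$ with the measure $m_\lambda$ is (when $n\ge 2$) a space on which the relevant geometric doubling/volume-growth hypotheses hold, and upgrades it to the $\mathcal{L}_p$ and weak-$\mathcal{L}_{p,\infty}$ bounds for all $p>2$ — precisely the conclusion sought. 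So the first step is simply to record that Lemma \ref{L2Cwikel} supplies exactly the hypothesis required by the abstract theorem.

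First I would set the stage: recall the statement of the Abstract Cwikel Estimate in the form given in \cite{LeSZ} (or \cite[Theorem 1.5.10]{LSZ2}), identifying the ``base'' Hilbert space as $L_2(\mathbb{R}_+^{n+1})$ (note we have already conjugated by $M_{x_{n+1}^\lambda}$ via Lemma \ref{weight}, so we work on the unweighted space, and the operator $g(\sqrt{\Delta_\lambda})$ in this picture is the one whose kernel was computed in the proof of Lemma \ref{L2Cwikel}), and the multiplication operators $M_f$ as the ``potential'' side. The abstract result is typically phrased: if $\|M_f A^{1/2}\|_{\mathcal{L}_2}\lesssim \|f\|_{L_2}$ for $A$ a suitable positive operator whose spectral measure is compatible with the decomposition into dyadic shells $|x|\sim 2^j$, then $M_f g(\sqrt A)\in\mathcal{L}_p$ with the stated norm bound for $p>2$, and $M_f g(\sqrt A)\in \mathcal{L}_{p,\infty}$ when $g\in L_{p,\infty}$. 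Here $A=\Delta_\lambda$, and the role of $r^n\,dr$ on $\mathbb{R}_+$ is as the pushforward of Lebesgue measure on $\mathbb{R}^{n+1}$ under $x\mapsto |x|$ — this is why the same weight $r^n dr$ (not $r^{n+2\lambda}dr$) appears, matching the computation in Lemma \ref{L2Cwikel} where after the substitution $\psi_\lambda=t^\lambda\phi_\lambda$ the weight $dm_\lambda$ collapsed to $dz$.

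Then I would carry out the deduction: feed Lemma \ref{L2Cwikel} into the abstract theorem to obtain part (i) for $g\in L_p(\mathbb{R}_+,r^n dr)$, $p>2$; and feed the same $\mathcal{L}_2$-bound into the weak-type version of the abstract estimate (the Lorentz-scale refinement, also available in \cite{LeSZ,LSZ2}) to obtain part (ii) for $g\in L_{p,\infty}(\mathbb{R}_+,r^n dr)$. The constant $C_{n,p,\lambda}$ absorbs $\|\psi_\lambda\|_\infty$, the volume $\upsilon_n$ of $\mathbb{S}^n_+$, the $(2\pi)^{-n}$ factor, and the purely dimensional constant from the abstract interpolation. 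The restriction $n\ge 2$ (equivalently $p>n+1\ge 3$, or at least the need for enough room to interpolate down from $\infty$) is exactly the regime in which the abstract Cwikel estimate is stated; I would flag that the case $n=1$ is genuinely different and is precisely why Section \ref{upper bound section} gives the separate Schur-multiplier argument.

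The main obstacle I anticipate is \emph{not} a deep one but a bookkeeping/verification one: checking that the hypotheses of the abstract Cwikel theorem of \cite{LeSZ} are literally met by our operator $g(\sqrt{\Delta_\lambda})$ acting on $L_2(\mathbb{R}_+^{n+1})$ — in particular that the requisite kernel/diagonal estimates and the $L_{2,\infty}$-type ``weak'' input (if the abstract theorem demands the borderline input $\|M_f A^{1/2}\|_{\mathcal{L}_{2,\infty}}\lesssim\|f\|_{L_2}$ rather than the $\mathcal{L}_2$ input, which is stronger and which Lemma \ref{L2Cwikel} does supply) are in place, and that the measure $r^n dr$ on the ``spectral side'' is correctly identified. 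Once the translation dictionary between Lemma \ref{L2Cwikel} and the input format of \cite{LeSZ} is pinned down, the proof is a one-line citation; accordingly I would keep the written proof short, stating the identification explicitly and then invoking \cite{LeSZ} (equivalently \cite[Theorem 1.5.10]{LSZ2}) for both (i) and (ii).
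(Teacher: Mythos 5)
Your proposal is correct and follows exactly the paper's route: the paper likewise obtains the theorem in one step by combining Lemma \ref{L2Cwikel} with the Abstract Cwikel Estimate of \cite{LeSZ} (see also \cite[Theorem 1.5.10]{LSZ2}). Your additional discussion of how the $L_2$-bound feeds into the abstract machinery and why the weight $r^n\,dr$ appears is a faithful elaboration of what the paper leaves implicit.
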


\begin{lemma}\cite[Lemma 4.4]{MR4654013}\label{surprisingly_technical_lemma}
Let $B\geq 0$ be a (potentially unbounded) linear operator on a Hilbert space $H$ with $ker(B)=0$, and let $A$ be a bounded operator on $H$. Suppose that $1<p<\infty$ and
\begin{enumerate}[{\rm (i)}]
\item $B^{-1}[B^2,A]B^{-1}\in\mathcal{L}_{p,\infty}(H);$
\item $[B,A]B^{-1}\in\mathcal{L}_{\infty}(H);$
\item $AB^{-1}\in\mathcal{L}_{p,\infty}(H)$ or $B^{-1}A\in\mathcal{L}_{p,\infty}(H).$
\end{enumerate}
Under these assumptions, there exists a constant $C_p>0$ such that
$$\|[B,A]B^{-1}\|_{\mathcal{L}_{p,\infty}(H)} \leq C_p\|B^{-1}[B^2,A]B^{-1}\|_{\mathcal{L}_{p,\infty}(H)}.$$
\end{lemma}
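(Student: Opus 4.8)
The plan is to reduce the statement, via a one‑line algebraic identity, to the boundedness of a single classical Schur multiplier, and then to carry out the (surprisingly delicate) work of making the formal computation rigorous in the presence of the unbounded operators $B$ and $B^{-1}$. First I would exploit the commutator identity $[B^2,A]=B[B,A]+[B,A]B$, which after multiplication by $B^{-1}$ on both sides gives
$$B^{-1}[B^2,A]B^{-1}=[B,A]B^{-1}+B^{-1}[B,A].$$
Writing $T:=[B,A]B^{-1}$ and $S:=B^{-1}[B^2,A]B^{-1}$, and noting $B^{-1}[B,A]=B^{-1}\bigl([B,A]B^{-1}\bigr)B=B^{-1}TB$, this becomes $S=T+B^{-1}TB$. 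Hypotheses (i), (ii), (iii) guarantee respectively that $S\in\mathcal{L}_{p,\infty}(H)$, that $T$ is a genuine bounded operator, and that $AB^{-1}$ (or $B^{-1}A$) lies in $\mathcal{L}_{p,\infty}(H)$; these are precisely the regularity facts that will make the argument below rigorous.

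Next I would invert the relation $S=T+B^{-1}TB$ by a Schur multiplier built from the spectral measure $P$ of $B$ on $(0,\infty)$ (here $\ker B=\{0\}$ is used). In the double operator integral calculus attached to $P$, the passage $X\mapsto B^{-1}XB$ multiplies the symbol of $X$ by $\nu/\mu$, so $S$ has symbol $\bigl(1+\tfrac{\nu}{\mu}\bigr)$ relative to $T$; applying the double operator integral with symbol $\psi(\mu,\nu)=\tfrac{\mu}{\mu+\nu}$ and using $\psi(\mu,\nu)\bigl(1+\tfrac{\nu}{\mu}\bigr)=1$ yields $T=\mathfrak{S}_\psi(S)$. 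It then suffices to show $\mathfrak{S}_\psi$ is bounded on $\mathcal{L}_{p,\infty}(H)$ for $1<p<\infty$. In the variables $u=\log\mu,\ v=\log\nu$ one has $\psi=\sigma(u-v)$ with $\sigma$ the logistic function, and writing $\sigma(t)=\int_{\mathbb{R}}\chi_{\{s<t\}}\,\sigma'(s)\,ds$ with $\sigma'\ge0$, $\int_{\mathbb{R}}\sigma'=1$, exhibits $\mathfrak{S}_\psi$ as an average $\int_{\mathbb{R}}\sigma'(s)\,\mathcal{T}_s\,ds$ of shifted triangular truncations $\mathcal{T}_s$ with respect to the spectral measure of $\log B$. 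Each $\mathcal{T}_s$ is bounded on $\mathcal{L}_q(H)$, $1<q<\infty$, with constant depending only on $q$ (the classical $\mathcal{L}_q$-boundedness of triangular truncation), hence so is $\mathfrak{S}_\psi$; choosing $1<q_0<p<q_1<\infty$ and using $\mathcal{L}_{p,\infty}(H)=(\mathcal{L}_{q_0}(H),\mathcal{L}_{q_1}(H))_{\theta,\infty}$, real interpolation gives $\|\mathfrak{S}_\psi\|_{\mathcal{L}_{p,\infty}(H)\circlearrowleft}\le C_p$.

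To rigorise Step 2, I would truncate: set $P_n:=P([1/n,n])$, $B_n:=BP_n$, $A_n:=P_nAP_n$, so that $B_n$ is a bounded operator on $P_nH$ with bounded inverse and all the above manipulations are legitimate inside $B(P_nH)$, giving
$$\|[B_n,A_n]B_n^{-1}\|_{\mathcal{L}_{p,\infty}(H)}\le C_p\,\|B_n^{-1}[B_n^2,A_n]B_n^{-1}\|_{\mathcal{L}_{p,\infty}(H)}.$$
A direct computation — using (ii) to guarantee $AB^{-1}y\in\mathrm{dom}(B)$ for $y\in\mathrm{range}(B)$ — identifies both sides as compressions: $[B_n,A_n]B_n^{-1}=P_nTP_n$ and $B_n^{-1}[B_n^2,A_n]B_n^{-1}=P_nSP_n$. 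Since $\mu_{B(H)}(k,P_nSP_n)\le\mu_{B(H)}(k,S)$, the right side is $\le C_p\|S\|_{\mathcal{L}_{p,\infty}(H)}$ uniformly in $n$; since $P_n\to1$ strongly and $T$ is bounded, $P_nTP_n\to T$ in the strong operator topology, and lower semicontinuity of the singular value function gives $\mu_{B(H)}(k,T)\le\liminf_n\mu_{B(H)}(k,P_nTP_n)$ for every $k$, whence $\|T\|_{\mathcal{L}_{p,\infty}(H)}\le C_p\|S\|_{\mathcal{L}_{p,\infty}(H)}$, which is the claim. Hypothesis (iii) enters in checking that the bounded operators produced by the truncation (or by a resolvent regularisation $B\rightsquigarrow B(1+\varepsilon B)^{-1}$, should that be preferred) converge to the intended limits rather than to some other bounded operator, and in controlling the attendant error terms.

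The hard part is Step 3, not Steps 1–2: the identity $T=\mathfrak{S}_\psi(S)$ and the boundedness of $\mathfrak{S}_\psi$ are routine, but matching this formal computation with an actual inequality between operators in $\mathcal{L}_{p,\infty}(H)$ — establishing the compression identities $[B_n,A_n]B_n^{-1}=P_nTP_n$, $B_n^{-1}[B_n^2,A_n]B_n^{-1}=P_nSP_n$ and the convergence, with $B$ and $B^{-1}$ both unbounded and with delicate domain issues — is exactly where hypotheses (ii) and (iii) are genuinely needed, and is the technical heart of the lemma.
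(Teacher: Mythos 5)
This lemma is quoted in the paper from \cite[Lemma 4.4]{MR4654013} and is not reproved here, so there is no in-paper argument to compare against; judged on its own terms, your proposal reconstructs essentially the argument of the cited source. The three ingredients are all the right ones: the identity $S=T+B^{-1}TB$ coming from $[B^2,A]=B[B,A]+[B,A]B$; the inversion by the double operator integral with symbol $\psi(\mu,\nu)=\mu/(\mu+\nu)$, whose $\mathcal{L}_{p,\infty}$-boundedness follows from writing $\psi$ in logarithmic coordinates as $\sigma(u-v)$ and averaging shifted triangular truncations (each bounded on $\mathcal{L}_q$, $1<q<\infty$, uniformly in the shift, by transference to the Riesz projection) and then real-interpolating $\mathcal{L}_{p,\infty}=(\mathcal{L}_{q_0},\mathcal{L}_{q_1})_{\theta,\infty}$; and a cut-off $P_n=P([1/n,n])$ to legitimise the formal computation, followed by the Fatou property of $\mathcal{L}_{p,\infty}$ under strong (or weak) operator convergence. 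Your verification of the compression identities is also essentially right: the cross terms vanish because $P_nB(1-P_n)=0$.

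The one point you leave genuinely under-specified is precisely the one you flag: the identity $[B_n,A_n]B_n^{-1}=P_nTP_n$. A direct computation gives $[B_n,A_n]B_n^{-1}=BP_nAB^{-1}P_n-P_nAP_n$, which is well defined because $P_nAB^{-1}P_nx$ always lies in $\mathrm{dom}(B)$; but $P_nTP_n$, where $T$ is only the \emph{closure} of the densely defined operator $BAB^{-1}-A$ guaranteed bounded by (ii), cannot be evaluated by the naive pointwise formula, since for $x\in P_nH$ there is no reason that $AB^{-1}x\in\mathrm{dom}(B)$. Matching the two therefore requires approximating $P_nx$ by vectors in the natural domain of $[B,A]B^{-1}$ \emph{in the graph norm of $B^{-1}$}, and this is where hypothesis (iii) has to be used concretely (the boundedness/compactness of $AB^{-1}$ or $B^{-1}A$ is what lets you commute the closure with the spectral cut-off and identify the limit). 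Your write-up acknowledges that (iii) enters "in controlling the attendant error terms" but does not pin down this mechanism; filling in that one step would complete the proof. Everything else is sound.
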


\begin{lemma}\label{sufficiency verification lemma} Let $n\geq 2$ and $p=n+1,$ then the operators $A:=M_{E^*f},$ $f\in C^{\infty}_c(\mathbb{R}^{n+1})$, and $B=\Delta_{\lambda}^{\frac12}$ satisfy the conditions in Lemma \ref{surprisingly_technical_lemma}. Furthermore, there exists a constant $C_{n,\lambda}>0$ such that
$$\|B^{-1}[B^2,A]B^{-1}\|_{\mathcal{L}_{n+1,\infty}(L_2(\mathbb{R}_+^{n+1}))}\leq C_{n,\lambda}\|f\|_{\dot{W}^{1,n+1}(\mathbb{R}^{n+1}_+)}.$$
\end{lemma}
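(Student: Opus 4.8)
The strategy is to verify the three hypotheses of Lemma \ref{surprisingly\_technical\_lemma} with $B=\Delta_\lambda^{1/2}$ and $A=M_{E^*f}$, and to identify the quantity $B^{-1}[B^2,A]B^{-1}$ explicitly so that the Cwikel estimate of Theorem \ref{cwikel estimate in bessel setting} can be applied. First I would compute the commutator $[B^2,A]=[\Delta_\lambda,M_{E^*f}]$. Since $\Delta_\lambda$ is a second-order differential operator, the commutator with a multiplication operator is first order: on $C_c^\infty(\mathbb{R}^{n+1}_+)$ one has
\begin{align*}
[\Delta_\lambda,M_{E^*f}]=-2\sum_{j=1}^{n+1}M_{\partial_j(E^*f)}\partial_j-M_{(\Delta_\lambda(E^*f))},
\end{align*}
where the zeroth-order term involves $\Delta_\lambda(E^*f)$ as a distribution. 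Then $B^{-1}[B^2,A]B^{-1}$ can be rewritten, using $\partial_j\Delta_\lambda^{-1/2}=R_{\lambda,j}$ and $\Delta_\lambda^{-1/2}$, in a form that is a finite sum of operators of the type $R_{\lambda,j}^*\, M_{\partial_j f}\, \Delta_\lambda^{-1/2}$ (plus an analogous zeroth-order piece), each of which factors through $M_{\partial_j f}\, g(\sqrt{\Delta_\lambda})$ with $g(r)=r^{-1}\in L_{n+1,\infty}(\mathbb{R}_+,r^n dr)$ up to the bounded operators $R_{\lambda,j}$.

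Granting this algebraic reduction, hypothesis (i) follows from Theorem \ref{cwikel estimate in bessel setting}(ii): since $n\geq 2$ and $p=n+1>2$, and $\partial_j(E^*f)\in L_{n+1}(\mathbb{R}^{n+1}_+)$ because $f\in C_c^\infty(\mathbb{R}^{n+1})$, we get $M_{\partial_j f}\Delta_\lambda^{-1/2}\in\mathcal{L}_{n+1,\infty}$ with norm controlled by $\|\partial_j f\|_{L_{n+1}(\mathbb{R}^{n+1}_+)}\leq\|f\|_{\dot W^{1,n+1}(\mathbb{R}^{n+1}_+)}$; composing with the bounded Riesz transforms $R_{\lambda,j}$ (and handling the zeroth-order term similarly, bounding $\|\Delta_\lambda(E^*f)\|_{L_{n+1}}$ by derivatives of the smooth compactly supported $f$) gives both the membership and the asserted norm bound. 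For hypothesis (iii), $AB^{-1}=M_{E^*f}\Delta_\lambda^{-1/2}=M_{E^*f}g(\sqrt{\Delta_\lambda})$ with the same $g(r)=r^{-1}$, and since $f$ is bounded with compact support it lies in $L_{n+1}(\mathbb{R}^{n+1}_+)$, so another application of Theorem \ref{cwikel estimate in bessel setting}(ii) gives $AB^{-1}\in\mathcal{L}_{n+1,\infty}$. Hypothesis (ii), namely $[B,A]B^{-1}\in\mathcal{L}_\infty$, is the boundedness of the Bessel--Riesz commutator $[\Delta_\lambda^{1/2},M_{E^*f}]\Delta_\lambda^{-1/2}$; this is standard Calder\'on--Zygmund theory for the Bessel setting (the symbol $E^*f$ being Lipschitz, indeed smooth and compactly supported), and can be quoted from \cite{FLLXarxiv} or derived from the kernel representation in Lemma \ref{Bessel--Riesz kernel representation lemma} together with the boundedness of $[R_k,M_f]$ on $L_2(\mathbb{R}^{n+1})$ and the Schur multiplier bounds of Propositions \ref{fkl schur lemma} and \ref{standard schur lemma}.

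The main obstacle I expect is the careful treatment of $B^{-1}[B^2,A]B^{-1}$: since $f$ is only assumed to be $C_c^\infty(\mathbb{R}^{n+1})$ (not necessarily even in $x_{n+1}$), the function $E^*f$ need not be in the domain considerations cleanly, and one must justify that the formal commutator identity for $[\Delta_\lambda,M_{E^*f}]$ holds as an operator identity on a suitable dense domain and that the resulting expression, after inserting $B^{-1}$ on both sides, genuinely equals a finite sum of the Cwikel-type operators above — in particular keeping track of the weight $x_{n+1}^{2\lambda}$ entering $\Delta_\lambda$ through the first-order term $-\frac{2\lambda}{x_{n+1}}\partial_{n+1}$. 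The extra singular coefficient $\tfrac{1}{x_{n+1}}$ means the naive commutator computation produces a term $M_{\frac{2\lambda}{x_{n+1}}(\partial_{n+1}E^*f)}$ which is not obviously in $L_{n+1}$; the resolution is that this term is precisely cancelled when one passes to the self-adjoint realization on $L_2(\mathbb{R}^{n+1}_+,m_\lambda)$, equivalently after the unitary conjugation by $M_{x_{n+1}^\lambda}$ of Lemma \ref{weight}, so that effectively one works with a conjugated operator whose commutator with $M_{E^*f}$ has only the benign first-order and zeroth-order terms. Making this cancellation precise, and confirming that the constant depends only on $n$ and $\lambda$, is the technical heart of the argument.
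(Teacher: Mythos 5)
Your overall strategy (verify the three hypotheses of Lemma \ref{surprisingly_technical_lemma} and feed the resulting factorisations into the Bessel Cwikel estimate of Theorem \ref{cwikel estimate in bessel setting}) is the paper's strategy, and your treatment of hypothesis (iii) is correct. But there is a genuine gap in your verification of hypothesis (i) together with the stated norm bound. Your Leibniz expansion $[\Delta_\lambda,M_{E^*f}]=-2\sum_j M_{\partial_j(E^*f)}\partial_j-M_{\Delta_\lambda(E^*f)}$ produces a zeroth-order term that involves \emph{second} derivatives of $f$ as well as the singular coefficient $\tfrac{2\lambda}{x_{n+1}}\partial_{n+1}(E^*f)$. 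The singular piece is generically not in $L_{n+1}(\mathbb{R}^{n+1}_+)$ (unless $\partial_{n+1}f$ vanishes on $\{x_{n+1}=0\}$), and even the regular piece $M_{\Delta(E^*f)}$ is controlled only by second-order derivative norms, not by $\|f\|_{\dot W^{1,n+1}}$ — so even if you could place it in $\mathcal{L}_{n+1,\infty}$ for a fixed $f\in C_c^\infty$, you would lose the inequality $\leq C_{n,\lambda}\|f\|_{\dot W^{1,n+1}(\mathbb{R}^{n+1}_+)}$, which is essential for the approximation argument in Proposition \ref{final sufficiency lemma}. Your proposed fix — that the singular term is "precisely cancelled" after conjugation by $M_{x_{n+1}^\lambda}$ — is not substantiated: the conjugation converts the drift $-\tfrac{2\lambda}{x_{n+1}}\partial_{n+1}$ into a potential $c_\lambda x_{n+1}^{-2}$, which indeed commutes with $M_{E^*f}$, but the second-order zeroth-order term $M_{\Delta(E^*f)}$ survives and is still not dominated by the first-order Sobolev norm; moreover the conjugation perturbs $\partial_j\tilde\Delta_\lambda^{-1/2}$ by another singular multiplier.

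The paper avoids all of this with one algebraic move you are missing: write $\Delta_\lambda=\sum_{k}\partial_k^{\ast}\partial_k$ with the adjoint taken in $L_2(\mathbb{R}^{n+1}_+,m_\lambda)$, and apply Leibniz symmetrically, $[\partial_k^{\ast}\partial_k,M_g]=[\partial_k^{\ast},M_g]\partial_k+\partial_k^{\ast}[\partial_k,M_g]=\partial_k^{\ast}M_{\partial_k g}-M_{\partial_k g}\partial_k$ (the weighted adjoint differs from $-\partial_k$ only by a multiplication operator, which commutes with $M_g$). Sandwiching with $\Delta_\lambda^{-1/2}$ then gives $B^{-1}[B^2,A]B^{-1}=\sum_k\bigl(R_{\lambda,k}^{\ast}M_{\partial_k E^*f}\Delta_\lambda^{-1/2}-\Delta_\lambda^{-1/2}M_{\partial_k E^*f}R_{\lambda,k}\bigr)$: only first derivatives of $f$, no singular coefficient, and each summand is a bounded Riesz transform times a Cwikel-type factor, so Theorem \ref{cwikel estimate in bessel setting} yields exactly the bound $C_{n,\lambda}\|f\|_{\dot W^{1,n+1}}$. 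For hypothesis (ii) you correctly identify $[B,A]B^{-1}=[\Delta_\lambda^{1/2},M_{E^*f}]\Delta_\lambda^{-1/2}$, but the route you sketch (kernel representation plus Schur multipliers) only gives boundedness of $[R_{\lambda,k},M_{E^*f}]$; one still needs the identity $|[B,A]B^{-1}|^2=\sum_k|R_{\lambda,k}[B,A]B^{-1}|^2$ together with $[R_{\lambda,k},M_{E^*f}]=M_{E^*(\partial_k f)}\Delta_\lambda^{-1/2}-R_{\lambda,k}[B,A]B^{-1}$ to transfer that boundedness to $[B,A]B^{-1}$ itself, a step absent from your sketch.
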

\begin{proof}
By Leibniz rule,
\begin{align*}
[\Delta_{\lambda},M_{E^*f}]=\sum_{k=1}^{n+1}[\partial_k^{\ast}\partial_k,M_{E^*f}]&=\sum_{k=1}^{n+1}\big([\partial_k^{\ast},M_{E^*f}]\partial_k+\partial_k^{\ast}[\partial_k,M_{E^*f}]\big)\\
&=\sum_{k=1}^{n+1}\big(\partial_k^{\ast}M_{\partial_k{E^*f}}-M_{\partial_k{E^*f}}\partial_k\big).
\end{align*}
Thus,
\begin{align*}
B^{-1}[B^2,A]B^{-1}&=\Delta_\lambda^{-\frac12}[\Delta_{\lambda},M_{E^*f}]\Delta_\lambda^{-\frac12}\\
&=\sum_{k=1}^{n+1}\big(R_{\lambda,k}^{\ast}\cdot M_{\partial_k{E^*f}}\Delta_{\lambda}^{-\frac12}-\Delta_{\lambda}^{-\frac12}M_{\partial_k{E^*f}}\cdot R_{\lambda,k}\big).
\end{align*}
By Theorem \ref{cwikel estimate in bessel setting},
\begin{align*}
&\|B^{-1}[B^2,A]B^{-1}\|_{\mathcal{L}_{n+1,\infty}(L_2(\mathbb{R}_+^{n+1}))}\\
&\lesssim\sum_{k=1}^{n+1}\big(\|M_{\partial_k{E^*f}}\Delta_{\lambda}^{-\frac12}\|_{\mathcal{L}_{n+1,\infty}(L_2(\mathbb{R}_+^{n+1}))}+\|\Delta_{\lambda}^{-\frac12}M_{\partial_k{E^*f}}\|_{\mathcal{L}_{n+1,\infty}(L_2(\mathbb{R}_+^{n+1}))}\big)\\
\\&\lesssim\|f\|_{\dot{W}^{1,n+1}(\mathbb{R}^{n+1}_+)}.
\end{align*}
This verifies the first condition in Lemma \ref{surprisingly_technical_lemma}.
	
Next, we verify the second condition in Lemma \ref{surprisingly_technical_lemma}. Indeed, it follows from \cite[Proposition 7.6]{DGKLWY} that the commutators $[R_{\lambda,k},M_{E^{\ast}f}],$ $1\leq k\leq n+1,$ are bounded. Using Leibniz rule, we decompose $[R_{\lambda,k},M_{E^{\ast}f}]$ as follows.
$$[R_{\lambda,k},M_{E^{\ast}f}]=[\partial_k,M_{E^{\ast}f}]\Delta_\lambda^{-\frac12}+\partial_k[\Delta_\lambda^{-\frac12},M_{E^{\ast}f}]=M_{E^{\ast}(\partial_kf)}\Delta_\lambda^{-\frac12}-R_{\lambda,k}[B,A]B^{-1}.$$
By Theorem \ref{cwikel estimate in bessel setting}, the first summand on the right is bounded. Hence, $R_{\lambda,k}[B,A]B^{-1}$ is bounded for every $1\leq k\leq n+1.$ Taking into account that
$$|[B,A]B^{-1}|^2=\sum_{k=1}^{n+1}|R_{\lambda,k}[B,A]B^{-1}|^2,$$
it follows that $[B,A]B^{-1}$ is bounded.
	
The third condition in Lemma \ref{surprisingly_technical_lemma} follows from Theorem \ref{cwikel estimate in bessel setting}.
\end{proof}

\begin{lemma}\label{main sufficiency lemma} Let $n\geq 2,$ then there exists a constant $C_{n,\lambda}>0$ such that for every $f\in C^{\infty}_c(\mathbb{R}^{n+1}),$ we have
$$\|[\Delta_\lambda^{\frac12},M_{E^*f}]\Delta_\lambda^{-\frac12}\|_{\mathcal{L}_{n+1,\infty}(L_2(\mathbb{R}_+^{n+1}))}\leq C_{n,\lambda}\|f\|_{\dot{W}^{1,n+1}(\mathbb{R}_+^{n+1})}.$$
\end{lemma}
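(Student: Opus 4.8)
The plan is to deduce Lemma \ref{main sufficiency lemma} from the preceding two lemmas by invoking Lemma \ref{surprisingly_technical_lemma} with the choice $A=M_{E^*f}$ and $B=\Delta_\lambda^{\frac12}$. First I would observe that $B=\Delta_\lambda^{\frac12}$ is a non-negative self-adjoint operator on $L_2(\mathbb{R}_+^{n+1},m_\lambda)$ with trivial kernel (this follows from the spectral picture $\Delta_\lambda=(\mathcal F\otimes U_\lambda)^{-1}M_{|x|^2}(\mathcal F\otimes U_\lambda)$ recorded in Section \ref{Selfexten}, since $M_{|x|^2}$ has trivial kernel on $L_2$), and that $A=M_{E^*f}$ is bounded because $f\in C^\infty_c(\mathbb{R}^{n+1})$, so $E^*f\in L_\infty(\mathbb{R}_+^{n+1})$. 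Then Lemma \ref{sufficiency verification lemma} asserts precisely that the hypotheses (i), (ii), (iii) of Lemma \ref{surprisingly_technical_lemma} are met with $p=n+1$, and moreover furnishes the quantitative bound
$$\|B^{-1}[B^2,A]B^{-1}\|_{\mathcal{L}_{n+1,\infty}(L_2(\mathbb{R}_+^{n+1}))}\leq C_{n,\lambda}\|f\|_{\dot{W}^{1,n+1}(\mathbb{R}^{n+1}_+)}.$$

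Next I would simply apply Lemma \ref{surprisingly_technical_lemma}, which yields a constant $C_{n+1}>0$ with
$$\|[B,A]B^{-1}\|_{\mathcal{L}_{n+1,\infty}(L_2(\mathbb{R}_+^{n+1}))}\leq C_{n+1}\,\|B^{-1}[B^2,A]B^{-1}\|_{\mathcal{L}_{n+1,\infty}(L_2(\mathbb{R}_+^{n+1}))}.$$
Chaining this with the estimate from Lemma \ref{sufficiency verification lemma} and noting that $[B,A]B^{-1}=[\Delta_\lambda^{\frac12},M_{E^*f}]\Delta_\lambda^{-\frac12}$ gives exactly the claimed inequality with $C_{n,\lambda}$ the product of the two constants. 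One small bookkeeping point I would want to be careful about: the weighted space $L_2(\mathbb{R}_+^{n+1},m_\lambda)$ versus the unweighted $L_2(\mathbb{R}_+^{n+1})$ in which Lemma \ref{L2Cwikel} and Theorem \ref{cwikel estimate in bessel setting} were phrased — Lemma \ref{weight} guarantees the weak Schatten norms agree under the unitary $M_{x_{n+1}^{-\lambda}}$, so this transition is harmless, and in fact Lemma \ref{sufficiency verification lemma} is already stated on the unweighted space, so the application of Lemma \ref{surprisingly_technical_lemma} can be carried out directly there.

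Since essentially all the work has been front-loaded into Lemma \ref{sufficiency verification lemma} (whose proof rests on the Bessel Cwikel estimate of Theorem \ref{cwikel estimate in bessel setting}) and into the abstract commutator estimate of Lemma \ref{surprisingly_technical_lemma} borrowed from \cite{MR4654013}, the present lemma is really just the assembly step, and there is no genuine obstacle. The only thing that requires a moment's thought is verifying that all three hypotheses of Lemma \ref{surprisingly_technical_lemma} are genuinely available at the stated index $p=n+1>1$ (which needs $n\geq 1$, consistent with the hypothesis $n\geq 2$ here that is actually imposed to make the Cwikel estimate of Theorem \ref{cwikel estimate in bessel setting} valid), and that the operator-theoretic identity $B^{-1}[B^2,A]B^{-1}=\Delta_\lambda^{-\frac12}[\Delta_\lambda,M_{E^*f}]\Delta_\lambda^{-\frac12}$ used in Lemma \ref{sufficiency verification lemma} is legitimate on the relevant dense domain — both points are routine given the smoothness and compact support of $f$.
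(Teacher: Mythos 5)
Your proposal is correct and is exactly the paper's argument: the paper's proof of this lemma is the single sentence ``The assertion follows from Lemma \ref{sufficiency verification lemma} and Lemma \ref{surprisingly_technical_lemma},'' and you have simply spelled out the same assembly step (hypotheses of Lemma \ref{surprisingly_technical_lemma} verified and the quantitative bound supplied by Lemma \ref{sufficiency verification lemma}, then the abstract estimate applied with $A=M_{E^*f}$, $B=\Delta_\lambda^{1/2}$). No further comment is needed.
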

\begin{proof} The assertion follows from Lemma \ref{sufficiency verification lemma} and Lemma \ref{surprisingly_technical_lemma}.
\end{proof}

\begin{proposition}\label{final sufficiency lemma} Let $n\geq 2,$ then there exists a constant $C_{n,\lambda}>0$ such that for every $f\in  \dot{W}^{1,n+1}(\mathbb{R}_+^{n+1})\cap L_{\infty}(\mathbb{R}_+^{n+1})$ and for every $1\leq k\leq n+1,$ we have
$$\|[R_{\lambda,k},M_f]\|_{\mathcal{L}_{n+1,\infty}(L_2(\mathbb{R}_+^{n+1}))}\leq C_{n,\lambda}\|f\|_{\dot{W}^{1,n+1}(\mathbb{R}_+^{n+1})}.$$
\end{proposition}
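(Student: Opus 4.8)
The plan is to establish the bound first for symbols of the form $E^{*}g$ with $g\in C^{\infty}_c(\mathbb{R}^{n+1})$, and then to remove this restriction by the density argument already used in the proof of the upper bound in Theorem \ref{main theorem}; the only change is that the core estimate is now supplied by the Cwikel estimate of Theorem \ref{cwikel estimate in bessel setting} instead of by Proposition \ref{commutator representation lemma}.

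For the first step I would start from the algebraic identity recorded inside the proof of Lemma \ref{sufficiency verification lemma},
$$[R_{\lambda,k},M_{E^{*}g}]=M_{E^{*}(\partial_kg)}\,\Delta_\lambda^{-\frac12}-R_{\lambda,k}\,[\Delta_\lambda^{\frac12},M_{E^{*}g}]\,\Delta_\lambda^{-\frac12},\qquad 1\leq k\leq n+1.$$
The first summand is of the form treated by the Cwikel estimate: applying the weak-type estimate in Theorem \ref{cwikel estimate in bessel setting} with $p=n+1$, with $M_f$ replaced by $M_{E^{*}(\partial_kg)}$ and with the radial profile $r\mapsto r^{-1}$ (which lies in $L_{n+1,\infty}(\mathbb{R}_+,r^{n}dr)$, as a one-line computation of its decreasing rearrangement shows), we obtain
$$\big\|M_{E^{*}(\partial_kg)}\,\Delta_\lambda^{-\frac12}\big\|_{\mathcal{L}_{n+1,\infty}(L_2(\mathbb{R}^{n+1}_+))}\lesssim\|\partial_kg\|_{L_{n+1}(\mathbb{R}^{n+1}_+)}\leq\|g\|_{\dot{W}^{1,n+1}(\mathbb{R}^{n+1}_+)}.$$
For the second summand, $R_{\lambda,k}$ is bounded on $L_2$, so the ideal property of $\mathcal{L}_{n+1,\infty}$ together with Lemma \ref{main sufficiency lemma} gives
$$\big\|R_{\lambda,k}\,[\Delta_\lambda^{\frac12},M_{E^{*}g}]\,\Delta_\lambda^{-\frac12}\big\|_{\mathcal{L}_{n+1,\infty}(L_2(\mathbb{R}^{n+1}_+))}\leq\|R_{\lambda,k}\|_{\infty}\,\big\|[\Delta_\lambda^{\frac12},M_{E^{*}g}]\,\Delta_\lambda^{-\frac12}\big\|_{\mathcal{L}_{n+1,\infty}(L_2(\mathbb{R}^{n+1}_+))}\lesssim\|g\|_{\dot{W}^{1,n+1}(\mathbb{R}^{n+1}_+)}.$$
Summing and invoking the quasi-triangle inequality for $\mathcal{L}_{n+1,\infty}$ yields $\|[R_{\lambda,k},M_{E^{*}g}]\|_{\mathcal{L}_{n+1,\infty}(L_2(\mathbb{R}^{n+1}_+))}\lesssim\|g\|_{\dot{W}^{1,n+1}(\mathbb{R}^{n+1}_+)}$ for all $g\in C^{\infty}_c(\mathbb{R}^{n+1})$; by Lemma \ref{weight} this also holds on $L_2(\mathbb{R}^{n+1}_+,m_\lambda)$.

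For the second step, given $f\in\dot{W}^{1,n+1}(\mathbb{R}^{n+1}_+)\cap L_{\infty}(\mathbb{R}^{n+1}_+)$ I would invoke Lemma \ref{density lemma} to obtain $\{f_m\}\subset C^{\infty}_c(\mathbb{R}^{n+1})$ that is Cauchy in $\dot{W}^{1,n+1}(\mathbb{R}^{n+1})$ with $E^{*}f_m\to f$ in $\dot{W}^{1,n+1}(\mathbb{R}^{n+1}_+)$ and $M_{E^{*}f_m}\to M_{f-c}$ in the strong operator topology for some constant $c$. Applying the first step to $f_{m_1}-f_{m_2}$ shows $\{[R_{\lambda,k},M_{E^{*}f_m}]\}$ is Cauchy in $\mathcal{L}_{n+1,\infty}$, hence norm-convergent to some operator $T$; since it also converges to $[R_{\lambda,k},M_f]$ in the strong operator topology (the additive constant $c$ cancels in the commutator), uniqueness of limits gives $T=[R_{\lambda,k},M_f]$, and passing to the limit in the inequality of the first step, using $\|E^{*}f_m\|_{\dot{W}^{1,n+1}(\mathbb{R}^{n+1}_+)}\to\|f\|_{\dot{W}^{1,n+1}(\mathbb{R}^{n+1}_+)}$, gives the claim.

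The genuine content has been isolated beforehand in Theorem \ref{cwikel estimate in bessel setting} and Lemma \ref{main sufficiency lemma} (the latter resting on the operator-theoretic Lemma \ref{surprisingly_technical_lemma}), so the present argument is mostly assembly; the one place demanding some care is the approximation step — verifying that strong-operator convergence genuinely identifies the $\mathcal{L}_{n+1,\infty}$-limit as $[R_{\lambda,k},M_f]$ when $f$ is merely bounded with gradient in $L_{n+1}$, and that the constant $c$ is irrelevant. This is also exactly where the assumption $n\ge2$ is used, through the Cwikel estimate of Theorem \ref{cwikel estimate in bessel setting}, so the present route does not reach $n=1$.
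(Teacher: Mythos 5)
Your proposal is correct and follows essentially the same route as the paper: the paper likewise reduces to $f=E^{\ast}g$, $g\in C^{\infty}_c(\mathbb{R}^{n+1})$, by the approximation argument of Section \ref{upper bound section}, applies the Leibniz-rule decomposition $[R_{\lambda,k},M_f]=M_{\partial_k f}\Delta_\lambda^{-\frac12}-R_{\lambda,k}[\Delta_\lambda^{\frac12},M_f]\Delta_\lambda^{-\frac12}$, and concludes from Theorem \ref{cwikel estimate in bessel setting}, Lemma \ref{main sufficiency lemma} and the quasi-triangle inequality. The extra details you supply (checking that $r\mapsto r^{-1}$ lies in $L_{n+1,\infty}(\mathbb{R}_+,r^ndr)$ and spelling out the Cauchy-sequence identification of the limit) simply make explicit what the paper leaves implicit.
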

\begin{proof}
By the same approximation argument given in Section \ref{upper bound section}, it suffices to show the assertion for those $f$ such that $f=E^{\ast}g$ for some $g\in C_c^\infty(\mathbb{R}^{n+1})$ which is even in the $(n+1)$-st argument. Using Leibniz rule, we decompose $[R_{\lambda,k},M_f]$ as follows.
$$[R_{\lambda,k},M_f]=[\partial_k,M_f]\Delta_\lambda^{-\frac12}+\partial_k[\Delta_\lambda^{-\frac12},M_f]=M_{\partial_k f}\Delta_\lambda^{-\frac12}-R_{\lambda,k}[\Delta_\lambda^{\frac12},M_f]\Delta_\lambda^{-\frac12},$$
where in the last step we applied the following commutator formula:
\begin{align}\label{commutator}
[B^{-1},A]=-B^{-1}[B,A]B^{-1}.
\end{align}
The assertion follows now from Lemma \ref{main sufficiency lemma}, Theorem \ref{cwikel estimate in bessel setting} and the quasi-triangle inequality.
\end{proof}

\section{Proof of the lower bound}\label{lowbd}
\setcounter{equation}{0}
This section is devoted to providing a proof of lower bound in Theorem \ref{main theorem}.

\begin{lemma}\cite[Theorem 2.13]{MR2154153}\label{positive schur lemma} Let $K_1,K_2$ be measurable functions on $B(0,R)\times B(0,R)$ and let $V_{K_1},V_{K_2}$ be integral operators with integral kernels $K_1$ and $K_2.$ If $|K_2|\leq K_1$ and if $p\in2\mathbb{N},$ then
$$\|V_{K_2}\|_{\mathcal{L}_p(L_2(B(0,R)))}\leq\|V_{K_1}\|_{\mathcal{L}_p(L_2(B(0,R)))}.$$
\end{lemma}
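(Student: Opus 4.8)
The plan is to exploit the elementary fact that, for $p=2m$ an even integer, the $p$-th power of the Schatten norm of an integral operator can be written as an absolutely convergent $(2m)$-fold integral of products of kernel values, so that the inequality reduces to a pointwise domination of the integrand.

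First I would recall the identity $\|V_K\|_{\mathcal{L}_{2m}(L_2(B(0,R)))}^{2m}={\rm Tr}\big((V_K^{\ast}V_K)^m\big)$, valid whenever $V_K$ is Hilbert--Schmidt. Since $V_K^{\ast}V_K$ has integral kernel $L(x,y)=\int_{B(0,R)}\overline{K(z,x)}K(z,y)\,dz$ and the kernel of $(V_K^{\ast}V_K)^m$ is the $m$-fold composition of $L$ with itself, unfolding all the compositions and taking the trace along the diagonal gives
$$\|V_K\|_{\mathcal{L}_{2m}}^{2m}=\int_{B(0,R)^{2m}}\,\prod_{j=1}^{m}\overline{K(z_j,x_j)}\,K(z_j,x_{j+1})\,dz_1\cdots dz_m\,dx_1\cdots dx_m,$$
with the cyclic convention $x_{m+1}=x_1$. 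To make this rigorous I would first suppose $K_1\in L_2(B(0,R)\times B(0,R))$, so that $V_{K_1}$, and hence $V_{K_2}$, is Hilbert--Schmidt and the composition identities together with Fubini's theorem are legitimate; here the finiteness of the measure of $B(0,R)$ is convenient. The remaining case $K_1\notin L_2$ is dealt with by the usual device: either $V_{K_1}\notin\mathcal{L}_p$, in which case the asserted bound is vacuous, or one truncates $K_1$ from above, applies the already-established inequality, and passes to the limit by monotone convergence.

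The core estimate is then immediate. The hypothesis $|K_2|\le K_1$ forces $K_1\ge 0$, so, applying the displayed formula with $K=K_2$ and estimating the integrand in absolute value, one gets
$$\|V_{K_2}\|_{\mathcal{L}_{2m}}^{2m}\le\int_{B(0,R)^{2m}}\,\prod_{j=1}^{m}|K_2(z_j,x_j)|\,|K_2(z_j,x_{j+1})|\,dz\,dx\le\int_{B(0,R)^{2m}}\,\prod_{j=1}^{m}K_1(z_j,x_j)\,K_1(z_j,x_{j+1})\,dz\,dx,$$
and the last multiple integral is exactly the displayed formula for $K=K_1$ because $\overline{K_1}=K_1$, hence it equals $\|V_{K_1}\|_{\mathcal{L}_{2m}}^{2m}$. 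Taking $2m$-th roots yields the claim.

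The only genuine difficulty is bookkeeping rather than substance: carefully justifying the multiple-integral representation of ${\rm Tr}\big((V_K^{\ast}V_K)^m\big)$ and the repeated applications of Fubini's theorem for merely measurable, dominated kernels. Isolating the Hilbert--Schmidt case and then approximating by truncation, as indicated above, disposes of this, and once the integral representation is in hand the monotonicity argument is entirely elementary. In practice one may of course simply invoke \cite[Theorem 2.13]{MR2154153}, where these details are carried out.
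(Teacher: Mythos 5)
Your argument is correct and is essentially the standard proof of Simon's Theorem 2.13, which the paper simply cites without reproducing: for $p=2m$ one writes $\|V_K\|_{\mathcal{L}_{2m}}^{2m}=\mathrm{Tr}\big((V_K^{\ast}V_K)^m\big)$ as a $2m$-fold integral via the Hilbert--Schmidt pairing and dominates the integrand pointwise, then removes the $K_1\in L_2$ assumption by truncation and the Fatou property of the Schatten norms. No gaps worth flagging.
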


\begin{lemma}\label{convolution operator lemma} Let $0<\alpha<1.$ If
$$K(x,y)=|x-y|^{-\alpha(n+1)},\quad x,y\in B(0,R),$$
and if $V_K$ is an integral operator with an integral kernel $K,$ then $$V_K\in\mathcal{L}_{\frac{1}{1-\alpha},\infty}(L_2(B(0,R))).$$
\end{lemma}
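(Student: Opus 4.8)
The plan is to reduce the claim to a computation of the singular value asymptotics of the convolution-type operator with kernel $|x-y|^{-\alpha(n+1)}$ on a ball. The first step is to recognize that $K(x,y) = |x-y|^{-\alpha(n+1)} = |x-y|^{-(n+1)\alpha}$ is (up to normalization) the kernel of the Riesz potential $I_{\beta}$ with $\beta = (n+1)(1-\alpha)$ restricted to $B(0,R)$; indeed $I_\beta$ has kernel $c_{n+1,\beta}|x-y|^{-(n+1)+\beta}$, so matching exponents gives $-(n+1)+\beta = -(n+1)\alpha$, i.e. $\beta = (n+1)(1-\alpha) \in (0,n+1)$ since $\alpha \in (0,1)$. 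Thus $V_K$ is, modulo a constant, the composition $M_{\chi_{B(0,R)}} \Delta^{-\beta/2} M_{\chi_{B(0,R)}}$ on $L_2(\mathbb{R}^{n+1})$, where $\Delta$ is the Laplacian on $\mathbb{R}^{n+1}$.

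Next I would invoke the Cwikel-type estimate of Lemma \ref{cwikel estimate in Euclidean setting} (with $n$ there replaced by $n+1$), applied to $f = \chi_{B(0,R)} \in L_p(\mathbb{R}^{n+1})$ for the appropriate exponent and $g(r) = r^{-\beta}$, which lies in $L_{p,\infty}(\mathbb{R}_+, r^n\,dr)$ precisely when $p$ is chosen so that $r^{-\beta}$ has weak-$L_p$ decay against the measure $r^n\,dr$: a short computation shows $\mu(t, g) \sim t^{-\beta/(n+1)}$ relative to $r^n\,dr$, hence $g \in L_{p,\infty}$ for $p = (n+1)/\beta = 1/(1-\alpha)$. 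Writing $V_K = M_{\chi_{B(0,R)}}\, g(\sqrt{\Delta})\, M_{\chi_{B(0,R)}}$ and applying the Cwikel bound to the first two factors (the trailing multiplication by $\chi_{B(0,R)}$ only shrinks singular values, since it is a projection) gives $V_K \in \mathcal{L}_{p,\infty}$ with $p = 1/(1-\alpha)$, which is exactly the assertion. An alternative, more self-contained route avoids Lemma \ref{cwikel estimate in Euclidean setting}: one can estimate $\mu(t, V_K)$ directly by splitting $K = K\chi_{\{|x-y|<\varepsilon\}} + K\chi_{\{|x-y|\geq\varepsilon\}}$, bounding the far part in Hilbert--Schmidt norm and the near part in operator norm, optimizing in $\varepsilon$; combined with Lemma \ref{positive schur lemma} (which lets one replace $K$ by any dominating positive kernel) this yields the weak Schatten bound on a sequence of dyadic annuli.

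The main obstacle I anticipate is bookkeeping at the boundary of the ball and the precise verification that $g(r) = r^{-\beta}$ sits in the correct weak-Lorentz space with respect to the measure $r^n\,dr$ rather than $dr$ — this is where the dimension $n+1$ (versus $n$ in the cited Cwikel lemma) must be tracked carefully, and it is easy to be off by one in the exponent. A secondary subtlety is that Lemma \ref{cwikel estimate in Euclidean setting} as stated requires $p > 2$, so one should check that $1/(1-\alpha) > 2$, i.e. $\alpha > 1/2$; for $\alpha \le 1/2$ one either interpolates with the trivial Hilbert--Schmidt estimate (valid since $|x-y|^{-\alpha(n+1)} \in L_2(B(0,R)\times B(0,R))$ when $2\alpha(n+1) < n+1$, i.e. $\alpha < 1/2$) or uses the direct annular decomposition above, which has no such restriction. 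Either way the endpoint $\alpha = 1/2$, giving $p=2$, is covered directly by the Hilbert--Schmidt computation.
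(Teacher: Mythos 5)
Your main route --- identifying $V_K$ (up to a constant) with $M_{\chi_{B(0,R)}}\Delta^{-\beta/2}M_{\chi_{B(0,R)}}$ for $\beta=(n+1)(1-\alpha)$ and applying the Cwikel bound of Lemma \ref{cwikel estimate in Euclidean setting} with $g(r)=r^{-\beta}\in L_{1/(1-\alpha),\infty}(\mathbb{R}_+,r^ndr)$ --- is sound and yields the assertion whenever $1/(1-\alpha)>2$, i.e.\ $\alpha>1/2$. The genuine gap is your treatment of $\alpha\le 1/2$. There the target ideal $\mathcal{L}_{1/(1-\alpha),\infty}$ has exponent $1/(1-\alpha)\le 2$, and for $p<2$ the inclusion runs $\mathcal{L}_{p,\infty}\subsetneq\mathcal{L}_2$, not the other way around: Hilbert--Schmidt membership only gives $\mu(k,V_K)=o(k^{-1/2})$, which is strictly weaker than the required $\mu(k,V_K)=O(k^{-(1-\alpha)})$ with $1-\alpha>1/2$. ``Interpolating with the trivial Hilbert--Schmidt estimate'' cannot upgrade a weaker bound to a stronger one without a second, stronger endpoint, and you do not supply one. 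The endpoint $\alpha=1/2$ is worse still: $|x-y|^{-2\alpha(n+1)}=|x-y|^{-(n+1)}$ is not locally integrable, so $K\notin L_2(B(0,R)\times B(0,R))$ and the Hilbert--Schmidt computation does not apply there at all. Your fallback near/far decomposition has the same defect: for $\alpha\le1/2$ the Hilbert--Schmidt norm of the far part does not decay as the truncation parameter $\varepsilon\downarrow0$, so the optimisation yields only $\mu(k,V_K)\lesssim k^{-1/2}$. These are not vacuous cases: the paper invokes the lemma with $\alpha=\frac{n-1}{n+1}$, which is $<1/2$ for $n=2$ and $=1/2$ for $n=3$.

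For comparison, the paper avoids Cwikel estimates entirely. After a dilation it periodises the truncated kernel, writes $V_K=M_{\chi_{B(0,R)}}\hat{\psi}(\nabla_{\mathbb{T}^{n+1}})M_{\chi_{B(0,R)}}$ for a $2\pi$-periodic $\psi$ agreeing with $|x|^{-\alpha(n+1)}$ near the origin, and uses that the Fourier multiplier is diagonal in the exponential basis with eigenvalue sequence $\hat{\psi}\in\ell_{1/(1-\alpha),\infty}(\mathbb{Z}^{n+1})$; compressing by the projections $M_{\chi_{B(0,R)}}$ only decreases singular values. This works uniformly for all $0<\alpha<1$. To repair your argument below $p=2$ you would need either this Fourier-series computation or a finite-rank approximation scheme exploiting the smoothness of the kernel off the diagonal (Birman--Solomyak piecewise-polynomial approximation on Whitney cubes); neither is supplied in your proposal.
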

\begin{proof} Conjugating with dilation operator, we can always assume $R<(n+1)^{-\frac12}$ so that $B(0,R)\subset [-1,1]^{n+1}.$ Let $\phi\in C^{\infty}_c(\mathbb{R}^{n+1})$ be compactly supported in $(-\pi,\pi)^{n+1}$ and such that $\phi=1$ on $[-2,2]^{n+1}.$ Let $\psi$ be the $2\pi$-periodic extension of the function
$$x\to |x|^{-\alpha(n+1)}\phi(x),\quad x\in[-\pi,\pi]^{n+1}.$$
Integral kernel of our operator is
$$(x,y)\to \chi_{B(0,R)}(x)\psi(x-y)\chi_{B(0,R)}(y),\quad x,y\in\mathbb{R}^{n+1}.$$
Identifying $[-\pi,\pi]^{n+1}$ with $\mathbb{T}^{n+1},$ we write our operator as $M_{\chi_{B(0,R)}}\hat{\psi}(\nabla_{\mathbb{T}^{n+1}})M_{\chi_{B(0,R)}}.$ Note that for $f\in L_2(\mathbb{T}^{n+1})$,
$$\hat{\psi}(\nabla_{\mathbb{T}^{n+1}})f(x)=\sum_{k\in\mathbb{Z}^{n+1}}\hat{\psi}(k)\hat{f}(k) e^{i\langle k,x\rangle},$$
and that $\{e^{i\langle k,x\rangle}\}_{k\in\mathbb{Z}^{n+1}}$ is an orthogonal basis of $L_2(\mathbb{T}^{n+1})$.
Thus,
\begin{align*}
\|V_K\|_{\mathcal{L}_{\frac{1}{1-\alpha},\infty}(L_2(B(0,R)))}&=\|M_{\chi_{B(0,R)}}\hat{\psi}(\nabla_{\mathbb{T}^{n+1}})M_{\chi_{B(0,R)}}\|_{\mathcal{L}_{\frac{1}{1-\alpha},\infty}(L_2(\mathbb{T}^{n+1}))}\\
&\leq\|\hat{\psi}(\nabla_{\mathbb{T}^{n+1}})\|_{\mathcal{L}_{\frac{1}{1-\alpha},\infty}(L_2(\mathbb{T}^{n+1}))}=\|\hat{\psi}\|_{l_{\frac1{1-\alpha},\infty}(\mathbb{Z}^{n+1})}.
\end{align*}
Since $\hat{\psi}\in l_{\frac1{1-\alpha},\infty}(\mathbb{Z}^{n+1}),$ the assertion follows.
\end{proof}

\begin{lemma}\label{rough estimate} Let $Q\subset\mathbb{R}^{n+1}$ be a cube and let $L\in L_{\infty}(Q\times Q).$ If
$$K(x,y)=\frac{L(x,y)}{|x-y|^{n-1}},\quad x,y\in Q,$$
and if $V_K$ is an integral operator with an integral kernel $K,$ then
$$\|V_K\|_{\mathcal{L}_{n+1}(L_2(Q))}\leq c_Q\|L\|_{L_{\infty}(Q\times Q)}$$
for some constant $c_Q>0$.
\end{lemma}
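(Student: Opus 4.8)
The plan is to decompose the kernel $K$ dyadically with respect to $|x-y|$ and, on each annular piece, to interpolate the $\mathcal{L}_{n+1}$ norm between the elementary $\mathcal{L}_\infty$ (Schur test) and $\mathcal{L}_2$ (Hilbert--Schmidt) bounds. The argument works for all $n\ge1$; when $n=1$ it degenerates into the trivial observation that $K=L$ is bounded on the bounded set $Q\times Q$, so $V_K$ is Hilbert--Schmidt with $\|V_K\|_{\mathcal{L}_2(L_2(Q))}=\|L\|_{L_2(Q\times Q)}\le|Q|\,\|L\|_{L_\infty(Q\times Q)}$ and $\mathcal{L}_2=\mathcal{L}_{n+1}$. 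For general $n$ I first record that $(x,y)\mapsto|x-y|^{-(n-1)}$ lies in $L_1(Q\times Q)$ since $n-1<n+1$, so $K\in L_1(Q\times Q)$ and $V_K$ is at least well defined as an integral operator on $C_c(Q)$.

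Next I would fix $j_0\in\mathbb{Z}$ with $2^{-j_0-1}<\operatorname{diam}(Q)\le 2^{-j_0}$ and, for $j\ge j_0$, set
$$A_j:=\bigl\{(x,y)\in Q\times Q:\ 2^{-j-1}<|x-y|\le 2^{-j}\bigr\},\qquad K_j:=K\,\chi_{A_j},$$
so that $K=\sum_{j\ge j_0}K_j$ with the $A_j$ pairwise disjoint. On $A_j$ one has $|K_j|\le\|L\|_{L_\infty(Q\times Q)}\,2^{(j+1)(n-1)}$, hence the Schur test yields
$$\|V_{K_j}\|_{\mathcal{L}_\infty(L_2(Q))}\le\|L\|_{L_\infty(Q\times Q)}\sup_{x}\int_{2^{-j-1}<|z|\le 2^{-j}}|z|^{-(n-1)}\,dz\le C_n\,\|L\|_{L_\infty(Q\times Q)}\,2^{-2j},$$
while
$$\|V_{K_j}\|_{\mathcal{L}_2(L_2(Q))}^2=\|K_j\|_{L_2(Q\times Q)}^2\le\|L\|_{L_\infty(Q\times Q)}^2\,|Q|\int_{2^{-j-1}<|z|\le 2^{-j}}|z|^{-2(n-1)}\,dz\le C_n\,\|L\|_{L_\infty(Q\times Q)}^2\,|Q|\,2^{j(n-3)}.$$

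Interpolating $\mathcal{L}_{n+1}$ between $\mathcal{L}_2$ and $\mathcal{L}_\infty$, i.e. using $\|T\|_{\mathcal{L}_{n+1}}\le\|T\|_{\mathcal{L}_\infty}^{(n-1)/(n+1)}\|T\|_{\mathcal{L}_2}^{2/(n+1)}$, the powers of $2^{j}$ combine to exactly $2^{-j}$, so that
$$\|V_{K_j}\|_{\mathcal{L}_{n+1}(L_2(Q))}\le C_n\,\|L\|_{L_\infty(Q\times Q)}\,|Q|^{1/(n+1)}\,2^{-j},\qquad j\ge j_0.$$
Since $n+1\ge2$, the space $\mathcal{L}_{n+1}(L_2(Q))$ is a Banach space, so $\sum_{j\ge j_0}V_{K_j}$ converges there with norm at most $C_n\,\|L\|_{L_\infty(Q\times Q)}\,|Q|^{1/(n+1)}\sum_{j\ge j_0}2^{-j}\le C_n\,\|L\|_{L_\infty(Q\times Q)}\,|Q|^{1/(n+1)}\operatorname{diam}(Q)$. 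Finally I would identify this limit with $V_K$ by pairing against $f,g\in C_c(Q)$: $\langle V_{\sum_{j_0\le j\le N}K_j}f,g\rangle\to\langle V_Kf,g\rangle$ as $N\to\infty$ by dominated convergence, the dominating function $\|f\|_\infty\|g\|_\infty|K|$ being integrable on $Q\times Q$. This gives the assertion with $c_Q=C_n\,|Q|^{1/(n+1)}\operatorname{diam}(Q)$.

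The only genuine point to watch is the exponent bookkeeping in the interpolation step: the two annular estimates are calibrated precisely so that the resulting series in $j$ is geometric with ratio $1/2$; together with the (minor) technicality of identifying the $\mathcal{L}_{n+1}$-limit of the truncated sums with the actual integral operator $V_K$, which uses nothing beyond $K\in L_1(Q\times Q)$. As an alternative route when $n\ge2$, one could instead dominate $|K|\le\|L\|_{L_\infty(Q\times Q)}|x-y|^{-(n-1)}$, apply Lemma~\ref{convolution operator lemma} with $\alpha=(n-1)/(n+1)$ to place $V_{|x-y|^{-(n-1)}}$ in $\mathcal{L}_{(n+1)/2,\infty}\subset\mathcal{L}_{n+1}$, and combine this with Lemma~\ref{positive schur lemma}; but that argument requires extra care when $n+1$ is odd, so the dyadic decomposition above is the cleaner choice.
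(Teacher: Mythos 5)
Your proof is correct, but it takes a genuinely different route from the paper's. The paper treats $n=1$ separately (there $K$ is bounded, hence Hilbert--Schmidt, and $\mathcal{L}_2=\mathcal{L}_{n+1}$) and, for $n\geq2$, dominates $|K|$ by $\|L\|_{L_\infty(Q\times Q)}|x-y|^{1-n}$, places the model convolution operator in $\mathcal{L}_{\frac{n+1}{2},\infty}\subset\mathcal{L}_{2\lfloor\frac{n+1}{2}\rfloor}$ via Lemma \ref{convolution operator lemma} (periodization and decay of the Fourier coefficients of $|x|^{-\alpha(n+1)}$), and then transfers the bound to $V_K$ by the kernel-domination principle of Lemma \ref{positive schur lemma} --- precisely the alternative you sketch at the end, including your correct observation that the domination lemma applies only for even integer Schatten exponents, which is why the paper works with $2\lfloor\frac{n+1}{2}\rfloor$ rather than $n+1$ itself. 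Your dyadic decomposition in $|x-y|$, with the Schur-test bound $\lesssim 2^{-2j}$, the Hilbert--Schmidt bound $\lesssim |Q|^{1/2}2^{j(n-3)/2}$, and the elementary interpolation $\|T\|_{\mathcal{L}_{n+1}}\leq\|T\|_{\mathcal{L}_\infty}^{(n-1)/(n+1)}\|T\|_{\mathcal{L}_2}^{2/(n+1)}$, is self-contained, uniform in $n\geq1$, sidesteps both the parity issue and the separate $n=1$ case, and yields an explicit constant $c_Q\sim|Q|^{1/(n+1)}{\rm diam}(Q)$; the exponent bookkeeping is right, since $-2\cdot\frac{n-1}{n+1}+\frac{n-3}{2}\cdot\frac{2}{n+1}=\frac{-2(n-1)+(n-3)}{n+1}=-1$ gives a geometric series, and the identification of the $\mathcal{L}_{n+1}$-limit of the truncated sums with $V_K$ is legitimate because $K\in L_1(Q\times Q)$. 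What the paper's route buys in exchange is the marginally stronger conclusion $V_K\in\mathcal{L}_{2\lfloor\frac{n+1}{2}\rfloor}$, a strictly smaller ideal than $\mathcal{L}_{n+1}$ when $n$ is even, though only $\mathcal{L}_{n+1}$ membership is used downstream.
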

\begin{proof} If $n=1,$ then $K\in L_{\infty}(Q\times Q)\subset L_2(Q\times Q).$ Hence, $V_K\subset \mathcal{L}_2(L_2(Q))$ and
$$\|V_K\|_{\mathcal{L}_2(L_2(Q))}\leq m(Q)\|L\|_{L_{\infty}(Q\times Q)}.$$
This proves the assertion for $n=1.$
	
Suppose now $n\geq 2.$ Let
$$K_0(x,y)=|x-y|^{1-n},\quad x,y\in Q.$$
Using Lemma \ref{convolution operator lemma} with $\alpha=\frac{n-1}{n+1},$ we obtain $V_{K_0}\in\mathcal{L}_{\frac{n+1}{2},\infty}(L_2(Q)).$ Since $\frac{n+1}{2}< 2\lfloor\frac{n+1}{2}\rfloor,$ it follows that $V_{K_0}\in\mathcal{L}_{2\lfloor\frac{n+1}{2}\rfloor}(L_2(Q)).$ It follows from Lemma \ref{positive schur lemma} that
$$\|V_K\|_{\mathcal{L}_{2\lfloor\frac{n+1}{2}\rfloor}(L_2(Q))}\leq\|L\|_{\infty}\|V_{K_0}\|_{\mathcal{L}_{2\lfloor\frac{n+1}{2}\rfloor}(L_2(Q))}.$$
The assertion follows immediately.
\end{proof}

\begin{lemma}\label{mtb separable part lemma} Let $Q\subset\mathbb{R}^{n+1}$ be a cube. If $f\in L_{\infty}(\mathbb{R}^{n+1}),$ then
$$M_{\chi_Q}[\frac{\partial_k}{\Delta},M_f]M_{\chi_Q}\in (\mathcal{L}_{n+1,\infty}(L_2(\mathbb{R}^{n+1})))_0.$$
\end{lemma}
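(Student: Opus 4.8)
The plan is to reduce the statement, by a density argument, to the Euclidean Cwikel estimate of Lemma~\ref{cwikel estimate in Euclidean setting} and to Lemma~\ref{rough estimate}. First I would record that $M_{\chi_Q}[\frac{\partial_k}{\Delta},M_f]M_{\chi_Q}$ is the integral operator on $L_2(Q)$ with kernel $\chi_Q(x)\chi_Q(y)K_0(x,y)(f(y)-f(x))$, where $K_0$ is the locally integrable kernel of $\partial_k\Delta^{-1}=R_k\Delta^{-1/2}$ and $|K_0(x,y)|\le c_n|x-y|^{-n}$ on $\mathbb{R}^{n+1}\times\mathbb{R}^{n+1}$. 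In particular this operator depends only on $f|_Q$, it is bounded for every $f\in L_\infty$ by Schur's test, and its singular values are unchanged under zero extension from $L_2(Q)$ to $L_2(\mathbb{R}^{n+1})$, so it suffices to treat it on $L_2(Q)$.

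The heart of the argument, for $n\ge 2$, is the quantitative bound
$$\big\|M_{\chi_Q}[\tfrac{\partial_k}{\Delta},M_h]M_{\chi_Q}\big\|_{\mathcal{L}_{n+1,\infty}(L_2(\mathbb{R}^{n+1}))}\le C_Q\,\|h\|_{L_{n+1}(Q)},\qquad h\in L_\infty(Q).$$
To prove it I would split $[\partial_k\Delta^{-1},M_h]=\partial_k\Delta^{-1}M_h-M_h\partial_k\Delta^{-1}$ and compress. The first summand is $(M_{\chi_Q}R_k)(\Delta^{-1/2}M_{h\chi_Q})$, where $M_{\chi_Q}R_k$ is bounded; since the spectral function $r\mapsto r^{-1}$ has distribution function $((n+1)\lambda^{n+1})^{-1}$ with respect to $r^n\,dr$ and hence lies in $L_{n+1,\infty}(\mathbb{R}_+,r^n\,dr)$, Lemma~\ref{cwikel estimate in Euclidean setting} (applicable because $n+1>2$) gives $\|\Delta^{-1/2}M_{h\chi_Q}\|_{\mathcal{L}_{n+1,\infty}}=\|M_{\overline h\chi_Q}\Delta^{-1/2}\|_{\mathcal{L}_{n+1,\infty}}\lesssim_n\|h\|_{L_{n+1}(Q)}$. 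For the second summand one passes to adjoints: since $\partial_k$ is skew-adjoint and commutes with $\Delta^{-1}$, the adjoint of $M_{h\chi_Q}\partial_k\Delta^{-1}M_{\chi_Q}$ equals $-M_{\chi_Q}R_k\Delta^{-1/2}M_{\overline h\chi_Q}$, which obeys the same bound; the quasi-triangle inequality in $\mathcal{L}_{n+1,\infty}$ then finishes the estimate. (The same splitting also makes sense, and defines a bounded operator into $\mathcal{L}_{n+1,\infty}$, for arbitrary $h\in L_{n+1}(Q)$, agreeing with $M_{\chi_Q}[\frac{\partial_k}{\Delta},M_h]M_{\chi_Q}$ when $h\in L_\infty$.)

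With this bound in hand the conclusion follows. For $h\in C_c^\infty(\mathbb{R}^{n+1})$ one has $|h(y)-h(x)|\le\|\nabla h\|_\infty|x-y|$, so the kernel above equals $L(x,y)\,|x-y|^{1-n}$ on $Q\times Q$ with $L\in L_\infty(Q\times Q)$, and Lemma~\ref{rough estimate} places $M_{\chi_Q}[\frac{\partial_k}{\Delta},M_h]M_{\chi_Q}$ in $\mathcal{L}_{n+1}(L_2(Q))$, hence in $(\mathcal{L}_{n+1,\infty}(L_2(\mathbb{R}^{n+1})))_0$, since any Schatten class sits inside the separable part of the weak Schatten ideal of the same index. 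Now given $f\in L_\infty$, pick $f_m\in C_c^\infty$ with $f_m\to f$ in $L_{n+1}(Q)$; by the displayed estimate the corresponding operators converge in $\mathcal{L}_{n+1,\infty}$, and as $(\mathcal{L}_{n+1,\infty})_0$ is a closed ideal containing every approximant it contains the limit $M_{\chi_Q}[\frac{\partial_k}{\Delta},M_f]M_{\chi_Q}$.

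The main obstacle is the endpoint $n=1$, i.e.\ $p=n+1=2$: Lemma~\ref{cwikel estimate in Euclidean setting} is unavailable there, the two summands of the above splitting are not separately meaningful, and the compressed commutator is in general outside $\mathcal{L}_2$ (so that neither the $L_2$–Cwikel formula nor, because the dominating kernel $|x-y|^{-1}$ on $Q\subset\mathbb{R}^2$ yields an operator outside $\mathcal{L}_2(L_2(Q))$, the domination principle of Lemma~\ref{positive schur lemma} applies). Here I would substitute the Lorentz endpoint refinement of the Cwikel estimate, bounding $\|M_\phi\Delta^{-1/2}\|_{\mathcal{L}_{2,\infty}(L_2(\mathbb{R}^2))}$ by $\|\phi\|_{L_{2,1}}$ for $\phi$ supported in $Q$, and run the same density argument with $L_q(Q)$, $q>2$, in place of $L_{n+1}(Q)$, using the embeddings $L_\infty(Q)\hookrightarrow L_q(Q)\hookrightarrow L_{2,1}(Q)$ valid on the finite measure cube $Q$.
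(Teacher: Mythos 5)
Your argument for $n\geq 2$ is correct and, while organised differently from the paper's (the paper expands $[\partial_k\Delta^{-1},M_f]$ twice by Leibniz and lands each smooth-symbol piece in $\mathcal{L}_{\frac{n+1}{2},\infty}$ via Cwikel, whereas you handle smooth symbols by the kernel bound plus Lemma \ref{rough estimate} and prove a quantitative $\mathcal{L}_{n+1,\infty}$ bound for the density step), both routes rest on Lemma \ref{cwikel estimate in Euclidean setting} at $p=n+1>2$ and are equally valid there.

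The case $n=1$ is a genuine gap. Your density step there hinges on the asserted estimate $\|M_\phi\Delta^{-1/2}\|_{\mathcal{L}_{2,\infty}(L_2(\mathbb{R}^2))}\lesssim\|\phi\|_{L_{2,1}}$, which you neither prove nor attribute. This is exactly the excluded endpoint $p=2$ of Cwikel's theorem: writing $\|M_\phi\Delta^{-1}M_{\bar\phi}\|_{\mathcal{L}_{1,\infty}}\lesssim\|\phi\|_{L_{2,1}}^2=\||\phi|^2\|_{L_{1,1/2}}$ shows it is a weak-type $(1,1)$ bound for the two-dimensional Birman--Schwinger operator, i.e.\ a CLR-type inequality in dimension two, where the naive $L_1$ bound is known to fail and the correct substitutes involve Orlicz/Birman--Solomyak conditions rather than a Lorentz refinement of the symbol. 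Such a statement cannot simply be cited; it would need its own proof and may well be false. The repair is much cheaper and is what the paper does: you do not need Cwikel at $p=2$ at all. For the smooth approximants at $n=1$ the commutator kernel $(y-x)_k(f_m(y)-f_m(x))/|x-y|^2$ is bounded on $Q\times Q$, hence Hilbert--Schmidt (this is already contained in your appeal to Lemma \ref{rough estimate}); and for the convergence of the approximation one factors $\partial_k\Delta^{-1}=\Delta^{-1/4}\cdot R_k\cdot\Delta^{-1/4}$ and applies Lemma \ref{cwikel estimate in Euclidean setting} at the safe exponent $2n+2=4>2$ to each factor (the symbol $r^{-1/2}$ lies in $L_{2n+2,\infty}(\mathbb{R}_+,r^ndr)$), obtaining
$$\Big\|M_{\chi_Q}\big[\tfrac{\partial_k}{\Delta},M_{f-f_m}\big]M_{\chi_Q}\Big\|_{\mathcal{L}_{n+1,\infty}}\leq C_n\,m(Q)^{\frac{1}{2n+2}}\|f-f_m\|_{L_{2n+2}(Q)},$$
by the H\"older property of the weak Schatten ideals. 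Since $Q$ has finite measure and $f\in L_\infty$, paying $L_{2n+2}(Q)$ instead of $L_{n+1}(Q)$ costs nothing, and this closes the argument uniformly in $n\geq 1$; with this substitution your scheme becomes complete.
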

\begin{proof} {\bf Step 1:} Let $n\geq 2$ and let $f\in C^{\infty}_c(\mathbb{R}^{n+1}).$ By the Leibniz rule,
$$[\frac{\partial_k}{\Delta},M_f]=M_{\partial_kf}\Delta^{-1}-\frac{\partial_k}{\Delta}[\Delta,M_f]\Delta^{-1}=M_{\partial_kf}\Delta^{-1}+\sum_{l=1}^{n+1}\frac{\partial_k}{\Delta}[\partial_l^2,M_f]\Delta^{-1}.$$
Again by the Leibniz rule,
$$[\partial_l^2,M_f]=\partial_lM_{\partial_lf}+M_{\partial_lf}\partial_l=2\partial_lM_{\partial_lf}-M_{\partial_l^2f}.$$
Thus,
$$[\frac{\partial_k}{\Delta},M_f]=M_{\partial_kf}\Delta^{-1}+2\sum_{l=1}^{n+1}\frac{\partial_k\partial_l}{\Delta}M_{\partial_lf}\Delta^{-1}+\frac{\partial_k}{\Delta}M_{\Delta f}\Delta^{-1}.$$
Finally,
\begin{align}\label{rhsof}
&M_{\chi_Q}[\frac{\partial_k}{\Delta},M_f]M_{\chi_Q}=M_{\partial_kf\cdot \chi_Q}\Delta^{-1}M_{\chi_Q}\nonumber\\
&\hspace{1.0cm}+2\sum_{l=1}^{n+1}M_{\chi_Q}\frac{\partial_k\partial_l}{\Delta}\cdot M_{\partial_lf}\Delta^{-1}M_{\chi_Q}+M_{\chi_Q}\frac{\partial_k}{\Delta}\cdot M_{\Delta f}\Delta^{-1}M_{\chi_Q}.
\end{align}
By Lemma \ref{cwikel estimate in Euclidean setting} and $L_2$-boundedness of classical Riesz transform, for $g\in \{\partial_kf\cdot\chi_Q,\partial_lf,\Delta f\}$,
$$M_{g}\Delta^{-1}M_{\chi_Q}=M_{g}\Delta^{-\frac12}\cdot\Delta^{-\frac12}M_{\chi_Q}\in \mathcal{L}_{n+1,\infty}(L_2(\mathbb{R}^{n+1}))\cdot \mathcal{L}_{n+1,\infty}(L_2(\mathbb{R}^{n+1})),$$
$$M_{\chi_Q}\frac{\partial_k\partial_l}{\Delta}\in \mathcal{L}_{\infty}(L_2(\mathbb{R}^{n+1})),$$
$$M_{\chi_Q}\frac{\partial_k}{\Delta}=M_{\chi_Q}\Delta^{-\frac12}\cdot\frac{\partial_k}{\Delta^{\frac12}}\in \mathcal{L}_{n+1,\infty}(L_2(\mathbb{R}^{n+1}))\cdot \mathcal{L}_{\infty}(L_2(\mathbb{R}^{n+1})).$$
Hence, the right hand side on \eqref{rhsof} belongs to $\mathcal{L}_{\frac{n+1}{2},\infty}(L_2(\mathbb{R}^{n+1}))$ which is, clearly, a subset of $(\mathcal{L}_{n+1,\infty}(L_2(\mathbb{R}^{n+1})))_0.$ This completes the proof under the additional assumptions made in Step 1.

{\bf Step 2:} Let $n=1$ and let $f\in C^{\infty}_c(\mathbb{R}^{n+1})=C^{\infty}_c(\mathbb{R}^2).$ Integral kernel of the operator $M_{\chi_Q}[\frac{\partial_k}{\Delta},M_f]M_{\chi_Q}$ is (up to a constant factor)
$$(x,y)\to\frac{(y-x)_k(f(y)-f(x))}{|x-y|^2},\quad x,y\in Q.$$
Since $f\in C^{\infty}_c(\mathbb{R}^2),$ it follows that the integral kernel is bounded (thus, square integrable).  Hence, $$M_{\chi_Q}[\frac{\partial_k}{\Delta},M_f]M_{\chi_Q}\in\mathcal{L}_2(L_2(\mathbb{R}^2))\subset (\mathcal{L}_{n+1,\infty}(L_2(\mathbb{R}^2)))_0.$$ This completes the proof under the additional assumptions made in Step 2.

{\bf Step 3:} Consider now the general case. Fix a sequence $\{f_m\}_{m\geq1}\subset C^{\infty}_c(\mathbb{R}^{n+1})$ such that $\|f-f_m\|_{L_{2n+2}(Q)}\to0$ as $m\to\infty.$ By triangle inequality, we have
\begin{align*}
&\Big\|M_{\chi_Q}[\frac{\partial_k}{\Delta},M_f]M_{\chi_Q}-M_{\chi_Q}[\frac{\partial_k}{\Delta},M_{f_m}]M_{\chi_Q}\Big\|_{\mathcal{L}_{n+1,\infty}(L_2(\mathbb{R}^{n+1}))}\\
&\lesssim\Big\|M_{\chi_Q}\frac{\partial_k}{\Delta}M_{(f-f_m) \chi_Q}\Big\|_{\mathcal{L}_{n+1,\infty}(L_2(\mathbb{R}^{n+1}))}+\Big\|M_{(f-f_m)\chi_Q}\frac{\partial_k}{\Delta}M_{\chi_Q}\Big\|_{\mathcal{L}_{n+1,\infty}(L_2(\mathbb{R}^{n+1}))}.
\end{align*}
By Lemma \ref{cwikel estimate in Euclidean setting}, we have
\begin{align*}
&\Big\|M_{\chi_Q}[\frac{\partial_k}{\Delta},M_f]M_{\chi_Q}-M_{\chi_Q}[\frac{\partial_k}{\Delta},M_{f_m}]M_{\chi_Q}\Big\|_{\mathcal{L}_{n+1,\infty}(L_2(\mathbb{R}^{n+1}))}\\
&\leq C_nm(Q)^{\frac1{2n+2}}\|f-f_m\|_{L_{2n+2}(Q)}
\end{align*}
for some constant $C_n>0$. Hence, the left hand side tends to $0$ as $m\to\infty.$ The assertion follows now from Steps 1 and 2.
\end{proof}

\begin{lemma}\label{first distance lemma} Let $Q\subset\mathbb{R}^{n+1}$ be a cube. There exists a constant $C_n>0$ such that, for every $f\in L_{\infty}(\mathbb{R}^{n+1})\cap W^{1,n+1}(\mathbb{R}^{n+1}),$ we have
$$\|f\|_{\dot{W}^{1,n+1}(Q)}=C_n{\rm dist}_{\mathcal{L}_{n+1,\infty}(L_2(\mathbb{R}^{n+1}))}\Big(M_{\chi_Q}[R_k,M_f]M_{\chi_Q},(\mathcal{L}_{n+1,\infty}(L_2(\mathbb{R}^{n+1})))_0\Big)$$
for some constant $C_n>0$.
\end{lemma}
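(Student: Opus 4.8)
The plan is to reduce the statement to its already-known Euclidean counterpart for a model operator, after discarding all strictly lower-order contributions, which land in the separable part. By rescaling and translating the cube (the whole assertion is invariant under the dilation $u(x)\mapsto\lambda^{(n+1)/2}u(\lambda x)$, since Riesz transforms commute with dilations) we may assume $Q$ is the unit cube, so that the constant depends only on $n$. First I would record the standard fact that for $A\in\mathcal{L}_{n+1,\infty}(H)$ the distance ${\rm dist}_{\mathcal{L}_{n+1,\infty}}(A,(\mathcal{L}_{n+1,\infty})_0)$ is comparable to $\limsup_{t\to\infty}t^{\frac1{n+1}}\mu_{B(H)}(t,A)$ (truncation at the top $N$ singular values gives one inequality, the quasi-triangle inequality for $\mu$ the other). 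Note that the kernel of $M_{\chi_Q}[R_k,M_f]M_{\chi_Q}$ depends only on $f|_Q$, and is unchanged if $f$ is replaced by any function agreeing with it on $Q$ up to an additive constant. The \emph{upper} bound ${\rm dist}(\cdots)\lesssim\|f\|_{\dot W^{1,n+1}(Q)}$ is then immediate: subtract the average of $f$ over $Q$, use a bounded Sobolev extension followed by a cut-off to produce a compactly supported $\tilde f$ with $\|\tilde f\|_{\dot W^{1,n+1}(\mathbb{R}^{n+1})}\lesssim\|f\|_{\dot W^{1,n+1}(Q)}$, and estimate ${\rm dist}(\cdots)\le\|M_{\chi_Q}[R_k,M_{\tilde f}]M_{\chi_Q}\|_{\mathcal{L}_{n+1,\infty}}\le\|[R_k,M_{\tilde f}]\|_{\mathcal{L}_{n+1,\infty}}\lesssim\|\tilde f\|_{\dot W^{1,n+1}(\mathbb{R}^{n+1})}$, the last step being the Euclidean upper bound of \cite{LMSZ}. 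The same extension device, together with Lemma \ref{density lemma}, shows both sides of the asserted identity are continuous in $f$ with respect to $\|\cdot\|_{\dot W^{1,n+1}(Q)}$, so it suffices to prove the \emph{lower} bound for $f\in C^\infty_c(\mathbb{R}^{n+1})$.

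For such $f$, I would peel off the principal part of the kernel. The integral kernel of $M_{\chi_Q}[R_k,M_f]M_{\chi_Q}$ is $\omega_n\,\chi_Q(x)\chi_Q(y)\frac{(y-x)_k}{|x-y|^{n+2}}\bigl(f(y)-f(x)\bigr)$; subtracting the \emph{model} kernel $\omega_n\,\chi_Q(x)\chi_Q(y)\frac{(y-x)_k}{|x-y|^{n+2}}\,\nabla f(x)\cdot(y-x)$ leaves, by Taylor's theorem, a kernel dominated by $C\|f\|_{C^2}\,|x-y|^{-(n-1)}\chi_Q(x)\chi_Q(y)$, so by Lemma \ref{rough estimate} the difference lies in $\mathcal{L}_{n+1}(L_2(Q))\subset(\mathcal{L}_{n+1,\infty}(L_2(\mathbb{R}^{n+1})))_0$ — exactly the mechanism behind Lemma \ref{mtb separable part lemma}. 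Hence ${\rm dist}\bigl(M_{\chi_Q}[R_k,M_f]M_{\chi_Q},(\cdot)_0\bigr)={\rm dist}\bigl(V_f,(\cdot)_0\bigr)$, where $V_f=\sum_{l=1}^{n+1}M_{\chi_Q\,\partial_lf}\,W_{kl}\,M_{\chi_Q}$ and $W_{kl}$ is the fixed order $-1$ Fourier multiplier with kernel $\omega_n\frac{(y-x)_k(y-x)_l}{|x-y|^{n+2}}$; collecting the terms, the principal symbol of $V_f$ is $c_n\bigl(\partial_kf(x)-\frac{\xi_k}{|\xi|}\sum_m\frac{\xi_m}{|\xi|}\partial_mf(x)\bigr)|\xi|^{-1}$, which matches the classical expression for the leading symbol of $[R_k,M_f]$.

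Finally I would invoke the sharp Cwikel (Weyl-type) asymptotics for operators of the form $\sum_l M_{g_l}W_{kl}$ on $\mathbb{R}^{n+1}$ — the Euclidean law of Frank–Sukochev–Zanin \cite{FSZ}, the same computation underpinning the distance formula of \cite{LMSZ} — to obtain
$$\lim_{t\to\infty}t^{\frac1{n+1}}\mu(t,V_f)=c_n'\Big(\int_{Q}\int_{\mathbb{S}^n}\Big|\partial_kf(x)-s_k\sum_{m=1}^{n+1}s_m\partial_mf(x)\Big|^{n+1}\,dx\,ds\Big)^{\frac1{n+1}},$$
and then observe that $v\mapsto\bigl(\int_{\mathbb{S}^n}|v_k-s_k\sum_m s_m v_m|^{n+1}\,ds\bigr)^{1/(n+1)}$ is a norm on $\mathbb{R}^{n+1}$ equivalent to $|v|$ (each is continuous, positively $1$-homogeneous, and vanishes only at $v=0$, the latter because choosing $s\perp v$ or $s=(e_k+v/|v|)/\sqrt2$ makes the integrand nonzero), so the right-hand side is comparable to $\|f\|_{\dot W^{1,n+1}(Q)}$. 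Together with the first paragraph this gives the claimed two-sided estimate. The crux is this lower bound ${\rm dist}(\cdots)\gtrsim\|f\|_{\dot W^{1,n+1}(Q)}$: the reduction to $V_f$ is routine book-keeping, but pinning down the exact order of the singular values of $V_f$ from below is the genuinely hard ingredient, supplied by \cite{FSZ}/\cite{LMSZ}; a self-contained alternative would be to test $M_{\chi_Q}[R_k,M_f]M_{\chi_Q}$ against unit vectors supported on a fine disjoint family of subcubes of $Q$, producing $\gtrsim m$ singular values of size $\gtrsim m^{-1/(n+1)}\|f\|_{\dot W^{1,n+1}(Q)}$, which is precisely the covering argument behind \cite{LMSZ}.
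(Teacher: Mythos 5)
Your proposal is correct and rests on the same key external input as the paper: the Euclidean Weyl-type asymptotics of Frank--Sukochev--Zanin. The paper's proof is a one-line citation of \cite[Proposition 8.6]{FSZ}, which already delivers the limit $\lim_{t\to\infty}t^{\frac1{n+1}}\mu(t,M_{\chi_Q}[R_k,M_f]M_{\chi_Q})$ for $f\in L_\infty\cap W^{1,n+1}$ directly, so your Taylor-peeling reduction to the model operator $V_f$, the upper bound via Sobolev extension, and the density step are all already packaged inside that citation; they are a faithful unpacking rather than a different route. One remark: what you actually establish is a two-sided equivalence rather than the literal equality $\|f\|_{\dot W^{1,n+1}(Q)}=C_n\,{\rm dist}(\cdots)$, but since the exact asymptotic coefficient for a single Riesz commutator is the direction-dependent seminorm $\|f\|^{(k)}_{\dot W^{1,n+1}(Q)}$ (cf.\ Theorem \ref{mtc}), the equivalence is all that is truly available and is all that is used in Lemma \ref{second distance lemma}.
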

\begin{proof}
It follows from \cite[Proposition 8.6]{FSZ} that
$$\|f\|_{\dot{W}^{1,n+1}(Q)}=C_n \lim_{t\to\infty}t^{\frac1{n+1}}\mu_{B(L_2(\mathbb{R}^{n+1}))}(t,M_{\chi_Q}[R_k,M_f]M_{\chi_Q}).$$
As the right hand side is exactly the distance from the separable part, the assertion follows.
\end{proof}

\begin{lemma}\label{second distance lemma} Let $Q\subset\mathbb{R}^{n+1}$ be a cube. If $f\in L_{\infty}(\mathbb{R}^{n+1})$ is such that
$$M_{\chi_Q}[R_k,M_f]M_{\chi_Q}\in\mathcal{L}_{n+1,\infty}(L_2(\mathbb{R}^{n+1})),$$
then $f\in W^{1,n+1}(Q)$ and
$$\|f\|_{\dot{W}^{1,n+1}(Q)}\leq C_n{\rm dist}_{\mathcal{L}_{n+1,\infty}(L_2(\mathbb{R}^{n+1}))}\Big(M_{\chi_Q}[R_k,M_f]M_{\chi_Q},(\mathcal{L}_{n+1,\infty}(L_2(\mathbb{R}^{n+1})))_0\Big)$$
for some constant $C_n>0$.
\end{lemma}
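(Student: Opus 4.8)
\emph{Strategy.} The plan is to reduce to Lemma \ref{first distance lemma} by a mollification argument: smoothing $f$ produces a symbol to which Lemma \ref{first distance lemma} applies, and the crucial point is that this smoothing does not enlarge the distance to the separable part. One then transfers the resulting uniform $L_{n+1}$-bound on the mollified gradients to $f$ itself by weak compactness, and passes from a slightly smaller cube back to $Q$ in the limit.

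\emph{The main estimate.} Put $D:={\rm dist}_{\mathcal{L}_{n+1,\infty}(L_2(\mathbb{R}^{n+1}))}\big(M_{\chi_Q}[R_k,M_f]M_{\chi_Q},(\mathcal{L}_{n+1,\infty}(L_2(\mathbb{R}^{n+1})))_0\big)$, finite by hypothesis. Fix a concentric cube $Q''\subset\subset Q$ and $\varepsilon_0>0$ with $Q''+\overline{B(0,\varepsilon_0)}\subset Q$, let $\phi_\varepsilon$ be a standard mollifier and set $f_\varepsilon:=f*\phi_\varepsilon\in C^\infty(\mathbb{R}^{n+1})\cap L_\infty(\mathbb{R}^{n+1})$ for $0<\varepsilon<\varepsilon_0$. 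Since $R_k$ commutes with the translations $\tau_y$, we have $M_{f_\varepsilon}=\int\phi_\varepsilon(y)\tau_yM_f\tau_{-y}\,dy$, hence, the integral being taken in the strong operator topology,
\begin{align*}
M_{\chi_{Q''}}[R_k,M_{f_\varepsilon}]M_{\chi_{Q''}}=\int_{|y|<\varepsilon}\phi_\varepsilon(y)\,\tau_y\Big(M_{\chi_{Q''-y}}[R_k,M_f]M_{\chi_{Q''-y}}\Big)\tau_{-y}\,dy.
\end{align*}
For $|y|<\varepsilon_0$ one has $Q''-y\subset Q$, so $M_{\chi_{Q''-y}}[R_k,M_f]M_{\chi_{Q''-y}}=M_{\chi_{Q''-y}}\big(M_{\chi_Q}[R_k,M_f]M_{\chi_Q}\big)M_{\chi_{Q''-y}}$ belongs to $\mathcal{L}_{n+1,\infty}$, and, since the separable part is an ideal and the $\tau_{\pm y}$ are unitary, each integrand has distance to $(\mathcal{L}_{n+1,\infty})_0$ at most $D$. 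Because $n+1>1$, $\mathcal{L}_{n+1,\infty}$ carries an equivalent norm making $\mathcal{L}_{n+1,\infty}/(\mathcal{L}_{n+1,\infty})_0$ a Banach space, so the displayed averaging identity yields a constant $C_n'$ with
$${\rm dist}_{\mathcal{L}_{n+1,\infty}}\big(M_{\chi_{Q''}}[R_k,M_{f_\varepsilon}]M_{\chi_{Q''}},(\mathcal{L}_{n+1,\infty})_0\big)\leq C_n'D,\qquad 0<\varepsilon<\varepsilon_0.$$

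\emph{Conclusion.} Choose $\tilde f_\varepsilon\in C^\infty_c(\mathbb{R}^{n+1})$ equal to $f_\varepsilon$ on a neighbourhood of $Q''$, so that $M_{\chi_{Q''}}[R_k,M_{\tilde f_\varepsilon}]M_{\chi_{Q''}}=M_{\chi_{Q''}}[R_k,M_{f_\varepsilon}]M_{\chi_{Q''}}$; Lemma \ref{first distance lemma} applied on the cube $Q''$ then gives
$$\|f_\varepsilon\|_{\dot W^{1,n+1}(Q'')}=\|\tilde f_\varepsilon\|_{\dot W^{1,n+1}(Q'')}=C_n\,{\rm dist}_{\mathcal{L}_{n+1,\infty}}\big(M_{\chi_{Q''}}[R_k,M_{f_\varepsilon}]M_{\chi_{Q''}},(\mathcal{L}_{n+1,\infty})_0\big)\leq C_nC_n'D.$$
Thus $\{\nabla f_\varepsilon\}$ is bounded in the reflexive space $L_{n+1}(Q'')$; extracting a weakly convergent subsequence $\nabla f_{\varepsilon_m}$ and using $f_{\varepsilon_m}\to f$ in $L_1(Q'')$, the weak limit is identified with $\nabla f$, so $f\in W^{1,n+1}(Q'')$ and $\|\nabla f\|_{L_{n+1}(Q'')}\leq C_nC_n'D$ by weak lower semicontinuity of the norm. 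Letting $Q''\uparrow Q$ and invoking monotone convergence yields $f\in W^{1,n+1}(Q)$ together with $\|f\|_{\dot W^{1,n+1}(Q)}\leq C_nC_n'D$, which is the assertion with constant $C_n:=C_nC_n'$.

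\emph{Main obstacle.} The delicate point is the second step, namely that mollification does not increase the distance to the separable part. Through the translation–averaging identity this reduces to controlling an operator-valued integral in the quotient $\mathcal{L}_{n+1,\infty}/(\mathcal{L}_{n+1,\infty})_0$; the subtlety is that $\mathcal{L}_{n+1,\infty}$ is only a quasi-Banach ideal and $y\mapsto M_{\chi_{Q''-y}}$ fails to be norm continuous. I would handle this by working with the equivalent norm available for $n+1>1$, testing the integral against the predual of $\mathcal{L}_{n+1,\infty}$, and using the elementary facts that Hilbert--Schmidt operators lie in $(\mathcal{L}_{n+1,\infty})_0$ (as $n+1\geq 2$) and that ${\rm dist}(AXB,(\mathcal{L}_{n+1,\infty})_0)\leq\|A\|_\infty\|B\|_\infty\,{\rm dist}(X,(\mathcal{L}_{n+1,\infty})_0)$. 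A secondary point requiring care is precisely the need to shrink to $Q''\subset\subset Q$ before mollifying — so that every translate $Q''-y$ stays inside $Q$, where the hypothesis is available — and then to recover $Q$ by the limit $Q''\uparrow Q$.
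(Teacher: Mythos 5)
Your argument is correct in outline, but it follows a genuinely different route from the paper's. The paper does not mollify $f$: it multiplies $f$ by a smooth cutoff $\phi_\epsilon^2$ equal to $1$ on an inner cube, uses the Leibniz rule together with $[R_k,M_{\phi_\epsilon}]\in\mathcal{L}_{n+1,\infty}$ and the boundedness of $f$ to conclude (purely qualitatively) that $[R_k,M_{f\phi_\epsilon^2}]\in\mathcal{L}_{n+1,\infty}$, then invokes the necessity direction of the Euclidean characterization \cite[Theorem 1]{LMSZ} to obtain $f\phi_\epsilon^2\in L_\infty\cap W^{1,n+1}(\mathbb{R}^{n+1})$ --- exactly the a priori regularity required by Lemma \ref{first distance lemma} --- and finally applies that lemma on the inner cube, where the compressions of $[R_k,M_f]$ and $[R_k,M_{f\phi_\epsilon^2}]$ coincide. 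Your mollification scheme trades that qualitative input for an operator-theoretic one: you only ever apply Lemma \ref{first distance lemma} to smooth compactly supported symbols, but you must show that convolution does not increase the distance to $(\mathcal{L}_{n+1,\infty})_0$. That is the one place where your sketch is not yet a proof: ``testing against the predual'' controls the $\mathcal{L}_{n+1,\infty}$-norm of the SOT-valued integral but not its distance to the separable part, since $(\mathcal{L}_{n+1,\infty})_0$ is weak-$*$ dense and the quotient norm is not computed by predual functionals. The step can nevertheless be completed: by Ky Fan's duality, $\sum_{k<N}\mu\bigl(k,\int\phi_\varepsilon(y)F(y)\,dy\bigr)\leq\int\phi_\varepsilon(y)\sum_{k<N}\mu(k,F(y))\,dy$ for every $N$ (test against finite-rank contractions, for which the trace pairing commutes with the WOT integral), and combining this with the identity ${\rm dist}_{\mathcal{L}_{p,\infty}}(T,(\mathcal{L}_{p,\infty})_0)\asymp\limsup_{t\to\infty}t^{1/p-1}\int_0^t\mu(s,T)\,ds$, the pointwise bound $\mu(\cdot,F(y))\leq\mu(\cdot,M_{\chi_Q}[R_k,M_f]M_{\chi_Q})$ and reverse Fatou, one gets the desired estimate with a constant depending only on $n$. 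With that step filled in, the remainder of your proof (equality of the compressed commutators for $\tilde f_\varepsilon$ and $f_\varepsilon$ on $Q''$, weak compactness in $L_{n+1}(Q'')$, identification of the weak limit with $\nabla f$, and exhaustion $Q''\uparrow Q$) is sound. In summary: the paper's proof is shorter but leans on the deep qualitative result of \cite{LMSZ}; yours avoids it at the cost of a delicate averaging estimate in the quotient modulo the separable part, which you have correctly identified as the main obstacle and which is fixable as indicated.
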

\begin{proof} By translation and dilation, we may assume without loss of generality that $Q=[0,1]^{n+1}.$ Let $\phi_{\epsilon}\in C^{\infty}_c(\mathbb{R}^{n+1})$ be supported in $[\epsilon,1-\epsilon]^{n+1}$ and such that $\phi_{\epsilon}=1$ on $[2\epsilon,1-2\epsilon]^{n+1}.$ We have
$$M_{\phi_{\epsilon}}[R_k,M_f]M_{\phi_{\epsilon}}\in\mathcal{L}_{n+1,\infty}(L_2(\mathbb{R}^{n+1})).$$
By the Leibniz rule,
$$[R_k,M_{f\phi_{\epsilon}^2}]=[R_k,M_{\phi_{\epsilon}}] M_{f\phi_{\epsilon}}+M_{\phi_{\epsilon}}[R_k,M_f]M_{\phi_{\epsilon}}+M_{f\phi_{\epsilon}} [R_k,M_{\phi_{\epsilon}}].$$
Since $\phi_{\epsilon}\in C^{\infty}_c(\mathbb{R}^{n+1}),$ it follows that
$$[R_k,M_{\phi_{\epsilon}}]\in\mathcal{L}_{n+1,\infty}(L_2(\mathbb{R}^{n+1})).$$
Since $f$ is bounded, it follows that
$$[R_k,M_{f\phi_{\epsilon}^2}]\in\mathcal{L}_{n+1,\infty}(L_2(\mathbb{R}^{n+1})).$$
The latter inclusion is purely quantitative, as we do not have any reasonable control of the norm. By \cite[Theorem 1]{LMSZ}, $f\phi_{\epsilon}^2\in \dot{W}^{1,n+1}(\mathbb{R}^{n+1}).$ Since the function $f\phi_{\epsilon}^2$ is bounded and supported on $[0,1]^{n+1},$ it follows that $f\phi_{\epsilon}^2\in L_{\infty}(\mathbb{R}^{n+1})\cap W^{1,n+1}(\mathbb{R}^{n+1}).$

It is immediate that
$$M_{\chi_{[2\epsilon,1-2\epsilon]^{n+1}}}[R_k,M_f]M_{\chi_{[2\epsilon,1-2\epsilon]^{n+1}}}=M_{\chi_{[2\epsilon,1-2\epsilon]^{n+1}}}[R_k,M_{f\phi_{\epsilon}^2}]M_{\chi_{[2\epsilon,1-2\epsilon]^{n+1}}}.$$
Thus,
\begin{align*}
&{\rm dist}_{\mathcal{L}_{n+1,\infty}(L_2(\mathbb{R}^{n+1}))}\Big(M_{\chi_Q}[R_k,M_f]M_{\chi_Q},(\mathcal{L}_{n+1,\infty}(L_2(\mathbb{R}^{n+1})))_0\Big)\\
&\geq {\rm dist}_{\mathcal{L}_{n+1,\infty}(L_2(\mathbb{R}^{n+1}))}\Big(M_{\chi_{[2\epsilon,1-2\epsilon]^{n+1}}}[R_k,M_f]M_{\chi_{[2\epsilon,1-2\epsilon]^{n+1}}},(\mathcal{L}_{n+1,\infty}(L_2(\mathbb{R}^{n+1})))_0\Big)\\
&={\rm dist}_{\mathcal{L}_{n+1,\infty}(L_2(\mathbb{R}^{n+1}))}\Big(M_{\chi_{[2\epsilon,1-2\epsilon]^{n+1}}}[R_k,M_{f\phi_{\epsilon}^2}]M_{\chi_{[2\epsilon,1-2\epsilon]^{n+1}}},(\mathcal{L}_{n+1,\infty}(L_2(\mathbb{R}^{n+1})))_0\Big).
\end{align*}
Since $f\phi_{\epsilon}^2\in L_{\infty}(\mathbb{R}^{n+1})\cap W^{1,n+1}(\mathbb{R}^{n+1}),$ it follows from Lemma \ref{first distance lemma} that
\begin{align*}
&C_n{\rm dist}_{\mathcal{L}_{n+1,\infty}(L_2(\mathbb{R}^{n+1}))}\Big(M_{\chi_Q}[R_k,M_f]M_{\chi_Q},(\mathcal{L}_{n+1,\infty}(L_2(\mathbb{R}^{n+1})))_0\Big)\\
&\geq \|f\phi_{\epsilon}^2\|_{\dot{W}^{1,n+1}([2\epsilon,1-2\epsilon]^{n+1})}=\|f\|_{\dot{W}^{1,n+1}([2\epsilon,1-2\epsilon]^{n+1})}.
\end{align*}
Passing $\epsilon\downarrow0,$ we complete the proof.
\end{proof}
The following proposition provides a suitable local approximation of the Bessel--Riesz commutators.
\begin{proposition}\label{mtb main lemma} Let $f\in L_{\infty}(\mathbb{R}^{n+1})$ and let $Q$ be a cube compactly supported in $\mathbb{R}^{n+1}_+.$ For $1\leq k\leq n+1,$ we have
\begin{align*}
&M_{\chi_Q}M_{x_{n+1}^{\lambda}}E[R_{\lambda,k},M_{E^{\ast}f}]E^{\ast}M_{x_{n+1}^{-\lambda}}M_{\chi_Q}\\
&\hspace{1.0cm}-\kappa^{[3]}_{n,\lambda}F_{2,0}(0)M_{\chi_Q}[R_k,M_f]M_{\chi_Q}\in (\mathcal{L}_{n+1,\infty}(L_2(\mathbb{R}^{n+1})))_0.
\end{align*}
\end{proposition}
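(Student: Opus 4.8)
The plan is to deduce this from the kernel identity of Proposition~\ref{commutator representation lemma}: conjugating that identity by $M_{\chi_Q}M_{x_{n+1}^{\lambda}}E$ on the left and $E^{\ast}M_{x_{n+1}^{-\lambda}}M_{\chi_Q}$ on the right, and using $Q\Subset\mathbb{R}^{n+1}_{+}$ to collapse all the weight factors, reduces the statement to an assertion about Schur multipliers acting on the \emph{classical} localised Riesz commutators $M_{\chi_Q}[R_l,M_f]M_{\chi_Q}$. Although Proposition~\ref{commutator representation lemma} is stated for $f\in C^{\infty}_c(\mathbb{R}^{n+1})$, the operator identity it records is equivalent to the $f$-independent identity of integral kernels in Lemma~\ref{Bessel--Riesz kernel representation lemma} (both sides being commutators with $M_{E^{\ast}f}$, their kernels are $K(x,y)(f(y)-f(x))$ with $K$ free of $f$), so I would use it freely for $f\in L_{\infty}(\mathbb{R}^{n+1})$.

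First I would record the elementary relations $M_{x_{n+1}^{\lambda}}EM_{x_{n+1}^{-\lambda}}=E$, $M_{x_{n+1}^{\lambda}}E^{\ast}M_{x_{n+1}^{-\lambda}}=E^{\ast}$ and $EE^{\ast}=M_{\chi_{\mathbb{R}^{n+1}_{+}}}$, so that the conjugation above carries $M_{x_{n+1}^{-\lambda}}E^{\ast}TEM_{x_{n+1}^{\lambda}}$ to $M_{\chi_Q}TM_{\chi_Q}$ for any $T$ on $L_2(\mathbb{R}^{n+1})$ and, since both operations act on kernels by pointwise multiplication, commutes with every Schur multiplier. Feeding this into Proposition~\ref{commutator representation lemma}, together with $\mathfrak{S}_{M_1}\circ\mathfrak{S}_{M_2}=\mathfrak{S}_{M_1M_2}$, the trivial identity $F_{2,1}(s)=sF_{1,1}(s)$, and the relation $(y_{n+1}/x_{n+1})^{1/2}=a-bh_{n+1}H$ from the proof of Lemma~\ref{Bessel--Riesz kernel representation lemma}, and writing $A,B$ for the first and second operators appearing in the statement, I arrive at
\begin{align*}
A-B&=\kappa_{n,\lambda}^{[3]}\,\mathfrak{S}_{(F_{2,0}\circ H)-F_{2,0}(0)}\big(M_{\chi_Q}[R_k,M_f]M_{\chi_Q}\big)\\
&\quad+\kappa_{n,\lambda}^{[3]}\,\delta_{k,n+1}\sum_{l=1}^{n+1}\mathfrak{S}_{h_l\cdot(y_{n+1}/x_{n+1})^{1/2}\cdot(F_{1,1}\circ H)}\big(M_{\chi_Q}[R_l,M_f]M_{\chi_Q}\big),
\end{align*}
using $\mathfrak{S}_{1}=\mathrm{id}$.

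It then remains to put each summand in $(\mathcal{L}_{n+1,\infty}(L_2(\mathbb{R}^{n+1})))_0$, which I would do by separating the value at the diagonal and the first-order part of each symbol. Since $Q\Subset\mathbb{R}^{n+1}_{+}$, on $Q\times Q$ one has $H(x,y)\le C_Q|x-y|$ while $h_l,(y_{n+1}/x_{n+1})^{1/2}$ are bounded; combined with the expansions $F_{2,0}(s)-F_{2,0}(0)=O(s^{2})$ and $F_{1,1}(s)=c_{\lambda}s+O(s^{2})$ as $s\downarrow0$ (extracted from the integrals in Notation~\ref{fkl notation}; cf.\ Theorem~\ref{fkl are smooth theorem}), this gives $|(F_{2,0}\circ H)(x,y)-F_{2,0}(0)|\le C_Q|x-y|^{2}$ and $|(F_{1,1}\circ H)(x,y)-c_{\lambda}H(x,y)|\le C_Q|x-y|^{2}$ on $Q\times Q$. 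Since also $|K_{[R_l,M_f]}(x,y)|\le C\|f\|_{\infty}|x-y|^{-(n+1)}$, the kernels of $\mathfrak{S}_{(F_{2,0}\circ H)-F_{2,0}(0)}(M_{\chi_Q}[R_k,M_f]M_{\chi_Q})$ and of $\mathfrak{S}_{h_l\cdot(y_{n+1}/x_{n+1})^{1/2}\cdot((F_{1,1}\circ H)-c_{\lambda}H)}(M_{\chi_Q}[R_l,M_f]M_{\chi_Q})$ are of the form $L(x,y)|x-y|^{-(n-1)}$ with $L\in L_{\infty}(Q\times Q)$, so Lemma~\ref{rough estimate} puts these operators in $\mathcal{L}_{n+1}(L_2(Q))\subset(\mathcal{L}_{n+1,\infty}(L_2(\mathbb{R}^{n+1})))_0$. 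The only leftover is the leading $\delta$-piece $c_{\lambda}\mathfrak{S}_{h_l\cdot(y_{n+1}/x_{n+1})^{1/2}\cdot H}(M_{\chi_Q}[R_l,M_f]M_{\chi_Q})$: comparing the kernels of $R_l$ and of $\partial_l\Delta^{-1}$ $(=\frac{\partial_l}{\Delta})$ on $\mathbb{R}^{n+1}$ shows $H(x,y)K_{[R_l,M_f]}(x,y)$ is a constant multiple of $(x_{n+1}y_{n+1})^{-1/2}K_{[\partial_l\Delta^{-1},M_f]}(x,y)$, and since $(y_{n+1}/x_{n+1})^{1/2}(x_{n+1}y_{n+1})^{-1/2}=x_{n+1}^{-1}$, this piece is a constant multiple of $\mathfrak{S}_{h_l}\big(M_{x_{n+1}^{-1}\chi_Q}\cdot M_{\chi_Q}[\partial_l\Delta^{-1},M_f]M_{\chi_Q}\big)$. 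By Lemma~\ref{mtb separable part lemma} the inner operator lies in $(\mathcal{L}_{n+1,\infty})_0$; as $M_{x_{n+1}^{-1}\chi_Q}$ is bounded and $\mathfrak{S}_{h_l}$ preserves $(\mathcal{L}_{n+1,\infty})_0$ (Proposition~\ref{standard schur lemma}, whose proof also gives boundedness on $\mathcal{L}_p(L_2(\mathbb{R}^{n+1}))$), so does this piece, and summing finishes the proof.

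I expect this last step to be the crux: a naive pointwise bound on the $F_{1,1}$-term only produces an $|x-y|^{-n}$ kernel, which sits in $\mathcal{L}_{n+1,\infty}$ but need not sit in its separable part (compare the borderline operator $M_{\chi_Q}\Delta^{-1/2}M_{\chi_Q}$); the resolution is that the factor $H\sim|x-y|$ carried by $F_{1,1}\circ H$ converts the order-$0$ commutator $[R_l,M_f]$ into the order-$(-1)$ operator $[\partial_l\Delta^{-1},M_f]$, whose localised separable-part membership is exactly Lemma~\ref{mtb separable part lemma}. The other point requiring care is verifying the second-order vanishing $F_{2,0}(s)-F_{2,0}(0)=O(s^{2})$ and $F_{1,1}(s)-c_{\lambda}s=O(s^{2})$ at $s=0$, which is what keeps an $O(|x-y|)$-order leftover from spoiling the $\mathcal{L}_{n+1}$ estimates above.
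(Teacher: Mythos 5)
Your argument is correct and follows essentially the same route as the paper: Taylor-expand the symbols $F_{j,l}$ at the diagonal, identify the zeroth-order term with $\kappa^{[3]}_{n,\lambda}F_{2,0}(0)M_{\chi_Q}[R_k,M_f]M_{\chi_Q}$, absorb the first-order pieces into the localised operators $M_{\chi_Q}[\partial_l\Delta^{-1},M_f]M_{\chi_Q}$ via Lemma \ref{mtb separable part lemma} and Proposition \ref{standard schur lemma}, and dispose of the quadratic remainder with Lemma \ref{rough estimate}. The only (harmless) deviation is your claim that $F_{2,0}(s)-F_{2,0}(0)=O(s^{2})$, i.e.\ $F_{2,0}^{(1)}(0)=0$ --- which is indeed true, since by Lemmas \ref{second fkl lemma} and \ref{third fkl lemma} $F_{2,0}$ expands in even powers of $s$ near $0$ up to $O(s^{n+2}\log s)$ corrections --- whereas the paper simply keeps the (possibly nonzero) term $F_{2,0}^{(1)}(0)H$ and handles it by the same mechanism you use for the leading $F_{1,1}$ piece, so nothing is lost either way.
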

\begin{proof} By Lemma \ref{Bessel--Riesz kernel representation lemma}, integral kernel of the operator $M_{x_{n+1}^{\lambda}}[R_{\lambda,k},M_{E^{\ast}f}]M_{x_{n+1}^{-\lambda}}$ is given by the formula
\begin{align*}
(x,y)&\to\frac{\kappa^{[2]}_{n,\lambda}(y-x)_{k}(f(y)-f(x))}{|x-y|^{n+2}}\cdot F_{2,0}(H(x,y))\\
&\hspace{-0.8cm}+\kappa^{[2]}_{n,\lambda}\delta_{k,n+1}\sum_{l=1}^{n+1} h_l(x,y)a(x,y) \frac{(y-x)_{n+1}(f(y)-f(x))}{|x-y|^{n+2}}\cdot F_{1,1}(H(x,y))\\
&\hspace{-0.8cm}-\kappa^{[2]}_{n,\lambda}\delta_{k,n+1}\sum_{l=1}^{n+1} h_l(x,y)h_{n+1}(x,y)b(x,y) \frac{(y-x)_{n+1}(f(y)-f(x))}{|x-y|^{n+2}}\cdot F_{2,1}(H(x,y)).
\end{align*}

As $Q$ is compactly supported in $\mathbb{R}^{n+1}_+,$ we apply Taylor's formula to deduce that for any $(j,l)\in \{(2,0),(1,1),(2,1)\}$,
$$F_{j,l}(\frac{|x-y|}{(x_{n+1}y_{n+1})^{\frac12}})=F_{j,l}(0)+F_{j,l}^{(1)}(0)\frac{|x-y|}{(x_{n+1}y_{n+1})^{\frac12}}+O(|x-y|^2),\quad x,y\in Q.$$
Taking into account that $F_{1,1}(0)=F_{2,1}(0)=0$, we conclude that integral kernel of the operator $$M_{\chi_Q} M_{x_{n+1}^{\lambda}}E[R_{\lambda,k},M_{E^{\ast}f}]E^{\ast}M_{x_{n+1}^{-\lambda}} M_{\chi_Q}$$ is given by the formula
\begin{align*}
(x,y)&\to\frac{\kappa^{[2]}_{n,\lambda}(y-x)_k(f(y)-f(x))}{|x-y|^{n+2}}\chi_Q(x)\chi_Q(y)\cdot F_{2,0}(0)\\
&\hspace{-0.5cm}+\frac{\kappa^{[2]}_{n,\lambda}(y-x)_k(f(y)-f(x))}{|x-y|^{n+1}(x_{n+1}y_{n+1})^{\frac12}}\chi_Q(x)\chi_Q(y)\cdot F_{2,0}^{(1)}(0)\\
&\hspace{-0.5cm}+\kappa^{[2]}_{n,\lambda}\delta_{k,n+1}\sum_{l=1}^{n+1} h_l(x,y)a(x,y) \frac{(y-x)_{n+1}(f(y)-f(x))}{|x-y|^{n+1}(x_{n+1}y_{n+1})^{\frac12}}\chi_Q(x)\chi_Q(y)\cdot F_{1,1}^{(1)}(0)\\
&\hspace{-0.5cm}-\kappa^{[2]}_{n,\lambda}\delta_{k,n+1}\sum_{l=1}^{n+1} h_l(x,y)h_{n+1}(x,y)b(x,y)\times \\ &\hspace{1.0cm}\times\frac{(y-x)_{n+1}(f(y)-f(x))}{|x-y|^{n+1}(x_{n+1}y_{n+1})^{\frac12}}\chi_Q(x)\chi_Q(y)\cdot F_{2,1}^{(1)}(0)\\
&\hspace{-0.5cm}+O(|x-y|^{1-n}),\quad x,y\in Q.
\end{align*}
The first four summands on the right hand side are the integral kernels of
$$\kappa^{[3]}_{n,\lambda}F_{2,0}(0)M_{\chi_Q}[R_k,M_f]M_{\chi_Q},$$
$$C_{n,\lambda}F_{2,0}^{(1)}(0)M_{\chi_Q}M_{x_{n+1}^{-\frac12}}[\frac{\partial_k}{\Delta},M_f]M_{x_{n+1}^{-\frac12}}M_{\chi_Q},$$
$$C_{n,\lambda}F_{1,1}^{(1)}(0)\delta_{k,n+1}\sum_{l=1}^{n+1}\Big(\mathfrak{S}_{h_l}\circ\mathfrak{S}_a\Big)\Big(M_{\chi_Q}M_{x_{n+1}^{-\frac12}}[\frac{\partial_{n+1}}{\Delta},M_f]M_{x_{n+1}^{-\frac12}}M_{\chi_Q}\Big),$$
$$C_{n,\lambda}F_{2,1}^{(1)}(0)\delta_{k,n+1}\sum_{l=1}^{n+1}\Big(\mathfrak{S}_{h_l}\circ\mathfrak{S}_{h_{n+1}}\circ\mathfrak{S}_b\Big)\Big(M_{\chi_Q}M_{x_{n+1}^{-\frac12}}[\frac{\partial_{n+1}}{\Delta},M_f]M_{x_{n+1}^{-\frac12}}M_{\chi_Q}\Big),$$
respectively, for some constant $C_{n,\lambda}>0$. By Lemma \ref{rough estimate}, integral operator associated with the fifth summand belongs to  $\mathcal{L}_{n+1}(L_2(\mathbb{R}^{n+1}))\subset (\mathcal{L}_{n+1,\infty}(L_2(\mathbb{R}^{n+1})))_0.$

Since $Q$ is a cube in $\mathbb{R}^{n+1}_+,$ it follows from Lemma \ref{mtb separable part lemma} that
 $$M_{\chi_Q}M_{x_{n+1}^{-\frac12}}[\frac{\partial_k}{\Delta},M_f]M_{x_{n+1}^{-\frac12}}M_{\chi_Q}\in (\mathcal{L}_{n+1,\infty}(L_2(\mathbb{R}^{n+1})))_0.$$
This, in combination with Proposition \ref{standard schur lemma}, also implies that
$$\sum_{l=1}^{n+1}\Big(\mathfrak{S}_{h_l}\circ\mathfrak{S}_a\Big)\Big(M_{\chi_Q}M_{x_{n+1}^{-\frac12}}[\frac{\partial_{n+1}}{\Delta},M_f]M_{x_{n+1}^{-\frac12}}M_{\chi_Q}\Big)\in (\mathcal{L}_{n+1,\infty}(L_2(\mathbb{R}^{n+1})))_0,$$
$$\sum_{l=1}^{n+1}\Big(\mathfrak{S}_{h_l}\circ\mathfrak{S}_{h_{n+1}}\circ\mathfrak{S}_b\Big)\Big(M_{\chi_Q}M_{x_{n+1}^{-\frac12}}[\frac{\partial_{n+1}}{\Delta},M_f]M_{x_{n+1}^{-\frac12}}M_{\chi_Q}\Big)\in (\mathcal{L}_{n+1,\infty}(L_2(\mathbb{R}^{n+1})))_0.$$
This completes the proof.
\end{proof}

\begin{proof}[Proof of lower bound in Theorem \ref{main theorem}] It follows from Lemma \ref{weight} that for $f\in L_{\infty}(\mathbb{R}^{n+1}_+),$
\begin{align*}
&\|[R_{\lambda,k},M_f]\|_{\mathcal{L}_{n+1,\infty}(L_2(\mathbb{R}^{n+1}_+,m_{\lambda}))}\\
&\geq {\rm dist}_{\mathcal{L}_{n+1,\infty}(L_2(\mathbb{R}^{n+1}_+,m_{\lambda}))}([R_{\lambda,k},M_f],(\mathcal{L}_{n+1,\infty}(L_2(\mathbb{R}^{n+1}_+,m_{\lambda})))_0)\\
&={\rm dist}_{\mathcal{L}_{n+1,\infty}(L_2(\mathbb{R}^{n+1}_+))}(M_{x_{n+1}^{\lambda}}[R_{\lambda,k},M_f]M_{x_{n+1}^{-\lambda}},(\mathcal{L}_{n+1,\infty}(L_2(\mathbb{R}^{n+1}_+)))_0)\\
&\geq {\rm dist}_{\mathcal{L}_{n+1,\infty}(L_2(\mathbb{R}^{n+1}_+))}(M_{\chi_Q} M_{x_{n+1}^{\lambda}}[R_{\lambda,k},M_f]M_{x_{n+1}^{-\lambda}} M_{\chi_Q},(\mathcal{L}_{n+1,\infty}(L_2(\mathbb{R}^{n+1}_+)))_0)\\
&={\rm dist}_{\mathcal{L}_{n+1,\infty}(L_2(\mathbb{R}^{n+1}))}(M_{\chi_Q} M_{x_{n+1}^{\lambda}}E[R_{\lambda,k},M_f]E^{\ast}M_{x_{n+1}^{-\lambda}} M_{\chi_Q},(\mathcal{L}_{n+1,\infty}(L_2(\mathbb{R}^{n+1})))_0)
\end{align*}
for every cube $Q$ compactly supported in $\mathbb{R}^{n+1}_+.$	

Suppose $1\leq k\leq n+1.$ It follows from Proposition \ref{mtb main lemma} that
\begin{align*}
&\|[R_{\lambda,k},M_f]\|_{\mathcal{L}_{n+1,\infty}(L_2(\mathbb{R}^{n+1}_+,m_{\lambda}))}\\
&\geq \kappa^{[3]}_{n,\lambda}F_{2,0}(0){\rm dist}_{\mathcal{L}_{n+1,\infty}(L_2(\mathbb{R}^{n+1}))}(M_{\chi_Q} [R_k,M_{Ef}] M_{\chi_Q},(\mathcal{L}_{n+1,\infty}(L_2(\mathbb{R}^{n+1})))_0).
\end{align*}
Combining this with Lemma \ref{second distance lemma}, we conclude that there is a constant $C_{n,\lambda}>0$ such that
$$\|[R_{\lambda,k},M_f]\|_{\mathcal{L}_{n+1,\infty}(L_2(\mathbb{R}^{n+1}_+,m_{\lambda}))}\geq C_{n,\lambda} \|f\|_{\dot{W}^{1,n+1}(Q)}.$$
Taking the supremum over all cubes $Q$ compactly supported in $\mathbb{R}^{n+1}_+,$ we complete the proof.	
\end{proof}

\section{Proof of the spectral asymptotic}\label{spect}
\setcounter{equation}{0}

This section is devoted to establishing the spectral asymptotic formula for the Bessel--Riesz commutator. The key tool is to establish a suitable approximation of the commutator.

\begin{lemma}\label{abcde}
Let $1<p<\infty$. Assume that $V\in \mathcal{L}_{p,\infty}(L_2(\mathbb{R}^{n+1}_+))$ be compactly supported in $\mathbb{R}^{n+1}_+.$ If
$[M_{x_l},V]\in (\mathcal{L}_{p,\infty}(L_2(\mathbb{R}^{n+1}_+)))_0$ for every $1\leq l\leq n,$ then $\mathfrak{S}_H(V)\in (\mathcal{L}_{p,\infty}(L_2(\mathbb{R}^{n+1}_+)))_0.$
\end{lemma}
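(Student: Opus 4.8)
Since $V$ is supported in a compact $K\subset\mathbb R^{n+1}_+$, the function $H$ is bounded on $K\times K$, so I may fix $\theta\in C^\infty_c(\mathbb R^{n+1}_+)$ equal to $1$ near $K$ and a smooth, compactly supported $F$ on $[0,\infty)$ with $F(0)=0$ and $F(t)=t$ on the range of $H$ over $\operatorname{supp}(\theta\otimes\theta)$, so that $\mathfrak S_H(V)=\mathfrak S_{F\circ H}(V)$. For such an $F$ the function $F\circ\exp$ lies in $W^{1+\lceil (n+1)/2\rceil,2}(\mathbb R)$, so by Theorem \ref{main schur theorem} (and Proposition \ref{fkl schur lemma}, interpolation) $\mathfrak S_{F\circ H}$ is bounded on $\mathcal L_{p,\infty}(L_2(\mathbb R^{n+1}_+))$; in particular $\mathfrak S_H(V)$ is a well-defined element of $\mathcal L_{p,\infty}$. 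The idea is then to use the hypothesis $[M_{x_l},V]\in(\mathcal L_{p,\infty})_0$ ($1\le l\le n$) to peel off, modulo $(\mathcal L_{p,\infty})_0$, the whole horizontal dependence of the symbol, leaving a residual Schur multiplier whose symbol depends only on $x_{n+1},y_{n+1}$, which should then be handled by one-dimensional (Mellin / triangular-truncation) techniques.

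\textbf{The horizontal reduction.} Write $H_v(x,y)=|x_{n+1}-y_{n+1}|/(x_{n+1}y_{n+1})^{1/2}$ (the value of $H$ at $x'=y'$), so that $H^2-H_v^2=\sum_{l=1}^n(x_l-y_l)^2/(x_{n+1}y_{n+1})$. Then
$$F\circ H-F\circ H_v=(H-H_v)\,\Psi=\sum_{l=1}^n (x_l-y_l)\,\ell_l\,\Psi,\qquad \ell_l=\frac{x_l-y_l}{x_{n+1}y_{n+1}(H+H_v)},\quad \Psi=\int_0^1 F'\big(H_v+t(H-H_v)\big)\,dt .$$
The symbols $\ell_l,\Psi$ are homogeneous of degree $0$ modulo lower order near the full diagonal, but they are not smooth across $\{x_{n+1}=y_{n+1}\}$; to fix this I split with the triangular-truncation idempotent $\mathfrak S_b$ for $b=\chi_{\{x_{n+1}<y_{n+1}\}}$, which is bounded on $\mathcal L_{p,\infty}$ and preserves $(\mathcal L_{p,\infty})_0$ by Proposition \ref{standard schur lemma}: on each of the two halves $\{x_{n+1}\gtrless y_{n+1}\}$ the symbol $\ell_l\Psi$ extends to a genuine Calderón--Zygmund kernel (after localising by $\theta\otimes\theta$, a compactly supported one), so the associated Schur multipliers are bounded on every $\mathcal L_q$ by Theorem \ref{Annalspaper} and hence preserve $(\mathcal L_{p,\infty})_0$. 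Using $[M_{x_l},\mathfrak S_G(V)]=\mathfrak S_G([M_{x_l},V])$ (both sides have kernel $(x_l-y_l)G\,K_V$), each term satisfies $\mathfrak S_{(x_l-y_l)\ell_l\Psi}(V)=[M_{x_l},\mathfrak S_{\ell_l\Psi}(V)]=\mathfrak S_{\ell_l\Psi}([M_{x_l},V])\in(\mathcal L_{p,\infty})_0$ for $1\le l\le n$. Hence
$$\mathfrak S_{F\circ H}(V)\equiv\mathfrak S_{F\circ H_v}(V)\pmod{(\mathcal L_{p,\infty})_0}.$$

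\textbf{The vertical residual --- the main obstacle.} It remains to prove $\mathfrak S_{F\circ H_v}(V)\in(\mathcal L_{p,\infty})_0$. Here the symbol depends only on $x_{n+1}/y_{n+1}$ --- it is $F$ composed with $2|\sinh(\tfrac12\log(x_{n+1}/y_{n+1}))|$ --- and is singular only along $\{x_{n+1}=y_{n+1}\}$. I would split it at scale $\delta$ into a piece supported where $H_v\le\delta$ and a piece supported where $H_v\ge\delta/2$. For the first piece, writing it as a rescaled profile $\delta\,\psi(\cdot/\delta)$ and noting that composing with $\exp$ is a \emph{translation} (so the $W^{1+\lceil(n+1)/2\rceil,2}$-norm is $\delta$-independent), the transference estimate underlying Lemma \ref{post-denis lemma} gives an $\mathcal L_{p,\infty}$-operator-norm bound $O(\delta)$, so this contribution tends to $0$ in $\mathcal L_{p,\infty}$ as $\delta\to 0$ and is therefore negligible. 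The second piece has a smooth symbol, compactly supported (after localising with the compact support of $V$) and supported away from $\{x_{n+1}=y_{n+1}\}$, and showing that multiplying $V$ by it lands in $(\mathcal L_{p,\infty})_0$ is the crux: it does not follow from $\mathcal L_{p,\infty}$-boundedness alone, and only the horizontal commutators are available. I expect the resolution to exploit that, modulo $(\mathcal L_{p,\infty})_0$, the hypothesis forces $V$ to be concentrated near the horizontal diagonal $\{x'=y'\}$ --- the same commutator factorisation applied to $M_{\psi(x')}VM_{\phi(y')}$ with $\psi,\phi$ of disjoint supports shows every such ``corner'' of $V$ lies in $(\mathcal L_{p,\infty})_0$ --- so that on this concentrated part the off-diagonal vertical symbol lives away from the \emph{full} diagonal and can be absorbed by a Calderón--Zygmund/Cwikel-type estimate (using $p=n+1\ge 2$, so that an operator with bounded kernel on a compact set lies in $\mathcal L_2\subset(\mathcal L_{p,\infty})_0$). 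Making this last reduction precise is where the real work lies.
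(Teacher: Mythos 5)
Your argument is incomplete, and you say so yourself: the ``vertical residual'' $\mathfrak{S}_{F\circ H_v}(V)$ is never shown to lie in $(\mathcal{L}_{p,\infty}(L_2(\mathbb{R}^{n+1}_+)))_0$; the last paragraph is a plan (``I expect the resolution to exploit\dots'', ``where the real work lies''), not a proof. And the gap is not merely technical. Your ``corner'' reduction does kill the pieces $M_{\psi(x')}VM_{\phi(y')}$ with $\psi,\phi$ of disjoint horizontal supports, but the part of $V$ concentrated near $\{x'=y'\}$ can still live far from $\{x_{n+1}=y_{n+1}\}$, where $F\circ H_v$ is a genuinely nonconstant bounded symbol; a bounded Schur multiplier has no reason to map such a $V\in\mathcal{L}_{p,\infty}$ into the separable part, and your proposed Cwikel/Calder\'on--Zygmund absorption would require pointwise kernel control of $V$ that is simply not among the hypotheses (nor is $p\ge 2$: the lemma is stated for all $1<p<\infty$). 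With only the horizontal commutators $[M_{x_l},V]$, $1\le l\le n$, there is no visible mechanism to dispose of a symbol depending only on $(x_{n+1},y_{n+1})$.

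The reason you were pushed into this dead end is that you took the range ``$1\le l\le n$'' in the hypothesis literally, whereas the paper's own proof also uses the \emph{vertical} commutator $[M_{x_{n+1}},V]$ (the bound $n$ in the statement is evidently a typo for $n+1$: where the lemma is invoked, in the proof of Proposition \ref{first approximate lemma}, the commutator hypothesis is verified for all $1\le l\le n+1$ via Lemma \ref{iii verification second lemma}). Once $l=n+1$ is available, the entire lemma follows from one factorisation: writing $\Theta_l(x,y)=\frac{|x-y|}{\sum_{k=1}^{n+1}|x_k-y_k|}\,{\rm sgn}(x_l-y_l)$, one has on $Q\times Q$
\begin{equation*}
H(x,y)\,\chi_Q(x)\chi_Q(y)=x_{n+1}^{-\frac12}\chi_Q(x)\cdot y_{n+1}^{-\frac12}\chi_Q(y)\cdot\sum_{l=1}^{n+1}\Theta_l(x,y)(x-y)_l,
\end{equation*}
hence $\mathfrak{S}_H(V)=\sum_{l=1}^{n+1}\mathfrak{S}_{\Theta_l}\bigl(M_{x_{n+1}^{-1/2}\chi_Q}[M_{x_l},V]M_{x_{n+1}^{-1/2}\chi_Q}\bigr)$, and each summand is in the separable part because the commutators are and the multipliers $\mathfrak{S}_{\Theta_l}$ preserve $(\mathcal{L}_{p,\infty})_0$. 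Your horizontal reduction is, in effect, a more laborious version of the $l\le n$ half of this identity, and the half you could not complete is exactly the one requiring $l=n+1$. Either restate the hypothesis with $1\le l\le n+1$ and use this factorisation, or supply an actual proof (or a counterexample) for the vertical residual; as it stands the proposal does not establish the lemma.
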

\begin{proof} Set
$$\Theta_l(x)=\frac{|x-y|}{\sum_{k=1}^{n+1}|x_k-y_k|}\cdot {\rm sgn}(x_l-y_l),\quad x,y\in\mathbb{R}^{n+1},\quad 1\leq l\leq n+1.$$
Let $V$ be supported on a compact set $Q\subset \mathbb{R}^{n+1}_+.$ We have
$$H(x,y)\chi_Q(x)\chi_Q(y)=x_{n+1}^{-\frac12}\chi_Q(x)\cdot y_{n+1}^{-\frac12}\chi_Q(y)\cdot  \sum_{l=1}^{n+1}\Theta_l(x,y)\cdot (x-y)_l.$$
Thus,
$$\mathfrak{S}_H(V)=\sum_{l=1}^{n+1}\mathfrak{S}_{\Theta_l}\Big(M_{x_{n+1}^{-\frac12}\chi_Q} [M_{x_l},V] M_{x_{n+1}^{-\frac12}\chi_Q}\Big).$$
By assumption, the argument of Schur multiplier $\mathfrak{S}_{\Theta_l}$ belongs to $(\mathcal{L}_{p,\infty}(L_2(\mathbb{R}^{n+1}_+)))_0$ for every $1\leq l\leq n+1.$ Since Schur multiplier $\mathfrak{S}_{\Theta_l}$ sends $(\mathcal{L}_{p,\infty}(L_2(\mathbb{R}^{n+1}_+)))_0$ to itself for every $1\leq l\leq n+1,$ the assertion follows.
\end{proof}

\begin{lemma}\label{criterion lemma}
Let $1<p<\infty$. Assume that $V\in \mathcal{L}_{p,\infty}(L_2(\mathbb{R}^{n+1}_+))$ and $(V_j)_{j\geq1}\subset \mathcal{L}_{p,\infty}(L_2(\mathbb{R}^{n+1}_+))$ such that
\begin{enumerate}[{\rm (i)}]
\item\label{cla} ${\rm dist}_{\mathcal{L}_{p,\infty}(L_2(\mathbb{R}^{n+1}_+))}(V_j-V,(\mathcal{L}_{p,\infty}(L_2(\mathbb{R}^{n+1}_+)))_0)\to 0$ as $j\to\infty;$
\item\label{clb} $[M_{x_l},V_j]\in (\mathcal{L}_{p,\infty}(L_2(\mathbb{R}^{n+1}_+)))_0$ for every $j\geq1$ and for every $1\leq l\leq n;$
\item\label{clc} for every $j\geq 1,$ the operator $V_j$ is compactly supported in $\mathbb{R}^{n+1}_+,$
\end{enumerate}	
then for $(k,l)\in \{(2,0),(1,1),(2,1)\}$, we have
$$\mathfrak{S}_{F_{k,l}\circ H}(V)-F_{k,l}(0)V\in(\mathcal{L}_{p,\infty}(L_2(\mathbb{R}^{n+1}_+)))_0.$$
\end{lemma}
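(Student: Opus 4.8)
The plan is to use the approximation hypothesis to reduce to a single $V_j$, and then to exploit that $F_{k,l}-F_{k,l}(0)$ vanishes at the origin in order to peel off exactly one factor of $H$, which is absorbed by Lemma~\ref{abcde}. Throughout I write $(\cdot)_0$ for the separable part $(\mathcal{L}_{p,\infty}(L_2(\mathbb{R}^{n+1}_+)))_0$ and set $G:=F_{k,l}-F_{k,l}(0)$, so that $\mathfrak{S}_{F_{k,l}\circ H}=F_{k,l}(0)\cdot{\rm id}+\mathfrak{S}_{G\circ H}$ and the asserted statement becomes $\mathfrak{S}_{G\circ H}(V)\in(\cdot)_0$.

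First I would record that $\mathfrak{S}_{G\circ H}$ is bounded on $\mathcal{L}_{p,\infty}(L_2(\mathbb{R}^{n+1}_+))$ and maps $(\cdot)_0$ into itself: by Proposition~\ref{fkl schur lemma} the same holds for $\mathfrak{S}_{F_{k,l}\circ H}$ (the interpolation argument there, as in Proposition~\ref{standard schur lemma}, giving also the statement for the separable part), and one subtracts the bounded operator $F_{k,l}(0)\cdot{\rm id}$. In particular $\mathfrak{S}_{G\circ H}$ contracts, up to a fixed constant, the distance to the separable part, since for $W_0\in(\cdot)_0$ one has ${\rm dist}(\mathfrak{S}_{G\circ H}(W),(\cdot)_0)\le\|\mathfrak{S}_{G\circ H}(W-W_0)\|_{\mathcal{L}_{p,\infty}}\lesssim\|W-W_0\|_{\mathcal{L}_{p,\infty}}$ and one takes the infimum over $W_0$. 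Granting the assertion for each $V_j$ separately, the approximation hypothesis then gives ${\rm dist}(\mathfrak{S}_{G\circ H}(V-V_j),(\cdot)_0)\lesssim{\rm dist}(V-V_j,(\cdot)_0)\to0$; combining this with $\mathfrak{S}_{G\circ H}(V_j)\in(\cdot)_0$ and the quasi-triangle inequality for distances to an ideal yields ${\rm dist}(\mathfrak{S}_{G\circ H}(V),(\cdot)_0)=0$, and closedness of $(\cdot)_0$ finishes the reduction.

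It thus remains to treat a single $V_j$, which is compactly supported in $\mathbb{R}^{n+1}_+$—say its integral kernel is supported on $Q\times Q$ for a cube $Q$ compactly contained in $\mathbb{R}^{n+1}_+$—and satisfies $[M_{x_l},V_j]\in(\cdot)_0$ for $1\le l\le n$. Since $Q$ stays away from $\{x_{n+1}=0\}$, the function $H$ is bounded on $Q\times Q$, say $H\le C_Q$; choosing $\eta\in C^\infty_c[0,\infty)$ with $\eta\equiv1$ on $[0,C_Q]$ we have $\mathfrak{S}_{G\circ H}(V_j)=\mathfrak{S}_{(\eta G)\circ H}(V_j)$. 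Now $\eta G\in C^\infty_c[0,\infty)$ (using that $F_{k,l}\in C^\infty[0,\infty)$ by Theorem~\ref{fkl are smooth theorem}) and $(\eta G)(0)=0$, so Hadamard's lemma produces $\Phi\in C^\infty_c[0,\infty)$ with $(\eta G)(t)=t\,\Phi(t)$. Hence $(\eta G)\circ H=(\Phi\circ H)\cdot H$ on $\mathbb{R}^{n+1}_+\times\mathbb{R}^{n+1}_+$, and therefore, comparing integral kernels,
$$\mathfrak{S}_{G\circ H}(V_j)=\mathfrak{S}_{(\eta G)\circ H}(V_j)=\mathfrak{S}_{\Phi\circ H}\big(\mathfrak{S}_H(V_j)\big).$$
By Lemma~\ref{abcde}, $\mathfrak{S}_H(V_j)\in(\cdot)_0$; and since $\Phi$ is smooth and compactly supported it clearly satisfies the hypotheses of Theorem~\ref{main schur theorem}, so (exactly as in Proposition~\ref{fkl schur lemma}) $\mathfrak{S}_{\Phi\circ H}$ is bounded on $\mathcal{L}_q(L_2(\mathbb{R}^{n+1}_+))$ for every $1<q<\infty$ and, by the interpolation argument already invoked, preserves $(\cdot)_0$. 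This gives $\mathfrak{S}_{G\circ H}(V_j)\in(\cdot)_0$, as required.

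The step requiring the most care is the identity in the displayed line together with the precise meaning of $\mathfrak{S}_H(V_j)$ when $V_j$ is only assumed in $\mathcal{L}_{p,\infty}$ rather than Hilbert–Schmidt: one must interpret $\mathfrak{S}_H(V_j)$ through the decomposition in the proof of Lemma~\ref{abcde} and check that post-composing it with the bounded Schur multiplier $\mathfrak{S}_{\Phi\circ H}$ reproduces the operator with kernel $(G\circ H)K_{V_j}$—which is legitimate because multiplicativity of Schur multipliers holds on $\mathcal{L}_2$ and both sides extend by continuity to the relevant quasi-Banach ideals. The remaining points are routine bookkeeping with the ideal properties recorded in Section~\ref{Secpre}.
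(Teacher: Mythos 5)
Your overall architecture is exactly the paper's: subtract $F_{k,l}(0)\cdot{\rm id}$, factor one power of $H$ out of the remaining symbol, absorb that factor via Lemma \ref{abcde}, and then apply a bounded Schur multiplier that preserves the separable part; the reduction from $V$ to $V_j$ via the contraction of the distance to $(\mathcal{L}_{p,\infty})_0$ is also the paper's argument (the paper takes the infimum over $B\in\mathcal{L}_p$ rather than over $W_0$ in the separable part, which is equivalent). However, there is one genuine error. You assert that $F_{k,l}\in C^{\infty}[0,\infty)$, citing Theorem \ref{fkl are smooth theorem}, and then invoke Hadamard's lemma to produce $\Phi\in C^{\infty}_c[0,\infty)$ with $(\eta G)(t)=t\Phi(t)$, concluding that $\mathfrak{S}_{\Phi\circ H}$ is ``clearly'' bounded. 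Theorem \ref{fkl are smooth theorem} does not say that $F_{k,l}$ is smooth up to $0$; it only says that $F_{k,l}$ and $G_{k,l}$ satisfy the exponential--Sobolev hypotheses of Theorem \ref{main schur theorem}. In fact Lemma \ref{fifth fkl lemma} shows that for even $n$ the function $F_{k,l}$ contains a genuine term $P_{k,l}(x)\log(x)$ with $P_{k,l}$ vanishing only to second order at $0$, so $F_{k,l}$ fails to be $C^2$ at $0$ in general, and a fortiori your $\Phi=\eta\cdot\frac{F_{k,l}-F_{k,l}(0)}{x}$ fails to be $C^1$ at $0$ (its derivative behaves like $\log x$). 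Consequently the smoothness route (Hadamard plus Lemma \ref{post-parcet lemma}, which needs $C^{1+\lfloor\frac{n+1}{2}\rfloor}[0,1]$) breaks down for $n\geq 2$ even, and the applicability of Theorem \ref{main schur theorem} to $\Phi$ is precisely what needs proof rather than being clear.

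The gap is fillable, and filling it lands you on the paper's proof: define $G_{k,l}(x):=\frac{F_{k,l}(x)-F_{k,l}(0)}{x}$ directly (your cutoff $\eta$ is unnecessary, since the kernel identity $F_{k,l}\circ H=F_{k,l}(0)+(G_{k,l}\circ H)\cdot H$ holds on all of $\mathbb{R}^{n+1}_+\times\mathbb{R}^{n+1}_+$, compact support of $V_j$ being needed only for Lemma \ref{abcde}), and then cite Theorem \ref{fkl are smooth theorem} for the statement that $G_{k,l}$ satisfies the hypotheses of Theorem \ref{main schur theorem}. That statement is the nontrivial input here --- it is exactly why the paper develops the log-tolerant criterion of Theorem \ref{main schur theorem} (Sobolev regularity of $G_{k,l}\circ\exp$ on $\mathbb{R}_{\pm}$) instead of the pointwise-derivative criterion of Lemma \ref{pre-parcet lemma}/\ref{post-parcet lemma}, and it is proved by the decomposition into analytic and $x^{m}\log x$ pieces in the appendix. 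Your closing remark about interpreting $\mathfrak{S}_H(V_j)$ and the multiplicativity of Schur multipliers on $\mathcal{L}_2$ extended by continuity is a legitimate point and is consistent with how the paper treats the composition.
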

\begin{proof} We write
$$\mathfrak{S}_{F_{k,l}\circ H}(V_j)=F_{k,l}(0)V_j+\mathfrak{S}_{G_{k,l}\circ H}(\mathfrak{S}_H(V_j)),$$
where
$$G_{k,l}(x):=\frac{F_{k,l}(x)-F_{k,l}(+0)}{x},\quad x\in(0,\infty).$$
Applying Lemma \ref{abcde} (whose assumptions are satisfied due to \eqref{clb} and \eqref{clc}) to the operator $V_j,$ we conclude that
$$\mathfrak{S}_H(V_j)\in (\mathcal{L}_{p,\infty}(L_2(\mathbb{R}^{n+1}_+)))_0,\quad j\geq 1.$$
By Theorem \ref{fkl are smooth theorem}, the function $G_{k,l}$ satisfies the assumptions in Theorem \ref{main schur theorem}. By Theorem \ref{main schur theorem},  $\mathfrak{S}_{G_{k,l}\circ H}:(\mathcal{L}_{p,\infty}(L_2(\mathbb{R}^{n+1}_+)))_0\to (\mathcal{L}_{p,\infty}(L_2(\mathbb{R}^{n+1}_+)))_0.$  Hence,
\begin{equation}\label{cl eq0}
\mathfrak{S}_{F_{k,l}\circ H}(V_j)-F_{k,l}(0)V_j\in (\mathcal{L}_{p,\infty}(L_2(\mathbb{R}^{n+1}_+)))_0,\quad j\geq 1.
\end{equation}
We have
\begin{align*}
&{\rm dist}_{\mathcal{L}_{p,\infty}(L_2(\mathbb{R}^{n+1}_+))}(\mathfrak{S}_{F_{k,l}\circ H}(V_j)-\mathfrak{S}_{F_{k,l}\circ H}(V),(\mathcal{L}_{p,\infty}(L_2(\mathbb{R}^{n+1}_+)))_0)\\
&=\inf_{A\in \mathcal{L}_p(L_2(\mathbb{R}^{n+1}_+))}\Big\|\mathfrak{S}_{F_{k,l}\circ H}(V_j)-\mathfrak{S}_{F_{k,l}\circ H}(V)-A\Big\|_{\mathcal{L}_{p,\infty}(L_2(\mathbb{R}^{n+1}_+))}\\
&\leq\inf_{\substack{A=\mathfrak{S}_{F_{k,l}\circ H}(B)\\ B\in \mathcal{L}_p(L_2(\mathbb{R}^{n+1}_+))}}\Big\|\mathfrak{S}_{F_{k,l}\circ H}(V_j)-\mathfrak{S}_{F_{k,l}\circ H}(V)-A\Big\|_{\mathcal{L}_{p,\infty}(L_2(\mathbb{R}^{n+1}_+))}\\
&=\inf_{B\in \mathcal{L}_p(L_2(\mathbb{R}^{n+1}_+))}\Big\|\mathfrak{S}_{F_{k,l}\circ H}(V_j)-\mathfrak{S}_{F_{k,l}\circ H}(V)-\mathfrak{S}_{F_{k,l}\circ H}(B)\Big\|_{\mathcal{L}_{p,\infty}(L_2(\mathbb{R}^{n+1}_+))}\\
&=\inf_{B\in \mathcal{L}_p(L_2(\mathbb{R}^{n+1}_+))}\Big\|\mathfrak{S}_{F_{k,l}\circ H}(V_j-V-B)\Big\|_{\mathcal{L}_{p,\infty}(L_2(\mathbb{R}^{n+1}_+))}\\
&\leq\Big\|\mathfrak{S}_{F_{k,l}\circ H}\Big\|_{\mathcal{L}_{p,\infty}(L_2(\mathbb{R}^{n+1}_+))\circlearrowleft}\cdot\inf_{B\in \mathcal{L}_p(L_2(\mathbb{R}^{n+1}_+))}\Big\|V_j-V-B\Big\|_{\mathcal{L}_{p,\infty}(L_2(\mathbb{R}^{n+1}_+))}\\
&\leq\Big\|\mathfrak{S}_{F_{k,l}\circ H}\Big\|_{\mathcal{L}_{p,\infty}(L_2(\mathbb{R}^{n+1}_+))\circlearrowleft}\cdot{\rm dist}_{\mathcal{L}_{p,\infty}(L_2(\mathbb{R}^{n+1}_+))}(V_j-V,(\mathcal{L}_{p,\infty}(L_2(\mathbb{R}^{n+1}_+)))_0).
\end{align*}
By \eqref{cla}, the right hand side tends to $0$ as $j\to\infty.$ Hence, so does the left hand side. It follows now from \eqref{cl eq0} that
$${\rm dist}_{\mathcal{L}_{p,\infty}(L_2(\mathbb{R}^{n+1}_+))}(F_{k,l}(0)\cdot V_j-\mathfrak{S}_{F_{k,l}\circ H}(V),(\mathcal{L}_{p,\infty}(L_2(\mathbb{R}^{n+1}_+)))_0)\to0$$
as $j\to\infty.$ The assertion follows now from \eqref{cla}.
\end{proof}

The following lemma is well known. We provide a short proof for convenience of the reader.

\begin{lemma}\label{iii verification first lemma} If $1\leq k,l\leq n+1$ and $f\in C^{\infty}_c(\mathbb{R}^{n+1}),$ then the operators
$$[(1+\Delta)^{-\frac12},M_f],[\partial_k\partial_l(1+\Delta)^{-\frac32},M_f],\,{\rm and}\ [R_kR_m(1+\Delta)^{-\frac12},M_{f}]$$
belong to $\mathcal{L}_{\frac{n+1}{2},\infty}(L_2(\mathbb{R}^{n+1}))$.
\end{lemma}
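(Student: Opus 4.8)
We treat the three operators uniformly. Each of the Fourier multipliers $(1+\Delta)^{-\frac12}$, $\partial_k\partial_l(1+\Delta)^{-\frac32}$, $R_kR_l(1+\Delta)^{-\frac12}$ commutes with $\Delta$ and has the form $S(1+\Delta)^{-\frac12}$ with $S$ a bounded Fourier multiplier whose symbol is smooth (for $R_kR_l$ this is so after removing the low-frequency part $R_kR_l\phi_0(\nabla)(1+\Delta)^{-\frac12}$, $\phi_0\in C_c^\infty$ with $\phi_0\equiv1$ near $0$: this piece has a smooth, exponentially decaying convolution kernel, so its commutator with $M_f$ is trace class and lies in $\mathcal L_{\frac{n+1}{2},\infty}$ for free). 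By Leibniz' rule $[S(1+\Delta)^{-\frac12},M_f]=S[(1+\Delta)^{-\frac12},M_f]+[S,M_f](1+\Delta)^{-\frac12}$, so since $\mathcal L_{\frac{n+1}{2},\infty}$ is an ideal it suffices to show
\begin{enumerate}[{\rm (a)}]
\item $[(1+\Delta)^{-\frac12},M_f]\in\mathcal L_{\frac{n+1}{2},\infty}(L_2(\mathbb R^{n+1}))$, and
\item $[S,M_f](1+\Delta)^{-\frac12}\in\mathcal L_{\frac{n+1}{2},\infty}(L_2(\mathbb R^{n+1}))$ whenever $S$ is a bounded Fourier multiplier with smooth symbol, so that $[S,M_f]$ is a pseudodifferential operator of order $-1$ with $x$-support in $\mathrm{supp}f$.
\end{enumerate}
I carry this out for $n\ge2$, using Lemma \ref{cwikel estimate in Euclidean setting}, and treat $n=1$ at the end.

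For (a) I would use the identity $[B^{-1},A]=-B^{-1}[B,A]B^{-1}$ with $B=(1+\Delta)^{\frac12}$, giving $[(1+\Delta)^{-\frac12},M_f]=-(1+\Delta)^{-\frac12}[(1+\Delta)^{\frac12},M_f](1+\Delta)^{-\frac12}$, where $[(1+\Delta)^{\frac12},M_f]$ is bounded by standard pseudodifferential calculus. Choosing $\chi\in C_c^\infty(\mathbb R^{n+1})$ with $\chi\equiv1$ near $\mathrm{supp}f$, and using that the kernel of $[(1+\Delta)^{\frac12},M_f]$ is $\kappa(x-y)(f(y)-f(x))$ with $\kappa$ smooth and rapidly decaying off the diagonal, one sees that replacing $[(1+\Delta)^{\frac12},M_f]$ by $M_\chi[(1+\Delta)^{\frac12},M_f]M_\chi$ changes the operator only by a trace-class remainder (its kernel is smooth, rapidly decaying, and compactly supported in one variable; conjugation by $(1+\Delta)^{-\frac12}$ preserves this, and such operators are trace class by a routine factorisation through Hilbert--Schmidt operators via Kato--Seiler--Simon bounds). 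The main term is $-\big((1+\Delta)^{-\frac12}M_\chi\big)[(1+\Delta)^{\frac12},M_f]\big(M_\chi(1+\Delta)^{-\frac12}\big)$: by Lemma \ref{cwikel estimate in Euclidean setting} with $p=n+1>2$ and $g(r)=(1+r^2)^{-\frac12}\in L_{n+1,\infty}(\mathbb R_+,r^ndr)$, the two outer factors lie in $\mathcal L_{n+1,\infty}$, the middle one is bounded, and the H\"older-type inequality for weak Schatten ideals recalled in Section \ref{Secpre} ($\mathcal L_{n+1,\infty}\cdot\mathcal L_\infty\cdot\mathcal L_{n+1,\infty}\subset\mathcal L_{\frac{n+1}{2},\infty}$) places the product in $\mathcal L_{\frac{n+1}{2},\infty}$. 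Part (b) is the same argument with $[S,M_f]$ in place of $[(1+\Delta)^{-\frac12},M_f]$: writing $[S,M_f]=(1+\Delta)^{-\frac12}\big((1+\Delta)^{\frac12}[S,M_f]\big)$ with the second factor bounded of order $0$, localising by $M_\chi$ and discarding a trace-class remainder, one is left with a product of $M_\chi(1+\Delta)^{-\frac12}\in\mathcal L_{n+1,\infty}$, a bounded operator, and $M_\chi(1+\Delta)^{-\frac12}\in\mathcal L_{n+1,\infty}$.

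For $n=1$ the exponent $n+1=2$ is not permitted in Lemma \ref{cwikel estimate in Euclidean setting}, and there I would instead either invoke the known endpoint bound $M_\chi(1+\Delta)^{-\frac12}\in\mathcal L_{2,\infty}(L_2(\mathbb R^2))$, $\chi\in C_c^\infty(\mathbb R^2)$, and repeat the above, or work directly with the explicit kernel: after the reductions the operator has a kernel $\chi_Q(x)K(x,y)\chi_Q(y)$ on a cube $Q$, with $K(x,y)=\sum_j t_j(x-y)g_j(x,y)$ where each $g_j$ is smooth and bounded and each $t_j$ satisfies the order-$(-2)$ bounds $|\partial^\alpha t_j(z)|\lesssim|z|^{2-(n+1)-|\alpha|}$ near $z=0$, with rapid decay at infinity; a periodisation argument as in the proof of Lemma \ref{convolution operator lemma} then identifies this with an operator on $L_2(\mathbb T^2)$ whose Fourier symbol lies in $\ell_{1,\infty}(\mathbb Z^2)$, hence in $\mathcal L_{1,\infty}(L_2(\mathbb T^2))$ and, by restriction, in $\mathcal L_{1,\infty}(L_2(\mathbb R^2))$. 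The essential point, and the main obstacle, is to reach the endpoint weak Schatten class $\mathcal L_{\frac{n+1}{2},\infty}$ rather than merely $\mathcal L_2$: a crude kernel estimate for $[T,M_f]$ only gives $\mathcal L_2$ (or worse) when $n\le2$, so one must exploit the full order-$(-2)$ structure of the commutator by factoring it as a product of two operators each one order smoother than a bare $M_{\nabla f}(1+\Delta)^{-1/2}$, and then apply the Cwikel estimate together with the weak-Schatten H\"older inequality; the low dimension $n=1$ is the most delicate instance, since the Cwikel estimate there sits exactly at its forbidden endpoint.
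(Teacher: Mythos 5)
Your proposal is correct in substance and draws on the same toolbox as the paper --- commutator identities, pseudodifferential order counting to isolate bounded factors, and Cwikel estimates --- but the factorisation is genuinely different, and in a way that matters. The paper writes $[(1+\Delta)^{-1/2},M_f]$ as (bounded order-$0$ operator)$\,\cdot\,(1+\Delta)^{-1}M_\psi$ plus a symmetric term, so it needs a single Cwikel bound at the exponent $\frac{n+1}{2}$ (and, for the third operator, $(1+\Delta)^{-3/2}M_f$ at an even smaller exponent); Lemma \ref{cwikel estimate in Euclidean setting} as quoted requires $p>2$, so that literal application only covers large $n$. You instead sandwich a bounded operator between two copies of $M_\chi(1+\Delta)^{-1/2}$ and invoke the weak-Schatten H\"older inequality $\mathcal L_{n+1,\infty}\cdot\mathcal L_\infty\cdot\mathcal L_{n+1,\infty}\subset\mathcal L_{\frac{n+1}{2},\infty}$; every Cwikel application is then at $p=n+1>2$, valid for all $n\ge2$, and you correctly isolate $n=1$ as the only genuinely critical case and offer two workable fixes (the critical-exponent bound $M_\chi(1+\Delta)^{-1/2}\in\mathcal L_{2,\infty}(L_2(\mathbb R^2))$, which needs a citation beyond Lemma \ref{cwikel estimate in Euclidean setting}, or periodisation as in Lemma \ref{convolution operator lemma}). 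The price is the extra localisation step (inserting $M_\chi$ and discarding remainders), which the paper avoids by feeding $\psi$ with $f=f\psi$ directly into the Leibniz expansion; your remainder analysis is right, if telegraphic, and no worse than the paper's own level of detail.

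One justification is wrong as stated. The low-frequency piece $R_kR_l\phi_0(\nabla)(1+\Delta)^{-1/2}$ does \emph{not} have an exponentially decaying kernel: its symbol is compactly supported but carries a homogeneous degree-$0$ discontinuity at $\xi=0$, so the kernel, while smooth, decays only like $|x|^{-(n+1)}$, which is not enough to read trace-class membership of the commutator off kernel decay alone (it gives Hilbert--Schmidt, which is insufficient for $\mathcal L_{\frac{n+1}{2},\infty}$ when $n\le 2$). The conclusion is nevertheless true and easy to repair: choose $\tilde\phi_0\in C_c^\infty$ with $\tilde\phi_0\equiv1$ on ${\rm supp}\,\phi_0$ and $\psi\in C_c^\infty$ with $f=f\psi$; then, writing $S_0:=R_kR_l\phi_0(\nabla)(1+\Delta)^{-1/2}=S_0\tilde\phi_0(\nabla)$, one has $S_0M_f=S_0\cdot\bigl(\tilde\phi_0(\nabla)M_\psi\bigr)\cdot M_f$ and $M_fS_0=M_f\cdot\bigl(M_\psi\tilde\phi_0(\nabla)\bigr)\cdot S_0$, where $M_\psi\tilde\phi_0(\nabla)$ is trace class because both functions are Schwartz. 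With that repair, and a reference for the $d=2$ critical Cwikel estimate, your argument goes through.
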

\begin{proof} Fix $\psi\in C^{\infty}_c(\mathbb{R}^{n+1})$ such that $f=f\psi.$ Using Leibniz rule, we write
\begin{align*}
[(1+\Delta)^{-\frac12},M_f]
&=[(1+\Delta)^{-\frac12},M_{f}]M_{\psi}+M_{f}[(1+\Delta)^{-\frac12},M_{\psi}]\\
&=[(1+\Delta)^{-\frac12},M_{f}](1+\Delta)\cdot (1+\Delta)^{-1}M_{\psi}\\
&\hspace{1.0cm}+M_{f}(1+\Delta)^{-1}\cdot (1+\Delta)[(1+\Delta)^{-\frac12},M_{\psi}].
\end{align*}
Since $(1+\Delta)^{-\frac12}$ is a pseudo-differential operator of order $-1$ and since $M_{f}$ and $M_{\psi}$ are pseudo-differential operators of order $0,$ it follows that $[(1+\Delta)^{-\frac12},M_{f}]$ and $[(1+\Delta)^{-\frac12},M_{\psi}]$ are pseudo-differential operators of order $-2.$ Thus,
$$[(1+\Delta)^{-\frac12},M_{f}](1+\Delta)\mbox{ and }(1+\Delta)[(1+\Delta)^{-\frac12},M_{\psi}]$$
are pseudo-differential operators of order $0$ and are, therefore, bounded. The first assertion follows now from Lemma \ref{cwikel estimate in Euclidean setting}. The proof of the second assertion follows {\it mutatis mutandi}.

To see the third assertion, we write
\begin{align*}
&[R_kR_m(1+\Delta)^{-\frac12},M_{f}]\\
&=[\frac{\partial_k\partial_m}{(1+\Delta)^{\frac32}},M_f]+[\frac{\partial_k\partial_m}{\Delta(1+\Delta)^{\frac32}},M_f]\\
&=[\frac{\partial_k\partial_m}{(1+\Delta)^{\frac32}},M_f]+R_kR_m\cdot (1+\Delta)^{-\frac32}M_f-M_f(1+\Delta)^{-\frac32}\cdot R_kR_m.
\end{align*}
That commutator on the right side falls into $\mathcal{L}_{\frac{n+1}{2},\infty}(L_2(\mathbb{R}^{n+1}))$ can be proved repeating the argument in the first paragraph {\it mutatis mutandi}. That the other summands on the right hand side fall into $\mathcal{L}_{\frac{n+1}{2},\infty}(L_2(\mathbb{R}^{n+1}))$ follows from Cwikel estimates.
\end{proof}

\begin{lemma}\label{iii verification second lemma} If $1\leq k,l\leq n+1$ and $f\in C^{\infty}_c(\mathbb{R}^{n+1}),$ then
$$M_{\chi_Q}[\Delta^{-\frac12},M_f]M_{\chi_Q},M_{\chi_Q}[\partial_k\partial_l\Delta^{-\frac32},M_f]M_{\chi_Q}\in \mathcal{L}_{\frac{n+1}{2},\infty}(L_2(\mathbb{R}^{n+1}))$$
for every compact set $Q\subset\mathbb{R}^{n+1}.$
\end{lemma}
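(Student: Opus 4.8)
The plan is to reduce the statement to its global (i.e. non-localised) counterpart from Lemma~\ref{iii verification first lemma} by inserting the factor $(1+\Delta)^{\pm\frac12}$ in the right places so that the difference between $\Delta^{-\frac12}$ and $(1+\Delta)^{-\frac12}$ (and between $\partial_k\partial_l\Delta^{-\frac32}$ and $\partial_k\partial_l(1+\Delta)^{-\frac32}$) is controlled. First I would write, for the first operator,
\begin{align*}
M_{\chi_Q}[\Delta^{-\frac12},M_f]M_{\chi_Q}&=M_{\chi_Q}[(1+\Delta)^{-\frac12},M_f]M_{\chi_Q}\\
&\hspace{1.0cm}+M_{\chi_Q}[\Delta^{-\frac12}-(1+\Delta)^{-\frac12},M_f]M_{\chi_Q}.
\end{align*}
The first summand lies in $\mathcal{L}_{\frac{n+1}{2},\infty}(L_2(\mathbb{R}^{n+1}))$ by Lemma~\ref{iii verification first lemma} directly. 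For the second summand, the key point is that the low-frequency part is harmless after localisation: writing $R:=\Delta^{-\frac12}-(1+\Delta)^{-\frac12}$, one has that $R$ is (up to a smoothing error) a Fourier multiplier which is smooth and bounded away from $\xi=0$ and behaves like $|\xi|^{-1}$ near the origin, hence $\Delta^{\frac12}R=1-\Delta^{\frac12}(1+\Delta)^{-\frac12}$ is a bounded operator and $R\Delta^{\frac12}$ likewise. Thus $M_{\chi_Q}R\Delta^{\frac12}$ and $\Delta^{\frac12}RM_{\chi_Q}$ are bounded, and expanding $[R,M_f]=RM_f-M_fR$ and inserting $\Delta^{\mp\frac12}\Delta^{\pm\frac12}$ one writes each term as a product of an $\mathcal{L}_{\frac{n+1}{2},\infty}$ operator coming from the Cwikel estimate Lemma~\ref{cwikel estimate in Euclidean setting} (for $M_{\chi_Qf}\Delta^{-\frac12}$, noting $n+1>2$) and a bounded operator; here one uses that $\chi_Q\cdot f$ and $f\cdot\chi_Q$ are in $L_{n+1}(\mathbb{R}^{n+1})$ since $f$ is bounded with compact support $Q$. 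This gives membership in $\mathcal{L}_{n+1,\infty}\cdot\mathcal{L}_\infty\subset\mathcal{L}_{n+1,\infty}$; to get the sharper exponent $\frac{n+1}{2}$ one pairs two such Cwikel factors by writing $M_{\chi_Q}RM_fM_{\chi_Q}=(M_{\chi_Q}\Delta^{-\frac12})\cdot(\Delta^{\frac12}R\Delta^{\frac12})\cdot(\Delta^{-\frac12}M_{f\chi_Q})$ and similarly $M_{\chi_Q}M_fRM_{\chi_Q}$, each being a product of two $\mathcal{L}_{n+1,\infty}$ operators and one bounded operator, hence in $\mathcal{L}_{\frac{n+1}{2},\infty}$.

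For the second operator I would argue identically: the difference $\partial_k\partial_l\Delta^{-\frac32}-\partial_k\partial_l(1+\Delta)^{-\frac32}$ equals $\partial_k\partial_l\big(\Delta^{-\frac32}-(1+\Delta)^{-\frac32}\big)$, and $\partial_k\partial_l\big(\Delta^{-\frac32}-(1+\Delta)^{-\frac32}\big)\Delta^{\frac12}$ is a bounded Fourier multiplier (its symbol is $\xi_k\xi_l|\xi|^{-3}\big(1-(|\xi|^2/(1+|\xi|^2))^{\frac32}\big)\cdot|\xi|$, which is bounded and decays at infinity), and likewise on the other side; so after localisation and a Leibniz expansion the difference again splits into products of two Cwikel factors of type $M_{\chi_Qf}\Delta^{-\frac12}$ with a bounded operator in the middle, landing in $\mathcal{L}_{\frac{n+1}{2},\infty}(L_2(\mathbb{R}^{n+1}))$. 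Combining the two parts completes the proof.

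The main obstacle I anticipate is making the low-frequency analysis clean: $\Delta^{-\frac12}$ is an unbounded operator with a singular symbol at the origin, so one must be careful that the manipulations inserting $\Delta^{\pm\frac12}$ and $(1+\Delta)^{\pm\frac12}$ are justified on a suitable dense domain and that all the ``remainder'' multipliers $\Delta^{\frac12}(\Delta^{-\frac12}-(1+\Delta)^{-\frac12})\Delta^{\frac12}$ etc.\ are genuinely bounded (they are, since each is a smooth symbol that is bounded near $0$ and near $\infty$, but this needs a sentence). A secondary point is tracking which Cwikel estimate is available: Lemma~\ref{cwikel estimate in Euclidean setting} requires $p>2$ and $n\geq 2$, i.e.\ the ambient dimension $n+1\geq 3$; for $n=1$ (ambient dimension $2$) one instead notes directly that the relevant localised kernels are bounded, hence Hilbert--Schmidt, so the operators lie in $\mathcal{L}_2\subset\mathcal{L}_{1,\infty}=\mathcal{L}_{\frac{n+1}{2},\infty}$, exactly as in Step~2 of the proof of Lemma~\ref{mtb separable part lemma}.
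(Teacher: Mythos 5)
Your argument for $n\geq 2$ is correct and is a genuine variant of the paper's: you control the low-frequency correction $R=\Delta^{-1/2}-(1+\Delta)^{-1/2}$ by sandwiching it between the two factors $M_{\chi_Q}\Delta^{-1/2}$ and $\Delta^{-1/2}M_{f\chi_Q}$, each in $\mathcal{L}_{n+1,\infty}$ by the standard Cwikel estimate (Lemma \ref{cwikel estimate in Euclidean setting}), with the bounded Fourier multiplier $\Delta^{1/2}R\Delta^{1/2}$ in the middle. The paper instead applies the abstract Cwikel (submajorization) estimate directly to $M_{\chi_Q}g(\sqrt{\Delta})M_{\chi_Q}$ with $g(\xi)=|\xi|^{-1}-(1+|\xi|^2)^{-1/2}$, obtaining membership in $\mathcal{L}_{\frac{n+1}{2}}$ from $\chi_Q\otimes g\in(L_1+L_{\frac{n+1}{2}})(\mathbb{R}^{n+1}\times\mathbb{R}^{n+1})$, and then commutes with the bounded operator $M_f$. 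Both routes work for $n\geq2$; yours avoids the submajorization machinery at the cost of requiring the exponent $p=n+1>2$ in Cwikel.

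The case $n=1$ is a genuine gap. The inclusion you invoke, $\mathcal{L}_2\subset\mathcal{L}_{1,\infty}$, is false: it is the reverse inclusion $\mathcal{L}_{1,\infty}\subset\mathcal{L}_2$ that holds (take $\mu(k,T)=(k+1)^{-3/5}$, which is square-summable while $(k+1)\mu(k,T)\to\infty$). So observing that the localised kernels are bounded, hence Hilbert--Schmidt, does not place the operators in $\mathcal{L}_{\frac{n+1}{2},\infty}=\mathcal{L}_{1,\infty}$, which is what the lemma asserts when $n=1$; the analogous step in Lemma \ref{mtb separable part lemma} is legitimate only because there the target is $(\mathcal{L}_{2,\infty})_0\supset\mathcal{L}_2$, not $\mathcal{L}_{1,\infty}$. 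Nor can your main sandwich be used as a substitute at $n=1$, since $M_{\chi_Q}\Delta^{-1/2}$ on $L_2(\mathbb{R}^2)$ is the critical case $p=2$ of the Cwikel estimate, which is excluded from Lemma \ref{cwikel estimate in Euclidean setting} and fails in general. The paper's submajorization argument is what closes this case: $M_{\chi_Q}g(\sqrt{\Delta})M_{\chi_Q}\prec\prec 160\,\chi_Q\otimes g$ and $g\in L_1(\mathbb{R}^2)$ (it behaves like $|\xi|^{-1}$ near the origin and like $\tfrac12|\xi|^{-3}$ at infinity), so the correction term is in fact trace class, hence in $\mathcal{L}_{1,\infty}$; the same applies to $g_{k,l}(\xi)=\xi_k\xi_l(|\xi|^{-3}-(1+|\xi|^2)^{-3/2})$. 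You need this, or some other input valid at the critical exponent, to repair the two-dimensional case.
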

\begin{proof} We write
$$M_{\chi_Q}[\Delta^{-\frac12},M_f]M_{\chi_Q}=M_{\chi_Q}[(1+\Delta)^{-\frac12},M_f]M_{\chi_Q}+[M_{\chi_Q}g(\sqrt{\Delta})M_{\chi_Q},M_f],$$
$$M_{\chi_Q}[\partial_k\partial_l\Delta^{-\frac32},M_f]M_{\chi_Q}=M_{\chi_Q}[\partial_k\partial_l(1+\Delta)^{-\frac32},M_f]M_{\chi_Q}-[M_{\chi_Q}g_{k,l}(\sqrt{\Delta})M_{\chi_Q},M_f],$$		
where
$$g(x):=|x|^{-1}-(1+|x|^2)^{-\frac12},\quad g_{k,l}(x):=x_kx_l(|x|^{-3}-(1+|x|^2)^{-\frac32}),\quad x\in\mathbb{R}^{n+1}.$$
Applying the Abstract Cwikel Estimate (see Theorem 1.5.10 in \cite{LSZ2}) in the standard setting of $\mathbb{R}^{n+1},$ we obtain
$$M_{\chi_Q}g(\nabla)M_{\chi_Q}\prec\prec 160\chi_Q\otimes g,\quad M_{\chi_Q}g_{k,l}(\nabla)M_{\chi_Q}\prec\prec 160\chi_Q\otimes g_{k,l}.$$
Here we recall in Section \ref{Spdef} that $ A \prec\prec B$ means $A$ is submajorized by $B$.
Since $\chi_Q\otimes g,\chi_Q\otimes g_{k,l}\in (L_1+L_{\frac{n+1}{2}})(\mathbb{R}^{n+1}\times\mathbb{R}^{n+1}),$ it follows that
$$M_{\chi_Q}g(\nabla)M_{\chi_Q},M_{\chi_Q}g_{k,l}(\nabla)M_{\chi_Q}\in\mathcal{L}_{\frac{n+1}{2}}(L_2(\mathbb{R}^{n+1})).$$
The assertion follows now from Lemma \ref{iii verification first lemma}.
\end{proof}

\begin{lemma}\label{ii pre-verification lemma} If $Q_j=[0,j]^n\times[\frac1j,j]$ and $f\in C^{\infty}_c(\mathbb{R}^{n+1}),$ then
$$M_{f\cdot\chi_{Q_j}}(1+\Delta)^{-\frac12}\to M_{f\cdot\chi_{\mathbb{R}^{n+1}_+}}(1+\Delta)^{-\frac12},\ {\rm as}\ j\rightarrow \infty,$$
in $\mathcal{L}_{n+1,\infty}(L_2(\mathbb{R}^{n+1})).$
\end{lemma}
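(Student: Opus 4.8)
The plan is to realise the difference of the two operators as a single Cwikel-type operator whose symbol converges to $0$ in $L_{n+1}(\mathbb{R}^{n+1})$, and then to invoke the endpoint Cwikel estimate. First I would put $g_j:=f\cdot(\chi_{\mathbb{R}^{n+1}_+}-\chi_{Q_j})\in L_\infty(\mathbb{R}^{n+1})$, so that
$$M_{f\cdot\chi_{\mathbb{R}^{n+1}_+}}(1+\Delta)^{-\frac12}-M_{f\cdot\chi_{Q_j}}(1+\Delta)^{-\frac12}=M_{g_j}(1+\Delta)^{-\frac12},$$
which reduces the claim to showing $\big\|M_{g_j}(1+\Delta)^{-\frac12}\big\|_{\mathcal{L}_{n+1,\infty}(L_2(\mathbb{R}^{n+1}))}\to0$. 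Since $f$ is bounded with compact support and $\chi_{Q_j}\to\chi_{\mathbb{R}^{n+1}_+}$ pointwise almost everywhere as $j\to\infty$, we have $g_j\to0$ almost everywhere with $|g_j|\le|f|\in L_{n+1}(\mathbb{R}^{n+1})$, so dominated convergence gives $\|g_j\|_{L_{n+1}(\mathbb{R}^{n+1})}\to0$.

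Next I would write $(1+\Delta)^{-\frac12}=\eta(\sqrt{\Delta})$ with $\eta(r):=(1+r^2)^{-\frac12}$ and record that $\eta\in L_{n+1,\infty}(\mathbb{R}_+,r^ndr)$: the super-level set $\{r>0:\eta(r)>s\}$ is the interval $(0,\sqrt{s^{-2}-1})$, whose $r^ndr$-measure is $\tfrac1{n+1}(s^{-2}-1)^{\frac{n+1}2}$, so the decreasing rearrangement of $\eta$ with respect to $r^ndr$ is bounded near $0$ and of order $t^{-\frac1{n+1}}$ as $t\to\infty$. Feeding this into a Cwikel estimate at the exponent $p=n+1$ yields a constant $C_n>0$ with
$$\big\|M_{g_j}(1+\Delta)^{-\frac12}\big\|_{\mathcal{L}_{n+1,\infty}(L_2(\mathbb{R}^{n+1}))}\le C_n\,\|g_j\|_{L_{n+1}(\mathbb{R}^{n+1})}\,\|\eta\|_{L_{n+1,\infty}(\mathbb{R}_+,r^ndr)},$$
and together with the first paragraph the right-hand side tends to $0$, which is exactly the asserted convergence in $\mathcal{L}_{n+1,\infty}(L_2(\mathbb{R}^{n+1}))$. (The same estimate applied to $f\chi_{Q_j}$ and to $f\chi_{\mathbb{R}^{n+1}_+}$ also shows that both operators appearing in the statement genuinely lie in $\mathcal{L}_{n+1,\infty}$.)

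I expect the one step with content to be the Cwikel estimate at $p=n+1$. For $n\ge2$ one has $p=n+1>2$ and the bound is precisely Lemma \ref{cwikel estimate in Euclidean setting}. For $n=1$ the exponent degenerates to $p=2$, which lies outside the range of that lemma, and one must instead invoke an $\mathcal{L}_{2,\infty}$-Cwikel estimate for $M_{g_j}(1+\Delta)^{-\frac12}$ — obtained from the abstract Cwikel estimate of \cite{LeSZ} (see also \cite[Theorem 1.5.10]{LSZ2}) combined with a Hilbert--Schmidt computation in the spirit of Lemma \ref{L2Cwikel} — which is the delicate case at this borderline exponent. The remaining ingredients, namely the operator identity for the difference, the elementary rearrangement estimate $\eta\in L_{n+1,\infty}(\mathbb{R}_+,r^ndr)$, and the dominated-convergence argument for $\|g_j\|_{L_{n+1}}\to0$, are routine.
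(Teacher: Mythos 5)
For $n\ge2$ your argument is correct and coincides with the paper's: the difference of the two operators is $M_{g_j}(1+\Delta)^{-\frac12}$ with $g_j=f\cdot\chi_{\mathbb{R}^{n+1}_+\setminus Q_j}$, and Lemma \ref{cwikel estimate in Euclidean setting} at the exponent $p=n+1>2$, together with $\|g_j\|_{L_{n+1}(\mathbb{R}^{n+1})}\to0$ and the (correct) observation that $\eta(r)=(1+r^2)^{-\frac12}$ lies in $L_{n+1,\infty}(\mathbb{R}_+,r^ndr)$, finishes the proof. The genuine gap is in the case $n=1$. The inequality you need there,
$$\big\|M_{g}(1+\Delta)^{-\frac12}\big\|_{\mathcal{L}_{2,\infty}(L_2(\mathbb{R}^2))}\le C\,\|g\|_{L_2(\mathbb{R}^2)}\,\|\eta\|_{L_{2,\infty}(\mathbb{R}_+,r\,dr)},$$
is precisely the $p=2$ endpoint of Cwikel's theorem, and it cannot be obtained from the abstract Cwikel estimate of \cite{LeSZ} or \cite[Theorem 1.5.10]{LSZ2}: that machinery upgrades the Hilbert--Schmidt bound only to exponents $p>2$, and the weak-type inequality at $p=2$ with $f\in L_2$ and symbol merely in $L_{2,\infty}$ is in fact false in general (this is the classical failure of Cwikel's estimate at $p=2$; see \cite{MR2154153}). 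A direct Hilbert--Schmidt computation does not apply either, since $\xi\mapsto(1+|\xi|^2)^{-\frac12}$ is not square integrable on $\mathbb{R}^2$. So, as written, the case $n=1$ is not proved.

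The paper circumvents the borderline exponent rather than confronting it: one fixes $\phi\in C^{\infty}_c(\mathbb{R}^2)$ with $f=f\phi$ and writes
$$M_{g_j}(1+\Delta)^{-\frac12}=M_{g_j}(1+\Delta)^{-\frac12}M_{\phi}+M_{g_j}\cdot[M_{\phi},(1+\Delta)^{-\frac12}].$$
The first term is estimated in $\mathcal{L}_{2,\infty}$ as the product of the two factors $M_{g_j}(1+\Delta)^{-\frac14}$ and $(1+\Delta)^{-\frac14}M_{\phi}$, each controlled in $\mathcal{L}_{4,\infty}$ by Cwikel at $p=4>2$; the second term is placed in $\mathcal{L}_2$ by factoring it as $M_{g_j}(1+\Delta)^{-1}\cdot(1+\Delta)[M_{\phi},(1+\Delta)^{-\frac12}]$, where the first factor is genuinely Hilbert--Schmidt because $(1+|\xi|^2)^{-1}\in L_2(\mathbb{R}^2)$ and the second is a bounded operator. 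If you prefer to keep your one-line structure, you could instead replace $\|g_j\|_{L_2}$ by an $L_2\log L$-type norm and invoke the Cwikel--Solomyak endpoint estimate, noting that $\|g_j\|_{L_2\log L}\to0$ because the $g_j$ are uniformly bounded, supported in a fixed compact set, and tend to $0$ almost everywhere; but that is a different and substantially deeper ingredient than the ones you cite, and some such additional input is unavoidable at $p=2$.
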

\begin{proof} If $n\geq 2,$ then Lemma \ref{cwikel estimate in Euclidean setting} yields
$$\Big\|M_{f\cdot\chi_{\mathbb{R}^{n+1}_+\backslash Q_j}}(1+\Delta)^{-\frac12}\Big\|_{\mathcal{L}_{n+1,\infty}(L_2(\mathbb{R}^{n+1}))}\lesssim\|f\cdot\chi_{\mathbb{R}^{n+1}_+\backslash Q_j}\|_{L_{n+1}(\mathbb{R}^{n+1})}\to0,\quad j\to\infty.$$

Next, we consider the case $n=1.$ Fix $\phi\in C^{\infty}_c(\mathbb{R}^2)$ such that $f=f\phi.$ We have
$$M_{f\cdot\chi_{\mathbb{R}^2_+\backslash Q_j}}(1+\Delta)^{-\frac12}=M_{f\cdot\chi_{\mathbb{R}^2_+\backslash Q_j}}(1+\Delta)^{-\frac12}M_{\phi}+M_{f\cdot\chi_{\mathbb{R}^2_+\backslash Q_j}}\cdot[M_{\phi},(1+\Delta)^{-\frac12}].$$
By triangle inequality,
\begin{align*}
&\Big\|M_{f\cdot\chi_{\mathbb{R}^2_+\backslash Q_j}}(1+\Delta)^{-\frac12}\Big\|_{\mathcal{L}_{2,\infty}(L_2(\mathbb{R}^2))}\\
&\lesssim\Big\|M_{f\cdot\chi_{\mathbb{R}^2_+\backslash Q_j}}(1+\Delta)^{-\frac12}M_{\phi}\Big\|_{\mathcal{L}_{2,\infty}(L_2(\mathbb{R}^2))}+\Big\|M_{f\cdot\chi_{\mathbb{R}^2_+\backslash Q_j}}\cdot[M_{\phi},(1+\Delta)^{-\frac12}]\Big\|_{\mathcal{L}_{2,\infty}(L_2(\mathbb{R}^2))}\\
&\lesssim\Big\|M_{f\cdot\chi_{\mathbb{R}^2_+\backslash Q_j}}(1+\Delta)^{-\frac12}M_{\phi}\Big\|_{\mathcal{L}_{2,\infty}(L_2(\mathbb{R}^2))}+\Big\|M_{f\cdot\chi_{\mathbb{R}^2_+\backslash Q_j}}\cdot[M_{\phi},(1+\Delta)^{-\frac12}]\Big\|_{\mathcal{L}_2(L_2(\mathbb{R}^2))}\\
&\lesssim\Big\|M_{f\cdot\chi_{\mathbb{R}^2_+\backslash Q_j}}(1+\Delta)^{-\frac14}\Big\|_{\mathcal{L}_{4,\infty}(L_2(\mathbb{R}^2))}\cdot \Big\|(1+\Delta)^{-\frac14}M_{\phi}\Big\|_{\mathcal{L}_{4,\infty}(L_2(\mathbb{R}^2))}\\
&\hspace{1.0cm}+\Big\|M_{f\cdot\chi_{\mathbb{R}^2_+\backslash Q_j}}(1+\Delta)^{-1}\Big\|_{\mathcal{L}_2(L_2(\mathbb{R}^2))}\cdot\Big\|(1+\Delta)[M_{\phi},(1+\Delta)^{-\frac12}]\Big\|_{\mathcal{L}_{\infty}(L_2(\mathbb{R}^2))}.
\end{align*}
Using Lemma \ref{cwikel estimate in Euclidean setting}, we obtain
\begin{align*}
&\Big\|M_{f\cdot\chi_{\mathbb{R}^2_+\backslash Q_j}}(1+\Delta)^{-\frac12}\Big\|_{\mathcal{L}_{2,\infty}(L_2(\mathbb{R}^2))}\\
&\leq c_{\phi}(\|f\cdot\chi_{\mathbb{R}^2_+\backslash Q_j}\|_{L_4(\mathbb{R}^2)}+\|f\cdot\chi_{\mathbb{R}^2_+\backslash Q_j}\|_{L_2(\mathbb{R}^2)})\to0,\quad {\rm as}\ j\to\infty.
\end{align*}
This ends the proof of Lemma \ref{ii pre-verification lemma}.
\end{proof}

\begin{lemma}\label{ii verification lemma} If $1\leq k\leq n+1$, $Q_j=[0,j]^n\times[\frac1j,j]$ and $f\in C^{\infty}_c(\mathbb{R}^{n+1}),$ then
$$M_{\chi_{Q_j}}[R_k,M_f]M_{\chi_{Q_j}}\to M_{\chi_{\mathbb{R}^{n+1}_+}}[R_k,M_f]M_{\chi_{\mathbb{R}^{n+1}_+}},\ {\rm as}\ j\rightarrow \infty,$$
in the semi-norm ${\rm dist}_{\mathcal{L}_{n+1,\infty}(L_2(\mathbb{R}^{n+1}))}(\cdot,(\mathcal{L}_{n+1,\infty}(L_2(\mathbb{R}^{n+1})))_0).$
\end{lemma}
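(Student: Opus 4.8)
The plan is to prove the equivalent statement that
$D_j:=M_{\chi_{\mathbb R^{n+1}_+}}[R_k,M_f]M_{\chi_{\mathbb R^{n+1}_+}}-M_{\chi_{Q_j}}[R_k,M_f]M_{\chi_{Q_j}}$ tends to $0$ in $\mathcal L_{n+1,\infty}(L_2(\mathbb R^{n+1}))$ modulo the separable part; since $\mathrm{dist}_{\mathcal L_{n+1,\infty}}(\,\cdot\,,(\mathcal L_{n+1,\infty})_0)=\limsup_{t\to\infty}t^{1/(n+1)}\mu(t,\cdot)$, it suffices to bound this distance. Writing $\psi_j:=\chi_{\mathbb R^{n+1}_+\setminus Q_j}$ and using $\chi_{\mathbb R^{n+1}_+}=\chi_{Q_j}+\psi_j$, one gets $D_j=M_{\psi_j}[R_k,M_f]M_{\chi_{\mathbb R^{n+1}_+}}+M_{\chi_{Q_j}}[R_k,M_f]M_{\psi_j}$. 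The only features of $Q_j\uparrow\mathbb R^{n+1}_+$ that I would use are: for a fixed cube $B\supset\mathrm{supp}\,f$ the ``collar'' $(\mathbb R^{n+1}_+\setminus Q_j)\cap 2B$ has Lebesgue measure $O(1/j)$, while on $(2B)^c$ one has $\psi_j\to0$ pointwise a.e.\ as $j\to\infty$; this is precisely the mechanism already used in Lemma \ref{ii pre-verification lemma}.

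\textbf{First I would record a smoothing decomposition of $[R_k,M_f]$.} Expanding the kernel $\omega_n\frac{(y-x)_k(f(y)-f(x))}{|x-y|^{n+2}}$ by Taylor's formula, $f(y)-f(x)=\sum_{m=1}^{n+1}(\partial_mf)(x)(y-x)_m+\rho(x,y)$ with $|\rho(x,y)|\le\tfrac12\|D^2f\|_\infty|x-y|^2$, gives $[R_k,M_f]=\sum_{m}M_{\partial_mf}S_{km}+T_f$, where $S_{km}$ is the $f$-independent pseudodifferential operator of order $-1$ with kernel $\omega_n\frac{(y-x)_k(y-x)_m}{|x-y|^{n+2}}$ (by homogeneity a constant-coefficient combination of $R_kR_m\Delta^{-1/2}$ and $\delta_{km}\Delta^{-1/2}$), and $T_f$ has kernel $\omega_n\frac{(y-x)_k\rho(x,y)}{|x-y|^{n+2}}$, supported in $(\mathrm{supp}\,f\times\mathbb R^{n+1})\cup(\mathbb R^{n+1}\times\mathrm{supp}\,f)$ and bounded by $C_f\min(|x-y|^{1-n},|x-y|^{-n})$. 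Splitting $T_f$ by $|x-y|\le1$ or $|x-y|>1$, Lemma \ref{rough estimate} puts the first part into $\mathcal L_{n+1}(L_2(Q_0))$ for a suitable cube $Q_0\supset\mathrm{supp}\,f$ and, for $n\ge2$, the second part is Hilbert--Schmidt; hence $T_f\in(\mathcal L_{n+1,\infty}(L_2(\mathbb R^{n+1})))_0$. Replacing $\Delta^{-1/2}$ by $(1+\Delta)^{-1/2}$ costs only the multipliers $M_{\partial_mf}(a_{km}R_kR_m+b_{km}\delta_{km})g(\sqrt\Delta)$ with $g(t)=t^{-1}-(1+t^2)^{-1/2}$; cutting the symbol at $|\xi|=1$, its low-frequency part is (for $n\ge2$) Hilbert--Schmidt against $M_{\partial_mf}$ and its high-frequency part lies in $\mathcal L_q$ with $q<n+1$, so this correction is again in $(\mathcal L_{n+1,\infty})_0$. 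Thus, modulo $(\mathcal L_{n+1,\infty}(L_2(\mathbb R^{n+1})))_0$,
\[
[R_k,M_f]\equiv\sum_{m=1}^{n+1}M_{\partial_mf}\,B_{km},\qquad B_{km}:=(a_{km}R_kR_m+b_{km}\delta_{km})(1+\Delta)^{-1/2},
\]
each $B_{km}$ being a Fourier multiplier with symbol $O(|\xi|^{-1})\in L_{n+1,\infty}(\mathbb R^{n+1})$ and convolution kernel $\lesssim|z|^{-n}$ for $|z|\le1$, $\lesssim|z|^{-(n+1)}$ for $|z|\ge1$.

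\textbf{Then I would pass to the limit.} As $(\mathcal L_{n+1,\infty})_0$ is an ideal, the $M_\chi$-compressions of the remainder above stay in $(\mathcal L_{n+1,\infty})_0$, so modulo that ideal $D_j$ is the finite sum over $m$ of $M_{\chi_{\mathbb R^{n+1}_+}\partial_mf}B_{km}M_{\chi_{\mathbb R^{n+1}_+}}-M_{\chi_{Q_j}\partial_mf}B_{km}M_{\chi_{Q_j}}=M_{\psi_j\partial_mf}B_{km}M_{\chi_{\mathbb R^{n+1}_+}}+M_{\chi_{Q_j}\partial_mf}B_{km}M_{\psi_j}$. For the first term the Cwikel estimate (Lemma \ref{cwikel estimate in Euclidean setting}, whose symbol class absorbs $B_{km}$ since $|B_{km}|$ is dominated by the radial symbol $(1+|\xi|^2)^{-1/2}$, cf.\ also \cite{LeSZ}) gives $\|M_{\psi_j\partial_mf}B_{km}\|_{\mathcal L_{n+1,\infty}}\lesssim\|\psi_j\partial_mf\|_{L_{n+1}}\to0$. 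For the second term I would split $M_{\psi_j}=M_{\psi_j\chi_{2B}}+M_{\psi_j\chi_{(2B)^c}}$: after taking adjoints, Lemma \ref{cwikel estimate in Euclidean setting} bounds the near piece by $\lesssim\|\psi_j\chi_{2B}\|_{L_{n+1}}\to0$ (the collar has measure $O(1/j)$), while the far piece has kernel supported in $\{x\in B,\ y\in(2B)^c\}$, dominated by a fixed $L_2$-kernel (from the $|z|^{-(n+1)}$ decay of $B_{km}$) times $\psi_j\chi_{(2B)^c}(y)\to0$ a.e., hence $\to0$ in Hilbert--Schmidt norm by dominated convergence. Summing over $m$ yields $\mathrm{dist}_{\mathcal L_{n+1,\infty}}(D_j,(\mathcal L_{n+1,\infty})_0)\to0$. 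The case $n=1$ (where $p=n+1=2$ is the endpoint of Lemma \ref{cwikel estimate in Euclidean setting} and several Hilbert--Schmidt estimates fail) I would treat exactly as in Lemma \ref{ii pre-verification lemma}, factoring $(1+\Delta)^{-1/2}=(1+\Delta)^{-1/4}(1+\Delta)^{-1/4}$, distributing the Cwikel estimates with exponent $4>2$, and using the exponential decay of the Bessel potential kernel for the far pieces.

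The hard part is that $M_{\psi_j}$, being a projection, has norm $1$ and cannot by itself generate smallness in $\mathcal L_{n+1,\infty}$: smallness has to come from a multiplication operator $M_g$ with $\|g\|_{L_{n+1}}\to0$. This is exactly why the Taylor decomposition of the first step is the right move — it places the multiplication operators $M_{\partial_mf}$ to the \emph{left} of the order-zero operators $R_kR_m$, so that $M_{\psi_j}M_{\partial_mf}=M_{\psi_j\partial_mf}$ genuinely shrinks; the only remaining difficulty, absorbing the receding cut-off that sits on the \emph{right} of a Cwikel operator, is resolved by the near/far splitting of $\psi_j$, with ``near'' controlled by the vanishing measure of the boundary collar and ``far'' by the decay of the kernel of $B_{km}$.
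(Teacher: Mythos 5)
Your strategy is sound and, for $n\ge 2$, essentially complete, but it follows a genuinely different route from the paper's. The paper does not re-derive the principal-part decomposition: it quotes \cite[Theorem 6.3.1]{LSZ2} to write $[R_k,M_f]=i\big(M_{\partial_kf}-R_k\sum_mR_mM_{\partial_mf}\big)(1+\Delta)^{-1/2}$ modulo $(\mathcal{L}_{n+1,\infty})_0$, then uses Lemma \ref{iii verification first lemma} to commute the multipliers and, crucially, to insert a fixed compactly supported cutoff $M_{\psi}$ (with $f=f\psi$) on the \emph{right} of the principal part $A_k$, so that $A_k=M_{\phi}A_k=A_kM_{\phi}$ for a further cutoff $\phi$. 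After that, both truncations $M_{\chi_{\mathbb{R}^{n+1}_+\setminus Q_j}}A_k$ and $A_kM_{\chi_{\mathbb{R}^{n+1}_+\setminus Q_j}}$ become truncations of the fixed compactly supported $\phi$, and a single application of Lemma \ref{ii pre-verification lemma} finishes the proof. You instead (i) re-derive the decomposition by Taylor-expanding the kernel, disposing of the remainder via Lemma \ref{rough estimate} and Hilbert--Schmidt bounds, and (ii) absorb the receding right-hand cutoff by a near/far splitting: Cwikel for the collar of measure $O(1/j)$, kernel decay plus dominated convergence for the far piece. Your route is more self-contained and correctly isolates the mechanism (smallness must come from an $L_{n+1}$-small function sitting next to a Cwikel operator); the paper's pre-localization of $A_k$ by $M_{\phi}$ is shorter and dimension-uniform. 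Your appeal to Lemma \ref{cwikel estimate in Euclidean setting} for the non-radial symbols of $B_{km}$ is legitimate since the cited source (Simon, Theorem 4.2) covers general $g\in L_{p,\infty}$, though the lemma as stated in the paper is only for radial symbols.

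The genuine gap is at $n=1$, which the lemma covers. Several estimates you invoke fail there: the far part of the Taylor remainder $T_f$ is bounded only by $|x-y|^{-n}=|x-y|^{-1}$, whose square is not integrable at infinity in $\mathbb{R}^2$ (one must split off the $\nabla f(x)\cdot(y-x)$ contribution and check that the resulting convolution kernel itself, not its crude bound, is square integrable); the low-frequency part of your correction $g(\sqrt{\Delta})$, with $g(t)=t^{-1}-(1+t^2)^{-1/2}\sim t^{-1}$ near $0$, is not square integrable near the origin of $\mathbb{R}^2$, so the Hilbert--Schmidt argument for it breaks; and Lemma \ref{cwikel estimate in Euclidean setting} is unavailable at the endpoint $p=2$. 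Your one-line fix (factoring $(1+\Delta)^{-1/4}(1+\Delta)^{-1/4}$) repairs only the last of these. The cleanest repair is the paper's: take the statement that $[R_k,M_f]$ equals its principal part modulo $(\mathcal{L}_{n+1,\infty})_0$ directly from \cite[Theorem 6.3.1]{LSZ2}, valid in all ambient dimensions $n+1\ge2$, and reserve your argument for the passage to the limit. (One further remark: your claim that the collar $(\mathbb{R}^{n+1}_+\setminus Q_j)\cap 2B$ has measure $O(1/j)$ presumes the intended reading $Q_j=[-j,j]^n\times[\frac1j,j]$; with the literal $[0,j]^n$ the collar does not shrink, but this affects the paper's own proof equally and is evidently a typo.)
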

\begin{proof} Applying \cite[Theorem 6.3.1]{LSZ2}, we write
\begin{align}\label{equ00}
[R_k,M_f]=i\left(M_{\partial_k f}-R_k\sum_{m=1}^{n+1}R_mM_{\partial_m f}\right)(1+\Delta)^{-\frac12}+E,
\end{align}
with $E\in (\mathcal{L}_{n+1,\infty}(L_2(\mathbb{R}^{n+1})))_0.$ Fix $\psi\in C^{\infty}_c(\mathbb{R}^{n+1})$ such that $f=f\psi.$ Fix $\phi\in C^{\infty}_c(\mathbb{R}^{n+1})$ such that $\psi=\psi\phi.$ By Lemma \ref{iii verification first lemma},
$$[M_{\partial_mf},(1+\Delta)^{-\frac12}],[M_{\partial_mf},R_kR_m(1+\Delta)^{-\frac12}]\in (\mathcal{L}_{n+1,\infty}(L_2(\mathbb{R}^{n+1})))_0.$$
This implies that
$$[R_k,M_f]=iA_k+E_0,\quad E_0\in (\mathcal{L}_{n+1,\infty}(L_2(\mathbb{R}^{n+1})))_0,$$
where we denote for brevity
$$A_k=\left(M_{\partial_k f}-\sum_{m=1}^{n+1}M_{\partial_m f}\frac{\partial_k\partial_m}{1+\Delta}\right)(1+\Delta)^{-\frac12}M_{\psi}.$$
Note that $A_k=M_{\phi}A_k=A_kM_{\phi}.$ By triangle inequality,
\begin{align*}
&\Big\|M_{\chi_{Q_j}}A_kM_{\chi_{Q_j}}-M_{\chi_{\mathbb{R}^{n+1}_+}}A_k M_{\chi_{\mathbb{R}^{n+1}_+}}\Big\|_{\mathcal{L}_{n+1,\infty}(L_2(\mathbb{R}^{n+1}))}\\
&\leq 2\Big\|M_{\chi_{\mathbb{R}^{n+1}_+\backslash Q_j}}A_k\Big\|_{\mathcal{L}_{n+1,\infty}(L_2(\mathbb{R}^{n+1}))}+2\Big\|A_kM_{\chi_{\mathbb{R}^{n+1}_+\backslash Q_j}}\Big\|_{\mathcal{L}_{n+1,\infty}(L_2(\mathbb{R}^{n+1}))}\\
&=2\Big\|M_{\chi_{\mathbb{R}^{n+1}_+\backslash Q_j}\cdot\phi}A_k\Big\|_{\mathcal{L}_{n+1,\infty}(L_2(\mathbb{R}^{n+1}))}+2\Big\|A_kM_{\phi\cdot\chi_{\mathbb{R}^{n+1}_+\backslash Q_j}}\Big\|_{\mathcal{L}_{n+1,\infty}(L_2(\mathbb{R}^{n+1}))}\\
&\leq 2\Big\|M_{\chi_{\mathbb{R}^{n+1}_+\backslash Q_j}\cdot\phi}(1+\Delta)^{-\frac12}\Big\|_{\mathcal{L}_{n+1,\infty}(L_2(\mathbb{R}^{n+1}))}\cdot \Big\|(1+\Delta)^{\frac12}A_k\Big\|_{\mathcal{L}_{\infty}(L_2(\mathbb{R}^{n+1}))}\\
&\hspace{1.0cm}+2\Big\|A_k(1+\Delta)^{\frac12}\Big\|_{\mathcal{L}_{\infty}(L_2(\mathbb{R}^{n+1}))}\cdot \Big\|(1+\Delta)^{-\frac12}M_{\phi\cdot\chi_{\mathbb{R}^{n+1}_+\backslash Q_j}}\Big\|_{\mathcal{L}_{n+1,\infty}(L_2(\mathbb{R}^{n+1}))}.
\end{align*}
The assertion follows now from Lemma \ref{ii pre-verification lemma}.
\end{proof}

The following proposition provides a suitable global approximation of the Bessel--Riesz commutators.
\begin{proposition}\label{first approximate lemma} Let $1\leq k\leq n+1$ and $f\in C^{\infty}_c(\mathbb{R}^{n+1}),$ then
$$[R_{\lambda,k},M_{E^{\ast}f}]-\kappa_{n,\lambda}^{[3]}F_{2,0}(0)\Big( M_{x_{n+1}^{-\lambda}} E^{\ast}[R_k,M_f]E M_{x_{n+1}^{\lambda}}\Big)\in (\mathcal{L}_{n+1,\infty}(L_2(\mathbb{R}^{n+1}_+,m_{\lambda})))_0.$$
\end{proposition}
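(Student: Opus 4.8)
The plan is to start from the exact Schur-multiplier decomposition of Proposition \ref{commutator representation lemma}, isolate the single ``main term'' $\kappa_{n,\lambda}^{[3]}F_{2,0}(0)\cdot(\text{Euclidean commutator})$, and show that every remaining summand lands in the separable part by feeding them into the approximation criterion of Lemma \ref{criterion lemma}.

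\emph{Reduction to the unweighted space.} By Lemma \ref{weight}(iii) it is enough to prove the statement conjugated by $M_{x_{n+1}^{\lambda}}(\cdot)M_{x_{n+1}^{-\lambda}}$. Write $V_l:=M_{x_{n+1}^{-\lambda}}E^{\ast}[R_l,M_f]EM_{x_{n+1}^{\lambda}}$ for the operators occurring in Proposition \ref{commutator representation lemma} and $W_l:=M_{x_{n+1}^{\lambda}}V_lM_{x_{n+1}^{-\lambda}}=E^{\ast}[R_l,M_f]E$, an operator on $L_2(\mathbb{R}^{n+1}_+)$. Conjugation by a multiplication operator acts on integral kernels by pointwise multiplication, hence commutes with every Schur multiplier and intertwines $\mathfrak{S}_M$ on $L_2(\mathbb{R}^{n+1}_+,m_\lambda)$ with $\mathfrak{S}_M$ on $L_2(\mathbb{R}^{n+1}_+)$; applying this to the identity of Proposition \ref{commutator representation lemma} gives
$$M_{x_{n+1}^{\lambda}}[R_{\lambda,k},M_{E^{\ast}f}]M_{x_{n+1}^{-\lambda}}=\kappa_{n,\lambda}^{[3]}\mathfrak{S}_{F_{2,0}\circ H}(W_k)+\kappa_{n,\lambda}^{[3]}\delta_{k,n+1}\sum_{l=1}^{n+1}\big(\mathfrak{S}_{h_l}\circ\mathfrak{S}_a\circ\mathfrak{S}_{F_{1,1}\circ H}\big)(W_l)-\kappa_{n,\lambda}^{[3]}\delta_{k,n+1}\sum_{l=1}^{n+1}\big(\mathfrak{S}_{h_l}\circ\mathfrak{S}_{h_{n+1}}\circ\mathfrak{S}_b\circ\mathfrak{S}_{F_{2,1}\circ H}\big)(W_l).$$
Since $M_{x_{n+1}^{\lambda}}\big(F_{2,0}(0)V_k\big)M_{x_{n+1}^{-\lambda}}=F_{2,0}(0)W_k$, the claim reduces to showing $\mathfrak{S}_{F_{2,0}\circ H}(W_k)-F_{2,0}(0)W_k\in(\mathcal{L}_{n+1,\infty}(L_2(\mathbb{R}^{n+1}_+)))_0$ together with (when $k=n+1$) $\mathfrak{S}_{F_{1,1}\circ H}(W_l),\mathfrak{S}_{F_{2,1}\circ H}(W_l)\in(\mathcal{L}_{n+1,\infty}(L_2(\mathbb{R}^{n+1}_+)))_0$; here one uses $F_{1,1}(0)=F_{2,1}(0)=0$, and the outer Schur multipliers $\mathfrak{S}_{h_l},\mathfrak{S}_a,\mathfrak{S}_b$ then keep these terms in the separable part by Proposition \ref{standard schur lemma}.

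\emph{Applying the criterion.} The core step is to verify the hypotheses of Lemma \ref{criterion lemma} (with $p=n+1$) for $V:=W_l=E^{\ast}[R_l,M_f]E$, taking $V_j:=M_{\chi_{Q_j}}W_lM_{\chi_{Q_j}}$ with $Q_j=[0,j]^n\times[\frac1j,j]$. Hypothesis (iii) is clear since $Q_j$ is a compact subset of $\mathbb{R}^{n+1}_+$. For (i), using $EE^{\ast}=M_{\chi_{\mathbb{R}^{n+1}_+}}$ and $EM_{\chi_{Q_j}}E^{\ast}=M_{\chi_{Q_j}}$ one identifies $E(V_j-V)E^{\ast}$ with $M_{\chi_{Q_j}}[R_l,M_f]M_{\chi_{Q_j}}-M_{\chi_{\mathbb{R}^{n+1}_+}}[R_l,M_f]M_{\chi_{\mathbb{R}^{n+1}_+}}$, whose distance to $(\mathcal{L}_{n+1,\infty}(L_2(\mathbb{R}^{n+1})))_0$ tends to $0$ by Lemma \ref{ii verification lemma}, and Lemma \ref{half} transfers this back to $L_2(\mathbb{R}^{n+1}_+)$. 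For (ii), since $M_{x_m}$ commutes with $M_{\chi_{Q_j}}$ one has $[M_{x_m},V_j]=M_{\chi_{Q_j}}E^{\ast}[M_{x_m},[R_l,M_f]]EM_{\chi_{Q_j}}$, and $[M_{x_m},[R_l,M_f]]$ has integral kernel $\mathrm{const}\cdot\frac{(x-y)_m(y-x)_l\,(f(y)-f(x))}{|x-y|^{n+2}}$, which on $Q_j\times Q_j$ is of the form $L(x,y)|x-y|^{1-n}$ with $L$ bounded, because $f\in C^{\infty}_c(\mathbb{R}^{n+1})$ is Lipschitz; Lemma \ref{rough estimate} then places this operator in $\mathcal{L}_{n+1}(L_2(\mathbb{R}^{n+1}))\subset(\mathcal{L}_{n+1,\infty}(L_2(\mathbb{R}^{n+1})))_0$, and Lemma \ref{half} transfers it back. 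Lemma \ref{criterion lemma} now yields $\mathfrak{S}_{F_{j,l}\circ H}(W_l)-F_{j,l}(0)W_l\in(\mathcal{L}_{n+1,\infty}(L_2(\mathbb{R}^{n+1}_+)))_0$ for every $(j,l)\in\{(2,0),(1,1),(2,1)\}$; substituting back into the displayed decomposition and using $F_{1,1}(0)=F_{2,1}(0)=0$ finishes the argument.

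\emph{Expected main obstacle.} The delicate part is the bookkeeping in the reduction to the unweighted setting: one must track carefully which Hilbert space ($L_2$ with the weight $m_\lambda$ versus Lebesgue measure, and $\mathbb{R}^{n+1}_+$ versus $\mathbb{R}^{n+1}$) each operator and each Schur multiplier lives on, and verify rigorously that conjugation by $M_{x_{n+1}^{\pm\lambda}}$ intertwines the weighted and unweighted Schur multipliers with the same symbol — concretely, that the kernel of a conjugated operator picks up a factor $x_{n+1}^{\lambda}y_{n+1}^{\lambda}$ which commutes through the pointwise multiplication defining $\mathfrak{S}_M$. The other point requiring care is the kernel estimate in hypothesis (ii): the extra factor $|x-y|$ gained from the Lipschitz bound on $f$ is exactly what brings the singularity down to $|x-y|^{1-n}$ and makes Lemma \ref{rough estimate} (rather than the borderline Lemma \ref{convolution operator lemma}) applicable.
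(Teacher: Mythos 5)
Your proposal is correct and follows the paper's own strategy almost exactly: the same reduction via Proposition \ref{commutator representation lemma} (with the weights $M_{x_{n+1}^{\pm\lambda}}$ pulled through the Schur multipliers, which you rightly justify by noting that both conjugation and $\mathfrak{S}_M$ act by pointwise multiplication on kernels), the same choice $V_j=M_{\chi_{Q_j}}E^{\ast}[R_l,M_f]EM_{\chi_{Q_j}}$ with $Q_j=[0,j]^n\times[\tfrac1j,j]$, condition \eqref{cla} from Lemma \ref{ii verification lemma}, and the final assembly via $F_{1,1}(0)=F_{2,1}(0)=0$ and Proposition \ref{standard schur lemma}. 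The one place where you genuinely diverge is the verification of condition \eqref{clb}: the paper computes $[M_{x_l},R_k]=-\delta_{k,l}\Delta^{-\frac12}-\partial_k\partial_l\Delta^{-\frac32}$ on the Fourier side and then invokes Lemma \ref{iii verification second lemma}, which rests on pseudodifferential calculus and Cwikel estimates and actually places $[M_{x_l},V_j]$ in the smaller ideal $\mathcal{L}_{\frac{n+1}{2},\infty}$; you instead observe directly that $[M_{x_m},[R_l,M_f]]$ has kernel ${\rm const}\cdot(x-y)_m(y-x)_l(f(y)-f(x))|x-y|^{-n-2}$, which after the Lipschitz bound on $f$ is $L(x,y)|x-y|^{1-n}$ with $L$ bounded on $Q_j\times Q_j$, and apply Lemma \ref{rough estimate}. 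This is more elementary (it bypasses Lemmas \ref{iii verification first lemma} and \ref{iii verification second lemma} entirely for this step) and gives membership in $\mathcal{L}_{n+1}\subset(\mathcal{L}_{n+1,\infty})_0$, which is all Lemma \ref{criterion lemma} asks for; the paper's route yields the stronger $\mathcal{L}_{\frac{n+1}{2},\infty}$ bound but needs the machinery of Lemma \ref{iii verification second lemma} elsewhere anyway. Both arguments are sound, and your closing remarks correctly identify the only points that require care (the weighted-versus-unweighted bookkeeping, and the gain of one power of $|x-y|$ that makes Lemma \ref{rough estimate} applicable).
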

\begin{proof} Recall from Proposition \ref{commutator representation lemma} that
\begin{align*}
[R_{\lambda,k},M_{E^{\ast}f}]&=\kappa_{n,\lambda}^{[3]}M_{x_{n+1}^{-\lambda}}\cdot\mathfrak{S}_{F_{2,0}\circ H}\Big(  E^{\ast}[R_k,M_{f}]E\Big)\cdot  M_{x_{n+1}^{\lambda}}\\
&\hspace{-1.8cm}+\kappa_{n,\lambda}^{[3]}\delta_{k,n+1}\sum_{l=1}^{n+1}M_{x_{n+1}^{-\lambda}} \cdot \Big(\mathfrak{S}_{h_l}\circ\mathfrak{S}_a\circ\mathfrak{S}_{F_{1,1}\circ H}\Big)\Big(E^{\ast}[R_l,M_{f}]E \Big)\cdot M_{x_{n+1}^{\lambda}}\\
&\hspace{-1.8cm}-\kappa_{n,\lambda}^{[3]}\delta_{k,n+1}\sum_{l=1}^{n+1} M_{x_{n+1}^{-\lambda}}\cdot\Big(\mathfrak{S}_{h_l}\circ\mathfrak{S}_{h_{n+1}}\circ\mathfrak{S}_b\circ\mathfrak{S}_{F_{2,1}\circ H}\Big)\Big( E^{\ast}[R_l,M_{f}]E\Big)\cdot  M_{x_{n+1}^{\lambda}}.
\end{align*}
Let us verify the conditions in Lemma \ref{criterion lemma} for
$$V=E^{\ast}[R_k,M_f]E,\quad V_j=M_{\chi_{Q_j}}E^{\ast}[R_k,M_f]EM_{\chi_{Q_j}},$$
where $Q_j=[0,j]^n\times[\frac1j,j].$

The condition \eqref{cla} in Lemma \ref{criterion lemma} is established in Lemma \ref{ii verification lemma}.

Let us now verify the condition \eqref{clb} in Lemma \ref{criterion lemma}. Note that
$$[M_{x_l},R_k]=-\delta_{k,l}\Delta^{-\frac12}+\partial_k[M_{x_l},\Delta^{-\frac12}].$$
By taking Fourier transform on both side, we see that $\partial_k[M_{x_l},\Delta^{-\frac12}]=-\partial_k\partial_l\Delta^{-\frac32}.$ Thus,
$$[M_{x_l},R_k]=-\delta_{k,l}\Delta^{-\frac12}-\partial_k\partial_l\Delta^{-\frac32}.$$
Hence,
$$[M_{x_l},V_j]=-\delta_{k,l}\cdot E^{\ast}\cdot M_{\chi_{Q_j}}[\Delta^{-\frac12},M_f]M_{\chi_{Q_j}}\cdot E-E^{\ast}\cdot M_{\chi_{Q_j}}[\partial_k\partial_l\Delta^{-\frac32},M_f]M_{\chi_{Q_j}}\cdot E.$$
The condition \eqref{clb} in Lemma \ref{criterion lemma} follows now from Lemma \ref{iii verification second lemma}.

The condition \eqref{clc} in Lemma \ref{criterion lemma} trivially holds. Applying Lemma \ref{criterion lemma}
and taking into account that $F_{1,1}(0)=F_{2,1}(0)=0$, we obtain
$$\mathfrak{S}_{F_{2,0}\circ H}\Big(E^{\ast}[R_k,M_f]E\Big)-F_{2,0}(0) E^{\ast}[R_k,M_f]E\in (\mathcal{L}_{n+1,\infty}(L_2(\mathbb{R}^{n+1}_+)))_0,$$
$$\mathfrak{S}_{F_{1,1}\circ H}\Big(  E^{\ast}[R_k,M_f]E\Big)\in (\mathcal{L}_{n+1,\infty}(L_2(\mathbb{R}^{n+1}_+)))_0,$$
$$\mathfrak{S}_{F_{2,1}\circ H}\Big(  E^{\ast}[R_k,M_f]E\Big)\in (\mathcal{L}_{n+1,\infty}(L_2(\mathbb{R}^{n+1}_+)))_0.$$
These estimates, in combination with Proposition \ref{standard schur lemma}, yield the proof of Proposition \ref{first approximate lemma}.
\end{proof}

\begin{lemma}\label{from frank lemma} If $f\in C^{\infty}_c(\mathbb{R}^{n+1}),$ then there exists a constant $C_n>0$ such that for every $1\leq k\leq n+1,$ there exists a limit
$$\lim_{t\to\infty}t^{\frac1{n+1}}\mu_{B(L_2(\mathbb{R}^{n+1}))}(t, M_{\chi_{\mathbb{R}^{n+1}_+}}[R_k,M_f]M_{\chi_{\mathbb{R}^{n+1}_+}})=C_n\|f\|_{\dot{W}^{1,n+1}(\mathbb{R}_+^{n+1})}^{(k)}.$$
\end{lemma}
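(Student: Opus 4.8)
The plan is to derive the asymptotics over the half-space by exhausting $\mathbb{R}^{n+1}_+$ with the bounded boxes $Q_j$ of Lemma \ref{ii verification lemma}, applying the bounded-domain result \cite[Proposition 8.6]{FSZ} on each $Q_j$, and passing to the limit; Lemma \ref{ii verification lemma} supplies exactly the operator-theoretic control needed to justify the limit.

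The analytic heart of the matter is the following elementary but slightly delicate fact about the ideal $\mathcal{L}_{n+1,\infty}(L_2(\mathbb{R}^{n+1}))$: if $T_j,T\in\mathcal{L}_{n+1,\infty}(L_2(\mathbb{R}^{n+1}))$ satisfy ${\rm dist}_{\mathcal{L}_{n+1,\infty}(L_2(\mathbb{R}^{n+1}))}(T_j-T,(\mathcal{L}_{n+1,\infty}(L_2(\mathbb{R}^{n+1})))_0)\to0$, if for each $j$ the limit $L_j:=\lim_{t\to\infty}t^{1/(n+1)}\mu_{B(L_2(\mathbb{R}^{n+1}))}(t,T_j)$ exists, and if $L_j\to L$, then $\lim_{t\to\infty}t^{1/(n+1)}\mu_{B(L_2(\mathbb{R}^{n+1}))}(t,T)$ exists and equals $L$. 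I would prove this directly from the inequality $\mu(t,A+B)\le\mu((1-\delta)t,A)+\mu(\delta t,B)$: writing $T=T_j+S_j+N_j$ with $\|S_j\|_{\mathcal{L}_{n+1,\infty}}$ small and $N_j\in(\mathcal{L}_{n+1,\infty})_0$, this inequality gives, for fixed $\delta\in(0,1)$, that $\limsup_{t}t^{1/(n+1)}\mu(t,T)\le(1-2\delta)^{-1/(n+1)}L_j+\delta^{-1/(n+1)}\|S_j\|_{\mathcal{L}_{n+1,\infty}}$ together with a matching lower bound for $\liminf_t$; sending first $j\to\infty$ and then $\delta\downarrow0$ yields the claim. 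The same mechanism shows that $\limsup_t t^{1/(n+1)}\mu(t,\cdot)$ is unchanged by adding an element of the separable part and equals the distance to it.

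With this in hand the argument is short. Put $T_j:=M_{\chi_{Q_j}}[R_k,M_f]M_{\chi_{Q_j}}$ and $T:=M_{\chi_{\mathbb{R}^{n+1}_+}}[R_k,M_f]M_{\chi_{\mathbb{R}^{n+1}_+}}$. By \cite[Proposition 8.6]{FSZ} applied on the bounded box $Q_j$ (reducing to cubes by a further exhaustion if desired), $L_j:=\lim_{t\to\infty}t^{1/(n+1)}\mu_{B(L_2(\mathbb{R}^{n+1}))}(t,T_j)$ exists and equals $C_n\|f\|_{\dot W^{1,n+1}(Q_j)}^{(k)}$. Since $f\in C^\infty_c(\mathbb{R}^{n+1})$, the density $x\mapsto\int_{\mathbb{S}^n}|(\partial_kf)(x)-s_k\sum_{m=1}^{n+1}s_m(\partial_mf)(x)|^{n+1}ds$ is bounded with compact support, so $Q_j\uparrow\mathbb{R}^{n+1}_+$ and monotone convergence give $L_j\to C_n\|f\|_{\dot W^{1,n+1}(\mathbb{R}^{n+1}_+)}^{(k)}<\infty$. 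Lemma \ref{ii verification lemma} states precisely that ${\rm dist}_{\mathcal{L}_{n+1,\infty}(L_2(\mathbb{R}^{n+1}))}(T_j-T,(\mathcal{L}_{n+1,\infty})_0)\to0$, so the fact of the previous paragraph applies and shows that $\lim_{t\to\infty}t^{1/(n+1)}\mu_{B(L_2(\mathbb{R}^{n+1}))}(t,T)$ exists and equals $C_n\|f\|_{\dot W^{1,n+1}(\mathbb{R}^{n+1}_+)}^{(k)}$, which is the assertion.

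The main obstacle is precisely the abstract fact in the second paragraph, namely upgrading the bounded-box asymptotics — genuine two-sided limits for each $Q_j$ — to a genuine limit over the unbounded half-space. The difficulty is that $\mathcal{L}_{n+1,\infty}$ is merely quasi-normed, so the crude triangle inequality for $t\mapsto t^{1/(n+1)}\mu(t,\cdot)$ introduces a constant $2^{1/(n+1)}>1$ which does not wash out under iteration; the remedy is the $\delta$-splitting of the singular value function together with the order in which the limits are taken. The remaining points — that $Q_j\uparrow\mathbb{R}^{n+1}_+$, the monotone convergence of the $\|\cdot\|_{\dot W^{1,n+1}}^{(k)}$-seminorms, and the fact that \cite[Proposition 8.6]{FSZ} produces exactly the integrand of $\|\cdot\|_{\dot W^{1,n+1}}^{(k)}$, this being the leading symbol of $M_{\chi_Q}[R_k,M_f]M_{\chi_Q}$ — are routine.
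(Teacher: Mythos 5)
Your argument is correct, but it is not the route the paper takes: the paper's proof of this lemma is a one-line instruction to rerun the argument of \cite[Proposition 8.6]{FSZ} \emph{mutatis mutandis} with the half-space indicator $\chi_{\mathbb{R}^{n+1}_+}$ in place of $\chi_Q$, i.e.\ it redoes the principal-symbol computation directly on the unbounded domain. You instead black-box the bounded-domain statement on the exhausting boxes $Q_j$ and pass to the limit via Lemma \ref{ii verification lemma}. The one genuinely new ingredient you need — and correctly identify and prove — is a strengthening of the Birman--Solomyak approximation principle (Lemma \ref{approlem}): as stated in the paper it requires convergence $T_j\to T$ in the $\mathcal{L}_{n+1,\infty}$ quasi-norm, whereas Lemma \ref{ii verification lemma} only gives convergence in the quotient semi-norm modulo $(\mathcal{L}_{n+1,\infty})_0$; your $\delta$-splitting of $\mu(t,T_j+S_j+N_j)\le\mu((1-2\delta)t,T_j)+\mu(\delta t,S_j)+\mu(\delta t,N_j)$, with $j\to\infty$ taken before $\delta\downarrow0$, closes exactly that gap and is the standard way to do it. The payoff of your route is that one never has to inspect whether the FSZ argument survives a domain with non-compact closure; the cost is that you must know \cite[Proposition 8.6]{FSZ} with the exact $k$-dependent asymptotic coefficient $C_n\|f\|^{(k)}_{\dot W^{1,n+1}(Q)}$ on bounded boxes (the paper's Lemma \ref{first distance lemma} records only the equivalent isotropic seminorm, so you are implicitly relying on the sharper form of the FSZ statement — which is indeed what FSZ proves, the integrand being the principal symbol of $[R_k,M_f]$). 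Two cosmetic caveats you inherit from the paper rather than introduce: $Q_j=[0,j]^n\times[\tfrac1j,j]$ should read $[-j,j]^n\times[\tfrac1j,j]$ for $Q_j\uparrow\mathbb{R}^{n+1}_+$ (and for the proof of Lemma \ref{ii pre-verification lemma} to apply to $f$ supported off the first orthant), and $Q_j$ is a box rather than a cube, so the application of \cite[Proposition 8.6]{FSZ} needs the (unproblematic) observation that the result holds for such domains.
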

\begin{proof} The argument repeats that in \cite[Proposition 8.6]{FSZ} {\it mutatis mutandi}.
\end{proof}

The following is a well known approximation lemma due to Birman and Solomyak \cite[Section 11.6]{MR1192782}.

\begin{lemma}\label{approlem}
Let $0<p<\infty,$ $H$ be a Hilbert space and $(A_k)_{k\geq 1}\subset \mathcal{L}_{p,\infty}(H)$ be such that
\begin{enumerate}[{\rm (i)}]
\item $A_k\rightarrow A$ in $\mathcal{L}_{p,\infty}(H)$;
\item for every $k\geq 1$, the limit
$$\lim_{t\rightarrow \infty}t^{\frac{1}{p}}\mu_{B(H)}(t,A_k)=c_k,\quad {\rm exists},$$
\end{enumerate}
then the following limits exist and are equal:
$$\lim_{t\rightarrow \infty}t^{\frac{1}{p}}\mu_{B(H)}(t,A)=\lim_{k\rightarrow \infty}c_k.$$
\end{lemma}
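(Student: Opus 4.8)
The plan is to deduce the statement — the classical Birman--Solomyak approximation lemma — from two elementary facts only: the sub-additivity of the singular value function, $\mu_{B(H)}(s+t,X+Y)\le\mu_{B(H)}(s,X)+\mu_{B(H)}(t,Y)$ for all $s,t>0$ (see \cite{LSZold}; the symmetric special case appears in Section \ref{Spdef}), and the pointwise bound $t^{1/p}\mu_{B(H)}(t,T)\le\|T\|_{\mathcal{L}_{p,\infty}(H)}$ for $t>0$, which is immediate from $\mu_{B(H)}(t,T)=\mu_{B(H)}(\lfloor t\rfloor,T)$ and $t\le\lfloor t\rfloor+1$. I would introduce, for a compact operator $T$, the quantities $\Lambda^{+}(T):=\limsup_{t\to\infty}t^{1/p}\mu_{B(H)}(t,T)$ and $\Lambda^{-}(T):=\liminf_{t\to\infty}t^{1/p}\mu_{B(H)}(t,T)$, both finite for $T\in\mathcal{L}_{p,\infty}(H)$; the claimed limit for $A$ exists precisely when $\Lambda^{+}(A)=\Lambda^{-}(A)$, and then equals this common value. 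Throughout, hypothesis (i) gives $\|A-A_k\|_{\mathcal{L}_{p,\infty}(H)}\to0$ and hypothesis (ii) gives $t^{1/p}\mu_{B(H)}(t,A_k)\to c_k$ as $t\to\infty$.

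The first step identifies $\Lambda^{+}(A)$ with $\lim_k c_k$. Fixing $\epsilon>0$ and using the asymmetric split $(1+\epsilon)t=t+\epsilon t$ together with $A=A_k+(A-A_k)$, I would write
\[
\mu_{B(H)}\big((1+\epsilon)t,A\big)\le\mu_{B(H)}(t,A_k)+\mu_{B(H)}(\epsilon t,A-A_k),
\]
multiply by $((1+\epsilon)t)^{1/p}$, estimate the last contribution by $(1+\epsilon)^{1/p}\epsilon^{-1/p}\|A-A_k\|_{\mathcal{L}_{p,\infty}(H)}$, and let $t\to\infty$ to obtain $\Lambda^{+}(A)\le(1+\epsilon)^{1/p}\big(c_k+\epsilon^{-1/p}\|A-A_k\|_{\mathcal{L}_{p,\infty}(H)}\big)$ for every $k$; sending $k\to\infty$ and then $\epsilon\downarrow0$ yields $\Lambda^{+}(A)\le\liminf_k c_k$. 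Running the same argument with $A$ and $A_k$ swapped — that is, $(1+\epsilon)t=t+\epsilon t$ applied to $A_k=A+(A_k-A)$ — gives $c_k\le(1+\epsilon)^{1/p}\big(\Lambda^{+}(A)+\epsilon^{-1/p}\|A-A_k\|_{\mathcal{L}_{p,\infty}(H)}\big)$, hence $\limsup_k c_k\le\Lambda^{+}(A)$. Combining, $\lim_k c_k$ exists and equals $\Lambda^{+}(A)$.

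The second step upgrades this to $\Lambda^{-}(A)=\Lambda^{+}(A)$. For $0<\epsilon<1$, the split $t=(1-\epsilon)t+\epsilon t$ applied to $A_k=A+(A_k-A)$ yields $\mu_{B(H)}(t,A_k)\le\mu_{B(H)}\big((1-\epsilon)t,A\big)+\mu_{B(H)}(\epsilon t,A_k-A)$, which I would rearrange into
\[
\big((1-\epsilon)t\big)^{1/p}\mu_{B(H)}\big((1-\epsilon)t,A\big)\ge(1-\epsilon)^{1/p}\Big(t^{1/p}\mu_{B(H)}(t,A_k)-\epsilon^{-1/p}\|A_k-A\|_{\mathcal{L}_{p,\infty}(H)}\Big);
\]
taking $\liminf_{t\to\infty}$ gives $\Lambda^{-}(A)\ge(1-\epsilon)^{1/p}\big(c_k-\epsilon^{-1/p}\|A_k-A\|_{\mathcal{L}_{p,\infty}(H)}\big)$, and then $k\to\infty$ followed by $\epsilon\downarrow0$, using the first step, gives $\Lambda^{-}(A)\ge\lim_k c_k=\Lambda^{+}(A)$. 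Since $\Lambda^{-}(A)\le\Lambda^{+}(A)$ always, equality holds, so $\lim_{t\to\infty}t^{1/p}\mu_{B(H)}(t,A)$ exists and equals $\lim_k c_k$, as claimed.

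The only delicate point, rather than a genuine obstacle, is the order of limits: the symmetric inequality $\mu_{B(H)}(2t,A)\le\mu_{B(H)}(t,A_k)+\mu_{B(H)}(t,A-A_k)$ of Section \ref{Spdef} produces an uneliminable factor $2^{1/p}>1$, so I must instead use the asymmetric splits $(1\pm\epsilon)t=t\pm\epsilon t$, which trade $2^{1/p}$ for $(1\pm\epsilon)^{1/p}\to1$ at the price of the error term $\epsilon^{-1/p}\|A-A_k\|_{\mathcal{L}_{p,\infty}(H)}$; that term is harmless only because $k\to\infty$ is taken before $\epsilon\downarrow0$. Everything else is a direct manipulation of the displayed inequalities.
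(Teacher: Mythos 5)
Your proof is correct. The two ingredients you use are legitimate as stated: the asymmetric Fan inequality $\mu_{B(H)}(s+t,X+Y)\le\mu_{B(H)}(s,X)+\mu_{B(H)}(t,Y)$ follows directly from the definition of $\mu_{B(H)}$ via approximation numbers (ranks add), and the tail bound $t^{1/p}\mu_{B(H)}(t,T)\le\|T\|_{\mathcal{L}_{p,\infty}(H)}$ is exactly as you justify it. The splits $(1\pm\epsilon)t=t\pm\epsilon t$, the resulting estimates $\Lambda^{+}(A)\le(1+\epsilon)^{1/p}(c_k+\epsilon^{-1/p}\|A-A_k\|_{\mathcal{L}_{p,\infty}})$, $c_k\le(1+\epsilon)^{1/p}(\Lambda^{+}(A)+\epsilon^{-1/p}\|A-A_k\|_{\mathcal{L}_{p,\infty}})$ and $\Lambda^{-}(A)\ge(1-\epsilon)^{1/p}(c_k-\epsilon^{-1/p}\|A_k-A\|_{\mathcal{L}_{p,\infty}})$, and the order of limits ($k\to\infty$ before $\epsilon\downarrow0$) are all handled correctly, and nothing beyond the quasi-norm convergence in (i) is used, so the argument works for all $0<p<\infty$.

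For comparison: the paper does not prove this lemma at all; it is quoted as a known result of Birman and Solomyak with a reference to Section 11.6 of their book. Your argument is essentially the standard perturbation proof underlying that citation (stability of the upper and lower limit functionals $\Lambda^{\pm}$ under small $\mathcal{L}_{p,\infty}$ perturbations), so what your write-up buys is a short self-contained proof in place of an external reference, at no extra cost in hypotheses.
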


\begin{proof}[Proof of Theorem \ref{mtc}]
We first show the assertion under an extra assumption that $f=E^{\ast}h,$ $h\in C_c^\infty(\mathbb{R}^{n+1}).$ To this end, it follows from Proposition \ref{first approximate lemma} that
\begin{align}\label{spectralasym1}
&\lim_{t\to\infty}t^{\frac1{n+1}}\mu_{B(L_2(\mathbb{R}^{n+1}_+,m_{\lambda}))}(t,[R_{\lambda,k},M_f])\nonumber\\
&\stackrel{P.\ref{first approximate lemma}}{=}\kappa_{n,\lambda}^{[3]}F_{2,0}(0)\lim_{t\to\infty}t^{\frac1{n+1}}\mu_{B(L_2(\mathbb{R}^{n+1}_+,m_{\lambda}))}(t, M_{x_{n+1}^{-\lambda}} E^{\ast}[R_k,M_h]E M_{x_{n+1}^{\lambda}})\nonumber\\
&\stackrel{L.\ref{weight}}{=}\kappa_{n,\lambda}^{[3]}F_{2,0}(0)\lim_{t\to\infty}t^{\frac1{n+1}}\mu_{B(L_2(\mathbb{R}^{n+1}_+))}(t, E^{\ast}[R_k,M_h]E )\nonumber\\
&\stackrel{L.\ref{half}}{=}\kappa_{n,\lambda}^{[3]}F_{2,0}(0)\lim_{t\to\infty}t^{\frac1{n+1}}\mu_{B(L_2(\mathbb{R}^{n+1}))}(t, M_{\chi_{\mathbb{R}^{n+1}_+}}[R_k,M_h]M_{\chi_{\mathbb{R}^{n+1}_+}}).
\end{align}
This should be understood as follows: if one of the limits above exists, then so do the others and the equality holds. It follows from Lemma \ref{from frank lemma} that the latter limit actually exists and equals $C_n\|f\|_{\dot{W}^{1,n+1}(\mathbb{R}_+^{n+1})}^{(k)}$ for some $C_n>0$.  This completes the proof under the extra assumption made above.

Next we apply an approximation argument to remove the extra assumption. To this end, we suppose $f\in  \dot{W}^{1,n+1}(\mathbb{R}_+^{n+1})\cap L_{\infty}(\mathbb{R}_+^{n+1})$ and let $\{f_m\}_{m\geq 1}$ be the sequence chosen in Lemma \ref{density lemma}. Since $E^{\ast}f_m\to f$ in $\dot{W}^{1,n+1}(\mathbb{R}^{n+1}_+),$ it follows from the upper bound in Theorem \ref{main theorem} that
$[R_{\lambda,k},M_{E^{\ast}f_m}]\rightarrow [R_{\lambda,k},M_f]$ in $\mathcal{L}_{n+1,\infty}(L_2(\mathbb{R}^{n+1}_+,m_{\lambda})).$ The assertion follows now from the preceding paragraph and Lemma \ref{approlem}.
\end{proof}

\appendix

\section{Smoothness of auxiliary functions}
\setcounter{equation}{0}

Set
$$G_{k,l}(x):=\frac{F_{k,l}(x)-F_{k,l}(0)}{x},\quad x\in(0,\infty),$$
where $F_{k,l}(0)$ is simply denoted by the right limit of $F_{k,l}$ at $0$. The existence of this limit can be seen in this Appendix.
\begin{theorem}\label{fkl are smooth theorem} If $n\in\mathbb{N}$ and if $(k,l)\in\{(2,0),(1,1),(2,1)\},$ then the functions $F_{k,l}$ and $G_{k,l}$ satisfy the conditions in Theorem \ref{main schur theorem}.
\end{theorem}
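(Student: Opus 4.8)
The plan is to extract, for each $(k,l)\in\{(2,0),(1,1),(2,1)\}$, the asymptotic behaviour of $F_{k,l}$ as $x\to+\infty$ and as $x\to 0^+$, and then to verify that this forces the three requirements of Theorem \ref{main schur theorem} (smoothness on $(0,\infty)$, right continuity at $0$, and the two Sobolev conditions for $F\circ\exp$ and $(F-F(0))\circ\exp$) for $F_{k,l}$ and for $G_{k,l}$. Set $\alpha:=\lambda+\tfrac n2+1$, so that $F_{k,l}(x)=x^{n+k}I_l(x)$ with $I_l(x)=\int_0^2(x^2+2t)^{-\alpha}(2t-t^2)^{\lambda-1}t^l\,dt$; since the dependence on $k$ enters only through the monomial prefactor, $F_{2,l}(x)=xF_{1,l}(x)$, and it suffices to analyse $F_{1,0}$ and $F_{1,1}$. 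Smoothness of $F_{k,l}$ on $(0,\infty)$ is routine: for $x$ in a compact subset of $(0,\infty)$ every $x$-derivative of the integrand of $I_l$ is dominated by $C_x(2t-t^2)^{\lambda-1}t^l\in L_1[0,2]$ (here $\lambda>0$ is used), so one may differentiate under the integral sign. Consequently $G_{k,l}(x)=x^{-1}(F_{k,l}(x)-F_{k,l}(0))$ is smooth on $(0,\infty)$ once the right limit $F_{k,l}(0)$ is known to exist, which the expansion below supplies.

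For the behaviour at $+\infty$, factor $(x^2+2t)^{-\alpha}=x^{-2\alpha}(1+2t/x^2)^{-\alpha}$; since $(t,\sigma)\mapsto(1+2\sigma t)^{-\alpha}$ is smooth on $[0,2]\times[0,1]$, one obtains $F_{k,l}(x)=x^{k-2-2\lambda}\Psi_{k,l}(x^{-2})$ with $\Psi_{k,l}$ smooth near $0$ and $\Psi_{k,l}(0)=\int_0^2(2t-t^2)^{\lambda-1}t^l\,dt>0$. As $k\le2$ and $\lambda>0$ the exponent $k-2-2\lambda$ is strictly negative, so after the substitution $x=e^v$ the functions $F_{k,l}(e^v)=e^{(k-2-2\lambda)v}\Psi_{k,l}(e^{-2v})$ and $G_{k,l}(e^v)=e^{(k-3-2\lambda)v}\Psi_{k,l}(e^{-2v})-F_{k,l}(0)e^{-v}$, together with all their $v$-derivatives, are $O(e^{-\delta v})$ as $v\to+\infty$ for some $\delta>0$; being $C^\infty$ near $v=0$ as well, $F_{k,l}\circ\exp$ and $G_{k,l}\circ\exp$ lie in $W^{m,2}(\mathbb{R}_+)$ for every $m$, in particular for $m=1+\lceil\tfrac{n+1}2\rceil$.

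The substantive part is the expansion at $x=0^+$, which also produces the value $F_{k,l}(0)$. Writing $(2t-t^2)^{\lambda-1}t^l=2^{\lambda-1}t^{\lambda+l-1}(1-\tfrac t2)^{\lambda-1}$, Taylor-expanding the regular factor $(1-\tfrac t2)^{\lambda-1}=\sum_{j=0}^{N}\binom{\lambda-1}{j}(-\tfrac12)^{j}t^{j}+t^{N+1}R_N(t)$ on $[0,1]$, and rescaling $t=\tfrac12x^2u$ in each resulting integral, one is reduced to the incomplete Beta integrals $\int_0^{4/x^2}(1+u)^{-\alpha}u^{\lambda+l+j-1}\,du$, whose behaviour as $x\to0^+$ is classical: a constant $B(\lambda+l+j,\alpha-\lambda-l-j)$ when $l+j<\tfrac n2+1$, a finite sum of powers of $x^2$, and — precisely when $\tfrac n2+1-l-j\in\mathbb{Z}_{\le0}$, which can occur only for even $n$ — a term proportional to $x^{2(l+j)-n-2}\log x$. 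Choosing $N$ large relative to $n$ and $m$ makes the remainder term $C^{m+1}$ near $0$ and, modulo a polynomial in $x^2$ absorbed into the finite sum, vanishing there to order exceeding $m+1$. Multiplying by $x^{n+k}$ and collecting, one obtains, for every prescribed $m$,
\begin{equation*}
F_{k,l}(x)=\sum_{\mathrm{finite}}c_{a,b}\,x^{a}(\log x)^{b}+E(x),\qquad a\ge0,\ b\in\{0,1\},
\end{equation*}
where every $a$ with $b=1$ satisfies $a\ge n+k\ge2$, where $E\in C^{m+1}([0,\varepsilon])$ vanishes at $0$ to order $>m+1$, and where the only term with $a=b=0$ occurs for $(k,l)=(2,0)$, with coefficient $F_{2,0}(0)=\tfrac12 B(\lambda,\tfrac n2+1)>0$; in particular $F_{1,1}(0)=F_{2,1}(0)=0$, consistent with their use elsewhere, and all three right limits $F_{k,l}(0)$ exist.

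Finally, membership in $W^{m,2}(\mathbb{R}_-)$ is governed solely by the behaviour as $v\to-\infty$: for $a>0$ one has $(x^{a}(\log x)^{b})\circ\exp=v^{b}e^{av}$, whose $v$-derivatives are polynomials in $v$ times $e^{av}$ and hence decay exponentially, while $E\circ\exp$ and its first $m$ $v$-derivatives decay exponentially as well; since $(F_{k,l}-F_{k,l}(0))\circ\exp$ is moreover $C^\infty$ near $v=0$, it lies in $W^{m,2}(\mathbb{R}_-)$, and dividing the displayed expansion by $x$ yields the same conclusion for $(G_{k,l}-G_{k,l}(0))\circ\exp$ (using that $G_{k,l}(0)$ is the $a=b=0$ coefficient of that divided expansion). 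I expect the main obstacle to be precisely the bookkeeping of the logarithmic terms that appear for even $n$: they lower the classical regularity of $F_{k,l}$ at $x=0$, but, as just observed, they are harmless for both Sobolev conditions because the exponential factor coming from $\exp$ dominates any power of $v$; the rest of the argument is the (somewhat lengthy but standard) asymptotic analysis of the incomplete Beta integrals.
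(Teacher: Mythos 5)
Your argument is correct and follows essentially the same route as the paper's Appendix~A: power decay at infinity, a split--and--Taylor-expand reduction to incomplete Beta integrals whose small-$x$ asymptotics produce logarithms only for even $n$ and only attached to powers $x^{a}$ with $a\ge n+k\ge 2$ (matching the paper's polynomial $P_{k,l}$ with $P_{k,l}(0)=P_{k,l}'(0)=0$), and the observation that composition with $\exp$ turns all such terms into exponentially decaying functions on $\mathbb{R}_{\pm}$. The only cosmetic differences are your packaging of the behaviour at infinity as $x^{k-2-2\lambda}\Psi_{k,l}(x^{-2})$ with $\Psi_{k,l}$ smooth, where the paper instead bounds each derivative of $F_{k,l}$ directly, and your reduction $F_{2,l}=xF_{1,l}$, which the paper does not exploit.
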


\begin{lemma}\label{first fkl lemma} Let $\psi\in L_{\infty}(0,1).$ Then the function
$$d(x):=x^n\int_0^1(x^2+t)^{-\lambda-\frac{n}{2}-1}t^{\lambda+n+2}\psi(t)dt$$
satisfies $d^{(j)}\in L_{\infty}(0,1)$ for $0\leq j\leq n+2.$
\end{lemma}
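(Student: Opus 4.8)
The plan is to prove the claimed regularity of $d$ by differentiating under the integral sign and controlling the resulting integrands near $t=0$ and near $x=0$. The key observation is that every time we differentiate $(x^2+t)^{-\lambda-\frac{n}{2}-1}$ in $x$ we produce a factor of $x$ and lower the power of $(x^2+t)$ by one, i.e. $\partial_x (x^2+t)^{-\alpha} = -2\alpha x (x^2+t)^{-\alpha-1}$, and likewise each differentiation of the prefactor $x^n$ reduces its degree. So after applying the Leibniz rule to $d(x) = x^n \int_0^1 (x^2+t)^{-\lambda-\frac n2-1} t^{\lambda+n+2}\psi(t)\,dt$, a generic term in $d^{(j)}(x)$ has the form (up to constants)
\[
x^{n-a+b}\int_0^1 (x^2+t)^{-\lambda-\frac n2-1-b} t^{\lambda+n+2}\psi(t)\,dt,
\qquad a+b\le j,\ a\le n,\ b\le j .
\]
First I would record this expansion carefully, noting that the total power of $x$ outside is $n-a+b\ge 0$ whenever $a\le n$, and that if $a>n$ the differentiation of $x^n$ kills the term (so we may assume $a\le n$, $a\le j$).

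Next I would bound each such term uniformly for $x\in(0,1)$. For fixed $x>0$ the integrand is continuous on $[0,1]$, so the only issue is uniformity as $x\downarrow 0$. Use the elementary inequality $(x^2+t)^{-\lambda-\frac n2-1-b}\le t^{-\lambda-\frac n2-1-b}$ and $\|\psi\|_\infty<\infty$ to get
\[
\Big| x^{n-a+b}\int_0^1 (x^2+t)^{-\lambda-\frac n2-1-b} t^{\lambda+n+2}\psi(t)\,dt\Big|
\le \|\psi\|_\infty\, x^{n-a+b}\int_0^1 t^{\,n+1+\frac n2-b}\,dt .
\]
The remaining $t$-integral is finite precisely when $n+1+\frac n2-b>-1$, i.e. $b< n+2+\frac n2$; since $b\le j\le n+2$ this always holds (with room to spare), so each term is bounded by $C\|\psi\|_\infty$ on $(0,1)$, uniformly in $x$. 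This gives $d^{(j)}\in L_\infty(0,1)$ for $0\le j\le n+2$. In particular the $j=0$ case shows $d$ itself is bounded, and one sees $d(x)\to 0$ as $x\to 0$ by dominated convergence, which is consistent with the later use of such functions.

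The only point requiring a little care — and the step I'd flag as the main (minor) obstacle — is the justification of differentiation under the integral sign: one must produce, for $x$ ranging over a neighbourhood of any fixed $x_0\in(0,1)$, an $L^1(0,1)$-in-$t$ dominating function for the $x$-derivatives of the integrand. This is immediate for $x_0>0$ using continuity, but to conclude $d^{(j)}$ is defined and bounded on all of $(0,1)$ including near $0$ one invokes exactly the estimate above: the derivative integrands $t^{\lambda+n+2} t^{-\lambda-\frac n2 - 1 - b} = t^{\,n+1+\frac n2 - b}$ are dominated independently of $x$ by an integrable function of $t$, so the standard theorem applies with a locally uniform dominating function and the differentiated formula is valid throughout $(0,1)$. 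Assembling the Leibniz expansion, the uniform bound, and this differentiation-under-the-integral justification completes the proof of Lemma~\ref{first fkl lemma}.
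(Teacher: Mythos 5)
There is a genuine gap in your estimate, caused by an arithmetic slip that hides a real difficulty. After substituting $(x^2+t)^{-\lambda-\frac n2-1-b}\le t^{-\lambda-\frac n2-1-b}$, the power of $t$ in the integrand is
$$(\lambda+n+2)+\Bigl(-\lambda-\tfrac n2-1-b\Bigr)=\tfrac n2+1-b,$$
not $n+1+\tfrac n2-b$ as you wrote. With the correct exponent, $\int_0^1 t^{\frac n2+1-b}\,dt$ is finite only for $b<\tfrac n2+2$, whereas $b$ ranges up to $j\le n+2$; for example with $n=2$, $j=4$, $a=0$, $b=4$ your bound reduces to $x^{6}\int_0^1 t^{-2}\,dt$, which is infinite. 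So the crude domination $(x^2+t)^{-\alpha}\le t^{-\alpha}$, which throws away all the powers of $x$ sitting in front, is genuinely insufficient: those powers of $x$ are needed to absorb part of the singularity at $t=0$.

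The fix — and this is what the paper does — is to bound the \emph{integrand} pointwise rather than the integral. Split the negative exponent as $(x^2+t)^{-\lambda-\frac n2-1-b}=(x^2+t)^{-\lambda-n-2}(x^2+t)^{\frac n2+1-b}$; the first factor is $\le t^{-\lambda-n-2}$ and cancels $t^{\lambda+n+2}$ exactly, while the second factor, when $b>\tfrac n2+1$, is bounded by $(x^2)^{\frac n2+1-b}=x^{n+2-2b}$, which combines with your prefactor $x^{n-a+b}$ to give $x^{2n+2-a-b}=x^{2n+2-j}\le 1$ since $j\le n+2\le 2n+2$. (When $b\le\tfrac n2+1$ the second factor is simply bounded by a constant.) This yields $|d^{(j)}(x)|\le C_{n,j}\|\psi\|_\infty$ uniformly on $(0,1)$, and it is precisely here that the hypothesis $j\le n+2$ is used — your version of the argument never actually needs $j\le n+2$ in an essential way, which is a sign that the estimate is too lossy. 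Your Leibniz expansion and the remark on differentiating under the integral sign are fine and match the paper's proof; only the uniform bound needs to be repaired as above.
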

\begin{proof} We write
$$d^{(j)}(x)=\sum_{\substack{j_1,j_2\geq0\\ j_1+j_2=j}}c(n,j_1,j_2)x^{n+j_2-j_1}\int_0^1(x^2+t)^{-\lambda-\frac{n}{2}-1-j_2}t^{\lambda+n+2}\psi(t)dt.$$
Note that $c(n,j_1,j_2)=0$ if $n+j_2-j_1<0$ and that for any $j_1,j_2\geq0$ with $n+j_2-j_1\geq 0$ and $j=j_1+j_2\leq n+2$, we have
$$x^{n+j_2-j_1}(x^2+t)^{-\lambda-\frac{n}{2}-1-j_2}t^{\lambda+n+2}\leq 1.$$
Thus,
$$|d^{(j)}(x)|\leq \sum_{\substack{j_1,j_2\geq0\\ j_1+j_2=j}}|c(n,j_1,j_2)|\cdot \|\psi\|_{\infty}.$$
This yields the assertion.
\end{proof}

\begin{lemma}\label{second fkl lemma} For $(k,l)\in\{(2,0),(1,1),(2,1)\},$ we have
$$F_{k,l}(x)=x^{n+k}A_l(x)+x^kB_l(x)+x^{k+2l-2}\sum_{j=0}^{n+2}\frac{(-1)^j}{2^{l+2j+1}}\binom{\lambda-1}{j}C_{l,j}(x),$$
where
$$A_l(x):=\int_{\frac12}^2(x^2+2t)^{-\lambda-\frac{n}{2}-1}(2t-t^2)^{\lambda-1}t^ldt,$$
$$B_l(x):=x^n\int_0^{\frac12}(x^2+2t)^{-\lambda-\frac{n}{2}-1}(2t)^{\lambda-1}\Big((1-\frac{t}{2})^{\lambda-1}-\sum_{j=0}^{n+2}\binom{\lambda-1}{j}(-\frac{t}{2})^j\Big)t^ldt,$$
$$C_{l,j}(x):=x^{2j}\int_{x^2}^{\infty}(s+1)^{-\lambda-\frac{n}{2}-1}s^{\frac{n}{2}-j-l}ds.$$
\end{lemma}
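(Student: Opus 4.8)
The plan is to split the integral defining $F_{k,l}$ at $t=\tfrac12$ and, on the portion near the singularity $t=0$, to peel off the regular factor $(2t-t^2)^{\lambda-1}=(2t)^{\lambda-1}(1-\tfrac t2)^{\lambda-1}$ and Taylor-expand $(1-\tfrac t2)^{\lambda-1}$ about $t=0$. First I would write, directly from Notation~\ref{fkl notation},
$$F_{k,l}(x)=x^{n+k}\int_{1/2}^{2}(x^2+2t)^{-\lambda-\frac{n}{2}-1}(2t-t^2)^{\lambda-1}t^l\,dt+x^{n+k}\int_{0}^{1/2}(x^2+2t)^{-\lambda-\frac{n}{2}-1}(2t-t^2)^{\lambda-1}t^l\,dt.$$
The first summand is by definition $x^{n+k}A_l(x)$; for fixed $x>0$ the integrand is continuous on $[\tfrac12,2]$ and $(2t-t^2)^{\lambda-1}\sim\bigl(2(2-t)\bigr)^{\lambda-1}$ near $t=2$ is integrable because $\lambda>0$, so $A_l(x)$ is finite.

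For the second summand I would apply Taylor's formula to $z\mapsto(1+z)^{\lambda-1}$ at $z=-\tfrac t2$ (legitimate for $t\in[0,\tfrac12]$, where $|t/2|\le\tfrac14$): setting $R_{n+3}(t):=(1-\tfrac t2)^{\lambda-1}-\sum_{j=0}^{n+2}\binom{\lambda-1}{j}(-\tfrac t2)^j=O(t^{n+3})$ as $t\to0^+$, the second summand becomes
$$x^{n+k}\int_{0}^{1/2}(x^2+2t)^{-\lambda-\frac{n}{2}-1}(2t)^{\lambda-1}R_{n+3}(t)t^l\,dt+\sum_{j=0}^{n+2}\binom{\lambda-1}{j}\Bigl(-\frac12\Bigr)^j x^{n+k}\int_{0}^{1/2}(x^2+2t)^{-\lambda-\frac{n}{2}-1}(2t)^{\lambda-1}t^{j+l}\,dt.$$
The remainder integral equals $x^k B_l(x)$ by the definition of $B_l$; since $(2t)^{\lambda-1}R_{n+3}(t)t^l=O(t^{\lambda+n+2+l})$ and $(x^2+2t)^{-\lambda-\frac{n}{2}-1}\le(2t)^{-\lambda-\frac{n}{2}-1}$, the integrand is $O(t^{\frac{n}{2}+1+l})$ uniformly for $x\ge0$, so $B_l$ is finite (indeed well defined on a neighbourhood of $0$).

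The heart of the computation is to identify the $j$-th term $I_j(x):=x^{n+k}\int_{0}^{1/2}(x^2+2t)^{-\lambda-\frac{n}{2}-1}(2t)^{\lambda-1}t^{j+l}\,dt$. I would substitute $2t=x^2w$, so that $x^2+2t=x^2(1+w)$ and $t\in(0,\tfrac12)\leftrightarrow w\in(0,x^{-2})$; collecting the powers of $x$ and of $2$ gives $I_j(x)=2^{-(j+l)-1}x^{k+2j+2l-2}\int_{0}^{1/x^2}(1+w)^{-\lambda-\frac{n}{2}-1}w^{\lambda-1+j+l}\,dw$. Then the reflection $w=1/u$ turns $(1+w)^{-\lambda-\frac{n}{2}-1}$ into $u^{\lambda+\frac{n}{2}+1}(1+u)^{-\lambda-\frac{n}{2}-1}$, flips the limits to $(x^2,\infty)$, and yields
$$\int_{0}^{1/x^2}(1+w)^{-\lambda-\frac{n}{2}-1}w^{\lambda-1+j+l}\,dw=\int_{x^2}^{\infty}(1+u)^{-\lambda-\frac{n}{2}-1}u^{\frac{n}{2}-j-l}\,du=x^{-2j}C_{l,j}(x),$$
the last integral converging at $\infty$ since $\lambda>0$ and at the lower limit $u=x^2>0$ trivially. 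Hence $I_j(x)=2^{-(j+l)-1}x^{k+2l-2}C_{l,j}(x)$, and substituting this back into the finite sum produces $x^{k+2l-2}\sum_{j=0}^{n+2}\frac{(-1)^j}{2^{l+2j+1}}\binom{\lambda-1}{j}C_{l,j}(x)$; adding the two earlier terms gives exactly the claimed identity.

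The remaining points are routine: the interchange of the finite sum with the integral is trivial, the two changes of variables are diffeomorphisms of the relevant intervals for each fixed $x>0$, and Taylor's formula applies on $[0,\tfrac12]$. The only genuine source of error is the bookkeeping of the exponents of $x$ and of $2$ across the two substitutions — a single miscount would corrupt one of the claimed powers $x^{n+k}$, $x^k$, $x^{k+2l-2}$ or the coefficient $2^{-(l+2j+1)}$ — so that is the step where I would be most careful; note also that nothing in the argument uses the restriction $(k,l)\in\{(2,0),(1,1),(2,1)\}$, which merely fixes the range of indices needed in the applications.
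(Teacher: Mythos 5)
Your proof is correct and follows essentially the same route as the paper: split the integral at $t=\tfrac12$, factor $(2t-t^2)^{\lambda-1}=(2t)^{\lambda-1}(1-\tfrac t2)^{\lambda-1}$ and Taylor-expand to produce $A_l$, $B_l$ and the polynomial terms, then identify the latter with $C_{l,j}$; your two substitutions $2t=x^2w$, $w=1/u$ compose to exactly the paper's single change of variable $t=\tfrac{x^2}{2}s^{-1}$, and the exponent and power-of-$2$ bookkeeping matches the stated coefficients.
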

\begin{proof} It is immediate that
$$F_{k,l}(x)=x^{n+k}A_l(x)+x^kB_l(x)+D_{k,l}(x),$$
where
$$D_{k,l}(x):=x^{n+k}\sum_{j=0}^{n+2}\frac{(-1)^j}{2^{l+2j}}\binom{\lambda-1}{j}\int_0^{\frac12}(x^2+2t)^{-\lambda-\frac{n}{2}-1}(2t)^{\lambda+l+j-1}dt.$$
Substituting $t=\frac{x^2}{2}s^{-1},$ we conclude that
$$D_{k,l}(x)=x^{k+2l-2}\sum_{j=0}^{n+2}\frac{(-1)^j}{2^{l+2j+1}}\binom{\lambda-1}{j}C_{l,j}(x).$$
This completes the proof.
\end{proof}

\begin{lemma}\label{third fkl lemma} Assume that $(k,l)\in\{(2,0),(1,1),(2,1)\}$. If $n\in\mathbb{N}$ is odd, then $C_{l,j}$ is real analytic near $0.$ If $n\in\mathbb{N}$ is even, then $x\to C_{l,j}(x)-a_{l,j}x^{2j}\log(x)$ is real analytic near $0$ for some constant $a_{l,j}$ which vanishes if $j+l<\frac{n}{2}+1.$
\end{lemma}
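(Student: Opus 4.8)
The plan is to derive an explicit expansion of $C_{l,j}$ near $x=0$ and read off the three regimes. Put $\beta:=\lambda+\tfrac n2+1$, $\alpha:=\tfrac n2-j-l$, and substitute $s=1/r$ in the defining integral; using $((1/r)+1)^{-\beta}=(1+r)^{-\beta}r^{\beta}$ this gives
$$C_{l,j}(x)=x^{2j}\int_0^{1/x^2}(r+1)^{-\beta}r^{\gamma}\,dr,\qquad \gamma:=\beta-\alpha-2=\lambda+j+l-1 .$$
Since $\lambda>0$ and $j,l\ge 0$ we have $\gamma>-1$, so the integrand is locally integrable at $r=0$, and $\beta-\gamma=\lambda+j+l+1>1$, so the integral converges at $r=0$ (equivalently, the original integral converges at $s=\infty$). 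Writing $u:=x^2$ and $I(u):=\int_0^{1/u}(r+1)^{-\beta}r^{\gamma}\,dr$, everything reduces to the behaviour of $u^{j}I(u)$ as $u\downarrow 0$, i.e.\ as $1/u\to\infty$.

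For $u<\tfrac12$ I would split $I(u)=\int_0^2(r+1)^{-\beta}r^{\gamma}\,dr+\int_2^{1/u}(r+1)^{-\beta}r^{\gamma}\,dr$, the first summand being a finite constant. On $[2,\infty)$ one has $1/r\le\tfrac12$, so $(1+1/r)^{-\beta}=\sum_{m\ge0}\binom{-\beta}{m}r^{-m}$ with $\sum_m|\binom{-\beta}{m}|2^{-m}<\infty$ (the binomial series has radius of convergence $1$); hence $(r+1)^{-\beta}r^{\gamma}=r^{\gamma-\beta}(1+1/r)^{-\beta}=\sum_m\binom{-\beta}{m}r^{\gamma-\beta-m}$ may be integrated term by term over $[2,1/u]$, the resulting double series being absolutely convergent because $\bigl|\int_2^{1/u}r^{\gamma-\beta-m}\,dr\bigr|\le \tfrac1u\,2^{\gamma-\beta}2^{-m}$ for large $m$. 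Using $\gamma-\beta-m+1=-\alpha-1-m$ and the elementary evaluation
$$\int_2^{1/u}r^{\gamma-\beta-m}\,dr=\frac{u^{\alpha+1+m}-2^{-\alpha-1-m}}{-\alpha-1-m}\qquad(-\alpha-1-m\neq 0),$$
together with $\int_2^{1/u}r^{-1}\,dr=-\log u-\log 2$ in case $-\alpha-1-m=0$ for some $m$, I arrive at
$$I(u)=c_0+a\log u+u^{\alpha+1}h(u),$$
where $c_0$ is a constant, $h(v):=\sum_{m\ge0}\frac{\binom{-\beta}{m}}{-\alpha-1-m}\,v^{m}$ (with the index $m=-\alpha-1$ deleted if it occurs) is a power series of radius of convergence $\ge 1$, hence real analytic near $0$, and the logarithmic term is present exactly when $-\alpha-1\in\mathbb Z_+$, with $a=-\binom{-\beta}{-\alpha-1}$. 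Since $-\alpha-1=j+l-\tfrac n2-1$, this occurs precisely when $n$ is even and $j+l\ge\tfrac n2+1$; for $n$ odd, $-\alpha-1$ is a half-integer and no logarithm appears.

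Finally I would set $u=x^2$ and multiply by $x^{2j}$. The key identity $2j+2(\alpha+1)=n+2-2l$, combined with $l\in\{0,1\}$ and $n\ge 1$ (so $n+2-2l\ge n\ge 1$ is a positive integer), shows that $x^{2j}\,u^{\alpha+1}h(u)=x^{\,n+2-2l}h(x^2)$, which is real analytic near $0$; the term $x^{2j}c_0$ is a monomial; and $x^{2j}a\log u=2a\,x^{2j}\log x$ for $x>0$. Collecting, $C_{l,j}(x)=\bigl(\text{a function real analytic near }0\bigr)+a_{l,j}\,x^{2j}\log x$ with $a_{l,j}:=2a=-2\binom{-\lambda-n/2-1}{\,j+l-n/2-1\,}$ (understood as $0$ when $j+l-\tfrac n2-1\notin\mathbb Z_+$). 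In particular $a_{l,j}=0$ whenever $j+l<\tfrac n2+1$, and $a_{l,j}=0$ for all admissible $j,l$ when $n$ is odd, so $C_{l,j}$ itself is real analytic near $0$ in that case — which is exactly the assertion. The only genuinely delicate point is the bookkeeping of which single index $m$ produces the $\log u$ term, and the check that after multiplication by $x^{2j}$ the possibly negative powers $u^{\alpha+1+m}$ hidden in $u^{\alpha+1}h(u)$ recombine into the honestly analytic $x^{n+2-2l}h(x^2)$; this amounts to the inequality $j+\alpha+1=\tfrac{n+2}{2}-l\ge\tfrac n2\ge\tfrac12$, which guarantees that no negative power of $x$ survives, so there is no real obstacle beyond this verification.
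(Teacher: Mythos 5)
Your argument is correct and is essentially the paper's proof in disguise: the substitution $s=1/r$ turns your binomial expansion of $(1+1/r)^{-\beta}$ over $[2,1/x^2]$ into the paper's expansion of $(1+s)^{-\lambda-\frac n2-1}$ over $[x^2,\tfrac12]$, with the same split into a constant piece plus a variable piece, the same single resonant index $m=j+l-\tfrac n2-1$ producing the logarithm, and the same recombination $x^{2j}\cdot x^{2(\alpha+1+m)}=x^{n+2m-2l+2}$ showing only nonnegative integer powers survive. Incidentally, your value $a_{l,j}=-2\binom{-\lambda-n/2-1}{\,j+l-n/2-1\,}$ is the one consistent with the computation (the paper's stated $a_{l,j}$ has a sign slip relative to its own display, which is immaterial to the lemma).
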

\begin{proof} We write
\begin{align}\label{clj}
C_{l,j}(x)=x^{2j}\int_{x^2}^{\frac12}(s+1)^{-\lambda-\frac{n}{2}-1}s^{\frac{n}{2}-j-l}ds+x^{2j}\int_{\frac12}^{\infty}(s+1)^{-\lambda-\frac{n}{2}-1}s^{\frac{n}{2}-j-l}ds.
\end{align}
By Taylor's expansion,
$$(s+1)^{-\lambda-\frac{n}{2}-1}=\sum_{m\geq0}\binom{-\lambda-\frac{n}{2}-1}{m}s^m,$$
where the series converges uniformly for $s\in[0,\frac12].$ Substituting the above equality into \eqref{clj}, we deduce that
$$C_{l,j}(x)=x^{2j}\int_{\frac12}^{\infty}(s+1)^{-\lambda-\frac{n}{2}-1}s^{\frac{n}{2}-j-l}ds+x^{2j}\sum_{m\geq0}\binom{-\lambda-\frac{n}{2}-1}{m}\int_{x^2}^{\frac12}s^{\frac{n}{2}+m-j-l}ds.$$

{\bf Case 1:  $n$ is odd.}
\begin{align*}
C_{l,j}(x)&=x^{2j}\cdot\Big( \int_{\frac12}^{\infty}(s+1)^{-\lambda-\frac{n}{2}-1}s^{\frac{n}{2}-j-l}ds+\sum_{m\geq0}\binom{-\lambda-\frac{n}{2}-1}{m}\frac{2^{j-m+l-\frac{n}{2}-1}}{\frac{n}{2}+m-j-l+1}\Big)\\
&\hspace{1.0cm}-\sum_{m\geq0}\binom{-\lambda-\frac{n}{2}-1}{m}\frac{x^{n+2m-2l+2}}{\frac{n}{2}+m-j-l+1}.
\end{align*}
Thus, $C_{l,j}$ is real analytic near $0.$

{\bf Case 2:  $n$ is even.}
\begin{align*}
&C_{l,j}(x)=x^{2j}\int_{\frac12}^{\infty}(s+1)^{-\lambda-\frac{n}{2}-1}s^{\frac{n}{2}-j-l}ds+x^{2j}\sum_{m\geq0}\binom{-\lambda-\frac{n}{2}-1}{m}\int_{x^2}^{\frac12}s^{\frac{n}{2}+m-j-l}ds\\
&=x^{2j}\cdot\Big( \int_{\frac12}^{\infty}(s+1)^{-\lambda-\frac{n}{2}-1}s^{\frac{n}{2}-j-l}ds+\sum_{\substack{m\geq0\\ m\neq j+l-\frac{n}{2}-1}}\binom{-\lambda-\frac{n}{2}-1}{m}\frac{2^{j-m+l-\frac{n}{2}-1}}{\frac{n}{2}+m-j-l+1}\Big)\\
&\hspace{1.0cm}-\sum_{\substack{m\geq0\\ m\neq j+l-\frac{n}{2}-1}}\binom{-\lambda-\frac{n}{2}-1}{m}\frac{x^{n+2m-2l+2}}{\frac{n}{2}+m-j-l+1}\\
&\hspace{1.0cm}-x^{2j}\cdot \binom{-\lambda-\frac{n}{2}-1}{j+l-\frac{n}{2}-1}\cdot\chi_{\mathbb{N}}(j+l-\frac{n}{2}-1)\cdot (\log(2)+2\log(x)).
\end{align*}
Set $$a_{l,j}:=2\binom{-\lambda-\frac{n}{2}-1}{j+l-\frac{n}{2}-1}\chi_{\mathbb{N}}(j+l-\frac{n}{2}-1).$$
Thus, $C_{l,j}(x)-a_{l,j}x^{2j}\log(x)$ is real analytic near $0.$
\end{proof}

\begin{lemma}\label{fourth fkl lemma} Let $(k,l)\in\{(2,0),(1,1),(2,1)\},$ then for any $j\geq0$, there is a constant $C_{n,\lambda,l,j}>0$ such that
$$|F_{k,l}^{(j)}(x)|\leq C_{n,\lambda,l,j}x^{k-2-2\lambda-j},\quad x\in [0,\infty).$$
\end{lemma}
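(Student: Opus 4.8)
The plan is to estimate $F_{k,l}^{(j)}$ for large $x$ using the defining integral
$$F_{k,l}(x)=x^{n+k}\int_0^2(x^2+2t)^{-\lambda-\frac n2-1}(2t-t^2)^{\lambda-1}t^l\,dt,$$
and to track how each differentiation lowers the power of $x$. Writing $F_{k,l}(x)=x^{n+k}I_l(x)$ with $I_l(x):=\int_0^2(x^2+2t)^{-\lambda-\frac n2-1}(2t-t^2)^{\lambda-1}t^l\,dt$, the $j$-th derivative is a finite linear combination, via the Leibniz rule, of terms $x^{n+k-j_1}I_l^{(j_2)}(x)$ with $j_1+j_2=j$. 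So it suffices to bound $I_l^{(j_2)}(x)$ by a constant times $x^{-n-2-2\lambda-j_2}$ for all $j_2\ge 0$ and $x\ge 1$ (the range $0\le x\le 1$ being covered separately, see below).

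First I would differentiate under the integral sign: $I_l^{(m)}(x)$ is a constant times $x^m\int_0^2(x^2+2t)^{-\lambda-\frac n2-1-m}(2t-t^2)^{\lambda-1}t^l\,dt$ plus lower-order (in the $x$-power) terms of the same shape, so it is enough to estimate the model integrals $x^{m}\int_0^2(x^2+2t)^{-\lambda-\frac n2-1-m}(2t-t^2)^{\lambda-1}t^l\,dt$. On $[0,2]$ one has $(2t-t^2)^{\lambda-1}t^l\asymp t^{\lambda+l-1}$ near $t=0$ and is bounded away from $0$; since $\lambda+l-1>-1$ the factor $(2t-t^2)^{\lambda-1}t^l$ is integrable. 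For $x\ge 1$ I would substitute $t=x^2 s$ (so the integration range becomes $[0,2x^{-2}]$) to pull out the scaling: the integral becomes $x^{-2\lambda-n-2-2m}\int_0^{2x^{-2}}(1+2s)^{-\lambda-\frac n2-1-m}(2x^2s-x^4s^2)^{\lambda-1}(x^2s)^l\,x^2\,ds$; bounding $(1+2s)^{-\lambda-\frac n2-1-m}\le 1$ and using $(2x^2s-x^4s^2)^{\lambda-1}\le C(x^2s)^{\lambda-1}$ for $0\le s\le 2x^{-2}$ reduces the remaining integral to $\int_0^{2x^{-2}} s^{\lambda+l-1}\,ds\asymp x^{-2\lambda-2l}$, which after collecting powers gives $I_l^{(m)}(x)\lesssim x^{-n-2-2\lambda-m}$, hence $|F_{k,l}^{(j)}(x)|\lesssim x^{k-2-2\lambda-j}$ as claimed. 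For $0\le x\le 1$ one instead splits the $t$-integral at $t=x^2$ (or simply notes that $F_{k,l}$ and its derivatives extend continuously to $[0,\infty)$ by Lemmas \ref{second fkl lemma}–\ref{third fkl lemma}, so they are bounded on $[0,1]$), and since $x^{k-2-2\lambda-j}\to\infty$ as $x\to 0^+$ when $k-2-2\lambda-j<0$, the bound is automatic on the compact piece; the genuinely relevant regime is $x\to\infty$.

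The main obstacle I anticipate is purely bookkeeping: keeping uniform control of the constants in the Leibniz expansion and verifying the substitution-based estimate handles the boundary behaviour of $(2t-t^2)^{\lambda-1}$ at both endpoints $t=0$ and $t=2$ (the latter contributing a harmless $O((2-t)^{\lambda-1})$ integrable singularity whose contribution, being supported away from $t=0$, is $O(x^{-2\lambda-n-2-2m})$ with room to spare). No deep input is needed — this is a decay estimate that follows by the scaling substitution $t=x^2s$ and crude bounds on the remaining bounded factors; the care is in confirming that every term produced by differentiating the integrand has the asserted power of $x$ and not a worse one, which the explicit Leibniz formula makes transparent.
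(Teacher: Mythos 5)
Your argument is correct in substance and starts from the same place as the paper (Leibniz expansion of $F_{k,l}^{(j)}$ into terms $x^{n+k+j_2-j_1}\int_0^2(x^2+2t)^{-\lambda-\frac n2-1-j_2}(2t-t^2)^{\lambda-1}t^l\,dt$), but the middle step is heavier than it needs to be. The paper dispatches the model integrals with the single pointwise inequality $(x^2+2t)^{-\lambda-\frac n2-1-j_2}\le x^{-2\lambda-n-2-2j_2}$ (valid for all $x,t>0$ since the exponent is negative), after which the remaining factor $\int_0^2(2t-t^2)^{\lambda-1}t^l\,dt$ is a finite constant because $\lambda>0$; this gives $x^{n+k+j_2-j_1}(x^2+2t)^{-\lambda-\frac n2-1-j_2}\le x^{k-2-2\lambda-j}$ uniformly on $(0,\infty)$ in one line, with no case split in $x$ and no need to track the behaviour of $(2t-t^2)^{\lambda-1}$ at $t=2$. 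Your scaling substitution $t=x^2s$ recovers the same power of $x$ and is fine, but it forces exactly the bookkeeping you worry about at the end. One genuine (though non-fatal) slip: the parenthetical claim that ``$F_{k,l}$ and its derivatives extend continuously to $[0,\infty)$ by Lemmas \ref{second fkl lemma}--\ref{third fkl lemma}, so they are bounded on $[0,1]$'' is false in general — Lemma \ref{fifth fkl lemma} shows only that $F_{k,l}-P_{k,l}\log$ is $C^{n+2}$ near $0$, so for even $n$ the higher derivatives of $F_{k,l}$ acquire logarithmic singularities at $0$ and are \emph{not} bounded on $[0,1]$. This does not break your proof, because the target bound $x^{k-2-2\lambda-j}$ blows up as $x\to0^+$ and your scaling estimate is in fact valid for all $x>0$ (the $t\in[1,2]$ piece contributes $O(1)$ there, which is even better than required); but you should drop that justification and either run the main estimate on all of $(0,\infty)$ or use the paper's pointwise bound, which is uniform from the start.
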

\begin{proof}
By Leibniz's rule,
$$F_{k,l}^{(j)}(x)=\sum_{\substack{j_1+j_2\geq0\\ j_1+j_2=j}}c(n,j_1,j_2)x^{n+k+j_2-j_1}\int_0^2(x^2+2t)^{-\lambda-\frac{n}{2}-1-j_2}(2t-t^2)^{\lambda-1}t^ldt.$$
Clearly,
$$x^{n+k+j_2-j_1}(x^2+2t)^{-\lambda-\frac{n}{2}-1-j_2}\leq x^{k-2-2\lambda-j_1-j_2}.$$
Thus,
$$|F_{k,l}^{(j)}(x)|\leq\sum_{\substack{j_1+j_2\geq0\\ j_1+j_2=j}}|c(n,j_1,j_2)|x^{k-2-2\lambda-j}\int_0^2(2t-t^2)^{\lambda-1}t^ldt.$$
This implies the assertion.
\end{proof}

\begin{lemma}\label{fifth fkl lemma} Let $(k,l)\in\{(2,0),(1,1),(2,1)\}.$ There exists a polynomial $P_{k,l}$ such that $P_{k,l}(0)=P_{k,l}'(0)=0$ and such that the mapping $x\to F_{k,l}(x)-P_{k,l}(x)\log(x)$ belongs to $C^{n+2}[0,1].$
\end{lemma}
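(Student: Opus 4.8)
The plan is to derive the decomposition of $F_{k,l}$ into an analytic part plus a logarithmic singularity directly from the three auxiliary lemmas (Lemmas \ref{second fkl lemma}, \ref{third fkl lemma}, and \ref{fourth fkl lemma}) established just above, and then to read off the claimed $C^{n+2}$ regularity. First I would invoke Lemma \ref{second fkl lemma} to write
$$F_{k,l}(x)=x^{n+k}A_l(x)+x^kB_l(x)+x^{k+2l-2}\sum_{j=0}^{n+2}\frac{(-1)^j}{2^{l+2j+1}}\binom{\lambda-1}{j}C_{l,j}(x),$$
and then treat each of the three groups of terms separately. The term $x^{n+k}A_l(x)$: since $A_l$ is a smooth function on $[0,\infty)$ obtained by integrating $(x^2+2t)^{-\lambda-n/2-1}(2t-t^2)^{\lambda-1}t^l$ over $t\in[\tfrac12,2]$ — a region bounded away from the singularity of $(2t-t^2)^{\lambda-1}$ at $t=0$ and away from $x^2+2t=0$ — all $x$-derivatives pass under the integral sign and are bounded near $0$; so $x^{n+k}A_l\in C^{\infty}[0,1]$, and it contributes a polynomial factor with a vanishing value and derivative at $0$ if $n+k\geq 2$, which always holds here since $k\geq 1$ and $n\geq 1$. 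The term $x^kB_l(x)$: here the inner integrand is multiplied by the remainder $(1-\tfrac t2)^{\lambda-1}-\sum_{j=0}^{n+2}\binom{\lambda-1}{j}(-\tfrac t2)^j=O(t^{n+3})$, so $B_l$ is exactly of the shape treated in Lemma \ref{first fkl lemma} (with $\psi$ the bounded function capturing the remainder divided by $t^{n+3}$ together with the $t^l$ and $2^{\lambda-1}$ factors), hence $B_l^{(j)}\in L_\infty(0,1)$ for $0\leq j\leq n+2$, i.e. $B_l\in C^{n+1}[0,1]$ at least in the Sobolev sense, giving $x^kB_l\in W^{n+2,\infty}(0,1)\subset C^{n+1}[0,1]$ after multiplying by $x^k$, $k\ge1$.

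Next I would handle the genuinely singular group $x^{k+2l-2}\sum_j c_j C_{l,j}(x)$ using Lemma \ref{third fkl lemma}. When $n$ is odd, each $C_{l,j}$ is real analytic near $0$, so this whole sum is $x^{k+2l-2}\times(\text{analytic})$, which is analytic (note $k+2l-2\geq 0$ for all three admissible $(k,l)$: for $(2,0)$ it is $0$, for $(1,1)$ it is $1$, for $(2,1)$ it is $2$); in this case $P_{k,l}=0$ works. When $n$ is even, $C_{l,j}(x)=a_{l,j}x^{2j}\log(x)+(\text{analytic near }0)$, so
$$x^{k+2l-2}\sum_{j=0}^{n+2}\frac{(-1)^j}{2^{l+2j+1}}\binom{\lambda-1}{j}C_{l,j}(x)=\Big(\text{analytic near }0\Big)+\log(x)\cdot\sum_{j=0}^{n+2}\frac{(-1)^j a_{l,j}}{2^{l+2j+1}}\binom{\lambda-1}{j}x^{k+2l-2+2j}.$$
I would define $P_{k,l}(x)$ to be exactly this polynomial coefficient of $\log(x)$. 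Then $F_{k,l}(x)-P_{k,l}(x)\log(x)$ is a sum of a $C^{n+1}[0,1]$ function (from the first two groups) and an analytic function near $0$, hence lies in $C^{n+2}[0,1]$ — here I should double-check that the first two groups are actually in $C^{n+2}$ and not merely $C^{n+1}$; one gains the extra derivative either from the $x^k$ prefactor with $k\ge1$ combined with the $W^{n+2,\infty}$ bound on $B_l$, or more carefully by noting that the statement only requires $C^{n+2}[0,1]$ of the whole combination and the remainder-subtraction in $B_l$ can be pushed one order further ($O(t^{n+4})$) at no cost, as Lemma \ref{first fkl lemma} would then give $B_l\in C^{n+2}$. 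Finally I must verify $P_{k,l}(0)=P_{k,l}'(0)=0$: the lowest power of $x$ appearing in $P_{k,l}$ is $x^{k+2l-2}$ (from $j=0$), and $k+2l-2\in\{0,1,2\}$; the cases $k+2l-2=0$ (i.e. $(k,l)=(2,0)$) and $k+2l-2=1$ (i.e. $(1,1)$) are problematic for the claim $P'_{k,l}(0)=0$ unless the corresponding coefficient $a_{l,0}$ vanishes — and indeed Lemma \ref{third fkl lemma} states $a_{l,j}=0$ whenever $j+l<\tfrac n2+1$, so $a_{0,0}=0$ when $n\geq1$ (it needs $0<\tfrac n2+1$, always true) and $a_{1,0}=0$ when $1<\tfrac n2+1$, i.e. $n\geq2$; for $n=0$ there is nothing to prove since we assume $n\in\mathbb N$, and for $(k,l)=(1,1)$ with $n$ even we need $n\geq 2$ which holds. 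Hence the lowest surviving power in $P_{k,l}$ is at least $x^2$ in every case, giving $P_{k,l}(0)=P'_{k,l}(0)=0$.

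The last task is to translate the conclusion "$F_{k,l}-P_{k,l}\log\in C^{n+2}[0,1]$" together with the decay estimate $|F_{k,l}^{(j)}(x)|\leq C x^{k-2-2\lambda-j}$ from Lemma \ref{fourth fkl lemma} into the precise hypotheses of Theorem \ref{main schur theorem}, namely that $F_{k,l}\circ\exp\in W^{1+\lceil\frac{n+1}{2}\rceil,2}(\mathbb{R}_+)$ and $(F_{k,l}-F_{k,l}(0))\circ\exp\in W^{1+\lceil\frac{n+1}{2}\rceil,2}(\mathbb{R}_-)$, and similarly for $G_{k,l}$. For the $\mathbb{R}_-$ part (i.e. $x\in(0,1)$, $t=\log x\to-\infty$): differentiating $F_{k,l}(e^t)$ in $t$ produces combinations of $e^{mt}F_{k,l}^{(m)}(e^t)=x^m F_{k,l}^{(m)}(x)$; near $x=0$, from the $C^{n+2}$-plus-$\log$ structure with $P_{k,l}$ vanishing to second order, one gets $x^m F_{k,l}^{(m)}(x)=O(x^{\min(2,\text{order})}|\log x|)$ for $0\le m\le n+2$, and subtracting $F_{k,l}(0)$ removes the constant term, so these are square-integrable against $dt$ on $(-\infty,0)$ (polynomial decay in $x$ beats the $|\log x|$). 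For the $\mathbb{R}_+$ part ($x\in(1,\infty)$, $t\to+\infty$): here Lemma \ref{fourth fkl lemma} gives $x^m F_{k,l}^{(m)}(x)=O(x^{k-2-2\lambda})$, and since $\lambda>0$ and $k\le n+1$ this is $O(x^{-2\lambda-(2-k)})$; one must check $k-2-2\lambda<0$, which holds because $k\le n+1$ would make the exponent possibly positive — so I should instead note $k-2-2\lambda\le (n+1)-2-2\lambda=n-1-2\lambda$, which is negative only for $\lambda>\tfrac{n-1}2$; this is the subtle point. A cleaner route: for large $x$, $F_{k,l}(x)\sim (\text{const})x^{k-2-2\lambda}$ as the dominant term (the integral $\int_0^2(x^2+2t)^{-\lambda-n/2-1}(2t-t^2)^{\lambda-1}t^l\,dt\sim x^{-2\lambda-n-2}\int_0^2(2t-t^2)^{\lambda-1}t^l\,dt$ for $x\to\infty$ after rescaling $t\mapsto x^2 s$, more precisely one extracts the exact leading asymptotics), so $F_{k,l}\circ\exp$ decays like $e^{(k-2-2\lambda)t}$ with all derivatives, and this is exponentially decaying hence in $W^{N,2}(\mathbb{R}_+)$ regardless of the sign of $k-2-2\lambda$ as long as it is $\neq 0$ — and $k-2-2\lambda<0$ is automatic since $k\le 2$ for $(2,0),(2,1)$ giving $k-2-2\lambda=-2\lambda<0$, while $(1,1)$ gives $-1-2\lambda<0$. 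So in fact $k\le 2$ for all three admissible pairs and the exponent is strictly negative. I expect the main obstacle to be precisely this bookkeeping at $x=0$: assembling the fractional/integer Sobolev regularity of $F_{k,l}\circ\exp$ on $\mathbb{R}_-$ from the mixed analytic-plus-$x^{\ge2}\log x$ structure, making sure $n+2$ classical derivatives (which exceeds $1+\lceil\frac{n+1}2\rceil$) are enough and that the logarithmic factor does not destroy $L^2$-integrability after the exponential change of variables; and the parallel verification that $G_{k,l}=\frac{F_{k,l}-F_{k,l}(0)}{x}$ inherits the same structure (dividing by $x$ lowers the vanishing order of the polynomial part by one, so $G_{k,l}-\frac1x P_{k,l}(x)\log x\in C^{n+1}[0,1]$ and $\frac1xP_{k,l}(x)$ still vanishes at $0$ since $P_{k,l}$ vanishes to second order), which requires re-running Lemmas \ref{first fkl lemma}--\ref{fourth fkl lemma} with one derivative to spare — this is why the remainder in $B_l$ was taken to order $n+3$ rather than $n+2$.
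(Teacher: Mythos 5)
Your proof follows essentially the same route as the paper: the decomposition of Lemma \ref{second fkl lemma}, the definition of $P_{k,l}$ as the coefficient of $\log(x)$ built from the constants $a_{l,j}$ of Lemma \ref{third fkl lemma}, and the regularity of $B_l$ via Lemma \ref{first fkl lemma}; your explicit check that the vanishing condition $a_{l,j}=0$ for $j+l<\frac{n}{2}+1$ forces $P_{k,l}(0)=P_{k,l}'(0)=0$ in all three cases $(2,0),(1,1),(2,1)$ is exactly what the paper leaves implicit. Your closing paragraph on verifying the hypotheses of Theorem \ref{main schur theorem} belongs to the proof of Theorem \ref{fkl are smooth theorem} rather than to this lemma, but it does not affect the correctness of the argument.
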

\begin{proof} We use the decomposition
$$F_{k,l}(x)=x^{n+k}A_l(x)+x^kB_l(x)+x^{k+2l-2}\sum_{j=0}^{n+2}\frac{(-1)^j}{2^{l+2j+1}}\binom{\lambda-1}{j}C_{l,j}(x)$$
given in Lemma \ref{second fkl lemma}.

Let $a_{l,j}$ be the constant given in Lemma \ref{third fkl lemma} (we set $a_{l,j}=0$ for odd $n$). Set
$$P_{k,l}(x)=\sum_{j=0}^{n+2}\frac{(-1)^j}{2^{l+2j+1}}\binom{\lambda-1}{j}a_{l,j}x^{(k+2l-2)+2j}$$
and note that $P_{k,l}(0)=P_{k,l}'(0)=0$ by Lemma \ref{third fkl lemma}. We have
\begin{align*}
F_{k,l}(x)-P_{k,l}(x)\log(x)&=x^{n+k}A_l(x)+x^kB_l(x)\\
&\hspace{-1.0cm}+x^{k+2l-2}\sum_{j=0}^{n+2}\frac{(-1)^j}{2^{l+2j+1}}\binom{\lambda-1}{j}\big(C_{l,j}(x)-a_{l,j}x^{2j}\log(x)\big).
\end{align*}

Clearly, the first summand is infinitely differentiable. Next, we recall from Lemma \ref{second fkl lemma} that
$$B_l(x)=x^n\int_0^1(x^2+t)^{-\lambda-\frac{n}{2}-1}t^{\lambda+n+2}\psi_l(t)dt,$$
where
$$\psi_l(t):=2^{-l-1}t^{l-n-3}\Big((1-\frac{t}{4})^{\lambda-1}-\sum_{j=0}^{n+2}\binom{\lambda-1}{j}(-\frac{t}{4})^j\Big),\quad t\in(0,1).$$
Since $\psi_l\in L_{\infty}(0,1),$ it follows from Lemma \ref{first fkl lemma} that $B_l\in C^{n+2}[0,1].$ Hence, the second summand in the decomposition is  $n+2$ times differentiable.
 The last summands are infinitely differentiable by Lemma \ref{third fkl lemma}. This completes the proof.
\end{proof}

\begin{proof}[Proof of Theorem \ref{fkl are smooth theorem}]
It is obvious that each $F_{k,l}$ and $G_{k,l}$ is smooth on $(0,\infty).$ It follows from Lemma \ref{fourth fkl lemma} that $F_{k,l}\circ\exp\in W^{1+\lceil\frac{n+1}{2}\rceil,2}(\mathbb{R}_+).$ Thus, also $G_{k,l}\circ\exp\in W^{1+\lceil\frac{n+1}{2}\rceil,2}(\mathbb{R}_+).$

Let $P_{k,l}$ be the polynomial constructed in Lemma \ref{fifth fkl lemma}. Write $G_{k,l}=Q_{k,l}+R_{k,l},$ where
$$Q_{k,l}(x)=\frac{P_{k,l}(x)\log(x)}{x},\quad x\in [0,1],$$
$$R_{k,l}(x)=\frac{F_{k,l}(x)-P_{k,l}(x)\log(x)-F_{k,l}(0)}{x},\quad x\in [0,1].$$

By Lemma \ref{fifth fkl lemma}, $R_{k,l}\in C^{n+1}[0,1]\subset C^{1+\lceil\frac{n+1}{2}\rceil}[0,1].$ Thus, $(R_{k,l}-R_{k,l}(0))\circ\exp\in W^{1+\lceil\frac{n+1}{2}\rceil,2}(\mathbb{R}_-).$ It is immediate that $Q_{k,l}\circ\exp\in W^{1+\lceil\frac{n+1}{2}\rceil,2}(\mathbb{R}_-).$ Consequently, $(G_{k,l}-G_{k,l}(0))\circ\exp\in W^{1+\lceil\frac{n+1}{2}\rceil,2}(\mathbb{R}_-).$ This immediately yields $(F_{k,l}-F_{k,l}(0))\circ\exp\in W^{1+\lceil\frac{n+1}{2}\rceil,2}(\mathbb{R}_-).$ This ends the proof of Theorem \ref{fkl are smooth theorem}.
\end{proof}

\bigskip
\bigskip
{\bf Acknowledgements:}

Z. Fan is supported by the Guangdong Basic and Applied Basic Research Foundation (No. 2023A1515110879), by the China Postdoctoral Science Foundation (No. 2023M740799) and by the Postdoctoral Fellowship Program of CPSF (No. GZB20230175). J. Li is supported by the Australian Research Council (ARC) through the research grant DP220100285. F. Sukochev and D. Zanin are supported by the Australian Research Council through the research grant DP230100434. F. Sukochev is supported by the Australian Research Council Laureate Fellowship FL170100052. Z. Fan would like to thank Prof. Dongyong Yang for discussions about harmonic analysis associated with Bessel operator.

\end{document}